\newif\ifarxiv\arxivtrue
\newif\ifec\ecfalse
\newif\ifanonymous\anonymousfalse
\newif\ifdraft\draftfalse
\pgfplotsset{compat=1.18}
\tikzstyle{vertex} = [shape=circle,draw=black]
\tikzstyle{namedVertex} = [shape=circle,draw=black]
\tikzstyle{edge} = [draw,->,thick]
\tikzstyle{labeledNodeS}=[circle, color=black!75!white, draw, inner sep = 0.1em, minimum size = 1.5em, scale=1.25]
\tikzstyle{normalEdge}=[very thick, >=stealth]
\crefname{cons}{constraint}{constraints}
\Crefname{cons}{Constraint}{Constraints}
\crefname{claim}{claim}{claims}
\Crefname{claim}{Claim}{Claims}
\crefname{assumption}{assumption}{assumptions}
\Crefname{assumption}{Assumption}{Assumptions}
\newcommand{\citefull}[2][]{\citeauthor{#2}~\cite[#1]{#2}}
\newcommand{\citefulls}[2][]{\citeauthor*{#2}~\cite[#1]{#2}}
	\newenvironment{proofClaim}[1][]{\ifthenelse{\equal{#1}{}}{\begin{proof}}{\begin{proof}[#1]}}{\end{proof}}
		\theoremstyle{acmdefinition}
		\newtheorem{remark}[theorem]{Remark}
		\newtheorem{obs}[theorem]{Observation}
		\newtheorem{claim}[theorem]{Claim}
		\theoremstyle{definition}
		\newtheorem{definition}{Definition}[section]
		\newtheorem{example}[definition]{Example}
		\theoremstyle{plain}
		\newtheorem{proposition}[definition]{Proposition}
		\newtheorem{corollary}[definition]{Corollary}
		\newtheorem{lemma}[definition]{Lemma}
		\newtheorem{theorem}[definition]{Theorem}
		\newtheorem{claim}{Claim}
		\newenvironment{proofClaim}[1][]{\ifthenelse{\equal{#1}{}}{\begin{proof}}{\begin{proof}[#1]}}{\end{proof}}
		\theoremstyle{remark}
		\newtheorem{remark}[definition]{Remark}
		\newtheorem{obs}[definition]{Observation}
		\newenvironment{proofNormal}[1][Proof]{\begin{proof}[#1]}{\end{proof}}
		\newcommand{\caseitem}[1]{\def\casedescr{#1}%
			\item}
		\newlist{proofbycases}{enumerate}{1}
		\setlist[proofbycases]{
			leftmargin=3em,
			labelwidth=1.5em,
			label=\boldmath\bfseries\sffamily\arabic*. Case: \protect\casedescr:,
			ref=\arabic*,
			align=left
		}
		\newtheorem{obs}{Observation}
		\newcommand{\qedhere}{\relax}
		\newenvironment{proofClaim}[1][]{\ifthenelse{\equal{#1}{}}{\proof{Proof.}}{\proof{#1.}}}{\Halmos\endproof}
		\newenvironment{proofNormal}[1][]{\ifthenelse{\equal{#1}{}}{\proof{Proof.}}{\proof{#1.}}}{\Halmos\endproof}
\newcommand{\IN}{\mathbb{N}}	
\newcommand{\INs}{\IN^\ast}		
\newcommand{\IR}{\mathbb{R}}
\newcommand{\R}{\mathbb{R}}	
\newcommand{\Pc}{\mathcal{P}}
\newcommand{\abs}[1]{\left|#1\right|}
\newcommand{\noqed}{\let\qed\relax}
\newcommand{\cf}{cf.\ }
\newcommand{\fa}{f.a.\ }
\newcommand{\eg}{e.g.\ }
\newcommand{\ie}{i.e.\ }
\newcommand{\Ie}{I.e.\ }
\newcommand{\wrt}{w.r.t.\ }
\newcommand{\wlofg}{wlog.\ }
\newcommand{\termTime}[1][]{%
	\ifthenelse{\equal{#1}{}}
	{\Theta}
	{\Theta_{\mathrm{#1}}}
}
\NewDocumentCommand{\CharF}{O{}}{%
	\ifthenelse{\equal{#1}{}}%
	{\mathds{1}}%
	{\mathds{1}_{#1}}%
}
\newcommand{\norm}[1]{\left\Vert#1\right\Vert}
\newcolumntype{L}[1]{>{\raggedright\arraybackslash}m{#1}}
\newcolumntype{C}[1]{>{\centering\arraybackslash}m{#1}}
\newcolumntype{R}[1]{>{\raggedleft\arraybackslash}m{#1}}
\newcommand*\diff{\mathop{}\!\mathrm{d}} 
\DeclarePairedDelimiterX{\scalar}[2]{\langle}{\rangle}{#1, #2} 
\newcommand{\VI}{\textrm{VI}}
\newcommand{\QVI}{\textrm{QVI}} 
\newcommand{\A}{\mathcal{A}} 
\newcommand{\effWalkDelay}{effective walk delay}
\newcommand{\addmEpsDev}{admissible \epsDev} 
\newcommand{\epsDev}{$\gamma$-deviation}
\newcommand{\addmDev}{admissible deviation}
\newcommand{\setS}{constraint set}
\newcommand{\SCDE}[1][]{\ifthenelse{\equal{#1}{}}{SCDE}{\ifthenelse{\equal{#1}{fulls}}{side-constrained dynamic equilibria}{side-constrained dynamic equilibrium}}}
\newcommand{\globalSCDE}[1][]{\ifthenelse{\equal{#1}{}}{strict SCDE}{\ifthenelse{\equal{#1}{fulls}}{strict side-constrained dynamic equilibria}{strict side-constrained dynamic equilibrium}}}
\newcommand{\globalEL}[1][]{\ifthenelse{\equal{#1}{}}{strict CDE}{\ifthenelse{\equal{#1}{fulls}}{strict capacitated dynamic equilibria}{strict capacitated dynamic equilibrium}}}
\newcommand{\sCDEdf}[1][]{\ifthenelse{\equal{#1}{}}{strong BSDE}{strong dynamic Bernstein-Smith equilibrium}}
\newcommand{\wCDEdf}[1][]{\ifthenelse{\equal{#1}{}}{weak BSDE}{weak dynamic Bernstein-Smith equilibrium}}
\newcommand{\sCDEu}[1][]{\ifthenelse{\equal{#1}{}}{strong LPDE}{strong dynamic Larsson-Patriksson equilibrium}}
\newcommand{\wCDEu}[1][]{\ifthenelse{\equal{#1}{}}{weak LPDE}{weak dynamic Larsson-Patriksson equilibrium}}
\newcommand{\sCDEuP}[1][]{\ifthenelse{\equal{#1}{}}{strong MNSDE}{strong dynamic Marcotte-Nguyen-Schoeb equilibrium}}
\newcommand{\wCDEuP}[1][]{\ifthenelse{\equal{#1}{}}{weak MNSDE}{weak dynamic Marcotte-Nguyen-Schoeb equilibrium}}
\newcommand{\sCDEuE}[1][]{\ifthenelse{\equal{#1}{}}{strong ???}{strong ???}}
\newcommand{\HLP}[1][]{\ifthenelse{\equal{#1}{}}{HLP}{Hearn-Larsson-Patriksson equilibrium}}
\newcommand{\Sm}[1][]{\ifthenelse{\equal{#1}{}}{S}{Smith equilibrium}}
\newcommand{\Hey}[1][]{\ifthenelse{\equal{#1}{}}{BS}{Heydecker equilibrium}}
\newcommand{\BS}[1][]{\ifthenelse{\equal{#1}{}}{BS}{Bernstein-Smith equilibrium}}
\DeclareMathOperator{\supp}{supp}
\DeclareMathOperator{\esssup}{ess\,sup}
\DeclareMathOperator{\essinf}{ess\,inf}
\newcommand{\tStart}{t_0}
\newcommand{\tEnd}{t_f}
\newcommand{\planningInterval}{[\tStart,\tEnd]}
\newcommand{\truncated}[2]{#1\big\vert^{#2}}
\newcommand{\shiftN}{\gamma}
\newcommand{\emptyarg}{\,\cdot\,}
\newcommand{\flowVolume}[1][]{%
	\ifthenelse{\equal{#1}{}}
	{x}
	{x_{#1}}
}
\crefname{asmpt}{assumption}{assumptions}
\Crefname{asmpt}{Assumption}{Assumptions}
	\definecolor{darkgreen}{rgb}{0.2,0.8,0.55}
	\definecolor{aliceblue}{rgb}{0.8, 0.9, 1.0}
	\newcommand{\tobias}[1]{\todo[color=red,size=small]{Tobias: #1}}
	\newcommand{\lukas}[1]{\todo[color=aliceblue,size=small]{Lukas: #1}}
	\newcommand{\lukasI}[1]{\todo[color=aliceblue,inline]{Lukas: #1}}
	\newcommand{\tobias}[1]{\relax}
	\newcommand{\lukas}[1]{\relax}
	\newcommand{\lukasI}[1]{\relax}
\title{Side-Constrained Dynamic Traffic Equilibria}
\author{Lukas Graf and Tobias Harks}
\date{}
\newcommand{\abstracttext}{%
	We study  dynamic traffic assignment with side-constraints.
	We first give a counterexample to a key result from the literature
	regarding the existence of dynamic equilibria
	for volume-constrained traffic models in the classical linear edge-delay model. Our counterexample shows
	that the feasible flow space need not be convex and it further reveals
	that classical infinite dimensional variational inequalities are not suited for the definition of general side-constrained dynamic equilibria. We propose a new framework for side-constrained dynamic  equilibria
	based on the concept of \addmEpsDev s of flow particles in space and time. We then show under which assumptions the resulting equilibria can still be characterized by means of quasi-variational and variational inequalities, respectively.
	Finally, we establish first existence results for side-constrained dynamic equilibria for the non-convex setting of volume-constraints.%
}
\newcommand{\acknowledgetext}{%
	We thank Patrice Marcotte and Renxin Zhong as well as the anonymous reviewers for their helpful comments on an earlier version of this paper. We also thank Julian Schwarz, Michael Markl, David Watling and Michael Smith for many interesting discussions on the topics of this paper.%
}
\begin{document}
	
\thispagestyle{empty}
\maketitle


\begin{abstract}
	\abstracttext
\end{abstract}

\newpage
\tableofcontents
\newpage


\section{Introduction}
Traffic assignment problems have been
successfully applied in the past decades in order
to model, predict and optimize traffic distributions.
While in most models, the  network infrastructure
is equipped with capacities, it is usually assumed that
the excess of capacity is possible and leads to congestion, that is,
increased travel times, e.g., by increased waiting times in queues. 
In several realistic scenarios, however, 
there are also hard capacity constraints that must not be violated
by any feasible traffic flow. For instance, hard traffic volume restrictions
are imposed by local authorities in order to keep the exhaust gas emissions within urban residential areas below certain threshold values, see \citefulls{Grote2016}.
Following \citefulls{zhong11}, another example includes
tunnels, in which the number of vehicles inside the tunnel is limited to maintain sufficient reserve capacity/space for handling any possible incident (e.g., car accidents or disruptions due to disaster).

From a theoretical and computational perspective, the traffic assignment problem
 with hard side-constraints has been studied extensively for \emph{static flows} using methods
from convex optimization, see~\cite{Hearn98solving, Larsson99,Larsson95,Marcotte04}
and references therein. These works mostly considered edge-capacity constraints and
studied the optimization problem minimizing the Beckmann-McGuire-Winsten potential
subject to these constraints.
 The dual variables associated with the capacity constraints
are used as additional prices or  queueing delays and the optimal solutions
are interpreted as unconstrained Wardrop equilibria \wrt \emph{generalized travel costs} consisting of  the actual
delay plus the dual prices along a path.
This way, using the convexity of both the feasible space and the objective function, such special capacitated equilibria can be completely characterized as solutions
to associated variational inequalities. 
Most works cited above, however,  used the solutions of the convex optimization
problem or, equivalently, solutions to the associated variational inequalities \emph{as the definition} of a capacitated user (or Wardrop) equilibrium. Only a few works noted a conceptual gap between 
introducing a \emph{behavioural equilibrium concept} in the sense of defining an associated noncooperative game versus using solutions to a variational inequality formulation
as its definition -- see the discussions in \citeauthor*{CorreaCapEqInStaticFlows}~\cite[968]{CorreaCapEqInStaticFlows}, Bernstein and Smith~\cite{BernsteinS94} and \citefulls{Marcotte04}.

In this article, we revisit the \emph{dynamic traffic assignment problem with side-constraints}.
Based on the  path-delay operator model in the $L^2$-function space introduced in seminal works by \citeauthor*{FrieszLTW89}~\cite{FrieszLTW89,Han2013} (see also Friesz and Han~\cite{Friesz19}
for a recent overview),
we consider  general side-constraints
ranging from edge-volume constraints and path inflow constraints to abstract constraint sets.
While the dynamic traffic assignment problem with side-constraints is far less explored compared to the
static variant, there are a few works studying fundamental questions related
to the existence, structure and computability of constrained dynamic equilibria.
One of the central works in this area is the paper by Zhong, Sumalee, Friesz and Lam~\cite{zhong11},
who were the first to consider side-constraints within the general  path-delay operator model.
 They assumed a fixed flow volume
and flexible departure time choice and instantiated the network loading
using (linear) volume-delay functions. The side-constraints were given by arc-volume constraints. They defined a side-constrained dynamic user equilibrium
via solutions of an associated infinite-dimensional variational inequality (VI) which needs to be solved over
the space of capacity-feasible dynamic flows. As one of their main results (Proposition~3.1, p.~1040), they claimed the
existence of side-constrained dynamic user equilibria arguing that the respective  VI
always admits a solution. The proof of this claim uses that the capacity-feasible dynamic flow space is bounded, closed and convex. Only these properties would allow the invocation
of general existence results for VI's in appropriate function spaces by~Browder~\cite{BROWDER1968}.
 
 \subsection{Our Contribution}
 We study  dynamic traffic assignment problems using the general path-delay-operator
 form as proposed by \citefull{FrieszLTW89} and augment this model
 with side-constraints. In fact, we will consider a more general general \emph{walk}-delay-operator model, because some interesting side-constraints such as energy-constraints for electric vehicles (see \citefull{GHP22}) require cyclic routes. Our contribution consists of four types of results.

 \begin{enumerate}
\item We first show that the claim of existence of side-constrained dynamic user equilibria 
 defined as solutions to a VI (\citefull[Proposition~3.1, p.~1040]{zhong11}) is wrong -- we give a nontrivial counterexample to this claim.
 The underlying reason lies in the fact
 that the side-constrained dynamic flow space need not be convex in general.  
 The consequences of the counterexample are somewhat severe since not only does the
 assumed existence result break down but, perhaps more seriously, the counterexample
 reveals that the proposed VI is in fact not a suitable definition of a side-constrained dynamic equilibrium.
\item We introduce -- in line with prior works for the static flow model (e.g.,  \citefull[968]{CorreaCapEqInStaticFlows}, \citefull{BernsteinS94} and \citefull{Marcotte04}) -- a \emph{behavioral equilibrium concept} via formally introducing a noncooperative game modeling the
 space of feasible deviations of users given a dynamic flow. Roughly speaking, a dynamic flow is an equilibrium, if there is no arbitrarily small bundle of users that can switch their strategy in space and time and strictly reduce their 
 travel cost.
 The precise way a feasible deviation is defined leads to a whole set of equilibrium concepts and we propose several of them including dynamic extensions of  Larsson-Patriksson (LP), Bernstein-Smith (BS)
 and Marcotte-Nguyen-Schoeb (MNS) equilibria, respectively,  which were originally proposed for static equilibrium flows. 
 \item Given an equilibrium concept for side-constrained dynamic equilibrium flows,
 obvious questions related to their characterization, existence and computability arise. 
 Under natural assumptions on the structure of the side-constraints and the set of feasible deviations, we give necessary and sufficient conditions under which an equilibrium can be described as a solution to an associated quasi-variational inequality. Additionally, we give more restrictive assumptions under which they can even be characterized by a variational inequality.
 We further show that equilibrium solutions exist, 
 if the set of side-constrained dynamic flows is convex and the walk-delay operator 
 is sequentially weak-strong continuous. 
 \item As the counterexample to \citefulls{zhong11} suggests, the model with edge-volume constraints
 can lead to non-convex flow spaces which means that standard existence tools from
the infinite dimensional VI theory cannot be used.
 For modelling volume constraints, we first describe a network loading model and then introduce 
 abstract \emph{edge-load functions} which include flow volumes as a special case.
 We show existence of dynamic LP
 and MNS equilibria
 under mild continuity assumptions on the edge-load functions. Our existence proof is
 in some sense constructive as it uses an augmented Lagrangian function approach
 (see~\cite{Larsson95} for such an approach for static flows) for violated edge-load constraints
 and invokes, in a black-box fashion, solutions to the relaxed equilibrium problem.
 We further show, however, that this  augmented Lagrangian approach fails for other, stricter equilibrium concepts such as the dynamic BS equilibrium: we give 
 an example in which the flows for the unconstrained  model with penalties do not converge to a (strong) dynamic BS equilibrium.
 \end{enumerate}
   Finally, it is worth mentioning that our model and the subsequent characterization and existence results
require only mild continuity properties of the walk-delay operator and the edge load functions, respectively,  and
thus  apply for
several  realistic and well-studied network loading models including the Vickrey queueing model with point queues~\cite{CominettiCL15,CominettiCO17,Koch11,OlverSK21,Vickrey94}, with spillback~\cite{Sering2019} with departure-time choice~\cite{FrascariaO20,Han2013}, the  Lighthill-Whitham-Richards (LWR) model~\cite{HanPi16} and the classical link-delay model
 of \citefulls{Friesz93}.

 \subsection{Related Work}

Two of the earliest papers in the field of dynamic traffic assignment 
are papers by \citeauthor*{Friesz93}~\cite{Friesz93,FrieszLTW89} who introduced the formalism
of a path-delay operator and investigated variational inequality and optimal control formulations under
specific network loading models, see also Boyce, Ran and LeBlanc~\cite{BoyceRL95,RanBL93}.
For an overview of further relevant works, we refer to the survey article by Friesz and Han~\cite{Friesz19}. A key development in this field are the identification of certain continuity
conditions of the path-delay operator in order to establish equilibrium existence.
This has been successfully shown for various network loading models ranging
from the link-delay model~\cite{ZhuM00} and the Vickrey model with point queues~\cite{CominettiCL15,GrafHS20,Han2013a,Koch11,MeunierW10} to the  Lighthill-Whitham-Richards (LWR) model~\cite{FrieszHanPedro13,HanPi16}.
 
It is worth noting that dynamic traffic models with spillback (cf.~\cite{HanPi16,Sering2019}), can be interpreted as an alternative way of handling hard capacities limiting the flow volume on particular edges. However, there is an important conceptual difference to side-constrained dynamic traffic assignment models: 
Spillback models allow the injection of flow into arbitrary paths and whenever the capacity restriction on an edge is reached, additional flow is prohibited from entering this edge and has to wait on the previous edge instead (causing additional congestion there). Thus, the spillback effect influences the agents behaviour only indirectly via the increased path-delays. 
In a model with hard side-constraints, on the other hand, the volume constraints directly influence the agents' behaviour by restricting their strategy space: \Ie if entering a certain path at a specific time would lead to a violation of any edge capacity on that path, such a flow is considered to be infeasible.
Our model can be seen as a strict generalization of both ideas as we do allow spillback models for the network loading
but in addition we can model hard side-constraints.

For dynamic traffic assignment with hard side-constraints not much is known.
\citefulls{zhong11} considered the path-operator model with a linear volume-delay formulation
for the network loading. They defined side-constrained dynamic equilibria as solutions to an associated
infinite dimensional variational inequality and claimed existence of such equilibria.
\citefulls{Hoang19} transferred the static BMW equilibrium concept to
a dynamic model by discretizing time and then considering a time-expanded network.
In a similar way \citefulls{Hamdouch2004} extended the static equilibrium concept from~\cite{Marcotte04} to a dynamic setting.

 \subsection{Paper Organization}
 We start the paper by recapping in \Cref{sec:static} the theory of side-constrained static equilibrium flows. Already for static models, the issue about properly defining side-constrained equilibria arises and we try to sketch the historic development of the key concepts in the field.
 In \Cref{sec:dynamic}, we introduce the basic dynamic traffic assignment model.
 In \Cref{sec:counter}, we give a counterexample to \citefull[Proposition~3.1, p.~1040]{zhong11} which illustrates the need of rethinking an appropriate
 solution concept for traffic assignment models with side-constraints.
 
 In \Cref{sec:framework}, we introduce our abstract framework of side-constrained
 dynamic traffic equilibria. In \Cref{sec:characterization}, we turn to characterization
 results of  such equilibria  in terms of variational or quasi-variational inequalities.
 Finally, in \Cref{sec:SCviaNL}, we derive  two equilibrium existence results for a class of 
 non-convex volume-constrained traffic models using an augmented Lagrangian penalty function approach.


\section{The Static Model}\label{sec:static}
We are given a directed graph $G=(V,E)$
and a set of populations or commodities $I:= \{1, \dots,
n\}$, where each commodity $i \in I$ has a demand $d_i> 0$ that has
to be routed from a source $s_i \in V$ to a destination $t_i \in V$.
The demand interval $[0,d_i]$ represents a continuum of infinitesimally small agents each acting independently
choosing a cost minimal $s_i$,$t_i$-path. 
There are continuous and nondecreasing cost  functions $\ell_{e}: \R^E \rightarrow \R_{\geq 0}, e\in E$.
A \emph{path flow} for commodity~$i\in I$ is a nonnegative vector
$x_i \in \R^{|\Pc_i|}_{\geq 0}$ that  lives in the path flow polytope: 
\begin{align*}
X_i=\left\{x_i\in \R_{\geq 0}^{|\Pc_i|}\;\middle\vert \; \sum_{p\in\Pc_i} x_{i,p}=d_i \right\},
\end{align*}
where $\Pc_i$ denotes the set of simple $s_i,t_i$-paths in $G$ and
$\Pc \coloneqq \bigcup_{i \in I}\Pc_i$.
We assume that every $t_i$ is reachable in $G$ from $s_i$
for all $i\in I$, thus, $X_i\neq \emptyset$ for all $i\in I$.
Given a  path flow vector $x\in X:=\times_{i\in I}X_i$, the cost of a path $p\in\Pc_i$,
 is defined as
$\ell_{p}(x):=\sum_{e\in p}\ell_{e}(x),$
where $ x_e:=\sum_{i\in I}\sum_{p\in\Pc_i : e\in p} x_{i,p}$ is the aggregated load of edge $e\in E$.
\begin{definition}
A path flow $ x^*\in X$ is a \emph{Wardrop equilibrium} if for all $i\in I$:
\[ \ell_{p}( x^*)\leq \ell_{q}( x^*) \text{ for all }p,q\in \Pc_i \text{ with }x^*_{i,p}>0.\]
\end{definition}
The interpretation here is that all agents are travelling along cost minimal paths
given the overall load vector $(x^*_e)_{e\in E}$.
One can characterize Wardrop equilibria by means of variational inequalities
as follows (cf.~Patriksson~\cite[Sec. 3.2.1]{Patriksson1994tap}):
\begin{lemma}\label{lem:vi-static}
The following statements are equivalent:
\begin{enumerate}
\item $x^*\in X$ 
is a Wardrop equilibrium.
\item $\scalar{\ell( x^*)}{ x^*- y} \leq 0  \text{ for all }y\in X, \text{ with } \ell( x^*) \coloneqq (\ell_{p}( x^*))_{p\in\Pc}$ and $\scalar{\emptyarg}{\emptyarg}$ denoting the scalar product on $\IR^{\abs{\Pc}}$.
\end{enumerate}
\end{lemma}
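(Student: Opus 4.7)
The plan is to show both directions of the equivalence by exploiting the product structure $X=\times_{i\in I} X_i$ and the linearity of the pairing $\scalar{\ell(x^*)}{\emptyarg}$, which effectively means that the VI in (2) is equivalent to saying that $x^*$ minimises the \emph{linear} functional $y\mapsto \scalar{\ell(x^*)}{y}$ over $X$.

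For the direction (1) $\Rightarrow$ (2), I would fix a Wardrop equilibrium $x^*$ and define, for each commodity $i\in I$, the minimal equilibrium cost $\ell^*_i := \min_{p\in\Pc_i}\ell_p(x^*)$. The equilibrium property then gives $\ell_p(x^*)=\ell^*_i$ whenever $x^*_{i,p}>0$ while $\ell_p(x^*)\geq \ell^*_i$ in general. Using the demand constraint $\sum_{p\in\Pc_i} x^*_{i,p}=d_i$, this yields $\scalar{\ell(x^*)}{x^*}=\sum_{i\in I}\ell^*_i d_i$. On the other hand, for any $y\in X$ we have $\scalar{\ell(x^*)}{y}=\sum_{i\in I}\sum_{p\in\Pc_i}\ell_p(x^*)y_{i,p}\geq \sum_{i\in I}\ell^*_i\sum_{p\in\Pc_i} y_{i,p}=\sum_{i\in I}\ell^*_i d_i$. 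Subtracting gives the desired inequality.

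For the converse (2) $\Rightarrow$ (1), I would argue by contraposition: suppose $x^*$ is not a Wardrop equilibrium, so there exist a commodity $i\in I$ and paths $p,q\in\Pc_i$ with $x^*_{i,p}>0$ and $\ell_p(x^*)>\ell_q(x^*)$. I would then construct an explicit $y\in X$ that violates the VI by shifting all mass of commodity $i$ from $p$ to $q$: set $y_{j,r}=x^*_{j,r}$ for $(j,r)\notin\{(i,p),(i,q)\}$, and $y_{i,p}=0$, $y_{i,q}=x^*_{i,q}+x^*_{i,p}$. This $y$ clearly lies in $X$, and a direct computation gives $\scalar{\ell(x^*)}{x^*-y}=\bigl(\ell_p(x^*)-\ell_q(x^*)\bigr)\cdot x^*_{i,p}>0$, contradicting (2).

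There is no real obstacle here; the argument is essentially the standard Wardrop/VI equivalence (as attributed to Patriksson in the excerpt). The only care needed is in the first direction to correctly interchange the sums and invoke the demand constraint, and in the second direction to verify that the swapped vector $y$ still lies in the polytope $X_i$ (which is immediate since we only redistribute mass within the paths of one commodity while preserving the total $d_i$). I would present the proof in roughly half a page and conclude.
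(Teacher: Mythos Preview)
Your proposal is correct and is the standard argument for this classical equivalence. The paper itself does not give a proof of this lemma; it merely states the result with a reference to Patriksson~\cite[Sec.~3.2.1]{Patriksson1994tap}, so there is no paper-proof to compare against beyond noting that your argument is precisely the one found in that reference.
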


For the case of  separable latency functions $\ell$,  Dafermos and Sparrow~\cite{DafS69}  related Wardrop equilibria to Nash equilibria of an associated noncooperative game.
 Formally, for $\varepsilon>0$ and $p,q\in \Pc_i$ with $x_p>0$, let
\[ x_w(\varepsilon,p,q):=\begin{cases}x_w-\varepsilon, & \text{ if }w=p\\
x_w+\varepsilon, & \text{ if }w=q\\
x_w, & \text{ else. }\end{cases}\]

\begin{definition}\label{def:dafermos-sparrow}
A  path flow $x^*\in Z$ is a \emph{Nash equilibrium}, if for all $ p,q\in \Pc_i, x^*_{i,p}>0,\varepsilon\in (0,x^*_{i,p}], i\in I$, we have
\[ \ell_{p}( x^*)\leq \ell_{q}(x^*(\varepsilon,p,q)). \]
\end{definition}
Dafermos and Sparrow~\cite{DafS69} showed that for continuous and separable latency functions, Nash equilibria and
Wardrop equilibria coincide.

If $\ell$ is  separable and non-decreasing,  it is the gradient of the Beckmann-McGuire-Winsten potential function  and one can further characterize Wardrop equilibria
as optimal solutions to a convex optimization problem:
\begin{lemma}[cf.~Dafermos~\cite{dafermos1980traffic}]\label{lem:dafermos}
The following statements are equivalent:
\begin{enumerate}
\item $x^*\in X$ 
is a Wardrop equilibrium.
\item $x^*\in\arg\min_{x\in X} \Big\{  \sum_{e\in E}\int_{0}^{ x_e} \ell_{e}(z) \diff z  \Big\} $.
\item $\scalar{\ell( x^*)}{ x^*- y} \leq 0  \text{ for all }y\in X$.
\end{enumerate}
\end{lemma}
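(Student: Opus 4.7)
The plan is to reduce to the already-established equivalence $(1)\Leftrightarrow(3)$ of \Cref{lem:vi-static} and then establish $(2)\Leftrightarrow(3)$ via standard first-order conditions for a convex program on a convex feasible set.

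First I would introduce the Beckmann--McGuire--Winsten potential
\[
\Phi\colon X\to\R,\quad \Phi(x)\coloneqq\sum_{e\in E}\int_0^{x_e}\ell_e(z)\,\diff z,
\]
and verify two structural properties. Separability of $\ell$ means that $\ell_e$ depends only on the scalar load $x_e$, so by the fundamental theorem of calculus and the chain rule (noting that $x_e$ is linear in the path flow variables, $x_e=\sum_{i\in I}\sum_{p\in\Pc_i:\,e\in p} x_{i,p}$) the function $\Phi$ is continuously differentiable on $X$ with
\[
\frac{\partial\Phi}{\partial x_{i,p}}(x)=\sum_{e\in p}\ell_e(x_e)=\ell_p(x),
\]
so $\nabla\Phi(x)=\ell(x)$ in the path-flow coordinates. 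Because each $\ell_e$ is nondecreasing, the single-variable map $t\mapsto\int_0^t\ell_e(z)\,\diff z$ is convex, and since $x_e$ depends linearly on $x$ the composition is convex in $x$; summing over $e\in E$ yields convexity of $\Phi$.

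Next I would invoke the standard first-order optimality condition for smooth convex minimisation over a nonempty convex set: since $X$ is a bounded polytope (a product of simplices) and $\Phi$ is continuous and convex, a minimiser exists, and $x^*\in X$ minimises $\Phi$ over $X$ if and only if
\[
\scalar{\nabla\Phi(x^*)}{y-x^*}\geq 0 \quad\text{for all }y\in X.
\]
Substituting $\nabla\Phi(x^*)=\ell(x^*)$ and rewriting, this is precisely $\scalar{\ell(x^*)}{x^*-y}\leq 0$ for all $y\in X$, which is statement~(3). This gives $(2)\Leftrightarrow(3)$.

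Finally I would combine this with \Cref{lem:vi-static}, which already supplies $(1)\Leftrightarrow(3)$, to close the chain of equivalences. The main subtlety is the passage from separability of $\ell_e$ to convexity of $\Phi$ in the vector of path flows; once one notes that edge loads depend \emph{linearly} on path flows and that the univariate antiderivatives of nondecreasing functions are convex, everything else is routine. No non-convex argument, no fixed-point theorem, and no regularity beyond continuity of $\ell$ is required.
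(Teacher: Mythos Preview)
The paper does not actually prove this lemma; it is stated with the attribution ``cf.~Dafermos~\cite{dafermos1980traffic}'' and no argument is given. Your proposal is correct and is precisely the standard proof one would expect: reduce $(1)\Leftrightarrow(3)$ to the already-stated \Cref{lem:vi-static}, and obtain $(2)\Leftrightarrow(3)$ from the first-order optimality condition for a differentiable convex function over a convex set, after checking that separability and monotonicity of $\ell$ make $\Phi$ convex with $\nabla\Phi=\ell$.
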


\subsection{Side-Constrained Traffic Equilibria}
Suppose we have hard edge capacities $c_e\geq 0, e\in E$
which need to be satisfied for a flow $x\in X$ to be \emph{capacity-feasible}, that is,
$ x_e\leq c_e, e\in E$. Let $Z:=\{x \in X \vert x_e \leq c_e, e\in E\}$ denote the
set of capacity-feasible path flows.
Following Patriksson~\cite[73]{Patriksson1994tap} and Larsson and Patriksson~\cite{Larsson95} (which we abbreviate henceforth with LP),
a side-constrained equilibrium can be defined
via the notion of \emph{saturated} and \emph{unsaturated} paths.
Given a path flow $x\in Z$, a path $p$ is saturated, if in contains an edge $e\in p$ with $ x_e= c_e$ and, conversely, a path $p$ is unsaturated,
if $ x_e<c_e$ for all $e\in p$.

\begin{definition}\label{def:weakWE}
A path flow $x^*\in Z$  is a side-constrained \emph{LP-equilibrium}, if
\[ \ell_{p}( x^*)\leq \ell_{q}( x^*) \text{ for all }p,q\in \Pc_i \text{ with } x^*_{i,p}>0 \text{ and } q \text{ unsaturated}.\] 
\end{definition}

For our subsequent discussion it is worth reformulating
the definition of an LP-equi\-li\-bri\-um in terms of feasible \emph{additive} $\varepsilon$-deviations
in the spirit of Dafermos and Sparrow~\cite{DafS69}.
For $\varepsilon>0$ and $q\in \Pc_i$, let
\[ x_w(\varepsilon,q):=\begin{cases}x_w+\varepsilon, & \text{ if }w=q\\
x_w, & \text{ else. }\end{cases}\]
Define  \[ \tilde{\ell}_e(x):=\begin{cases} \ell_e(x), & \text{ if }x_e\leq c_e\\
 +\infty, &\text{else.}\end{cases}\]
We obtain the following equivalent definition:
\begin{lemma} \label{lem:lp}
A  path flow $x^*\in Z$ is a side-constrained LP-equilibrium iff for all $p,q\in \Pc_i, x^*_{i,p}>0, i\in I$, we have
\[ \tilde\ell_{p}( x^*)\leq \tilde\ell_{q}(x^{*}(\varepsilon,q)) \text{ for all }\varepsilon\in (0,x^*_{i,p}]. \]
\end{lemma}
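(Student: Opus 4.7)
The plan is to prove the two directions separately by unfolding the definition of $\tilde\ell$ and distinguishing whether the comparison path $q$ is saturated or unsaturated under $x^*$. Note throughout that $\tilde\ell_p(x^*) = \ell_p(x^*)$ is finite, since $x^*\in Z$ is capacity-feasible.

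For the forward direction, fix $p,q\in\Pc_i$ with $x^*_{i,p}>0$ and $\varepsilon\in(0,x^*_{i,p}]$. If $q$ is saturated, then there is some $e\in q$ with $x^*_e=c_e$, hence $x^*(\varepsilon,q)$ puts load $x^*_e+\varepsilon>c_e$ on $e$, so $\tilde\ell_q(x^*(\varepsilon,q))=+\infty$ and the inequality is trivial. If $q$ is unsaturated and $x^*(\varepsilon,q)$ violates capacity on some edge, the same argument applies. Otherwise $x^*(\varepsilon,q)\in Z$, and Definition~\ref{def:weakWE} gives $\ell_p(x^*)\le\ell_q(x^*)$, while coordinatewise monotonicity of $\ell$ yields $\ell_q(x^*)\le\ell_q(x^*(\varepsilon,q))=\tilde\ell_q(x^*(\varepsilon,q))$; combining the two inequalities gives the claim.

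For the reverse direction, fix $p,q\in\Pc_i$ with $x^*_{i,p}>0$ and $q$ unsaturated, so that $c_e-x^*_e>0$ for every $e\in q$. Pick any $\varepsilon\in\bigl(0,\min\{x^*_{i,p},\,\min_{e\in q}(c_e-x^*_e)\}\bigr)$; then $x^*(\varepsilon,q)\in Z$, so $\tilde\ell_q(x^*(\varepsilon,q))=\ell_q(x^*(\varepsilon,q))$ is finite and the assumed inequality specializes to $\ell_p(x^*)\le\ell_q(x^*(\varepsilon,q))$. Since this holds for all sufficiently small $\varepsilon>0$, continuity of $\ell_q$ gives $\ell_p(x^*)\le\ell_q(x^*)$ in the limit $\varepsilon\to 0^+$, which is precisely the LP-equilibrium condition.

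The statement is essentially a bookkeeping exercise, so there is no real obstacle; the only delicate point is that the reformulated condition is required to hold for \emph{all} $\varepsilon\in(0,x^*_{i,p}]$, not merely infinitesimally small ones. The forward direction therefore relies on monotonicity of $\ell$ to propagate the cost comparison from $x^*$ to the perturbed flow $x^*(\varepsilon,q)$, while the reverse direction exploits only continuity of $\ell$ together with the fact that small additive perturbations along unsaturated paths stay inside $Z$.
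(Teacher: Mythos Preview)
Your proof is correct in substance and follows the natural argument; the paper itself states this lemma without proof, so there is nothing to compare against. One small notational slip: when you write ``$x^*(\varepsilon,q)\in Z$'' you really mean that $x^*(\varepsilon,q)$ satisfies all capacity constraints $x_e\le c_e$, since $x^*(\varepsilon,q)$ is not in the demand polytope $X$ (its total flow on commodity~$i$ is $d_i+\varepsilon$) and hence not in $Z=\{x\in X: x_e\le c_e\}$ either; but the definition of $\tilde\ell_e$ only looks at edge loads versus capacities, so the argument is unaffected.
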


As observed by \citefull{Marcotte04}, this definition has the drawback of admitting
 rather artificial equilibrium flows. Consider for instance
a graph with one edge followed by two edges in parallel.
If the capacity of the first edge is saturated, then \emph{any}
feasible flow is an LP equilibrium no matter how the flow
is distributed on the two subsequent edges. This leads to unrealistic
equilibria in case one of the two edges is more expensive but carries flow
and the other (cheaper) one has free capacity.
An alternative equilibrium concept proposed by Smith~\cite{Smith84} 
avoids this problem by allowing for path changes of $\varepsilon>0$
units of flow provided that \emph{after the change}, the resulting flow is still
feasible (this corresponds to considering deviations of the form $x^*(\varepsilon,p,q)$
in Lemma~\ref{lem:lp}).
This concept has the drawback that it allows for coordinated
deviations of bundles of users, which is unrealistic and, as shown by Smith,
leads to non-existence of equilibria for monotonic, continuous and non-separable latency functions.

In response to Smith's equilibrium concept, Bernstein and Smith (BS)~\cite{BernsteinS94}  proposed an alternative equilibrium concept addressing the issue of possible coordinated
deviations of bundles of users. They added the condition that only deviations need to be considered that involve ``small enough''
bundles of users.\footnote{Heydecker~\cite{Heydecker86} introduced yet another equilibrium definition
in response to Smith's definition, where the path costs of $p$ and $q$ are compared after the route switch of $\varepsilon$ units of flow.}
\begin{definition} (Bernstein and Smith~\cite[Definition~2]{BernsteinS94})\label{def:BS}
A  path flow $x^*\in Z$ is a side-constrained \emph{BS-equilibrium}, if for all $p,q\in \Pc_i, x^*_{i,p}>0, i\in I$, we have
\[ \tilde\ell_{p}( x^*)\leq \lim\inf_{\varepsilon\downarrow 0} \tilde\ell_{q}(x^*(\varepsilon,p,q)). \]
\end{definition}
Note that the original definition of Bernstein and Smith~\cite[Definition~2]{BernsteinS94} does not involve capacities but by using latency functions $\tilde\ell$ that jump to $+\infty$ as soon as arc capacities are exceeded
an equilibrium will be capacity-feasible.
Following \citefull{CorreaCapEqInStaticFlows}, the definition of BS-equilibrium can be rephrased as ``no arbitrarily small bundle of drivers on a common path can strictly decrease its cost by switching to another path''. 

\subsection{Beckmann-McGuire-Winsten Equilibria and Variational Inequalities}
For the case of separable latency functions, a subset of BS-equilibria can be characterized as solutions
to an associated convex optimization problem, where the Beckmann-McGuire-Winsten potential function over the
convex space of  capacity-feasible flows is minimized:

\begin{align}
\label{beckmann-opt}\tag{BMW}
\min &  \sum_{e\in E}\int_{0}^{ x_e} \ell_{e}(z) \diff z  \\
\text{s.t.: }& x\in Z.\notag
\end{align}
We obtain the following well-known characterization (see, \eg Patriksson~\cite{Patriksson1994tap}).
\begin{lemma}
\begin{enumerate}
\item $x^*\in Z$ is optimal for~\eqref{beckmann-opt} iff $x^*$ solves the following variational inequality
\begin{equation}\label{var-static}
\scalar{\ell( x^*)}{ x^*- y} \leq 0 \text{ for all $y\in Z$}.
\end{equation}
\item Every optimal $x^*\in Z$  to~\eqref{beckmann-opt}
is a  BS-equilibrium but not vice versa.
\end{enumerate}
\end{lemma}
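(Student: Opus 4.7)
The plan is to dispatch the two parts of the lemma via standard convex-optimization machinery, modulo an explicit counterexample for the strict direction in Part~2. For Part~1, I would first note that each $\phi_e(t) \coloneqq \int_0^t \ell_e(z)\diff z$ is convex on $\IR_{\geq 0}$ because $\ell_e$ is continuous and non-decreasing, and that aggregated edge loads depend affinely on the path flows. Hence the BMW objective $f(x) = \sum_e \phi_e(x_e)$ is convex in $x$, and a chain-rule computation identifies the gradient entry in direction of path $p \in \Pc_i$ as $\partial_{i,p} f(x) = \ell_p(x)$. Since $Z$ is the intersection of the path-flow simplex with the box $\{x : x_e \leq c_e \text{ for all } e \in E\}$, it is closed and convex, so the standard first-order criterion ``$x^*$ minimizes a differentiable convex function over a convex set iff $\scalar{\nabla f(x^*)}{y - x^*} \geq 0$ for every $y$ in the set'' applies and rewrites directly as~\eqref{var-static}.

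For Part~2a (every BMW optimum is a BS-equilibrium), I would fix such an $x^*$ and arbitrary $i \in I$ and $p, q \in \Pc_i$ with $x^*_{i,p} > 0$. Because $x^* \in Z$, the used path $p$ is capacity-feasible and $\tilde\ell_p(x^*) = \ell_p(x^*) < \infty$, so it remains to control the right-hand side of the BS-inequality along the perturbations $y_\varepsilon \coloneqq x^*(\varepsilon, p, q)$. The plan is a two-case analysis. If some edge $e^\star \in q \setminus p$ already satisfies $x^*_{e^\star} = c_{e^\star}$, then $y_\varepsilon$ violates capacity on $e^\star$ for every $\varepsilon > 0$, so $\tilde\ell_q(y_\varepsilon) \equiv +\infty$ and the BS-inequality is vacuous. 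Otherwise every edge of $q \setminus p$ has strict slack, so $y_\varepsilon \in Z$ for all sufficiently small $\varepsilon > 0$; feeding such a $y_\varepsilon$ into~\eqref{var-static} and cancelling the edges of $p \cap q$ yields $\varepsilon\bigl(\ell_p(x^*) - \ell_q(x^*)\bigr) \leq 0$, and continuity of the $\ell_e$ then gives $\liminf_{\varepsilon \downarrow 0} \tilde\ell_q(y_\varepsilon) = \ell_q(x^*) \geq \ell_p(x^*) = \tilde\ell_p(x^*)$, as required.

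For Part~2b (strict containment), one explicit instance suffices. I would build a small parallel-paths example in the spirit of the discussion right after~\Cref{def:weakWE}: a saturated bottleneck edge blocks every cost-decreasing single-pair $(\varepsilon, p, q)$-deviation, so that the BS-condition is satisfied vacuously at a flow whose global rearrangement of mass -- not expressible as one pairwise swap -- would still strictly decrease the BMW potential. The main obstacle is verifying that the dichotomy in Part~2a genuinely exhausts the $\varepsilon$-behaviours along a single pairwise swap: ``feasible for all sufficiently small $\varepsilon > 0$'' and ``infeasible for every $\varepsilon > 0$'' are the only alternatives, because along the direction $(p, q)$ each edge load is affine in $\varepsilon$ and each capacity constraint is a closed half-space. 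Once this is pinned down, Parts~1 and~2a reduce to mechanical convex-analytic arguments, and Part~2b is one explicit instance.
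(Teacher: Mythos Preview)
The paper does not give its own proof of this lemma; it states the result as ``well-known'' and cites Patriksson. So there is nothing in the paper to compare against, and your approach is the natural one. Parts~1 and~2a are correct: the BMW potential is convex and differentiable with gradient $\ell(\cdot)$ in path-coordinates, so the first-order condition over the convex set $Z$ is exactly~\eqref{var-static}; and your dichotomy in~2a is exhaustive because only edges in $q\setminus p$ gain load under the swap, so either one of them is already tight (and $\tilde\ell_q(y_\varepsilon)=+\infty$ for all $\varepsilon>0$) or all have slack (and $y_\varepsilon\in Z$ for small $\varepsilon$, whence the VI gives $\ell_p(x^*)\le\ell_q(x^*)$).

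The gap is in Part~2b. The example you point to---``in the spirit of the discussion right after~\Cref{def:weakWE}''---separates LP from BMW but does \emph{not} separate BS from BMW: in the one-edge-then-two-parallel network, the saturated first edge lies in $p\cap q$ for the only two paths, so its load is unchanged under any $(p,q)$-swap, the swap stays feasible, and BS then forces cost-optimality on the parallel section, which coincides with BMW. To get a BS-equilibrium that is not BMW-optimal you need a cost-improving direction in $Z$ that cannot be realised as a single pairwise swap. The cleanest construction uses two commodities sharing a capacitated edge $e$ with $c_e=1$: commodity~$1$ has a path through $e$ (cost~$0$) and a direct edge (cost~$1$), commodity~$2$ likewise but with direct cost~$2$, each with unit demand. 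If commodity~$1$ occupies $e$ and commodity~$2$ takes its direct edge, every unilateral swap is either non-improving (commodity~$1$) or blocked by the saturated $e$ (commodity~$2$), so this flow is BS; but letting the two commodities exchange roles is feasible and strictly lowers the potential from $2$ to $1$, so it is not BMW-optimal. Your intuition (``global rearrangement not expressible as one pairwise swap'') is exactly right---you just need an instance of this shape rather than the LP example.
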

The  second statement  implies the existence of BS-equilibria:
The space $Z$ is non-empty and compact and by the continuity of the objective in~\eqref{beckmann-opt}, the theorem of
Weierstra\ss\ implies that~\eqref{beckmann-opt} admits an optimal solution (which then is also
a BS-equilibrium).
The first and second statement together show that there are BS-equilibria
which need not solve the variational inequality stated in~\eqref{var-static}.
\citefulls{CorreaCapEqInStaticFlows} termed equilibria coming from optimal solutions
to~\eqref{beckmann-opt} as \emph{Beckmann-McGuire-Winsten} (BMW) equilibria while
\citefulls{Marcotte04} termed them  \emph{Hearn-Larsson-Patriksson} equilibria.
A further useful interpretation of solutions to~\eqref{beckmann-opt} is the use of the dual variables
associated with the capacity constraints $x\leq c$. It was shown in several works (\cf Hearn~\cite{Hearn80},~Daganzo~\cite{Daganzo1977}, Larsson and Patriksson~\cite{Larsson99})
that a  BMW-equilibrium can be interpreted as an unconstrained equilibrium, if the dual variables
are added as additional penalty terms to the user's cost function. In addition to this natural interpretation and possible implementation of BMW-equilibria via prices, the efficiency properties of BMW-equilibria in terms of induced total travel times are particularly appealing compared
to other possible side-constrained equilibria, see \citefulls{CorreaCapEqInStaticFlows}.
\subsection{Discussion}
It is very instructive to restate a remark made by \citefulls{Marcotte04}:
``Defining equilibrium meaningfully in side-constrained transportation networks represents a nontrivial task.''
Indeed, the above presentation already shows some subtle issues arising when defining
a sound notion of side-constrained traffic equilibria. While only a few works
formally introduced a behavioral equilibrium concept involving the notion of feasible $\varepsilon$-deviations (as in Bernstein and Smith~\cite{BernsteinS94}, Dafermos and Sparrow~\cite{DafS69}, Smith~\cite{Smith84}, Heydecker~\cite{Heydecker86}), 
most of the works in the transportation science literature used directly the
optimization formulations of the type~\eqref{beckmann-opt} or the variational inequality formulations as the definition of a side-constrained user equilibrium  (cf.~Daganzo~\cite{Daganzo1977}, Hearn and Ramana~\cite{Hearn98solving} or Larsson and Patriksson~\cite[Remark 11]{Larsson99}).\footnote{We use the term ``behavioral equilibrium concept'' 
in accordance with the concept of a Nash equilibrium for a noncooperative game
involving  payoff functions (or preference relations) and strategy spaces which are implicitly defined
via feasible deviations as in Dafermos and Sparrow~\cite{DafS69}. The term ``behavioral'' as used in Larsson and Patriksson~\cite{Larsson1998b} has the following different meaning:
``As such, these models are behavioural, in the sense that the effects of the side constraints are assumed to be immediately transferable to the perception of travel costs among the trip-makers, for example as queueing delays; their solutions are also characterized and interpreted as flows satisfying the Wardrop equilibrium conditions in terms of generalized travel costs that include link queueing delays.''}
This observation was also made in \citefulls{CorreaCapEqInStaticFlows}:
``It is interesting to note that the model [\eqref{beckmann-opt}] has been used before without the
formal introduction of the concept of a capacitated user equilibrium.'' 

 As we will see later in Section~\ref{sec:counter}, within the realm of \emph{ dynamic traffic assignments},
defining side-constrained dynamic equilibria via variational inequalities is not only imprecise (as it excludes other ``equilibria'' being not of this type)  but also 
leads to flawed statements about equilibrium existence and their characterizations.
We show that the natural infinite dimensional variational inequality formulation 
for a class of volume-constrained dynamic traffic assignments need per se
not be related to an equilibrium solution. 
Instead our goal in this work is to transfer behavioral equilibrium concepts in the spirit of Dafermos and Sparrow~\cite{DafS69},
Bernstein and Smith~\cite{BernsteinS94}, Smith~\cite{Smith84} and Heydecker~\cite{Heydecker86} to the domain of \emph{dynamic} side-constrained traffic assignments.


\section{Unconstrained Dynamic Flows}\label{sec:dynamic}

We consider the following model based on the walk-delay operator model of \citefull{FrieszLTW89}: We are given a finite directed graph $G=(V,E)$ and some fixed planning horizon $\planningInterval \subseteq \IR_{\geq 0}$. Additionally, we have a finite set of commodities $I$ and for every commodity $i \in I$ a source node $s_i \in V$, a sink node $t_i \in V$ and either a fixed network inflow volume $Q_i \geq 0$ (for the model with departure time choice) or a fixed bounded network inflow rate $r_i\in L^2_+(\planningInterval)$ (for the model without departure time choice) where $L^2_+(\planningInterval)$ denotes the set of non-negative $L^2$-integrable functions on $\planningInterval$. 

We denote by $\Pc_i$ a fixed set of $s_i$,$t_i$-walks and assume -- \wlofg -- that these sets are disjoint for different commodities. We then denote by $\Pc \coloneqq \bigcup_{i \in I}\Pc_i$ the set of all relevant walks. Note that we allow general walks instead of just simple paths as travelling along cycles is necessary in certain applications like electric vehicles (\cf \cite{GHP22}) and can also sometimes be advantageous in networks with hard edge-capacities (\eg \Cref{ex:ImprovementByCycles}). 
A flow in this network is given by a vector $h \in L^2_+(\planningInterval)^\Pc$ of $L^2$-integrable functions $h_{p}: \planningInterval \to \IR_{\geq 0}$ denoting the walk inflow rates for all walks of all commodities. We denote by
	\[\Lambda(Q) \coloneqq \Set{h \in L^2_+(\planningInterval)^\Pc | \sum_{p \in \Pc_i}\int_{t_0}^{t_f}h_{p}(t)\diff t = Q_i \text{ for all } i \in I, h_{p} \leq B_{p} \text{ for all } p \in \Pc},\]
and
	\[\Lambda(r) \coloneqq \Set{h \in L^2_+(\planningInterval)^\Pc | \sum_{p \in \Pc_i}h_p(t) = r_i(t) \text{ for almost all } t \in \planningInterval \text{ and all } i \in I}\]
the sets of all feasible walk inflows for the model with and without departure time choice, respectively. In the definition of $\Lambda(Q)$ the values $B_p \in \IR_{\geq 0}$ are some fixed walk-specific bounds on the walk inflow rates. In general, these are needed to ensure existence of equilibria in this model (see \Cref{rem:JustificationPathInflowBounds}). Note, however, that in some models such bounds can be introduced without loss of generality since choosing $B_p$ large enough only excludes flows which cannot be equilibria anyway (\cf \cite[Proposition 5.9]{Han2013}). We also observe that we can always view $\Lambda(r)$ as a subset of some $\Lambda(Q)$ by defining $Q_i \coloneqq \int_{\tStart}^{\tEnd}r_i(t)\diff t$ and choosing $B_p \geq \sup_{t \in \planningInterval}r_i(t)$ for every $p \in \Pc_i$. 
For the case of elastic demands we are given a non-increasing inverse demand function $\Theta_i: [0,Q_i] \to \IR$ such that for any possible demand $Q \leq Q_i$ the value $\Theta_i(Q)$ is the cost threshold at which a volume of $Q$ of all particles of commodity $i$ is still willing to travel while the rest stays at home.

Furthermore, we are given a function
	\[\Psi:  L^2_+(\planningInterval)^\Pc \to  \hat{M}(\planningInterval)^\Pc, h \mapsto \left(\Psi_p(h,\cdot):\planningInterval \to \IR_{\geq 0}\right)_{p \in \Pc}\]
mapping walk inflows to \effWalkDelay, \ie for any walk inflow $h$, commodity $i$, walk $p \in \Pc_i$ and time $t$ the value $\Psi_p(h,t)$ is to be understood as the total travel cost (\eg some weighted sum of travel time, penalty for late arrival and cost of energy consumption on the chosen route) of a particle of commodity $i$ starting at time $t$ to travel along walk $p$ under the traffic state induced by the walk inflow $h$. Here, we denote by $\hat{M}(\planningInterval)$ the set of measurable functions from $\planningInterval$ to $\IR \cup \set{\infty}$.

We can now define three standard types of dynamic equilibria (\cf \eg \cite{ZhuM00,Friesz93,HanFSH15}):

\begin{definition}\label{def:DE}
	\begin{itemize}
		\item 	$h^*\in \Lambda(r)$ is a \emph{dynamic equilibrium with fixed inflow rates}, if for all $i\in I$, the following condition holds:
		\begin{align}\label{eq:de-rate}
			h^*_p(t)&>0 \Rightarrow \Psi_p(h^*,t)\leq \Psi_q(h^*,t)\text{ for almost all }t\in \planningInterval, p, q\in \Pc_i.	
		\end{align}
		\item 	$h^* \in \Lambda(Q)$ is a \emph{dynamic equilibrium with fixed flow volumes and departure time choice}, if for all $i\in I$, there exists a $\nu_i \in \IR$ such that the following conditions hold:
		\begin{align}\label{eq:de-volume}
			\begin{aligned}
				h^*_p(t)>0 &\Rightarrow \Psi_p(h^*,t)\leq\nu_i \text{ for almost all }t\in \planningInterval, p\in \Pc_i\\
				h^*_p(t)<B_p &\Rightarrow \Psi_p(h^*,t)\geq \nu_i \text{ for almost all }t\in \planningInterval, p\in \Pc_i.				
			\end{aligned}
		\end{align}
		\item $(h^*,Q^*)$ with $Q^* \in \IR^I_{\geq 0}$ and $h^* \in \Lambda(Q^*)$ is a \emph{dynamic equilibrium with elastic demands and departure time choice}, if for all $i \in I$, there exists a $\nu_i \in \IR$ such that the following conditions hold:
		\begin{align}\label{eq:de-elastic}
			\begin{aligned}
				h^*_p(t)>0 &\Rightarrow \Psi_p(h^*,t)\leq \nu_i \text{ for almost all }t\in \planningInterval, p\in \Pc_i\\
				h^*_p(t)<B_p &\Rightarrow \Psi_p(h^*,t)\geq \nu_i \text{ for almost all }t\in \planningInterval, p\in \Pc_i \\
				Q^*_i < Q_i &\Rightarrow \nu_i = \Theta_i(Q^*_i) \\
				Q^*_i = Q_i &\Rightarrow \nu_i \leq \Theta_i(Q_i).
			\end{aligned}
		\end{align}
	\end{itemize}
\end{definition}

We observe in the following \namecref{lemma:ElasticCaseReduction} that the model with elastic demands can be seen as a special case of the model with fixed inflow volume. Thus, we will only consider the first two models for the rest of this paper.
Note, that this trick is certainly not new and has been applied for static models before, see~Patriksson~\cite[Section~2.2.4]{Patriksson1994tap}.

\begin{lemma}\label{lemma:ElasticCaseReduction}
	Consider a network $\mathcal{N}$ with elastic demand and departure time choice. We construct from this a new network $\mathcal{N}'$ with fixed flow volumes and departure time choice by adding for every commodity $i \in I$ an additional new edge $\tilde{e}_i$ connecting the commodity's source and sink node and defining $\Pc_i' \coloneqq \Pc_i \cup \Set{\tilde{p}_i}$ where $\tilde{p}_i \coloneqq (\tilde{e}_i)$ is the path consisting of only this new edge. Moreover, for each of these new paths we define the effective walk delay operator by $\Psi_{\tilde{p}_i}(h,t) \coloneqq \Theta_i(\sum_{p \in \Pc_i}\int_{t_0}^{t_f}h_p(t')\diff t')$ and choose $B_{\tilde{p}_i}$ such that $B_{\tilde{p}_i}\cdot(\tEnd-\tStart) > Q_i$. 
	
	Then, every dynamic equilibrium in $\mathcal{N}'$ corresponds (in the natural way) to a dynamic equilibrium in $\mathcal{N}$ and vice versa.
\end{lemma}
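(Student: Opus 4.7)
\begin{proofNormal}[Proof plan]
The plan is to verify the correspondence directly by identifying the stay-at-home flow with the gap $Q_i - Q^*_i$ between maximal and realised demand. More precisely, given a flow $(h^*, Q^*)$ in $\mathcal{N}$, I would construct $h' \in \Lambda(Q)$ for $\mathcal{N}'$ by setting $h'_p \coloneqq h^*_p$ for $p \in \Pc_i$ and $h'_{\tilde{p}_i}(t) \coloneqq (Q_i - Q^*_i)/(\tEnd - \tStart)$ for all $t$. Conversely, given $h' \in \Lambda(Q)$ in $\mathcal{N}'$, I would set $h^*_p \coloneqq h'_p$ for $p \in \Pc_i$ and $Q^*_i \coloneqq \sum_{p \in \Pc_i}\int_{\tStart}^{\tEnd}h'_p(t)\diff t = Q_i - \int_{\tStart}^{\tEnd} h'_{\tilde{p}_i}(t)\diff t$. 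In both directions, the key observation is that $\Psi_{\tilde{p}_i}(h',t) = \Theta_i(Q^*_i)$ is a constant (in $t$) that only depends on the total flow that actually travels on the original walks, and the bound $B_{\tilde{p}_i}(\tEnd-\tStart) > Q_i$ guarantees $h'_{\tilde{p}_i}$ can always be chosen strictly below its cap.

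For the equilibrium conditions, note that $\Psi_p$ for $p \in \Pc_i$ only depends on the restriction of $h'$ to the original walks (the new edge is a parallel copy attached to $s_i,t_i$ and carries only its own flow), so the first two conditions in~\eqref{eq:de-volume} restricted to $p,q \in \Pc_i$ coincide with the first two conditions in~\eqref{eq:de-elastic}. Hence it only remains to relate the conditions on $\tilde{p}_i$ to the elastic demand conditions, using the common threshold~$\nu_i$.

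I would split the argument into two cases. \textbf{Case $Q^*_i < Q_i$:} Then $\int_{\tStart}^{\tEnd} h'_{\tilde{p}_i}(t)\diff t = Q_i - Q^*_i > 0$, so there is a positive-measure set of $t$ with $h'_{\tilde{p}_i}(t) > 0$, which via~\eqref{eq:de-volume} yields $\Theta_i(Q^*_i) = \Psi_{\tilde{p}_i}(h',t) \leq \nu_i$. Since $B_{\tilde{p}_i}(\tEnd-\tStart) > Q_i > Q_i - Q^*_i$, we also have a positive-measure set of $t$ with $h'_{\tilde{p}_i}(t) < B_{\tilde{p}_i}$, giving $\Theta_i(Q^*_i) \geq \nu_i$. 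Together, $\nu_i = \Theta_i(Q^*_i)$, which is exactly the third elastic condition. \textbf{Case $Q^*_i = Q_i$:} Then $h'_{\tilde{p}_i} \equiv 0 < B_{\tilde{p}_i}$ a.e., so~\eqref{eq:de-volume} yields $\Theta_i(Q_i) = \Psi_{\tilde{p}_i}(h',t) \geq \nu_i$, matching the fourth elastic condition. Reversing these implications (starting from the elastic conditions and using the explicit construction of $h'_{\tilde{p}_i}$ as constant) establishes the other direction.

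The main obstacle is essentially bookkeeping: one has to keep track of why the auxiliary path's flow can always be simultaneously bounded away from $0$ (whenever some stays home) and from $B_{\tilde{p}_i}$ (always, by the choice of $B_{\tilde{p}_i}$), so that both inequalities of~\eqref{eq:de-volume} fire and pin $\nu_i$ to $\Theta_i(Q^*_i)$ in the strict case. Everything else is a direct unwinding of the definitions.
\end{proofNormal}
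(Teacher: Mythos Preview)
Your proposal is correct and follows essentially the same approach as the paper's proof: the same explicit construction (constant inflow $(Q_i-Q^*_i)/(\tEnd-\tStart)$ on $\tilde{p}_i$ in one direction, restriction plus $Q^*_i \coloneqq \sum_{p\in\Pc_i}\int h'_p$ in the other), the same use of $B_{\tilde{p}_i}(\tEnd-\tStart)>Q_i$ to guarantee a positive-measure set with $h'_{\tilde{p}_i}<B_{\tilde{p}_i}$, and the same case split on $Q^*_i<Q_i$ versus $Q^*_i=Q_i$ to match the elastic conditions with the two inequalities of~\eqref{eq:de-volume} on $\tilde{p}_i$. You are slightly more explicit than the paper in noting that $\Psi_p$ for $p\in\Pc_i$ depends only on the original walk inflows, which is indeed the (implicit) way $\Psi$ is extended to $\mathcal{N}'$.
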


\begin{proof}
	First, assume that $h'$ is a dynamic equilibrium with fixed flow volume and departure time choice in $\mathcal{N}'$. Then, we define $Q^*_i \coloneqq \sum_{p \in \Pc_i}\int_{\tStart}^{\tEnd}h'_p(t)\diff t$ for every commodity~$i \in I$ and $h^*$ as the restriction of $h'$ to the subnetwork $\mathcal{N}$. We clearly have $h^* \in \Lambda(Q^*)$ and the first two conditions of~\eqref{eq:de-elastic} are satisfied because \eqref{eq:de-volume} holds for $h'$ (with the same $\nu_i$). Moreover, due to our choice of $B_{\tilde{p}_i}$ there must be a set of positive measure of times $t$ with $h'_{\tilde{p}_i}(t) < B_{\tilde{p}_i}$ and, therefore, \eqref{eq:de-volume} implies $\nu_i \leq \Psi_{\tilde{p}_i}(h',t) = \Theta_i(Q^*_i)$. Finally, if we have $Q^*_i < Q_i$, then there must be a set of positive measures of times $t$ with $h'_{\tilde{p}_i}(t) > 0$ and, thus, \eqref{eq:de-volume} also guarantees $\nu_i \geq \Psi_{\tilde{p}_i}(h',t) = \Theta_i(Q^*_i)$
	
	Now, assume that $(h^*,Q^*)$ is a dynamic equilibrium with elastic demand and departure time choice. Then, we can extend $h^*$ to a flow $h' \in \Lambda(Q)$ by setting $h'_{\tilde{p}_i}(t) \coloneqq \frac{Q_i-Q^*_i}{\tEnd-\tStart}$ for all $t \in \planningInterval$ and $i \in I$. Using the same $\nu_i$ as for $h^*$ we then immediately have that the conditions in~\eqref{eq:de-volume} hold for $h'$ on all paths in $\Pc_i$. So, we only have to show that they also hold for the new paths $\tilde{p}_i$: If we have $h'_{\tilde{p}_i}(t)>0$ at any time $t$, we have $Q^*_i < Q_i$ and, therefore, $\Psi_{\tilde{p}_i}(h',t) = \Theta_i(Q^*_i) = \nu_i$. Otherwise, we still have $\Psi_{\tilde{p}_i}(h',t) = \Theta_i(Q^*_i) \geq \nu_i$ for all times $t$. Hence, in both cases \eqref{eq:de-volume} holds for the new paths~$\tilde p_i$ as well.
\end{proof}

We now want to characterize the first two types of dynamic equilibria using variational inequalities. For this we require that the effective walk delays are bounded in the following sense:

\begin{enumerate}[label=(A\arabic*),series=Assumptions]
	\item For any $p \in \Pc$ and $h \in \Lambda(Q)$ the function $\Psi_p(h,.)$ is bounded.\label[asmpt]{ass:PsiBounded}
\end{enumerate}

This then, in particular, implies that all effective walk delays are $L^2$-integrable, \ie $\Psi_p(h,.) \in L^2(\planningInterval)$. An example for where this assumption may be violated is a network-loading model involving spillback and an \effWalkDelay{} $\Psi$ that just measures the actual delay. Since spillback can lead to gridlock (\cf \cite[p~99]{SeringThesis}), the delay will be $\infty$ for particles caught in such a gridlock. However, even for these cases our model might still be applicable as long as particles always have the option to avoid joining a gridlock and achieving some bounded cost (\eg some alternative route with infinite capacity or a stay at home-option). This will be formalized in \Cref{lemma:RestrictToTruncatedPsi}.

If assumption \ref{ass:PsiBounded} holds, it is well known (\cf \eg \cite{Friesz93,ZhuM00}) that, as in the static case, both kinds of dynamic equilibria can be characterized by variational inequalities, namely
\begin{equation}\label{eqn:vi-fixed-inflow-uc}\tag{\ensuremath{\VI(\Psi,r,\planningInterval)}}
	\begin{aligned}
		\text{Find }h^* \in  \Lambda(r)  \text{ such that:}&\\
		\scalar{\Psi(h^*)}{h-h^*} &\geq 0 \text{ for all }h \in \Lambda(r)
	\end{aligned}
\end{equation}
and
\begin{equation}\label{eqn:vi-fixed-volume-uc}\tag{\ensuremath{\VI(\Psi,Q,\planningInterval)}}
	\begin{aligned}
		\text{Find }h^* \in  \Lambda(Q)  \text{ such that:}&\\
		\scalar{\Psi(h^*)}{h-h^*} &\geq 0 \text{ for all }h \in \Lambda(Q).
	\end{aligned}
\end{equation}
Here, $\scalar{.}{.}$ denotes the canonical scalar product on $L^2(\planningInterval)^\Pc$, i.e.
	\[\scalar{.}{.}: L^2(\planningInterval)^\Pc \times L^2(\planningInterval)^\Pc \to \IR, (f,g) \mapsto \scalar{f}{g} \coloneqq \sum_{p \in \Pc}\int_{\tStart}^{\tEnd}f_p(t)g_p(t)\diff t.\]

\begin{theorem}\label{thm:VICharOfDE}
	Assume that \ref{ass:PsiBounded} holds. Then, a walk inflow $h^* \in \Lambda(r)$ ($h^* \in \Lambda(Q)$) is a dynamic equilibrium with fixed inflow rates (with fixed flow volume) if and only if $h^*$ is a solution to \eqref{eqn:vi-fixed-inflow-uc} (to \eqref{eqn:vi-fixed-volume-uc}).
\end{theorem}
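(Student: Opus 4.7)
The plan is to prove both equivalences simultaneously since they follow the same pattern, and then explain the minor adjustments for box constraints in the fixed-volume case. For each direction, I treat the fixed-inflow-rate case first and then indicate the additional bookkeeping needed for the fixed-volume case.

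For the direction ``equilibrium implies VI'', I would define for each commodity $i$ the pointwise minimum $\nu_i(t) \coloneqq \min_{q \in \Pc_i}\Psi_q(h^*,t)$, which exists and is measurable since $\Pc_i$ is finite and $\Psi_q(h^*,\cdot)$ is measurable and bounded by \ref{ass:PsiBounded}. The equilibrium condition~\eqref{eq:de-rate} then forces $\Psi_p(h^*,t) = \nu_i(t)$ whenever $h^*_p(t)>0$. For arbitrary $h \in \Lambda(r)$, using $h_p(t) \geq 0$ and $\sum_{p \in \Pc_i}h_p(t) = \sum_{p \in \Pc_i}h^*_p(t) = r_i(t)$, I compute
\[
\scalar{\Psi(h^*)}{h-h^*} = \sum_{i \in I}\sum_{p \in \Pc_i}\int_{\tStart}^{\tEnd}\Psi_p(h^*,t)\bigl[h_p(t)-h^*_p(t)\bigr]\diff t \geq \sum_{i \in I}\int_{\tStart}^{\tEnd}\nu_i(t)\bigl[r_i(t)-r_i(t)\bigr]\diff t = 0.
\]
For the fixed-volume case, I replace $\nu_i(t)$ by the constant $\nu_i$ from~\eqref{eq:de-volume} and split the integrand on each path into the regions $\{h^*_p(t)>0\}$, $\{h^*_p(t)<B_p\}$ and their complement; the two complementary conditions in~\eqref{eq:de-volume} together with $\sum_p \int h_p = \sum_p \int h^*_p = Q_i$ again yield non-negativity.

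For the converse ``VI implies equilibrium'' I argue by contradiction. If $h^* \in \Lambda(r)$ solves the VI but violates~\eqref{eq:de-rate}, then there exist a commodity $i$, paths $p,q \in \Pc_i$, a rational $\delta>0$, and a measurable set $T \subseteq \planningInterval$ of positive measure with $h^*_p(t) \geq \delta$ and $\Psi_p(h^*,t)-\Psi_q(h^*,t) \geq \delta$ for all $t \in T$ (this uses countable exhaustion over rationals). Defining $h$ to equal $h^*$ outside $T$ and to set $h_p(t) \coloneqq h^*_p(t)-\delta$ and $h_q(t) \coloneqq h^*_q(t)+\delta$ on $T$ yields $h \in \Lambda(r)$ (the swap preserves $\sum_{p\in \Pc_i}h_p(t) = r_i(t)$ and keeps $h$ in $L^2_+$). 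Then
\[
\scalar{\Psi(h^*)}{h-h^*} = \int_T \bigl[\Psi_q(h^*,t)-\Psi_p(h^*,t)\bigr]\delta \diff t \leq -\delta^2 \cdot \vert T\vert < 0,
\]
contradicting the VI. For the fixed-volume case the same construction works once I reformulate~\eqref{eq:de-volume} in Wardrop form: the existence of a $\nu_i$ satisfying both implications is equivalent to the condition that $h^*_p(t)>0$ and $h^*_q(t)<B_q$ together imply $\Psi_p(h^*,t) \leq \Psi_q(h^*,t)$ for a.e.\ $t$. I obtain this by setting $\nu_i \in [\esssup_{h^*_p(t)>0}\Psi_p(h^*,t),\, \essinf_{h^*_p(t)<B_p}\Psi_p(h^*,t)]$, which is non-empty exactly under the Wardrop form. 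Then the perturbation needs the additional bound $\delta \leq B_q - h^*_q(t)$ on $T$, which I again obtain by restricting to a positive-measure subset of $T$.

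The main obstacle I anticipate is the measurability and ``positive-measure restriction'' step in the converse direction, where the failure of the equilibrium condition (stated using pointwise a.e.\ quantifiers over an infinite family of paths) must be turned into a single violating set $T$ with quantitative lower bounds on both $h^*_p$ and the cost gap. Handling this cleanly via countable exhaustion over pairs $(p,q)$ and rational thresholds $\delta$ is the only subtle point; the remainder is routine integration and the finiteness of $\Pc$ and $I$. The boundedness assumption~\ref{ass:PsiBounded} is used only to guarantee $\Psi_p(h^*,\cdot) \in L^2(\planningInterval)$ so that the inner product $\scalar{\Psi(h^*)}{h-h^*}$ is well-defined and finite on all of $\Lambda(r)$ respectively $\Lambda(Q)$.
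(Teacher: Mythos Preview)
Your argument for the fixed-inflow-rate case and for the ``equilibrium $\Rightarrow$ VI'' direction in the fixed-volume case is correct and matches the paper's proof. The gap is in the ``VI $\Rightarrow$ equilibrium'' direction for the fixed-volume case. The equivalence you claim---that the existence of a constant $\nu_i$ satisfying~\eqref{eq:de-volume} is the same as the single-time Wardrop condition ``$h^*_p(t)>0$ and $h^*_q(t)<B_q$ imply $\Psi_p(h^*,t)\leq\Psi_q(h^*,t)$ for a.e.\ $t$''---is false. The fixed-volume model has departure time choice, so the constant $\nu_i$ links costs across \emph{different} times: from $h^*_p(t_1)>0$ and $h^*_q(t_2)<B_q$ one must deduce $\Psi_p(h^*,t_1)\leq\nu_i\leq\Psi_q(h^*,t_2)$, even when $t_1\neq t_2$. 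A single path $p$ with $0<h^*_p(t)<B_p$ everywhere and $\Psi_p(h^*,t)=t$ satisfies your single-time condition trivially, yet the interval $[\esssup_{h^*_p>0}\Psi_p,\essinf_{h^*_p<B_p}\Psi_p]=[\tEnd,\tStart]$ is empty and this $h^*$ is not a VI solution (shift flow to earlier times).

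Accordingly, your perturbation on a single set $T$ (removing from $p$ and adding to $q$ at the \emph{same} times) only refutes the single-time condition, not the actual equilibrium definition. The paper handles this by choosing two \emph{separate} sets $J_p,J_q\subseteq\planningInterval$ of equal positive measure with $h^*_p\geq\varepsilon$ and $\Psi_p(h^*,\cdot)>\nu$ on $J_p$, and $h^*_q\leq B_q-\varepsilon$ and $\Psi_q(h^*,\cdot)<\nu$ on $J_q$; the perturbation removes $\varepsilon$ from $p$ on $J_p$ and adds $\varepsilon$ to $q$ on $J_q$, which stays in $\Lambda(Q)$ because only the total volume must be preserved. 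Your argument is easily repaired along these lines.
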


Conditions to guarantee the existence of such an element $h^*$ are given by Lions in \cite[Chapitre 2, Théorème 8.1]{Lions} which, following \citefulls{CominettiCL15}, can be restated as follows (see \cite[Proof of Theorem 4.2]{ZhuM00} for how to derive this version from Lions' result):
\begin{theorem}\label{thm:Lions} 
	Let $C$ be a non-empty, closed, convex and bounded subset of $L^2([a, b])^d$. Let $\A : C \rightarrow L^2([a, b])^d$ be a sequentially weak-strong-continuous mapping. Then, the following variational inequality has a solution $h^* \in C$:
		\begin{equation*}
			\begin{aligned}
				\text{Find }h^* \in  C  \text{ such that:}&\\
				\scalar{\A(h^*)}{h-h^*} &\geq 0 \text{ for all }h \in C.
			\end{aligned}
		\end{equation*}
\end{theorem}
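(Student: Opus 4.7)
The plan is to apply a KKM-type fixed-point argument in the weak topology of $L^2([a,b])^d$. Since $L^2([a,b])^d$ is a separable Hilbert space and $C$ is bounded, closed, and convex, $C$ is weakly sequentially compact by Banach--Alaoglu together with Eberlein--Smulian. This is the only compactness available, and any proof has to exploit it.

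I would introduce the set-valued map $G: C \to 2^C$ defined by
\[ G(h) \coloneqq \{ y \in C : \scalar{\A(y)}{y - h} \leq 0 \}, \]
and observe that $h^* \in C$ solves the stated variational inequality if and only if $h^* \in \bigcap_{h \in C} G(h)$, directly from the definitions. Non-emptiness of this intersection then follows from the sequential form of the Ky Fan / KKM theorem, whose hypotheses are (i) $G$ is a KKM-map, meaning $\mathrm{conv}\{h_1, \dots, h_n\} \subseteq \bigcup_{i} G(h_i)$ for every finite $\{h_1, \dots, h_n\} \subseteq C$, and (ii) each $G(h)$ is weakly sequentially closed in $C$.

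For (i), write $y = \sum_i \lambda_i h_i$ with $\lambda_i \geq 0$ and $\sum_i \lambda_i = 1$; if $y$ were in none of the $G(h_i)$, then $\scalar{\A(y)}{y - h_i} > 0$ for every~$i$, and the $\lambda$-weighted sum of these strict inequalities would yield $\scalar{\A(y)}{y - y} > 0$, a contradiction. For (ii), consider $y_n \in G(h)$ with $y_n \rightharpoonup y$. The limit $y$ lies in $C$ since $C$ is weakly closed (being closed and convex), and the sequential weak-strong continuity of $\A$ gives $\A(y_n) \to \A(y)$ in norm. Decomposing
\[ \scalar{\A(y_n)}{y_n - h} = \scalar{\A(y_n) - \A(y)}{y_n - h} + \scalar{\A(y)}{y_n - h}, \]
the first summand vanishes in the limit since $\A(y_n) - \A(y) \to 0$ in norm against the bounded sequence $(y_n - h)$, while the second tends to $\scalar{\A(y)}{y - h}$ by weak convergence of $y_n - h$ tested against the fixed vector $\A(y)$. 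Hence $\scalar{\A(y)}{y - h} \leq 0$, so $y \in G(h)$.

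The main obstacle is step (ii): closedness has to be argued in the weak topology, which is exactly what fails for a merely norm-continuous operator~$\A$. The sequential weak-strong continuity assumption is tailored precisely to salvage the strong-weak bilinear pairing above, and is what singles out this particular existence theorem from the general Hartman--Stampacchia framework. Once (i) and (ii) are in place, the KKM theorem supplies an $h^* \in \bigcap_{h \in C} G(h)$, which by the equivalence noted above solves the variational inequality.
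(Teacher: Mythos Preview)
Your argument is correct. The KKM map $G$ is set up properly, the verification of the KKM property in~(i) is the standard one-line contradiction, and the key step~(ii) is handled cleanly: the decomposition of $\scalar{\A(y_n)}{y_n-h}$ into a strongly-vanishing times bounded term plus a weak-limit term is exactly where the sequential weak--strong continuity of $\A$ does its work. One small remark: since $L^2([a,b])^d$ is separable, the weak topology on the bounded set $C$ is metrizable, so weakly sequentially closed and weakly closed coincide on $C$; hence you may invoke the standard Ky~Fan/KKM lemma rather than a separate ``sequential form''. Each $G(h)$ is then a weakly closed subset of the weakly compact set $C$, and the finite-intersection property supplied by the KKM condition yields the common point~$h^*$.

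By contrast, the paper does not give a proof of this theorem at all: it simply cites the result from Lions (Chapitre~2, Th\'eor\`eme~8.1), pointing to Zhu and Marcotte for the restatement in the present form. Your proof therefore adds genuine value by making the argument self-contained; the KKM approach you chose is in fact one of the standard modern routes to this kind of existence statement and is arguably more transparent than unpacking Lions' original monotonicity-flavoured framework and then checking that weak--strong continuity suffices.
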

Using this theorem together with \Cref{thm:VICharOfDE} one can derive existence of dynamic equilibria under suitable additional assumptions:

\begin{enumerate}[label=(A\arabic*),resume=Assumptions]
	\item The sets $\Pc_i$ are non-empty and finite.\label[asmpt]{ass:FinitelyManyWalks}\label[asmpt]{ass:nonEmptyPathset}
	\item The walk inflow bounds $B_p$ are chosen such that there exists at least one feasible flow $h \in \Lambda(Q)$.\label[asmpt]{ass:PathInflowBounds}
	\item For any $p \in \Pc$ the mapping $\Lambda(Q) \to L^2(\planningInterval), h \mapsto \Psi_p(h,.)$ is sequentially weak-strong continuous.\label[asmpt]{ass:PsiWScont}
\end{enumerate}

Here, a mapping $\mathcal{A}: K \to X$ from some subset $K \subseteq B$ of a reflexive Banach space $B$ (\eg $L^2(\planningInterval)^\Pc$) to a topological space $X$ is \emph{sequentially weak-strong continuous} if it maps weakly convergent sequences in $B$ to (strongly) convergent sequences in $X$. A sequence $f^n$ converges weakly in $B$ if for any $g \in B^*$ from the dual space the sequence $\scalar{f^n}{g}$ converges in $\IR$.

\begin{theorem}\label{thm:ExistenceUnconstrained}
	For any network and \effWalkDelay s satisfying \labelcref{ass:FinitelyManyWalks,ass:nonEmptyPathset,ass:PathInflowBounds,ass:PsiWScont,ass:PsiBounded} there exists a dynamic equilibrium (both with and without departure time choice).
\end{theorem}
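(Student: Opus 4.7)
The plan is to apply \Cref{thm:Lions} with $\A \coloneqq \Psi$ and with $C$ equal to $\Lambda(r)$ or $\Lambda(Q)$, and then to invoke \Cref{thm:VICharOfDE} to translate the resulting solution of the respective variational inequality into a dynamic equilibrium. So there are two things to check: that $C$ has the required properties (non-empty, closed, convex, bounded subset of $L^2(\planningInterval)^\Pc$) and that $\Psi$ is sequentially weak-strong continuous on $C$.

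For $C = \Lambda(Q)$: non-emptiness is exactly \ref{ass:PathInflowBounds}; convexity follows at once from the fact that the defining conditions are a combination of linear equalities ($\sum_{p\in\Pc_i}\int h_p = Q_i$) and pointwise box constraints ($0\leq h_p\leq B_p$); boundedness in $L^2(\planningInterval)^\Pc$ follows from $\norm{h_p}_{L^2}\leq B_p\sqrt{\tEnd-\tStart}$ together with finiteness of $\Pc$ (\ref{ass:FinitelyManyWalks}); and closedness in $L^2$ follows because the linear functionals $h\mapsto \int h_p$ are continuous and the pointwise box constraints define a closed subset of $L^2_+(\planningInterval)$ (any $L^2$-limit has an a.e.\ pointwise subsequence limit, which preserves the bounds a.e.). For $C = \Lambda(r)$, one can simply observe that by setting $Q_i\coloneqq \int_{\tStart}^{\tEnd} r_i(t)\diff t$ and picking any $B_p\geq \esssup_{t}r_i(t)$ (which are finite since $r_i$ is a bounded function by assumption) we embed $\Lambda(r)\subseteq \Lambda(Q)$, and then rerun the same arguments using that the single linear equality constraint $\sum_{p\in\Pc_i}h_p(t)=r_i(t)$ is an a.e.\ equality, preserved by $L^2$-limits along a subsequence. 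Non-emptiness of $\Lambda(r)$ uses \ref{ass:nonEmptyPathset}: pick any $p_i\in\Pc_i$ and put all of $r_i$ onto that walk.

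The required sequential weak-strong continuity of $\Psi:C\to L^2(\planningInterval)^\Pc$ now follows from \ref{ass:PsiWScont}, which asserts it componentwise, together with the fact that $\Pc$ is finite by \ref{ass:FinitelyManyWalks} (so the product topology and the $L^2$-topology on $L^2(\planningInterval)^\Pc$ agree). Also, thanks to \ref{ass:PsiBounded}, the image of $\Psi$ indeed lies in $L^2(\planningInterval)^\Pc$, so that $\Psi$ is a well-defined mapping into the ambient space.

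With these verifications, \Cref{thm:Lions} produces a solution $h^*\in C$ of \eqref{eqn:vi-fixed-volume-uc} or \eqref{eqn:vi-fixed-inflow-uc}, respectively, and \Cref{thm:VICharOfDE} turns $h^*$ into a dynamic equilibrium of the desired type. The main obstacle, conceptually, is the closedness of $\Lambda(Q)$ and $\Lambda(r)$ in the \emph{strong} $L^2$-topology — closedness in the weak topology would be what Lions' theorem really uses, but strong closedness combined with convexity implies weak closedness by Mazur's theorem, so no additional work beyond the a.e.-subsequence argument above is needed. Everything else is a routine bookkeeping of how the assumptions plug into the two black-box theorems.
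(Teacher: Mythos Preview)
Your proposal is correct and follows essentially the same approach as the paper: verify that $\Lambda(r)$ and $\Lambda(Q)$ are non-empty, closed, convex and bounded in $L^2(\planningInterval)^\Pc$, invoke \Cref{thm:Lions} with $\A=\Psi$, and then translate via \Cref{thm:VICharOfDE}. The only differences are cosmetic (the paper carries out the closedness check by explicit integral estimates rather than an a.e.\ subsequence argument, and does not spell out the Mazur step or the role of \ref{ass:PsiBounded} for well-definedness of $\Psi$ into $L^2$), but the logical structure is identical.
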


As \Cref{thm:VICharOfDE,thm:ExistenceUnconstrained} can be proven in essentially the same way as analogous existence results for very similar models  (\cf \eg \cite{Han2013,ZhuM00}), we omit the proof here. We do, however, provide a proof for the exact setting used here in \Cref{app:VI-Existence}.


\section{A Counterexample}\label{sec:counter}

We now want to describe the dynamic flow model with hard edge capacities introduced by Zhong, Sumalee, Friesz and Lam~\cite{zhong11}. In order to formally define such edge capacities one first needs to be able to translate the individual particles' strategy choices (\ie a walk inflow~$h$) into the resulting dynamic flow on the edges of the network. This translation process is usually called \emph{network loading} and it typically uses some given edge delay functions $D_e(h,.): \IR_{\geq 0} \to \IR_{\geq 0}$ in order to model the flow dynamics on individual edges. Here, $D_e(h,\theta)$ denotes the travel time along edge $e$ when entering at time $\theta$ under the flow induced by $h$.

Using such edge delays, one can define the edge flow corresponding to a given walk inflow $h$: For this we define the set $\mathcal{R} \coloneqq \Set{(p,j) | p \in \Pc, j \in [\abs{p}]}$, \ie $\mathcal{R}$ contains exactly one element for every walk in $\Pc$ and each of its edges (counted with multiplicity if an edge occurs multiple times on the walk). An edge flow corresponding to a given walk inflow $h$ is then a tuple $(f^+,f^-)$ with $f^+,f^- \in L^2_+(\IR_{\geq 0})^\mathcal{R}$ satisfying the following conditions:
\begin{itemize}
	\item Correspondence to $h$: 
	\[f^+_{p,1}(\theta) = h_p(\theta) \text{ for all } p \in \Pc \text{ and almost all } \theta \in \IR_{\geq 0}.\]
	where we implicitly assume $h_p$ to be zero outside the planning horizon $\planningInterval$.
	\item Flow conservation at the nodes: 
	\[f^+_{p,j}(\theta) = f^-_{p,j-1}(\theta) \text{ for all } (p,j) \in \mathcal{R} \text{ with } j > 1 \text{ and almost all } \theta \in \IR_{\geq 0}.\]
	\item Flow conservation on the edges:
	\begin{align}\label{eq:FlowConservationOnEdges}
		\int_{0}^{\theta + D_e(h,\theta)} f^-_{p,j}(\zeta)\diff\zeta = \int_{0}^\theta f^+_{p,j}(\zeta)\diff\zeta \text{ \fa } (p,j) \in \mathcal{R} \text{ and all } \theta \in \IR_{\geq 0}.
	\end{align}
\end{itemize}
For any such edge flow we denote by 
	\[f^+_e(\theta) \coloneqq \sum_{\substack{(p,j) \in \mathcal{R}:\\e \text{ is } j\text{-th edge on } p}}f^+_{p,j}(\theta) \quad\text{ and }\quad f^-_e(\theta) \coloneqq \sum_{\substack{(p,j) \in \mathcal{R}:\\e \text{ is } j\text{-th edge on } p}}f^-_{p,j}(\theta)\]
the aggregated edge in- and outflow rates.

Two commonly used types of edge delay functions are the linear edge delays and the Vickrey queuing delays. In both models the flow dynamics of an edge $e$ are characterized by two values: The free flow travel time $\tau_e > 0$ and the edge's service rate $\nu_e > 0$. The linear edge delay functions are then defined as
	\[D_e(h,\theta) \coloneqq \tau_e + \frac{x_e(h,\theta)}{\nu_e},\]
where $x_e(h,\theta) \coloneqq \int_{0}^\theta f^+_e(\zeta)\diff\zeta -  \int_{0}^\theta f^-_e(\zeta)\diff\zeta$ is the \emph{flow volume} on edge $e$ at time $\theta$. The edge delays of the Vickrey point queue model are defined by
	\[D_e(h,\theta) \coloneqq \tau_e + \frac{q_e(h,\theta)}{\nu_e}\]
where $q_e(h,\theta) \coloneqq \int_{0}^\theta f^+_e(\zeta)\diff\zeta -  \int_{0}^{\theta+\tau_e} f^-_e(\zeta)\diff\zeta$ is the \emph{queue length} on edge $e$ at time $\theta$, together with the requirements that $f^-_e(\theta) \leq \nu_e$ and $q_e(h,\theta) \geq 0$ for almost all $\theta \in \IR$.

Given such edge delay functions we can then recursively define walk-delay-functions $D_p$ by setting
\begin{align*}
	D_p(h,t) \coloneqq \begin{cases}
		D_e(h,t), &\text{ if } p \text{ consists of a single edge } e \\
		D_e(h,t) + D_{p'}(h,t+D_e(h,t)), &\text{ if } p \text{ starts with edge } e \text{ followed by walk } p'.
	\end{cases}
\end{align*}
A typical choice for the effective walk delay is then $\Psi_p(h,t) \coloneqq D_p(h,t)$ or $\Psi_p(h,t) \coloneqq D_p(h,t) + P(t + D_p(h,t) - T_A)$ where $T_A$ is the desired arrival time and $P$ some penalty function for early/late arrival.

For both linear edge delays and the edge delays of the Vickrey point queue model it is known (\cf \eg \cite{CominettiCL15,Han2013,ZhuM00}) that for every walk inflow $h$, there exists a unique corresponding edge flow. Even more, the resulting mapping from $h$ over $(f^+,f^-)$ to $\Psi$ can be shown to be sequentially weak-strong continuous. Thus, unconstrained dynamic equilibria are guaranteed to exist with respect to both these edge delay functions.

\citefulls{zhong11} take the dynamic flow model with route and departure time choice and linear edge delays and augment it with additional constraints by associating with every edge $e$ a Lipschitz-continuous and bounded capacity function $c_e: \IR_{\geq 0} \to \IR_{\geq 0}$ and defining the set of all feasible flows as 
	\begin{align}\label{eq:ZhongDefS}
		S \coloneqq \Set{h \in \Lambda(Q) | x_e(h,\theta) \leq c_e(\theta) \text{ for all } \theta \in \IR_{\geq 0}, e \in E}.
	\end{align}
They then define side-constrained dynamic user equilibria with route and departure time choice as the solutions $h^*$ to the following variational inequality (\cf \cite[eq. (33)]{zhong11}):
\begin{align}\label{eq:ZhongVI}
	\begin{aligned}
		\text{Find }h^* \in  S  \text{ such that:}&\\
		\scalar{\Psi(h^*)}{h-h^*} &\geq 0 \text{ for all }h \in S
	\end{aligned}
\end{align}
Finally, they claim existence of such solutions (and, therefore, of such equilibria) under some mild additional assumptions (\cf \cite[Proposition 3.2]{zhong11}). As in the analogous existence results for unconstrained equilibria the proof proposed by \citeauthor*{zhong11} critically relies on the convexity of the set of feasible flows $S$. However, in contrast to the case of unconstrained equilibria, this convexity is not obvious for a \setS{} $S$ defined by \eqref{eq:ZhongDefS}. In fact, it turns out that in general this set need not be convex! This not only invalidates the existence proof but also calls into question whether a variational inequality of the form of~\eqref{eq:ZhongVI} is even the right way of defining such equilibria.

To prove our point we will now provide counterexamples both to the convexity of $S$ and the existence of solutions to the variational inequality~\eqref{eq:ZhongVI}. We first provide a simple example where $S$ is non-convex using infinite service rates and fixed network inflow rates. We then show how this example can be adapted and expanded to the exact model used by \citefulls{zhong11} (\ie using only finite service rates and allowing departure time choice). Finally, we show that in this example the variational inequality~\eqref{eq:ZhongVI} does not have a solution.

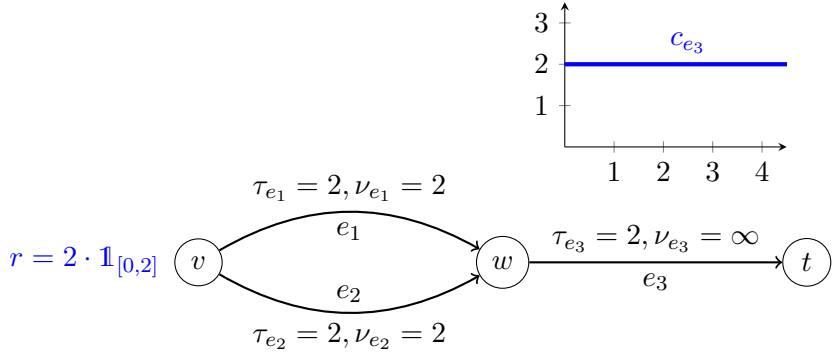
\begin{figure}
	\centering
	\begin{tikzpicture}
	\node[namedVertex] (v) at (0,0) {$v$};
	\node[namedVertex] (w) at (4,0) {$w$};
	\node[namedVertex] (t)  at (8,0) {$t$};
	
	\draw[edge] (v) to[bend left=30] node[below]{$e_1$} node[above]{$\tau_{e_1}=2, \nu_{e_1}=2$} (w);
	\draw[edge] (v) to[bend right=30] node[above]{$e_2$} node[below]{$\tau_{e_2}=2, \nu_{e_2}=2$} (w);
	\draw[edge] (w) -- node[below]{$e_3$} node[above](e3){$\tau_{e_3}=2, \nu_{e_3}=\infty$} (t);
	
	\node[above of=e3, anchor=south,node distance=.5cm] {
		\begin{tikzpicture}[scale=1,solid,black,
			declare function={
				c(\x)= 2;			
			}]

			\begin{axis}[xmin=0,xmax=4.5,ymax=3.5, ymin=0, samples=500,width=4.5cm,height=3.5cm,
				axis x line*=bottom, axis y line*=left, axis lines=middle, xtick={1,2,3,4}]
				\addplot[blue, ultra thick,domain=0:5] {c(x)} node[above,pos=.5]{$c_{e_3}$};
			\end{axis}
			
		\end{tikzpicture}
	};

	\node[left of=v,blue,node distance=1.5cm](){$r=2\cdot\CharF[{[0,2]}]$};
\end{tikzpicture}
	\caption{An instance with constant volume constraint on one edge where the constraint set $S$ defined by edge capacities is not convex when using linear edge delays.}\label{fig:CounterExampleConvexity}
\end{figure}

\begin{example}\label{ex:NonConvexitOfEdgeLoadConstraints}
	Consider the network depicted in \Cref{fig:CounterExampleConvexity} with a single commodity with source $v$, sink $t$ and a constant network inflow rate of $2$ during the planning interval $[0,2]$. If we use the linear edge delays defined by the values for $\tau_e$ and $\nu_e$ given in the figure, the following two walk inflows are feasible: Either sending flow at a rate of $2$ into the path $e_1,e_3$ (during the interval $[0,2]$) or sending flow at a rate of $2$ into the path $e_2,e_3$. In both cases, flow arrives at the intermediate node $w$ at a rate of $1$ during the interval $[2,6]$ as one can verify by a straightforward calculation (see \Cref{fig:FlowVolumeGraph}, left). Thus, the capacity constraint on edge $e_3$ is never violated and both flows are feasible. However, if we send flow at a rate of $1$ into each of the two paths during the interval $[0,2]$, this flow will arrive at node $w$ at a rate of $\frac{2}{3}$ during the interval $[2,5]$ from each of the two edges (see \Cref{fig:FlowVolumeGraph}, right). Thus, it will enter edge $e_3$ at a rate of $\frac{4}{3}$ and, therefore, violate its capacity constraint after time $\theta=2+\frac{3}{2}$. This shows that the set of feasible flows is not convex in this case. 
	
	\begin{figure}\centering
		\begin{tikzpicture}[scale=1,solid,black,
	declare function={
		vol(\x)= (\x <= 2)*2*\x + and(\x > 2, \x <= 6)*(6-\x);			
	}]

	\begin{axis}[xmin=0,xmax=6.5,ymax=4.5, ymin=0, samples=500,width=4.5cm,height=3.5cm,
		axis x line*=bottom, axis y line*=left, axis lines=middle, xtick={1,2,3,4,5,6}]
		\addplot[blue, ultra thick,domain=0:7] {vol(x)} node[right,pos=.5]{$x_{e_1}$};
	\end{axis}
	
\end{tikzpicture}\hspace{2cm}\begin{tikzpicture}[scale=1,solid,black,
	declare function={
		vol(\x)= (\x <= 2)*1*\x + and(\x > 2, \x <= 5)*(10/3-2/3*\x);			
	}]

	\begin{axis}[xmin=0,xmax=6.5,ymax=4.5, ymin=0, samples=500,width=4.5cm,height=3.5cm,
		axis x line*=bottom, axis y line*=left, axis lines=middle, xtick={1,2,3,4,5,6}]
		\addplot[blue, ultra thick,domain=0:7] {vol(x)} node[above,pos=.3]{$x_{e_1}$};
	\end{axis}
	
\end{tikzpicture}
		\caption{The flow volume $x_{e_1}(h,t)$ on edge $e_1$ given a constant inflow rate of $2$ (left) or $1$ (right) during the interval $[0,2]$. See \cite{CareyMcCartney} for more details on computing the flow dynamics with linear edge delays.}\label{fig:FlowVolumeGraph}
	\end{figure}
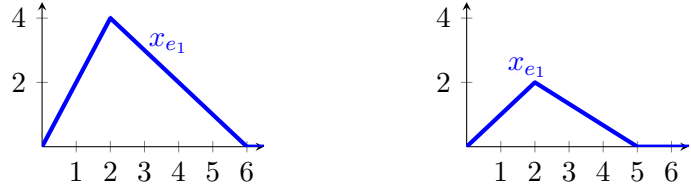
	
	Note, that a similar effect can be observed when using the Vickrey queuing model for the edge dynamics instead. Then, setting $\nu_e=1$ for all edges in the network gives us again an instance where sending all flow at a rate of $2$ into one of the two paths is feasible, while any convex combination of these two inflows violates the capacity constraint on edge~$e_3$.
\end{example}

Now, in order to adjust this example for the model used by \citeauthor*{zhong11} we make the following changes: We add an additional edge~$e_0$ with time-varying capacity constraint at the beginning forcing flow to arrive at node $v$ at a rate of $2$ even when departure time choice is enabled. Next, we replace the infinite service rate on edge $e_3$ by a large but finite service rate of $\frac{4}{\varepsilon}$ and slightly increase the capacity constraint on edge $e_3$ to $2+\varepsilon$. Finally, instead of a fixed inflow rate we now have a fixed flow volume $Q=4$. The resulting network is depicted in \Cref{fig:CounterExample}. 

\begin{figure}
	\centering
	\begin{adjustbox}{max width=\textwidth}
		\begin{tikzpicture}
	\node[namedVertex] (s) at (0,0) {$s$};
	\node[namedVertex] (v) at (4,0) {$v$};
	\node[namedVertex] (w) at (8,0) {$w$};
	\node[namedVertex] (t)  at (12,0) {$t$};
	
	\draw[edge] (s) -- node[below](e0){$e_0$} node[above]{$\tau_{e_0}=1, \nu_{e_0}=4$} (v);
	\draw[edge] (v) to[bend left=30] node[below]{$e_1$} node[above]{$\tau_{e_1}=2, \nu_{e_1}=2$} (w);
	\draw[edge] (v) to[bend right=30] node[above]{$e_2$} node[below]{$\tau_{e_2}=2, \nu_{e_2}=2$} (w);
	\draw[edge] (w) -- node[below]{$e_3$} node[above](e3){$\tau_{e_3}=2, \nu_{e_3}=\frac{4}{\varepsilon}$} (t);
	
	\node[above of=e3, anchor=south,node distance=.5cm] {
		\begin{tikzpicture}[scale=1,solid,black,
			declare function={
				c(\x)= 2.2;			
			}]

			\begin{axis}[xmin=0,xmax=4.5,ymax=3.5, ymin=0, samples=500,width=4.5cm,height=3.5cm,
				axis x line*=bottom, axis y line*=left, axis lines=middle, xtick={1,2,3,4}]
				\addplot[blue, ultra thick,domain=0:5] {c(x)} node[above,pos=.5]{$c_{e_3}\equiv2+\varepsilon$};
			\end{axis}
			
		\end{tikzpicture}
	};

	\node[above of=e0, anchor=south,node distance=1cm] {
		\begin{tikzpicture}[scale=1,solid,black,
			declare function={
				c(\x)= and(\x >= 0, \x <= 1)*4*\x + and(\x > 1, \x <=3)*(6-2*\x);			
			}]

			\begin{axis}[xmin=0,xmax=4.5,ymax=4.5, ymin=0, samples=500,width=4.5cm,height=4.5cm,
				axis x line*=bottom, axis y line*=left, axis lines=middle, xtick={1,2,3,4}]
				\addplot[blue, ultra thick,domain=0:5] {c(x)} node[right,pos=.6]{$c_{e_0}$};
			\end{axis}
			
		\end{tikzpicture}
	};	

	\node[left of=s,blue](){$Q=4$};
\end{tikzpicture}
	\end{adjustbox}
	\caption{A counterexample to \cite[Proposition 3.2]{zhong11}.}\label{fig:CounterExample}
\end{figure}

We first show that the capacity constraint on edge $e_0$ does indeed reduce the case with departure time choice to the case of fixed inflow rates:

\begin{claim}\label{claim:FlowOverEdge0}
	Let $h \in S$ be any feasible flow in the network depicted in~\Cref{fig:CounterExample} and $(f^+,f^-)$ its corresponding edge flow. Then, this flow enters edge $e_0$ at a rate of $4$ during the interval $[0,1]$ and leaves it at a rate of $2$ during the interval $[1,3]$ \ie we have
		\[f^+_{e_0} \equiv 4\cdot\CharF[{[0,1]}] \text{ and } f^-_{e_0} \equiv 2\cdot\CharF[{[1,3]}] \text{ almost everywhere.}\]
\end{claim}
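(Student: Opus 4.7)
The plan is to pin down both $f^+_{e_0}$ and $f^-_{e_0}$ by combining four ingredients. Write $F^\pm(\theta) := \int_0^\theta f^\pm_{e_0}(\zeta)\,\diff\zeta$ and $T(\sigma) := \sigma + D_{e_0}(h,\sigma) = \sigma + 1 + x_{e_0}(h,\sigma)/4$ for the exit time of a particle entering $e_0$ at $\sigma$; by FIFO, $T$ is non-decreasing and has a generalized inverse $u$, and \eqref{eq:FlowConservationOnEdges} reads $F^-(T(\sigma)) = F^+(\sigma)$. The ingredients are: (i) since $s$ is reachable only via $e_0$, $F^+(\infty)=Q_1=4$; (ii) the free-flow time $\tau_{e_0}=1$; (iii) the capacity bound $x_{e_0}(h,\theta)\le c_{e_0}(\theta)$, with $c_{e_0}$ supported in $[0,3]$ and vanishing at both endpoints; and (iv) the exit-time identity above.

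First I would localize the supports. Any inflow at $\sigma\ge 3$ would, because $T(\sigma)\ge \sigma+1$, produce positive edge volume throughout $[\sigma,\sigma+1]\subseteq[3,\infty)$, violating $x_{e_0}\le c_{e_0}\equiv 0$ there. Hence $\supp f^+_{e_0}\subseteq[0,3]$, so $F^+(3)=4$, and $x_{e_0}(3)\le c_{e_0}(3)=0$ gives $F^-(3)=4$. Since $\tau_{e_0}=1$, also $F^-\equiv 0$ on $[0,1]$, so on that interval the capacity bound reduces to $F^+(\theta)=x_{e_0}(\theta)\le 4\theta$.

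The technical core is the following bootstrap inequality: \emph{whenever $u(\theta)\le 1$, $F^-(\theta)\le 2(\theta-1)$}. Indeed, $u(\theta)\le 1$ forces $F^-(u(\theta))=0$, so the exit-time equation becomes $u(\theta)=\theta-1-F^+(u(\theta))/4$; combining with $F^-(\theta)=F^+(u(\theta))\le 4u(\theta)$ yields the bound. I then show $u(3)=1$, hence $F^+(1)=4$: the case $u(3)\le 1$ is immediate from $F^-(3)=F^+(u(3))=4\le 4u(3)$; the alternative $u(3)\in(1,2]$ is ruled out by applying the bootstrap at the intermediate argument $\theta'=u(3)$ -- where the precondition $u(u(3))\le u(3)-1\le 1$ holds automatically -- to get $F^-(u(3))\le 2(u(3)-1)$, and comparing with the direct computation $F^-(u(3))=4u(3)-4$, obtained from $x_{e_0}(u(3))=4(2-u(3))$ (exit-time equation at $\theta=3$) and $F^+(u(3))=F^-(3)=4$. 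This forces $u(3)\le 1$, a contradiction.

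With $F^+(1)=4$ established, $F^+\equiv 4$ on $[1,\infty)$, and the capacity bound then reads $F^-(\theta)\ge 4-c_{e_0}(\theta)=2(\theta-1)$ on $[1,3]$, pinching $F^-(\theta)=2(\theta-1)$ against the upper bound from the core step; thus $f^-_{e_0}\equiv 2$ a.e.\ on $[1,3]$. Substituting $F^-(\theta)=2(\theta-1)$ back into the exit-time identity gives $u(\theta)=(\theta-1)/2$ on $[1,3]$, whence $F^+((\theta-1)/2)=2(\theta-1)$, i.e.\ $F^+(\sigma)=4\sigma$ on $[0,1]$ and $f^+_{e_0}\equiv 4$ a.e.\ on $[0,1]$. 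The main obstacle is the apparent circularity of the bootstrap inequality -- it seems to require the very precondition it is used to prove -- which is resolved by first applying it at $\theta'=u(3)\in(1,2]$, where the precondition is automatic from the trivial estimate $u(\theta')\le\theta'-1\le 1$ provided by $T(\sigma)\ge\sigma+1$.
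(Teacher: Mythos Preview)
Your proof is correct and follows essentially the same approach as the paper's: both pin down $F^+(1)=4$ by combining the exit-time identity $F^-(T(\sigma))=F^+(\sigma)$ with the capacity bound $F^+(\sigma)\le 4\sigma$ on $[0,1]$ via one backward step under $T$, and then squeeze the cumulative in- and outflow to their exact values using the capacity on $[1,3]$. The only difference is packaging --- you work with the inverse exit-time $u$ throughout and isolate the key computation as a reusable ``bootstrap inequality'' $F^-(\theta)\le 2(\theta-1)$, whereas the paper argues directly with the last entry time $\theta$ and its $T$-preimage $\theta'$, deriving $x_{e_0}(h,\theta')=0$ in one pass.
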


\begin{proofClaim}
	We first show that in a feasible flow all flow has entered edge $e_0$ by time $1$. So, let $\theta \coloneqq \min\set{\vartheta | \int_0^\vartheta f^+_{e_0}(\zeta)\diff\zeta = 4}$ be the time the last particle enters edge~$e_0$. We observe that due to the capacity constraint on edge $e_0$ all flow must have left edge $e_0$ by time $3$. Thus, we must have 
	\[3 \geq \theta + D_{e_0}(h,\theta) = \theta + 1 + \frac{1}{4}\flowVolume[e_0](h,\theta)\]
	and, therefore, 
	\begin{align}\label{eq:CounterExampleA}
		\flowVolume[e_0](h,\theta) \leq 8-4\theta
	\end{align}
	as well as $\theta \leq 3-1=2$. Furthermore, since the total flow volume of $4$ must traverse edge $e_0$, at time $\theta$ we must have
	\begin{align}\label{eq:CounterExampleB}
		\int_0^\theta f^-_{e_0}(\zeta)\diff\zeta + \flowVolume[e_0](h,\theta) = 4.
	\end{align}
	Now, let $\theta'$ be the time at which particles have to enter edge $e_0$ in order to leave it at time~$\theta$, \ie $\theta'$ satisfies 
	\begin{align}\label{eq:CounterExampleE}
		\theta = \theta' + D_{e_0}(h,\theta') = \theta' + 1 + \frac{1}{4}\flowVolume[e_0](h,\theta').
	\end{align}		
	Since the edge travel time along edge $e_0$ is always at least $1$ we must have $\theta' \leq \theta-1 \leq 2-1 = 1$ and, hence, no particle has left edge $e_0$ by time $\theta'$. Thus, we have 
	\begin{align*}
		\flowVolume[e_0](h,\theta') = \int_0^{\theta'} f^+_{e_0}(\zeta)\diff\zeta \overset{\text{\eqref{eq:FlowConservationOnEdges}}}{=} \int_0^{\theta'+D_{e_0}(h,\theta')}f^-_{e_0}(\zeta)\diff\zeta = \int_0^\theta f^-_{e_0}(\zeta)\diff\zeta.
	\end{align*}
	Together with \eqref{eq:CounterExampleB} this gives us
	\begin{align}\label{eq:CounterExampleD}
		\flowVolume[e_0](h,\theta') = 4 - \flowVolume[e_0](h,\theta).
	\end{align}
	From this, we can now deduce
	\begin{align*}
		4\theta' = c_{e_0}(\theta') \geq \flowVolume[e_0](h,\theta') \overset{\text{\eqref{eq:CounterExampleD}}}{=} 4 - \flowVolume[e_0](h,\theta) 
		\overset{\text{\eqref{eq:CounterExampleA}}}{\geq} 4 - 8 + 4\theta = 4\theta-4
	\end{align*}
	and, thus,
	\begin{align*}
		4 \geq 4\theta - 4\theta' \overset{\text{\eqref{eq:CounterExampleE}}}{=} 4\theta' + 4 + \flowVolume[e_0](h,\theta') - 4\theta' = 4 + \flowVolume[e_0](h,\theta'),
	\end{align*}
	which gives us $\flowVolume[e_0](h,\theta') = 0$. Plugging this back into \eqref{eq:CounterExampleD} gives us $\flowVolume[e_0](h,\theta) = 4$ and, finally, feasibility of $h$ together with the the capacity constraint on edge $e_0$ then implies $\theta=1$ and, therefore,
		\begin{align}\label{eq:AllFlowEnteredByTime1}
			\int_0^\vartheta f^+_{e_0}(\zeta)\diff\zeta = 4 \text{ for all } \vartheta \geq 1.
		\end{align}
	
	Now, take any time $\theta \in [0,1]$ and let $\bar\theta \coloneqq \theta+D_{e_0}(h,\theta)$ be the time particles entering edge $e_0$ at time $\theta$ arrive at node $v$. Since $\bar\theta \in [1,3]$, the feasibility of $h$ implies
		\begin{align*}
			4-2\theta-\frac{\flowVolume[e_0](h,\theta)}{2} &= 6-2\left(\theta+1+\frac{\flowVolume[e_0](h,\theta)}{4}\right) = 6-2\bar\theta = c_{e_0}(\bar\theta) \\
				&\geq \flowVolume[e_0](h,\bar\theta) = \int_0^{\bar\theta}f^+_{e_0}(\zeta)\diff\zeta - \int_0^{\bar\theta}f^-_{e_0}(\zeta)\diff\zeta \overset{\text{\eqref{eq:AllFlowEnteredByTime1}}}{=} 4 - \int_0^{\bar\theta}f^-_{e_0}(\zeta)\diff\zeta \\
				&\overset{\text{\eqref{eq:FlowConservationOnEdges}}}{=} 4-\int_0^\theta f^+_{e_0}(\zeta)\diff\zeta = 4 - \flowVolume[e_0](h,\theta).
		\end{align*}
	which implies $\flowVolume[e_0](h,\theta) \geq 4\theta$.
	At the same time, feasibility of $h$ implies $\flowVolume[e_0](h,\theta) \leq 4\theta$ and, thus, we have $\flowVolume[e_0](h,\theta)=4\theta$ for all $\theta \in [0,1]$. Since no flow leaves edge $e_0$ before time $\theta=1$, this implies $\int_0^\theta f^+_{e_0}(\zeta)\diff\zeta = 4\theta$ and, therefore, $f^+_{e_0}(\zeta) = 4$ for almost all $\zeta \in [0,1]$. A direct computation then shows $f^-_{e_0}(\zeta) = 2$ for almost all $\zeta \in [1,3]$ as well.
\end{proofClaim}

Next, we define three path inflows: Flow $h^1$ sends flow into the upper path $e_0,e_1,e_3$ at a rate of $4$ during the interval $[0,1]$. Flow $h^2$ sends flow into the lower path $e_0,e_2,e_3$ at a rate of $4$ during the same interval. And, finally, $h^3$ splits the flow equally between the two paths, \ie sends flow at a rate of $2$ into each of the two paths. We now claim that $h^1$ and $h^2$ are feasible while $h^3$ is not.

\begin{claim}
	In the network from \Cref{fig:CounterExample} the path inflows $h^1$ and $h^2$ are feasible. The path inflow $h^3$ is infeasible for $\varepsilon < \frac{2}{3}$.
\end{claim}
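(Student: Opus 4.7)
The plan is to compute, for each of $h^1, h^2, h^3$, the induced edge flows under the linear edge delay model and then check the only two non-trivial edge-volume constraints, namely those on $e_0$ and $e_3$ (the edges $e_1, e_2$ carry no capacity constraint in Figure~\ref{fig:CounterExample}); membership in $\Lambda(Q)$ with $Q=4$ is automatic once the walk-inflow bounds $B_p$ are chosen large enough. All three inflows push the entire volume $4$ into $e_0$ at rate $4$ during $[0,1]$, so they induce the same edge flow on $e_0$: a direct linear-edge-delay computation with $\tau_{e_0}=1$, $\nu_{e_0}=4$ yields outflow at rate $2$ during $[1,3]$ together with $x_{e_0}(\theta)=4\theta$ on $[0,1]$ and $x_{e_0}(\theta)=6-2\theta$ on $[1,3]$, which matches $c_{e_0}$ with equality; hence the $e_0$-constraint is (tightly) satisfied for all three inflows.

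For $h^1$ the outflow of $e_0$ enters $e_1$ at rate $2$ during $[1,3]$, which is exactly the situation of the first part of Example~\ref{ex:NonConvexitOfEdgeLoadConstraints} shifted by one time unit, so $e_1$ emits flow at rate $1$ during $[3,7]$, and this is $f^+_{e_3}$. To bound $x_{e_3}$ I would exploit FIFO of the linear-delay edge: writing $\zeta^*(\theta)$ for the entry time of the particle leaving $e_3$ at time $\theta$ (so that $\zeta^*+D_{e_3}(h^1,\zeta^*)=\theta$), flow conservation on the edge together with $f^+_{e_3}\leq 1$ gives
\[
x_{e_3}(h^1,\theta) = \int_{\zeta^*(\theta)}^{\theta} f^+_{e_3}(\zeta)\,\diff\zeta \;\leq\; \theta-\zeta^*(\theta) \;=\; D_{e_3}(h^1,\zeta^*(\theta)) \;=\; 2 + \tfrac{\varepsilon}{4}\,x_{e_3}(h^1,\zeta^*(\theta)).
\]
Taking the supremum over $\theta$ and rearranging yields $x_{e_3}(h^1,\theta)\leq 8/(4-\varepsilon)$, and the elementary inequality $8/(4-\varepsilon)\leq 2+\varepsilon$ (equivalent to $\varepsilon(2-\varepsilon)\geq 0$) then gives $x_{e_3}(h^1,\theta)\leq c_{e_3}$ for every $\varepsilon\in[0,2]$. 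Hence $h^1\in S$, and by symmetry $h^2\in S$.

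For $h^3$ the flow is split evenly, so each of $e_1, e_2$ receives inflow at rate $1$ during $[1,3]$ and, by the second half of Example~\ref{ex:NonConvexitOfEdgeLoadConstraints} (again time-shifted by one), emits flow at rate $\tfrac{2}{3}$ during $[3,6]$; therefore $f^+_{e_3}\equiv\tfrac{4}{3}$ on $[3,6]$. Since $\tau_{e_3}=2$ forces the first exit from $e_3$ to happen at time $5$, throughout $[3,5]$ the volume obeys the exact identity $x_{e_3}(h^3,\theta)=\tfrac{4}{3}(\theta-3)$, which reaches $2+\varepsilon$ at $\theta^*=3+\tfrac{3}{4}(2+\varepsilon)$. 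The inequality $\theta^*<5$ is equivalent to $\varepsilon<\tfrac{2}{3}$, so for such $\varepsilon$ the capacity $c_{e_3}\equiv 2+\varepsilon$ is strictly exceeded on $(\theta^*,5]$, which establishes $h^3\notin S$.

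The main obstacle is the piecewise-constant flow propagation through the three linear-delay edges $e_0, e_1, e_2$; once this standard network-loading step is in hand (either by direct computation or via \cite{CareyMcCartney}), feasibility of $h^1, h^2$ reduces to the FIFO fixed-point bound together with a one-line algebraic inequality, and infeasibility of $h^3$ follows from the explicit linear ramp on $[3,5]$.
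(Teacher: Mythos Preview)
Your argument is correct and follows essentially the same route as the paper: both compute the network loading through $e_0,e_1,e_2$ (yielding rate-$1$ arrival at $w$ on $[3,7]$ for $h^1,h^2$ and rate-$\tfrac{4}{3}$ on $[3,6]$ for $h^3$) and both show infeasibility of $h^3$ by the exact linear ramp on $[3,5]$ hitting $2+\varepsilon$ at $\theta^*=3+\tfrac{3}{4}(2+\varepsilon)<5$ precisely when $\varepsilon<\tfrac{2}{3}$.

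The only noteworthy difference is in the feasibility bound for $h^1,h^2$ on $e_3$. The paper uses the global a priori bound $x_{e_3}\leq Q=4$, which immediately gives $D_{e_3}\leq 2+\varepsilon$ and hence, with rate-$1$ inflow, $x_{e_3}\leq 2+\varepsilon$ via the same FIFO identity you use. You instead close the FIFO inequality as a fixed point, obtaining $\sup x_{e_3}\leq 8/(4-\varepsilon)\leq 2+\varepsilon$. Your version is slightly more self-contained (it does not invoke the total-volume bound), while the paper's is a touch shorter; both are equally rigorous once one observes that for $\theta\in[3,5]$ the entry time $\zeta^*(\theta)=\theta-2$ is still well defined (since $x_{e_3}(\zeta^*)=0$ there), so the identity $x_{e_3}(\theta)=\int_{\zeta^*(\theta)}^{\theta}f^+_{e_3}$ holds throughout.
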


\begin{proofClaim}
	Both inflows $h^1$ and $h^2$ result in flow arriving at a rate of $1$ at node $w$ (without violating the capacity constraint on edge $e_0$ on the way). Since the total flow volume in the network is bounded by $4$, the travel time on edge $e_3$ is never larger than $2+\frac{4}{\frac{4}{\varepsilon}} = 2 + \varepsilon$. Thus, flow entering this edge at a rate of $1$ will never violate its capacity constraint. Therefore, both $h^1$ and $h^2$ are feasible.
	
	The equal split between the two paths in $h^3$ results in flow entering edge $e_1$ and $e_2$ at a rate of $1$ each during $[1,3]$. This flow exits these edges at a rate of $\frac{2}{3}$ during the interval $[3,6]$. Thus, the flow enters edge $e_3$ at a combined rate of $\frac{4}{3}$ during this interval. At time $\theta=3+\frac{3}{4}\cdot(2+\varepsilon)$ a flow volume of $2+\varepsilon$ has entered edge $e_3$. If $\varepsilon < \frac{2}{3}$, this happens before time $\theta=5$ and, in particular, before any flow has left edge $e_3$. Thus, the total flow volume on edge $e_3$ is $2+\varepsilon$ at this time, leading to a violation of the capacity constraint immediately after.
\end{proofClaim}

Since $h^3$ is a convex combination of $h^1$ and $h^2$, this claim already shows the non-convexity of the set $S$ of feasible flows. To show that the variational inequality~\eqref{eq:ZhongVI} has no solution, we need the following additional property of all feasible flows in the given instance.

\begin{claim}
	Let $h \in S$ be any feasible path inflow for the instance in \Cref{fig:CounterExample}. Then, one of the two paths has a total inflow volume of at most $4\varepsilon$.
\end{claim}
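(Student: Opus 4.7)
The plan is to extract the tightest constraint from the capacity bound on $e_3$ at the single time $\theta=5$: this is the earliest moment at which any flow can have left $e_3$ (since $\tau_{e_3}=2$ and flow can reach $e_3$ no earlier than $\theta=3$), so at $\theta=5$ the entire volume on $e_3$ equals what has entered it during $[3,5]$. Writing $U_i$ for the part of this inflow that came via $e_i$, feasibility will immediately yield $U_1+U_2\le 2+\varepsilon$, and the goal is to turn this into a useful upper bound on $\min(V_1,V_2)$.

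Letting $\phi_i$ be the inflow rate into $e_i$ at node $v$, I would use \Cref{claim:FlowOverEdge0} to get $\phi_1+\phi_2\equiv 2$ a.e.\ on $[1,3]$ and $V_i=\int_1^3\phi_i\,dt$. A direct FIFO calculation with the linear delay $D_{e_i}(h,t)=2+x_{e_i}(h,t)/2$, together with the fact that $x_{e_i}(h,t)=\int_1^t\phi_i\,d\tau$ on $[1,3]$ (no outflow from $e_i$ occurs before time~$3$), shows that the unique entry time $\theta_i^*\in[1,3]$ for which the particle exits $e_i$ at exactly time~$5$ satisfies $\theta_i^*+2+\tfrac12\int_1^{\theta_i^*}\phi_i\,d\tau=5$. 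I would read off $U_i=\int_1^{\theta_i^*}\phi_i\,d\tau=6-2\theta_i^*$, and note that the residual $R_i:=V_i-U_i=\int_{\theta_i^*}^3\phi_i\,d\tau$ obeys $R_i\le 2(3-\theta_i^*)=U_i$ since $\phi_i\le 2$.

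The heart of the argument will exploit the coupling $\phi_1+\phi_2=2$. Assuming WLOG $U_1\le U_2$ (equivalently $\theta_1^*\ge\theta_2^*$), nonnegativity of $\phi_1$ gives
\[ R_1+R_2 \;=\; \int_{\theta_1^*}^3\phi_1\,dt+\int_{\theta_2^*}^3\phi_2\,dt \;\le\; \int_{\theta_2^*}^3(\phi_1+\phi_2)\,dt \;=\; 2(3-\theta_2^*) \;=\; U_2. \]
Since $V_1+V_2=4$ rewrites the left-hand side as $4-(U_1+U_2)$, this rearranges to $U_1+2U_2\ge 4$, and combining with $U_1+U_2\le 2+\varepsilon$ forces $U_1\le 2\varepsilon$. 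Plugging back, $V_1\le 2U_1\le 4\varepsilon$, which is exactly the claim.

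The step I expect to be the real obstacle is the inequality $R_1+R_2\le U_2$: this is where the non-convex phenomenon from \Cref{ex:NonConvexitOfEdgeLoadConstraints} has to be captured as a linear constraint — using both branches $e_1,e_2$ at their maximum rates simultaneously would overwhelm the small capacity on $e_3$, and it is precisely this coupling that forces one of the two paths to carry at most $4\varepsilon$ units of flow. Everything else is routine bookkeeping with FIFO and the explicit form of the linear edge-delay functions.
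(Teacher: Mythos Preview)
Your proof is correct and follows essentially the same approach as the paper. Both arguments introduce the critical entry times $\theta_i^*$ (the paper's $\theta_i$) at which a particle entering $e_i$ reaches $w$ exactly at time~$5$, use the capacity constraint $x_{e_3}(h,5)\le 2+\varepsilon$ to obtain $U_1+U_2\le 2+\varepsilon$, derive the coupling inequality $U_1+2U_2\ge 4$ (equivalently the paper's $2(\theta_1-1)\le x_{e_1}(h,\theta_1)+x_{e_2}(h,\theta_2)$), conclude $U_{\min}\le 2\varepsilon$, and then bound the residual by another $2\varepsilon$ via $\phi_i\le 2$. Your packaging with the variables $U_i,R_i,V_i$ and the one-line estimate $R_1+R_2\le\int_{\theta_2^*}^3(\phi_1+\phi_2)\,dt=U_2$ is a clean way to phrase the same computation; the paper additionally derives $\theta_1\ge 2$, but that bound is not actually needed for the conclusion.
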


\begin{proofClaim}
	Define $\theta_1$ as the last time a particle can enter edge $e_1$ under the flow induced by $h$ and still arrive by time $5$ at the node $w$ and define $\theta_2$ analogous for edge $e_2$. That is, we choose $\theta_1$ and $\theta_2$ such that we have
		\[\theta_1 + D_{e_1}(h,\theta_1) = 5 \quad\text{ and }\quad \theta_2 + D_{e_2}(h,\theta_2) = 5\]
	or, equivalently
	\begin{align}\label{eq:0}
		\theta_1 + \frac{x_{e_1}(h,\theta_1)}{2} = 3 \quad\text{ and }\quad \theta_2 + \frac{x_{e_2}(h,\theta_2)}{2} = 3.
	\end{align}
	Without loss of generality we assume that $\theta_1 \leq \theta_2$. Since $\theta_1,\theta_2 \leq 3 = \tau_{e_0}+\tau_{e_1} = \tau_{e_0}+\tau_{e_2}$, no flow has left edges $e_1$ and $e_2$ by time $\theta_2$ and, thus
	\[2(\theta_1-1) \overset{\text{\Cref{claim:FlowOverEdge0}}}{=} \int_0^{\theta_1}f^-_{e_0}(\zeta)\diff \zeta \leq x_{e_1}(h,\theta_1) + x_{e_2}(h,\theta_2).\]
	At the same time we also have
	\begin{align}\label{eq:4}
		x_{e_1}(h,\theta_1) + x_{e_2}(h,\theta_2) \leq x_{e_3}(h,5) \leq c_{e_2}(5) = 2+\varepsilon
	\end{align}
	since all this flow has entered edge $e_3$ by time $5$ but no flow has left it. Together, this implies $2(\theta_1-1) \leq 2 + \varepsilon$ and, therefore,
	\begin{align}\label{eq:1}
		\theta_1 \leq 2 + \frac{\varepsilon}{2}.
	\end{align}
	Since flow arrives at node $v$ at a rate of $2$ during $[1,3]$ (\cf \Cref{claim:FlowOverEdge0}), we also have
	\[x_{e_1}(h,\theta_1) \leq 2(\theta_1-1)\]
	while \eqref{eq:0} implies
	\begin{align}\label{eq:3}
		x_{e_1}(h,\theta_1) = 6 - 2\theta_1.
	\end{align}
	Together, this gives us $\theta_1 \geq 2$ which, combined with \eqref{eq:1}, results in $\theta_1 \in [2,2+\frac{\varepsilon}{2}]$. Using~\eqref{eq:3} we then get $x_{e_1}(h,\theta_1) \in [2-\varepsilon,2]$ and, with \eqref{eq:4},
	\[x_{e_2}(h,\theta_2) \leq 2 + \varepsilon - x_{e_1}(h,\theta_1) \leq 2 + \varepsilon - (2 - \varepsilon) = 2\varepsilon.\]
	Plugging this back into \eqref{eq:0} and solving for $\theta_2$ gives
	\[\theta_2 = 3 - \frac{x_{e_2}(h,\theta_2)}{2} \geq 3- \frac{2\varepsilon}{2} = 3-\varepsilon.\]
	This, using again the fact that flow arrives at node $v$ at a rate of $2$ after time $\theta=1$, gives us
		\[x_{e_2}(h,3) \leq x_{e_2}(h,\theta_2) + 2\cdot(3-\theta_2) \leq 2\varepsilon + 2\varepsilon = 4\varepsilon.\]
	Since no flow leaves edge $e_2$ before time $3$ and no flow enters after time $3$, this proves our claim.
\end{proofClaim}

Now, let $h \in S$ be any feasible flow and $e_2$ the edge with a total inflow of at most $4\varepsilon$. We want to show that $\scalar{\Psi(h)}{h^2-h}<0$. Since the free flow travel times on both paths are equal, we only have to consider the additional flow dependent delays here (as induced by~$h$). Furthermore, as both $h$ and $h^2$ are feasible flows, they are the same on edge $e_0$ (by \Cref{claim:FlowOverEdge0}). Thus, the delays experienced on edge $e_0$ cancel out.
For flow $h^2$ edge $e_2$ contributes an additional delay of at most $\frac{4\varepsilon}{2}$ per particle and edge $e_3$ an additional delay of at most $\varepsilon$, leading to a total additional delay of at most $4\cdot(2\varepsilon+\varepsilon)=12\varepsilon$.
For flow $h$, on the other hand, a flow volume of at least $4-4\varepsilon$ traverses edge $e_1$ and is delayed there by all other particles having entered this edge before it (note, that by \Cref{claim:FlowOverEdge0} all particles enter edge $e_1$ during the interval $[1,3]$ and this edge has a travel time of at least $2$). Thus, the total additional delay here is at least
	\[\int_0^{4-4\varepsilon}\frac{x}{2}\diff x = 4-8\varepsilon+4\varepsilon^2.\]
This implies
	\[\scalar{\Psi(h)}{h^2-h} = \scalar{\Psi(h)}{h^2} - \scalar{\Psi(h)}{h} \leq 12\varepsilon - (4-8\varepsilon) = 20\varepsilon - 4\]
which is strictly smaller than $0$ for $\varepsilon<\frac{1}{5}$. Thus, for such $\varepsilon$ no feasible flow $h$ can be a solution to the variational inequality~\eqref{eq:ZhongVI}. 

Note, that whether or not the instances in \Cref{fig:CounterExampleConvexity,fig:CounterExample} \emph{should} have an equilibrium ought to depend on the behavioural model, i.e., which alternative strategies are available to individual particles under a given flow. However, a variational inequality of the form of~\eqref{eq:ZhongVI} cannot capture such subtleties as it essentially presents \emph{every} feasible flow as a potential alternative regardless of how different to the current flow it is. In particular, in the first example, the only existing alternative flow can be attained only by a \emph{collective} deviation of all particles and, therefore, should not be relevant for an individual particle's choice. If, on the other hand, we only consider small ``$\varepsilon$-deviations'', the two feasible flows may very well be equilibria as no such small deviation leads to another feasible flow and, thus, one can argue that individual particles have no potential alternative strategies here.

\begin{framed}
	This discussion leads to the following key questions:
	\begin{itemize}
		\item What is a reasonable and useful definition of side-constrained equilibria for dynamic flows?
		\item Under which conditions can those equilibria be characterized by a variational inequality?
		\item Under which conditions is existence of such equilibria guaranteed?
	\end{itemize}
\end{framed}
We will try to provide answers to all three questions in the following three sections.


\section{A General Framework for Side-Constrained Dynamic Equilibria}\label{sec:framework}

The general idea of dynamic equilibria is to find a walk inflow $h$ such that (almost) no particle has a better alternative. In the unconstrained case, ``alternative'' just means any other walk (and departure time) available to that particle's commodity. In particular, the set of all alternatives of any given particle is independent of the specific flow induced by $h$. This need not be the case anymore for flows with side-constraints as, for example, a certain walk may become unavailable depending on the flow volume already on any particular edge of this walk under $h$.

We will now introduce an abstract framework that will allow us to define different types of such side-constrained equilibria. This framework consists of two objects: First, we have a constraint set $S \subseteq \Lambda(Q)$ defining the set of feasible walk inflows. Second, for every commodity $i \in I$ and walk $p \in \Pc_i$, there is a correspondence
	\[A_p: S \to 2^{\Pc_i \times L_+^2(\planningInterval) \times \IR}, h \mapsto A_p(h),\]
mapping every walk inflow $h$ to the set of \emph{\addmEpsDev s} of commodity $i$ from walk $p$. Hereby, $2^X$ denotes the power set of $X$. An element $(q,\shiftN,\Delta) \in A_{p}(h)$ then denotes the following \addmEpsDev: Given the walk inflow $h$, commodity $i$ is allowed to shift flow from walk~$p$ to walk~$q$ by reducing the walk-inflow into~$p$ by~$\shiftN \in L_+^2(\planningInterval)$ and increasing the one into~$q$ using the same function~$\shiftN$ but with an additional time shift of~$\Delta$. 

For any such $(q,\shiftN,\Delta) \in A_{p}(h)$ we will denote the walk inflow obtained after this deviation by
\begin{equation}\label{eq:H-r}
	H_{p\to q}(h,\shiftN,\Delta) \coloneqq (h'_w)_{w \in \Pc} \text{ with } h'_w(t) \coloneqq \begin{cases} 
		h_p(t) - \shiftN(t), &\text{for } w=p\\
		h_q(t) + \shiftN(t-\Delta), &\text{for } w=q\\
		h_{w}(t), &\text{else}
	\end{cases}.
\end{equation}
\Cref{fig:ExampleAddmEpsDev} provides a graphical depiction for how $H_{p\to q}(h,\shiftN,\Delta)$ is obtained from $h$.
In order to ensure that we have $H_{p \to q}(h,\shiftN,\Delta) \in \Lambda(Q)$, we will always require that $A_p$ are defined such that any \addmEpsDev{} $(q,\shiftN,\Delta) \in A_p(h)$ satisfies the following three properties:

\begin{figure}[h]
	\centering
	\begin{adjustbox}{max width=.7\textwidth}
		\begin{tikzpicture}[scale=1,solid,black,
	declare function={
		hp(\x)= 2-2*sin(20*\x);	
		hq(\x)= 2*sin(20*\x);
		dev(\x) =  and(\x>1,\x<3)*min(hp(\x),.4*(sin(180*(\x-1.5))+1));
		zero(\x) = 0;		
	}]
	\node at(0,1.5){	
		\begin{tikzpicture}[scale=1,solid,black]
		\begin{axis}[xmin=0,xmax=4.5,ymax=3, ymin=0, samples=500,width=6.5cm,height=4cm,
			axis x line*=bottom, axis y line*=left, axis lines=middle, xtick={1,2,3},xticklabels={$[$,\small$\supp(\shiftN)$,$]$},ytick=\empty]
			\addplot[blue, ultra thick,domain=0:5] {hp(x)} node[above,pos=.2]{$h_p$};
			
			\addplot[red,dashed,thick,name path=F,domain=0:5] {dev(x)}  node[above=.1cm,pos=.25]{$\shiftN$};
			\addplot[name path=G] {zero(\x)};
			
			\addplot[pattern=north east lines, pattern color=red, thick]fill between[of=F and G];
		\end{axis}
		\end{tikzpicture}
	};
		
	\node at(0,-1.8){	
		\begin{tikzpicture}[scale=1,solid,black]
		\begin{axis}[xmin=0,xmax=4.5,ymax=3, ymin=0, samples=500,width=6.5cm,height=4cm,
			axis x line*=bottom, axis y line*=left, axis lines=middle, xtick={2,3,4},xticklabels={$[$,\small$\supp(\shiftN)+\Delta$,$]$},ytick=\empty]
			\addplot[blue, ultra thick,domain=0:5] {hq(x)} node[above,pos=.2]{$h_q$};
		\end{axis}
		\end{tikzpicture}
	};

	\node at(10,1.5) {
		\begin{tikzpicture}[scale=1,solid,black]
		\begin{axis}[xmin=0,xmax=4.5,ymax=3, ymin=0, samples=500,width=6.5cm,height=4cm,
			axis x line*=bottom, axis y line*=left, axis lines=middle, xtick={1,2,3},xticklabels={$[$,\small$\supp(\shiftN)$,$]$},ytick=\empty]
			\addplot[blue, ultra thick,domain=0:5] {hp(x)-dev(x)} node[above,pos=.2]{$h'_p$};
		\end{axis}
		\end{tikzpicture}
	};

	\node at(10,-1.8){
		\begin{tikzpicture}[scale=1,solid,black]
		\begin{axis}[xmin=0,xmax=4.5,ymax=3, ymin=0, samples=500,width=6.5cm,height=4cm,
			axis x line*=bottom, axis y line*=left, axis lines=middle, xtick={2,3,4},xticklabels={$[$,\small$\supp(\shiftN)+\Delta$,$]$},ytick=\empty]
			\addplot[blue, ultra thick,domain=0:5,name path=Gq] {hq(x)+dev(x-1)} node[above,pos=.2]{$h'_q$};
			
			\addplot[domain=2:4,red,dashed,thick,name path=Fq] {hq(x)};
			
			\addplot[domain=2:4,pattern=north east lines, pattern color=red, thick]fill between[of=Fq and Gq];
		\end{axis}
		\end{tikzpicture}
	};
	
	\draw[->,ultra thick] (3.5,0) -- node[above]{\epsDev} node[below]{$(q,\shiftN,\Delta) \in A_p(h)$} +(3,0);
\end{tikzpicture}
	\end{adjustbox}
	\caption{An example for how an \addmEpsDev{} $(q,\shiftN,\Delta) \in A_{p}(h)$ changes the path inflow rates on the involved paths $p$ and $q$ from the original flow $h$ (left) to the new flow $h' \coloneqq H_{p\to q}(h,\shiftN,\Delta)$ (right).}\label{fig:ExampleAddmEpsDev}
\end{figure}

\begin{itemize}
	\item $\shiftN \leq h_p$, \ie we do not remove too much flow. This ensures that the walk-inflow rates stay non-negative.
	\item $h_q(t) + \shiftN(t-\Delta) \leq B_q$ for almost all $t \in \planningInterval$, \ie we do not move too much flow onto walk~$q$.
	\item $\supp(\shiftN(\emptyarg-\Delta)) \subseteq \planningInterval$, \ie no flow is moved outside the planning horizon.\footnote{As $\shiftN \in L^2(\planningInterval)$ is formally an equivalence class of functions, its support is only defined up to a null-set. Hence, an inclusion $\supp(\shiftN) \subseteq A$ should be interpreted as $\supp(\shiftN)\setminus A$ having measure zero for any choice of representative of~$\shiftN$.}
\end{itemize}
This, in particular, ensures that $\Psi$ is well defined on any such flow $H_{p \to q}(h,\shiftN,\Delta)$.

The correspondence
\[
\begin{aligned}
	M_i: &S \to L^2_+ ([t_0,t_f])^{\Pc}\\ 
	& h \mapsto \set{h' \in L^2 ([t_0,t_f])^{\Pc} | \exists p \in \Pc_i, (q,\shiftN,\Delta) \in A_p(h): h' = H_{p \to q}(h,\shiftN,\Delta)}\end{aligned}\]
then returns for any given walk inflow $h$ the set of all possible walk inflows obtained by any of commodity $i$'s \addmEpsDev. Note that, in general, $M_i$ and $S$ can be completely independent of each other. In particular, a flow obtained by an \addmEpsDev{} might be infeasible (\ie $M_i(h) \not\subseteq S$) and not all feasible flows might be reachable by an \addmEpsDev{} even if they only differ by a single commodity's deviation (\ie $\Set{H_{p\to q}(h,\shiftN,\Delta) \in S} \not\subseteq M_i(h)$).

Now, any constraint set $S$ together with a family of deviation correspondences $A_p$ gives rise to the following informal equilibrium concept: A walk inflow $h$ is an equilibrium with respect to $S$ and $A_p$, if it is feasible and no particle can improve by an \addmEpsDev. 
To make this mathematically precise, 
we introduce the concept of \emph{\addmDev s} to some fixed walk $p$ of commodity $i$ with respect to some $h \in S$ at any fixed time $t \geq 0$ by defining the set
\begin{align}\label{eq:feasible-deviation} 
	\!U_{p}(h, t) 
	\coloneqq \Set{\!
		(q,\Delta)\in \Pc_i \times \IR |\!\! \begin{array}{l}
			\forall \delta > 0, \varepsilon > 0: \exists \shiftN \in L^2_+(\planningInterval): \int_{\tStart}^{\tEnd}\shiftN(\zeta)\diff\zeta > 0, \\ \shiftN \leq \varepsilon, \supp(\shiftN) \subseteq [t-\delta,t+\delta], \text{ and } (q,\shiftN,\Delta) \in A_p(h)
		\end{array}\!\!\!
	}.
\end{align}
In words: A walk $q$ and a shift $\Delta$ form an admissible alternative to $p$ at time $t$, if we can shift arbitrarily small amounts of flow in space from $p$ to $q$ and in time from small neighbourhoods of $t$  to small neighbourhoods of $t+\Delta$. 
This concept can be interpreted as a dynamic extension of the methodology of Bernstein and Smith~\cite{BernsteinS94}.

Note that, if there is no inflow into a walk $p$ in some neighbourhood of $t$, then $U_{p}(h,t)$ will always be empty (regardless of the state of the alternative walks). If we assume that the trivial deviation  (\ie shifting flow from~$p$ to~$p$ at any rate but without time shift) is always admissible then the converse also holds, \ie we have: $U_{p}(h,t)$ is non-empty if and only if there is inflow into $p$ near $t$. 

From now on, we will often assume that the \effWalkDelay{} for any given walk inflow $h$ is continuous.

\begin{enumerate}[label=(A\arabic*),resume=Assumptions]
	\item For any $h \in S$ and $p \in \Pc$, the function $\Psi_{p}(h,.): [t_0,t_f] \to \IR$ is continuous.\label[asmpt]{ass:EffectivePathDelayContinuous}
\end{enumerate}

We can then formally define the concept of a side-constrained dynamic equilibrium in our model as follows:
\begin{framed}
\begin{definition}\label{def:DCE}
	Given a graph $G$, a set of commodities $I$, a set of feasible walks $\Pc$, an \effWalkDelay{} operator $\Psi$, a constraint set $S$ and for every walk $p \in \Pc$ a correspondence $A_p$. Then, a feasible flow $h^*\in S$ is a \emph{\SCDE[full] (\SCDE)} \wrt $S$ and $A_p$, if for all $p \in \Pc$ and all $t \in [t_0,t_f]$ the following condition holds:
	\begin{equation}\label{eq:CDE} 
		\Psi_p(h^*,t) \leq  \Psi_q(h^*,t+\Delta) \text{ for all } (q,\Delta) \in U_{p}(h^*, t).
	\end{equation}
\end{definition}
\end{framed}

Note that the above model encompasses both (side-constrained) dynamic equilibria with and without departure time choice. In particular, to disable departure time choice we just choose some $S \subseteq \Lambda(r)$ and define $A_p$ such that it only contains deviations of the form~$(q,\shiftN,0)$.

\subsection{Basic Properties of \texorpdfstring{\SCDE}{SCDE}}

We observe that an alternative (negative) definition of side-constrained dynamic equilibria is as follows: A flow is \emph{not} an equilibrium if there exist two walk-time pairs $(p,t)$ and $(q,t')$ such that the cost of walk $q$ at time $t'$ is strictly lower than that of walk $p$ at time $t$ and we can shift flow in arbitrarily small neighbourhoods of $t$ from $p$ to neighbourhoods of $t'$ on $q$. This is formalized in the following \namecref{lemma:NegativeCharacterization}:
\begin{lemma}\label{lemma:NegativeCharacterization}
	A flow $h \in S$ is \emph{not} an \SCDE{} if and only if there exists a commodity $i \in I$, walks $p,q \in \Pc_i$, times $t,t' \in \planningInterval$, a sequence of functions $\shiftN_n \in L^2_+(\planningInterval)$ such that
	\begin{enumerate}[label=\alph*)]
		\item $\lim_n \esssup \shiftN_n = 0$,
		\item $\lim_n \sup\set{\theta \in \planningInterval | \supp(\shiftN_n) \cap [\tStart,\theta] \text{ has measure zero}} \geq t$ and\\ $\lim_n \inf\set{\theta \in \planningInterval | \supp(\shiftN_n) \cap [\theta,\tEnd] \text{ has measure zero}} \leq t$, 
		\item $\Psi_p(h,t) > \Psi_q(h,t')$ and
		\item $\int_{\tStart}^{\tEnd} \shiftN_n(\theta) \diff\theta > 0$ and $(q,\shiftN_n,\Delta) \in A_p(h)$ for all $n \in \IN$.
	\end{enumerate}
\end{lemma}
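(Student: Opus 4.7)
The plan is to directly unfold both sides of the definition of \SCDE{} and the set $U_p(h,t)$, using the identification $\Delta := t'-t$ in condition d). The underlying idea is simply that the quantities in a) and b) encode, in measure-theoretic form, precisely the conditions ``$\shiftN \leq \varepsilon$ pointwise almost everywhere'' and ``$\supp(\shiftN) \subseteq [t-\delta,t+\delta]$'' from the definition of $U_p(h,t)$ in~\eqref{eq:feasible-deviation}, taken along a sequence where $\varepsilon,\delta \downarrow 0$.

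For the forward direction (``not SCDE'' $\Rightarrow$ existence of the sequence): If $h^*$ fails to be an \SCDE, then by \Cref{def:DCE} there exist $i \in I$, $p \in \Pc_i$, $t \in \planningInterval$ and $(q,\Delta) \in U_p(h,t)$ with $\Psi_p(h,t) > \Psi_q(h,t+\Delta)$. Set $t' := t+\Delta$. For each $n \in \IN$, apply the definition of $U_p(h,t)$ with $\delta = \varepsilon = 1/n$ to obtain a function $\shiftN_n \in L_+^2(\planningInterval)$ with $\int \shiftN_n > 0$, $\shiftN_n \leq 1/n$, $\supp(\shiftN_n) \subseteq [t-1/n,t+1/n]$, and $(q,\shiftN_n,\Delta) \in A_p(h)$. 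Then a) is immediate from $\esssup \shiftN_n \leq 1/n$; b) follows since $\supp(\shiftN_n) \subseteq [t-1/n,t+1/n]$ forces the ``essential left edge'' (the sup in~b) to lie in $[t-1/n,t+1/n]$, and analogously for the essential right edge, so both converge to $t$; c) and~d) are immediate from the construction.

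For the reverse direction: Given the sequence $(\shiftN_n)$, set $\Delta := t'-t$; we must show $(q,\Delta) \in U_p(h,t)$, because then c) supplies the violation $\Psi_p(h,t) > \Psi_q(h,t+\Delta)$ of~\eqref{eq:CDE}. Fix $\delta,\varepsilon > 0$. By a), there is an $N_1$ with $\esssup \shiftN_n < \varepsilon$ for all $n \geq N_1$. Denote
\[
a_n := \sup\set{\theta \in \planningInterval | \supp(\shiftN_n) \cap [\tStart,\theta] \text{ has measure zero}},\quad b_n := \inf\set{\theta \in \planningInterval | \supp(\shiftN_n) \cap [\theta,\tEnd] \text{ has measure zero}};
\]
since $\int \shiftN_n > 0$ the support has positive measure, so $a_n \leq b_n$, and a representative of $\shiftN_n$ can be chosen with support in $[a_n,b_n]$. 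By b) we have $\liminf a_n \geq t$ and $\limsup b_n \leq t$, hence both sequences converge to $t$, and there is an $N_2$ with $[a_n,b_n] \subseteq [t-\delta,t+\delta]$ for $n \geq N_2$. Taking any $n \geq \max\{N_1,N_2\}$, the function $\shiftN_n$ witnesses $(q,\Delta) \in U_p(h,t)$ via the definition in~\eqref{eq:feasible-deviation}, using the admissibility $(q,\shiftN_n,\Delta) \in A_p(h)$ supplied by~d).

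I expect the only mildly delicate point to be the precise measure-theoretic handling of b) — specifically, arguing that the two quantities $a_n,b_n$ above coincide with the essential inf/sup of the support and that $a_n,b_n \to t$ (using $a_n \leq b_n$ together with the two one-sided inequalities from b) to rule out the degenerate case where the two liminf/limsup could a priori sit on opposite sides of $t$). Everything else is a mechanical translation between the definition of $U_p(h,t)$ and the quantitative conditions a)--d).
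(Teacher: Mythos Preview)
Your proposal is correct and follows essentially the same approach as the paper: both directions simply unfold the definition of $U_p(h,t)$ and set $\Delta = t'-t$. The paper's own proof is much terser (it just asserts that a), b), d) together give $(q,t'-t)\in U_p(h,t)$ without spelling out the $a_n,b_n$ argument), so your more explicit treatment of the measure-theoretic details in b) is a genuine improvement in exposition rather than a different method.
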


\begin{proof}
	If the \namecref{lemma:NegativeCharacterization}'s conditions are satisfied then a), b) and d) together show that we have $(q,t'-t) \in U_{p}(h,t)$. Then, c) implies that $h$ is not an \SCDE.
	
	If, on the other hand, we know that $h$ is not a \SCDE[full], then there must be some $t \in [t_0,t_f]$, $i \in I$, $p,q \in \Pc_i$ and $\Delta \in \IR$ such that both $\Psi_p(h,t) > \Psi_q(h,t+\Delta)$ and $(q,\Delta) \in U_{p}(h,t)$ hold. The latter then implies the existence of $\shiftN_n \in L_+^2(\planningInterval)$ satisfying a), b) and d) while the former is exactly c) with $t'\coloneqq t + \Delta$.
\end{proof}

The next \namecref{lemma:ConditiononApForStrongerEquilibria} provides a condition for two sets of \addmEpsDev s under which one of the corresponding \SCDE{} is stronger than the other. Two direct consequences of this \namecref{lemma:ConditiononApForStrongerEquilibria} are:
\begin{enumerate}[label=\alph*)]
	\item Smaller sets of \addmEpsDev s lead to weaker forms of \SCDE{} (we will make extensive use of this in \Cref{prop:RelationshipsOfCDE}).
	\item In many cases, one can restrict the set of \addmEpsDev s to some (simpler) subset without changing the resulting equilibria. For example, by only allowing functions of the form $\shiftN=\varepsilon\CharF[J]$ where $\CharF[J]$ denotes the characteristic function of a measurable subset $J \subseteq \planningInterval$ (as it has been done previously in \cite[Definition~4]{GHP22}). 
\end{enumerate}

\begin{lemma}\label{lemma:ConditiononApForStrongerEquilibria}
	Let $S$ be a \setS{} and $A_p$ and $A'_p$ two correspondences defining \addmEpsDev s. Furthermore, let $h^* \in S$ be a feasible flow such that the sets of \addmEpsDev s satisfy the following condition for every walk $p \in \Pc$:
	\[\forall (q,\shiftN,\Delta) \in A_p(h^*), \shiftN \neq 0: \exists (q,\shiftN',\Delta) \in \A'_p(h^*): \shiftN' \neq 0, \shiftN' \leq \shiftN.\]
	If this $h^*$ is an \SCDE{} \wrt $S$ and $A'_p$, then it is also an \SCDE{} \wrt $S$ and $A_p$.
\end{lemma}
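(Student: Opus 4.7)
The plan is to prove the contrapositive using the negative characterisation provided by \Cref{lemma:NegativeCharacterization}. Assume for contradiction that $h^*$ is \emph{not} an \SCDE{} \wrt $S$ and $A_p$. Then that lemma supplies a commodity $i \in I$, walks $p, q \in \Pc_i$, times $t, t' \in \planningInterval$ (setting $\Delta \coloneqq t' - t$), and a sequence $(\shiftN_n) \subseteq L^2_+(\planningInterval)$ satisfying conditions (a)--(d) with respect to $A_p$. In particular, property (d) gives $(q, \shiftN_n, \Delta) \in A_p(h^*)$ with $\int_{\tStart}^{\tEnd}\shiftN_n \diff\theta > 0$, so $\shiftN_n \neq 0$ as an $L^2$-function.

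Using the hypothesis of the lemma, I would select for each $n$ a function $\shiftN'_n$ such that $(q, \shiftN'_n, \Delta) \in A'_p(h^*)$ with $\shiftN'_n \neq 0$ and $\shiftN'_n \leq \shiftN_n$. The task is then to verify that the sequence $(\shiftN'_n)$ witnesses the four conditions of \Cref{lemma:NegativeCharacterization} for $A'_p$ as well; this will contradict $h^*$ being an \SCDE{} \wrt $S$ and $A'_p$ and finish the proof. Three of the checks are immediate: (a) follows from the sandwich $0 \leq \shiftN'_n \leq \shiftN_n$ together with $\esssup \shiftN_n \to 0$; (c) is the inequality $\Psi_p(h^*, t) > \Psi_q(h^*, t')$, which does not involve the shift functions at all; and (d) is true by construction together with $\int_{\tStart}^{\tEnd}\shiftN'_n \diff\theta > 0$, which holds because $\shiftN'_n \in L^2_+$ is non-zero as an equivalence class.

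The only slightly delicate point, and therefore the closest thing to an obstacle, is (b). Here I would note that $\shiftN'_n \leq \shiftN_n$ implies $\supp(\shiftN'_n) \subseteq \supp(\shiftN_n)$ up to a null-set, hence the set $\Set{\theta \in \planningInterval | \supp(\shiftN'_n) \cap [\tStart,\theta] \text{ has measure zero}}$ \emph{contains} the corresponding set for $\shiftN_n$; taking suprema preserves the resulting inequality, and passing to the limit yields $\lim_n \sup\Set{\theta | \supp(\shiftN'_n) \cap [\tStart,\theta] \text{ of measure zero}} \geq t$. The mirror argument gives the symmetric bound for the right endpoint. With (a)--(d) all verified for $(\shiftN'_n)$ and $A'_p$, the negative characterisation applied in the opposite direction shows that $h^*$ is not an \SCDE{} \wrt $S$ and $A'_p$, contradicting the assumption and completing the proof.
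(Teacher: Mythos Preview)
Your proof is correct and follows essentially the same idea as the paper's: both use the hypothesis to replace each $\shiftN$ in $A_p(h^*)$ by a smaller $\shiftN'$ in $A'_p(h^*)$ and then exploit $\shiftN' \leq \shiftN$ (hence $\supp(\shiftN') \subseteq \supp(\shiftN)$) to carry over the relevant support and magnitude bounds. The only cosmetic difference is that the paper argues directly that $U_p(h^*,t) \subseteq U'_p(h^*,t)$ from the definition of admissible deviations, whereas you take the contrapositive and route everything through \Cref{lemma:NegativeCharacterization}.
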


\begin{proof}
	We prove this \namecref{lemma:ConditiononApForStrongerEquilibria} by showing that the given condition implies $U'_p(h^*,t) \supseteq U_p(h^*,t)$ for all $t \in [t_0,t_f]$ (where $U'_p$ denotes the \addmDev s defined by $A'_p$):
	
	Take any \addmDev{} $(q,\Delta) \in U_p(h^*,t)$. Then, for any $\delta, \varepsilon > 0$ there must exist some $(q,\shiftN,\Delta) \in A_p(h^*)$ with $\shiftN \leq \varepsilon$, $\int_{\tStart}^{\tEnd}\shiftN(t')\diff t' > 0$ and $\supp(\shiftN) \subseteq [t-\delta,t+\delta]$. The \namecref{lemma:ConditiononApForStrongerEquilibria}'s assumption now guarantees the existence of some $(q,\shiftN',\Delta) \in A'_p(h^*)$ with $\int_{\tStart}^{\tEnd}\shiftN'(t')\diff t' > 0$, $\shiftN' \leq \gamma \leq \varepsilon$ and $\supp(\shiftN') \subseteq \supp(\shiftN) \subseteq [t-\delta,t+\delta]$. As this holds for any $\delta,\varepsilon > 0$, this shows that we have $(q,\Delta) \in U'_p(h^*,t)$.
	
	Hence, being an \SCDE{} \wrt $A'_p$ is a stronger condition than being an \SCDE{} \wrt $A_p$, which proves the \namecref{lemma:ConditiononApForStrongerEquilibria}.
\end{proof}

Finally, the following two lemmas show that in many cases, we can \wlofg restrict ourselves to smaller sets of feasible walks $\Pc_i$ (\eg in order to make them finite and satisfy \ref{ass:FinitelyManyWalks}) or assume that the effective walk-delay operators $\Psi_p$ are bounded (\ie satisfy \ref{ass:PsiBounded}). For the first lemma we make use of the concept of dominating walks as introduced in \cite{GHP22}.

\begin{definition}
	We call a subset $\Pc' \subseteq \Pc$ a \emph{dominating walk set}, if for every walk $p \in \Pc$, $h \in S$, $t \in [t_0,t_f]$ and $(q,\Delta) \in U_p(h,t)$, there exists some $(q',\Delta') \in U_p(h,t)$ such that $q' \in \Pc'$ and $\Psi_{q'}(h,t+\Delta') \leq \Psi_q(h,t+\Delta)$.
\end{definition}

\begin{lemma}\label{lemma:RestrictToDominatingWalkSet}
	Let $\Pc' \subseteq \Pc$ a dominating walk-set and define 
		\[A'_p(h,t) \coloneqq \set{(q,\shiftN,\Delta) \in A_p(h,t) | q \in \Pc'}.\]
	Then, any $h^* \in S' \coloneqq \Set{h \in S | h_p \equiv 0 \text{ for all } p \in \Pc\setminus\Pc'}$ is an \SCDE{} \wrt $S'$ and $A'_p$ if and only if it is an \SCDE{} \wrt $S$ and $A_p$.
\end{lemma}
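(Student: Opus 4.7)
The plan is to verify the two implications separately, after first eliminating walks outside $\Pc'$ as trivial. The key preliminary observation is that for any $h^* \in S'$ and any walk $p \in \Pc \setminus \Pc'$ we have $h^*_p \equiv 0$, so the general requirement $\shiftN \leq h^*_p$ on every \addmEpsDev{} forces $\shiftN \equiv 0$; combined with the strict positivity clause $\int_{\tStart}^{\tEnd}\shiftN > 0$ in the definition of $U_p$, this shows that both $U_p(h^*,t)$ and the analogous set $U'_p(h^*,t)$ built from $A'_p$ are empty for every $p \notin \Pc'$ and every $t \in \planningInterval$. Hence the equilibrium conditions in both formulations are vacuous on walks outside $\Pc'$, and it suffices to check \eqref{eq:CDE} on walks $p \in \Pc'$.

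For the reverse direction ($\Leftarrow$), fix $p \in \Pc'$, $t \in \planningInterval$ and $(q,\Delta) \in U'_p(h^*,t)$. Since $A'_p(h^*) \subseteq A_p(h^*)$ by the very definition of $A'_p$, any sequence of functions $\shiftN$ witnessing $(q,\Delta) \in U'_p(h^*,t)$ also witnesses $(q,\Delta) \in U_p(h^*,t)$. Applying the \SCDE{} condition assumed for $(S,A_p)$ then yields $\Psi_p(h^*,t) \leq \Psi_q(h^*,t+\Delta)$, as desired.

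For the forward direction ($\Rightarrow$), fix $p \in \Pc'$, $t \in \planningInterval$ and $(q,\Delta) \in U_p(h^*,t)$. I invoke the dominating walk-set property to produce $(q',\Delta') \in U_p(h^*,t)$ with $q' \in \Pc'$ and $\Psi_{q'}(h^*,t+\Delta') \leq \Psi_q(h^*,t+\Delta)$. Because $q' \in \Pc'$, any witnessing $\shiftN$ for $(q',\Delta') \in U_p(h^*,t)$ lies in $A_p(h^*)$ with target walk in $\Pc'$, hence by definition belongs to $A'_p(h^*)$. Therefore $(q',\Delta') \in U'_p(h^*,t)$, and the \SCDE{} condition assumed for $(S',A'_p)$ gives $\Psi_p(h^*,t) \leq \Psi_{q'}(h^*,t+\Delta')$. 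Chaining this with the inequality supplied by the dominating walk-set property yields $\Psi_p(h^*,t) \leq \Psi_q(h^*,t+\Delta)$.

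The only place where any genuine content enters is the forward direction, where one has to recognise that the replacement pair $(q',\Delta')$ produced by the dominating walk-set property is not merely \addmDev{} under $A_p$ but actually under $A'_p$; but this is automatic from $q' \in \Pc'$ together with the definition of $A'_p$. No continuity, convexity, or further regularity assumptions on $\Psi$, $S$ or $A_p$ are required, so I expect the whole argument to be a compact bookkeeping exercise rather than a technical one.
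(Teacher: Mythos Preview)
Your proof is correct and follows essentially the same approach as the paper: the reverse direction uses $A'_p \subseteq A_p$ (the paper phrases this via \Cref{lemma:ConditiononApForStrongerEquilibria}), and the forward direction invokes the dominating walk-set property to replace $(q,\Delta)$ by some $(q',\Delta')$ with $q' \in \Pc'$ and then applies the $(S',A'_p)$-equilibrium condition. Your extra care in making explicit why $(q',\Delta') \in U'_p(h^*,t)$ and why walks $p \notin \Pc'$ can be ignored is a welcome clarification of steps the paper leaves implicit.
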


\begin{proof}
	The `if'-part follows directly from \Cref{lemma:ConditiononApForStrongerEquilibria}. For the `only if'-part, take any walk $p \in \Pc_i$, time $t \in \planningInterval$ and \addmDev{} $(q,\Delta) \in U_p(h^*,t)$. Since $\Pc'$ is a dominating walk set, there must be some $(q',\Delta') \in U_p(h^*,t)$ with $\Psi_{q'}(h^*,t+\Delta') \leq \Psi_q(h^*,t+\Delta)$ and $q' \in \Pc'$. Since $h^*$ is an \SCDE{} \wrt $A'_p$, this gives us
		\[\Psi_p(h^*,t) \leq \Psi_{q'}(h^*,t+\Delta') \leq \Psi_q(h^*,t+\Delta).\] 
	Thus, $h^*$ is an \SCDE{} \wrt $A_p$ as well.
\end{proof}

\begin{definition}\label{def:truncatedPsi}
	For any $M \in \IR,$ we define the \emph{truncated effective walk delay operator} $\truncated{\Psi}{M}$ of $\Psi$ by setting
		\[\truncated{\Psi}{M}_p(h,t) \coloneqq \min\set{\Psi_p(h,t), M} \text{ for all } p \in \Pc, h \in \Lambda(Q), t \in \planningInterval.\]
\end{definition}

\begin{lemma}\label{lemma:RestrictToTruncatedPsi}
	Given $M \in \IR$ such that for all $h \in S$, $t \in \planningInterval$ and $p \in \Pc$ with $U_p(h,t) \neq \emptyset$ there exists some $(q,\Delta) \in U_p(h,t)$ such that $\Psi_q(h,t+\Delta) < M$,
	then any walk inflow $h^*$ is an \SCDE{} with respect to $\Psi$ if and only if it is an \SCDE{} with respect to $\truncated{\Psi}{M}$.
\end{lemma}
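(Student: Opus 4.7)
The plan is to prove both directions by a case analysis on whether $\Psi_p(h^*,t)$ exceeds the truncation threshold $M$, using the hypothesis to rule out the only problematic case. The hypothesis essentially says that whenever there is \emph{any} admissible deviation from $(p,t)$, there is an admissible deviation to a cost strictly below $M$. This will be the engine of both directions.

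For the forward direction, I would assume that $h^*$ is an \SCDE{} \wrt $\Psi$ and fix $p \in \Pc$, $t \in \planningInterval$, and $(q,\Delta) \in U_p(h^*,t)$. I need to show $\truncated{\Psi}{M}_p(h^*,t) \leq \truncated{\Psi}{M}_q(h^*,t+\Delta)$. The key observation is that the hypothesis, combined with the equilibrium condition \eqref{eq:CDE} applied to any witnessing deviation $(q',\Delta') \in U_p(h^*,t)$ with $\Psi_{q'}(h^*,t+\Delta') < M$, forces $\Psi_p(h^*,t) < M$. Hence the truncation is inactive at $(p,t)$, so $\truncated{\Psi}{M}_p(h^*,t) = \Psi_p(h^*,t) \leq \Psi_q(h^*,t+\Delta)$, and the right-hand side clearly dominates $\truncated{\Psi}{M}_q(h^*,t+\Delta)$.

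For the backward direction, I would symmetrically assume $h^*$ is an \SCDE{} \wrt $\truncated{\Psi}{M}$ and fix $(q,\Delta) \in U_p(h^*,t)$. If $\Psi_p(h^*,t) \leq M$, the truncation is inactive at $(p,t)$, and the desired inequality $\Psi_p(h^*,t) \leq \Psi_q(h^*,t+\Delta)$ follows from $\truncated{\Psi}{M}_p(h^*,t) \leq \truncated{\Psi}{M}_q(h^*,t+\Delta) \leq \Psi_q(h^*,t+\Delta)$. If instead $\Psi_p(h^*,t) > M$, I invoke the hypothesis to obtain a witness $(q',\Delta') \in U_p(h^*,t)$ with $\Psi_{q'}(h^*,t+\Delta') < M$; then $\truncated{\Psi}{M}_p(h^*,t) = M > \Psi_{q'}(h^*,t+\Delta') = \truncated{\Psi}{M}_{q'}(h^*,t+\Delta')$ contradicts the truncated equilibrium condition. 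So this case cannot occur, and we are done.

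I do not foresee any serious obstacle here: the proof is essentially bookkeeping around the $\min$ in \Cref{def:truncatedPsi} together with the single non-trivial use of the hypothesis to rule out the case $\Psi_p(h^*,t) > M$. The only subtlety worth stressing is that the hypothesis is applied only at points $(p,t)$ where $U_p(h^*,t)$ is non-empty — which is exactly where the equilibrium condition \eqref{eq:CDE} has content — so one never needs to worry about points with an empty deviation set. This also mirrors the logic used earlier in \Cref{lemma:RestrictToDominatingWalkSet}, so the argument fits naturally within the framework developed so far.
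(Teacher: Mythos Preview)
Your argument is correct and mirrors the paper's, with one minor difference: the paper's forward direction does not invoke the hypothesis at all, since monotonicity of $x \mapsto \min\{x,M\}$ immediately gives $\truncated{\Psi}{M}_p(h^*,t) = \min\{\Psi_p(h^*,t),M\} \leq \min\{\Psi_q(h^*,t+\Delta),M\} = \truncated{\Psi}{M}_q(h^*,t+\Delta)$ from the \SCDE{} condition $\Psi_p(h^*,t) \leq \Psi_q(h^*,t+\Delta)$; the hypothesis is used only in the backward direction, exactly as you do. Incidentally, your final clause in the forward direction --- that $\Psi_q$ ``dominates'' $\truncated{\Psi}{M}_q$ --- points the wrong way for the chain you want (from $\truncated{\Psi}{M}_p \leq \Psi_q$ and $\Psi_q \geq \truncated{\Psi}{M}_q$ one cannot conclude $\truncated{\Psi}{M}_p \leq \truncated{\Psi}{M}_q$); what actually closes your argument is the fact you already derived, $\Psi_p(h^*,t) < M$, which together with $\Psi_p \leq \Psi_q$ yields $\Psi_p \leq \min\{\Psi_q,M\} = \truncated{\Psi}{M}_q$.
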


\begin{proof}
	First, let $h^*$ be an \SCDE{} with respect to $\Psi$. Then, for any $p \in \Pc$, $i \in I$ and $(q,\Delta) \in U_p(h^*,t)$, we have
		\[\truncated{\Psi}{M}_p(h^*,t) = \min\set{\Psi_p(h^*,t),M} \leq \min\set{\Psi_q(h^*,t+\Delta),M} = \truncated{\Psi}{M}_p(h^*,t+\Delta).\]
	Therefore, $h^*$ is also an \SCDE{} with respect to $\truncated{\Psi}{M}$.
	
	Now, let $h^*$ be an \SCDE{} with respect to $\truncated{\Psi}{M}$. Again, take any $p \in \Pc$, $i \in I$ and $(q,\Delta) \in U_p(h^*,t)$. Then, by the \namecref{lemma:RestrictToTruncatedPsi}'s assumption, we have some $(q',\Delta') \in U_p(h^*,t)$ such that $\Psi_{q'}(h^*,t+\Delta') < M$ implying
		\[\truncated{\Psi}{M}_p(h^*,t) \leq \truncated{\Psi}{M}_{q'}(h^*,t+\Delta') = \Psi_{q'}(h^*,t+\Delta') < M\]
	and, therefore, $\truncated{\Psi}{M}_p(h^*,t) = \Psi_p(h^*,t)$. This, in turn, implies
		\[\Psi_p(h^*,t) = \truncated{\Psi}{M}_p(h^*,t) \leq \truncated{\Psi}{M}_q(h^*,t+\Delta) \leq \Psi_q(h^*,t+\Delta).\]
	Thus, $h^*$ is an \SCDE{} with respect to $\Psi$.		
\end{proof}

\subsection{Some Special Cases of \texorpdfstring{\SCDE}{SCDE}}\label{sec:SCDESpecialCases}

We first observe that the standard \emph{un}constrained dynamic equilibrium concepts are special cases of our model:

\begin{lemma}
	Assume that \ref{ass:EffectivePathDelayContinuous} holds. We define $S \coloneqq \Lambda(Q)$ and 
		\[A_p(h) \coloneqq \Set{(q,\shiftN,\Delta) | \begin{array}{l}
				q \in \Pc_i, \shiftN \in L_+^2(\planningInterval), \shiftN \leq h_p, \supp(\shiftN(\emptyarg-\Delta)) \subseteq \planningInterval \text{ and }\\
				h_q(t) + \shiftN(t-\Delta) \leq B_q \text{ for almost all } t \in \planningInterval
			\end{array}}\]
	for all $h \in S$, $p \in \Pc_i$ and $i \in I$. Then, any flow $h^* \in S$ is a \SCDE[full] in the sense of \eqref{eq:CDE} if and only if it is a dynamic equilibrium with fixed flow volumes and departure time choice in the sense of \eqref{eq:de-volume}.
\end{lemma}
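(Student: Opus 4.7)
The plan is to verify the two implications of the equivalence directly, using the continuity of $\Psi$ and Lebesgue-density arguments to bridge between the pointwise formulation of \eqref{eq:de-volume} and the neighbourhood-based formulation of \eqref{eq:CDE}.

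For the backward direction (\eqref{eq:de-volume} $\Rightarrow$ \eqref{eq:CDE}), I would fix $p \in \Pc_i$, $t \in \planningInterval$ and $(q,\Delta) \in U_{p}(h^*,t)$. By definition of $U_p$, for each $n \in \IN$ I obtain $\shiftN_n \in L_+^2(\planningInterval)$ supported in $[t-1/n,t+1/n]$ with $\int \shiftN_n > 0$ and $(q,\shiftN_n,\Delta) \in A_p(h^*)$. The constraints $\shiftN_n \leq h^*_p$ and $h^*_q(\emptyarg) + \shiftN_n(\emptyarg-\Delta) \leq B_q$ force, respectively, a positive-measure subset of $[t-1/n,t+1/n]$ on which $h^*_p > 0$ and a positive-measure subset of $[t+\Delta-1/n,t+\Delta+1/n]$ on which $h^*_q < B_q$. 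Applying \eqref{eq:de-volume} inside those subsets produces points $s_n \to t$ with $\Psi_p(h^*,s_n) \leq \nu_i$ and $s'_n \to t+\Delta$ with $\Psi_q(h^*,s'_n) \geq \nu_i$, and continuity (\ref{ass:EffectivePathDelayContinuous}) then yields $\Psi_p(h^*,t) \leq \nu_i \leq \Psi_q(h^*,t+\Delta)$.

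For the forward direction (\eqref{eq:CDE} $\Rightarrow$ \eqref{eq:de-volume}), I would fix a commodity $i$ and let $E_p^+$ and $E_q^-$ denote the sets of Lebesgue-density-$1$ points of $\{h^*_p > 0\}$ and of $\{h^*_q < B_q\}$ inside $\planningInterval$, respectively; each differs from its underlying set only by a null set. Set
\[ \nu_i^- \coloneqq \sup\{\Psi_p(h^*,t) : p \in \Pc_i,\, t \in E_p^+\}, \qquad \nu_i^+ \coloneqq \inf\{\Psi_q(h^*,t') : q \in \Pc_i,\, t' \in E_q^-\}. \]
The crucial step is to show $\nu_i^- \leq \nu_i^+$. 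Given $p, q \in \Pc_i$, $t \in E_p^+$ and $t' \in E_q^-$, translation invariance of Lebesgue density implies that $t$ is simultaneously a density-$1$ point of $\{h^*_p > 0\}$ and of $\{h^*_q(\emptyarg+(t'-t)) < B_q\}$, so $A \coloneqq [t-\delta,t+\delta] \cap \{h^*_p > 0\} \cap \{h^*_q(\emptyarg+(t'-t)) < B_q\}$ has positive measure for every $\delta > 0$. Setting $\shiftN_{\varepsilon,\delta} \coloneqq \min\{\varepsilon,\, h^*_p,\, B_q - h^*_q(\emptyarg+(t'-t))\} \cdot \CharF[A]$ provides a valid witness certifying $(q,t'-t) \in U_{p}(h^*,t)$, and the SCDE condition then gives $\Psi_p(h^*,t) \leq \Psi_q(h^*,t')$; taking $\sup$ and $\inf$ yields $\nu_i^- \leq \nu_i^+$. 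Any real $\nu_i \in [\nu_i^-,\nu_i^+]$ now satisfies \eqref{eq:de-volume}, since almost every $t$ with $h^*_p(t) > 0$ lies in $E_p^+$ and almost every $t'$ with $h^*_q(t') < B_q$ lies in $E_q^-$.

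The main obstacle is this joint density argument: one must simultaneously exploit the density of $\{h^*_p > 0\}$ at $t$ and of $\{h^*_q < B_q\}$ at $t' = t + \Delta$ to ensure that the intersection $A$ has positive measure for every $\delta$ and every shift. The remaining technicalities---handling degenerate commodities with $Q_i = 0$ (any finite $\nu_i$ works, using $\Psi \geq 0$) and the boundary of $\planningInterval$ (where $\shiftN_{\varepsilon,\delta}$ is truncated so its shifted support stays inside the horizon)---are routine.
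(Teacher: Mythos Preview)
Your proof is correct and takes a slightly different, arguably cleaner, route than the paper's. In the backward direction, the paper argues by contrapositive with a case split on whether $\Psi_q(h^*,t+\Delta)<\nu_i$ or not, whereas you argue directly: from $(q,\Delta)\in U_p(h^*,t)$ you extract approximating points $s_n\to t$ and $s_n'\to t+\Delta$ and let continuity do the work. In the forward direction, the paper proceeds by contradiction, defining $\nu_i$ as an essential supremum, producing two positive-measure sets $J_p,J_q$, and then using a convolution-type argument to find a single shift $\Delta$ with $(J_p+\Delta)\cap J_q$ of positive measure before invoking \Cref{lemma:NegativeCharacterization}. You instead work constructively: for \emph{every} pair $t\in E_p^+$, $t'\in E_q^-$ you exhibit the deviation $(q,t'-t)$ by taking the intersection of two density-$1$ sets, which immediately gives $\nu_i^-\leq\nu_i^+$. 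Your approach trades the convolution trick for a systematic use of the Lebesgue density theorem at both endpoints simultaneously; this is more direct and avoids the contradiction, at the price of having to be slightly careful about boundary points of $\planningInterval$, which you correctly flag as routine.

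One small imprecision: for $Q_i=0$ it is not true that ``any finite $\nu_i$ works''---the second line of \eqref{eq:de-volume} still requires $\nu_i\leq\Psi_q(h^*,t)$ wherever $h^*_q(t)<B_q$, so you want e.g.\ $\nu_i=0$ (or any $\nu_i\leq\nu_i^+$). This does not affect the argument.
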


\begin{proof}
	First, let $h^*$ be an equilibrium in the sense of \eqref{eq:de-volume} with corresponding values $\nu_i$. Now, take any time $t \in [t_0,t_f]$, shift $\Delta$ and walks $p,q \in \Pc_i$ such that $\Psi_p(h^*,t) > \Psi_q(h^*,t+\Delta)$. We then have to show that $(q,\Delta)$ is not an \addmDev, \ie that $(q,\Delta) \notin U_p(h^*,t)$. For this, we distinguish two cases: If we have $\Psi_q(h^*,t+\Delta) < \nu_i$, then \ref{ass:EffectivePathDelayContinuous} guarantees the existence of some neighbourhood $[t-\delta,t+\delta]$ of $t$ such that $\Psi_q(h^*,t'+\Delta) < \nu_i$ holds for all $t' \in [t-\delta,t+\delta]$. From \eqref{eq:de-volume} we then get that $h^*_q(t'+\Delta) \geq B_q$ for almost all these $t' \in [t-\delta,t+\delta]$ which, in turn, implies $(q,\Delta) \notin U_p(h^*,t)$. If, on the other hand, we have $\Psi_q(h^*,t+\Delta) \geq \nu_i$, then we have $\Psi_p(h^*,t) > \nu_i$ by our initial assumption. \Cref{ass:EffectivePathDelayContinuous} then again guarantees the existence of some $\delta > 0$ with $\Psi_p(h^*,t') > \nu_i$ for all $t' \in [t-\delta,t+\delta]$. From this \eqref{eq:de-volume} allows us to deduce $h^*_p(t')=0$ for almost all those $t'$ and, hence, $U_p(h^*,t) = \emptyset$. 
	
	Now, let $h^*$ be an equilibrium in the sense of \eqref{eq:CDE} and assume for contradiction that it is not an equilibrium in the sense of \eqref{eq:de-volume}. Then, there must exist some commodity $i \in I$ such that there exists no $\nu_i \in \IR$ satisfying \eqref{eq:de-volume}. In particular, we must have $Q_i > 0$, and therefore 
		\[\nu_i \coloneqq \esssup\set{\Psi_p(h^*,t) | p \in \Pc_i, t \in [t_0,t_f], h^*_p(t) > 0} > -\infty\]
	where $\esssup$ denotes the essential supremum, \ie 
		\[\esssup\Set{g(x) | x \in X} \coloneqq \inf\Set{b \in \IR | \abs{\set{x \in X | g(x) > b}} = 0}\]
	for any measurable function $g: X \to \IR$ where $\abs{.}$ denotes the measure on $X$. Since this $\nu_i$ cannot satisfy \eqref{eq:de-volume}, there must be a walk $q \in \Pc_i$, some $\varepsilon, \varepsilon' > 0$ and a set $J_q \subseteq [t_0,t_f]$ of positive measure such that we have 
		\[h^*_q(t) \leq B_q - \varepsilon \text{ and } \Psi_q(h^*,t) \leq \nu_i - \varepsilon' \text{ for all } t \in J_q.\]
	At the same time, the definition of $\nu_i$ guarantees the existence of some walk $p \in \Pc_i$ and set $J_p \subseteq [t_0,t_f]$ of positive measure such that
		\[h^*_p(t) > 0 \text{ and } \Psi_p(h^*,t) > \nu_i - \varepsilon' \text{ for all } t \in J_p.\]
	Now, there must exist some $\Delta \in \IR$ such that $(J_p + \Delta) \cap J_q$ has positive measure. Defining $J \coloneqq J_p \cap (J_q-\Delta)$ gives us a set containing at least one $t \in J$ such that for all $n \in \IN$ the set $J_n \coloneqq [t-\frac{1}{n},t+\frac{1}{n}] \cap J$ has positive measure. Defining 
		\[\shiftN_n \coloneqq \min\Set{\tfrac{\varepsilon}{n}\cdot\CharF[J_n],h_p}: \planningInterval \to \IR, t' \mapsto \begin{cases}\min\set{\tfrac{\varepsilon}{n},h_p(t')}, &\text{if } t' \in J_n\\0,&\text{else}\end{cases}\]
	for all $n \in \IN$ then gives us a sequence of $L^2$-functions satisfying
	\begin{itemize}
		\item $\lim_n \esssup \shiftN_n = \lim_n \tfrac{\varepsilon}{n} = 0$,
		\item $\lim_n \sup\set{\theta \in \planningInterval | \supp(\shiftN_n) \cap \planningInterval \text{ has measure zero}}$\\\null$\quad= \lim_n \sup\set{\theta \in \planningInterval | J_n \cap \planningInterval \text{ has measure zero}} \geq \lim_n (t-\tfrac{1}{n}) = t$ and, analogously, $\lim_n \inf\set{\theta \in \planningInterval | \supp(\shiftN_n) \cap [\theta,\tEnd] \text{ has measure zero}} \leq t$,
		\item $\Psi_p(h^*,t) > \nu_i - \varepsilon' \geq \Psi_q(h^*,t+\Delta)$ since $t \in J \subseteq J_p$ and $t + \Delta \in J + \Delta \subseteq J_q$,
		\item $\int_{J_n}\shiftN(t')\diff t' > 0$ for all $n \in \IN$ as $J_n \subseteq J_p$ has positive measure and
		\item $(q,\shiftN_n,\Delta) \in A_{p}(h^*)$ for all $n \in \IN$ since $\shiftN_n \leq h_p$, $\supp(\shiftN_n(\emptyarg-\Delta))= J_n+\Delta \subseteq J_q \subseteq \planningInterval$ and $h^*_q (t') + \shiftN_n(t'-\Delta) \leq h^*_q (t') + \varepsilon \leq B_q$ for all $t' \in J_n + \Delta \subseteq J_q$.
	\end{itemize}
	By \Cref{lemma:NegativeCharacterization} this implies that $h^*$ is not a \SCDE[full] -- a contradiction to our initial assumption.
\end{proof}

\begin{lemma}
	Assume that \ref{ass:EffectivePathDelayContinuous} holds. We choose $S \coloneqq \Lambda(r)$ and 
		\[A_p(h) \coloneqq \set{(q,\shiftN,0) | q \in \Pc_i, \shiftN \in L_+^2(\planningInterval), \shiftN \leq h_p}\]
	for all $h \in S$, $p \in \Pc_i$ and $i \in I$. Then, any flow $h^* \in S$ is a \SCDE[full] in the sense of \eqref{eq:CDE} if and only if it is a dynamic equilibrium with fixed inflow rate in the sense of \eqref{eq:de-rate}.
\end{lemma}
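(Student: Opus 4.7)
The approach mirrors the proof of the preceding lemma, with two simplifications: all admissible shifts satisfy $\Delta = 0$, so no time shift is involved; and the walk-inflow bounds $B_q$ are never active because any $\shiftN \leq h^*_p$ combined with $h^* \in \Lambda(r)$ automatically yields $h^*_q + \shiftN \leq h^*_p + h^*_q \leq r_i \leq B_q$ almost everywhere on $\planningInterval$, so $H_{p \to q}(h^*,\shiftN,0) \in \Lambda(r)$ without extra work.

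For the direction ``\eqref{eq:de-rate} implies \SCDE{}'' I would proceed by contraposition. Fix $t \in \planningInterval$, walks $p,q \in \Pc_i$ and assume $\Psi_p(h^*,t) > \Psi_q(h^*,t)$; the goal is to show $(q,0) \notin U_p(h^*,t)$. By \ref{ass:EffectivePathDelayContinuous} the strict inequality extends to some closed neighbourhood $[t-\delta,t+\delta] \cap \planningInterval$; on this neighbourhood, \eqref{eq:de-rate} forces $h^*_p \equiv 0$ almost everywhere, so every admissible $\shiftN \leq h^*_p$ supported in $[t-\delta,t+\delta]$ satisfies $\int \shiftN = 0$, contradicting the definition of $U_p(h^*,t)$.

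For the converse, suppose $h^*$ is not an equilibrium in the sense of \eqref{eq:de-rate}. The countable decomposition
\[
\bigcup_{p,q \in \Pc_i,\, k \in \IN} \set{t \in \planningInterval | h^*_p(t) > 0 \text{ and } \Psi_p(h^*,t) - \Psi_q(h^*,t) > 1/k}
\]
of the ``bad'' set must contain a summand $J$ of positive measure for some fixed commodity $i$ and walks $p,q \in \Pc_i$. By Lebesgue's density theorem, $J$ contains a density point $t$, so $J_n \coloneqq J \cap [t - 1/n, t + 1/n]$ has positive measure for every $n \in \IN$. Setting $\shiftN_n \coloneqq \min\set{\tfrac{1}{n}\CharF[J_n], h^*_p}$ yields a sequence with $\shiftN_n \leq h^*_p$, hence $(q,\shiftN_n,0) \in A_p(h^*)$; moreover $\esssup \shiftN_n \leq 1/n \to 0$, $\supp \shiftN_n \subseteq [t-1/n,t+1/n]$, and $\int_{\planningInterval} \shiftN_n > 0$ since $h^*_p > 0$ almost everywhere on the positive-measure set $J_n$. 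Since $\Psi_p(h^*,t) > \Psi_q(h^*,t)$ by the choice of $J$, \Cref{lemma:NegativeCharacterization} (applied with $t' = t$ and $\Delta = 0$) certifies that $h^*$ is not a \SCDE{}.

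The only subtlety I anticipate is purely technical bookkeeping---ensuring the violation set $J$ has positive measure, which is handled by the rational-threshold decomposition, and checking that the constructed $\shiftN_n$ lies in $L^2_+(\planningInterval)$ and inherits the three admissibility requirements from the framework (all immediate once $\shiftN_n \leq h^*_p$ is known and $h^* \in \Lambda(r)$). Every remaining step parallels the argument already carried out for the fixed-volume case, with the time-shift aspect collapsing trivially.
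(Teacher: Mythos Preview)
Your argument is correct. The first direction matches the paper's essentially verbatim. For the converse, however, the paper takes a different route: it argues directly that an \SCDE{} satisfies \eqref{eq:de-rate} by noting that for every $t$ with $\Psi_p(h^*,t)>\Psi_q(h^*,t)$ there is a neighbourhood $[t-\delta_t,t+\delta_t]$ on which $h^*_p$ vanishes almost everywhere, and then passes to a \emph{countable} subcover of the open set $\{t:\Psi_p(h^*,t)>\Psi_q(h^*,t)\}$ to conclude $h^*_p=0$ almost everywhere on it. Your contrapositive via a Lebesgue density point and \Cref{lemma:NegativeCharacterization} is exactly how the paper handled the analogous direction in the preceding fixed-volume lemma, so it is a perfectly natural choice; it trades the Lindel\"of-type covering step for the density-point step and recycles machinery already in place, while the paper's covering argument avoids invoking \Cref{lemma:NegativeCharacterization} at all.
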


\begin{proof}
	First, let $h^*$ be an equilibrium in the sense of \eqref{eq:de-rate}. Then, for any point $t \in [t_0,t_f]$ and walks $p,q \in \Pc_i$ with $\Psi_p(h^*,t) > \Psi_q(h^*,t)$ \cref{ass:EffectivePathDelayContinuous} guarantees the existence of some neighbourhood $[t-\delta,t+\delta]$ of $t$ with $\Psi_p(h^*,t') > \Psi_q(h^*,t')$ for all $t' \in [t-\delta,t+\delta]$. \Cref{eq:de-rate} then implies that $h^*_p(t') = 0$ for almost all $t'$ from this interval and, thus, $U_{p}(h^*,t) = \emptyset$. In particular, we have $q \notin U_{p}(h^*,t)$ which shows that \eqref{eq:CDE} is satisfied by $h^*$.
	
	Now, let $h^*$ be an equilibrium in the sense of \eqref{eq:CDE}. Then, for every time $t \in [t_0,t_f]$ and each pair of walks $p,q$ with $\Psi_p(h^*,t) > \Psi_q(h^*,t)$, we have $q \notin U_{p}(h^*,t)$. As we are in the unconstrained case, this can only be because there exists some $\delta_t > 0$ such we have $h^*_p(t') = 0$ for almost all $t' \in [t-\delta_t,t+\delta_t]$. Since this is true for all such times $t$, we clearly have
		\[\set{t \in [t_0,t_f] | \Psi_p(h^*,t) > \Psi_q(h^*,t)} \subseteq \bigcup_{t \in [t_0,t_f]:\Psi_p(h^*,t) > \Psi_q(h^*,t)}[t-\delta_t,t+\delta_t].\]
	But then, there also exists a countable such covering, \ie a countable set 
		\[K \subseteq \set{t \in [t_0,t_f] | \Psi_p(h^*,t) > \Psi_q(h^*,t)}\]
	such that 
		\[\set{t \in [t_0,t_f] | \Psi_p(h^*,t) > \Psi_q(h^*,t)} \subseteq \bigcup_{t \in K}[t-\delta_t,t+\delta_t].\]
	Therefore, for almost all $t \in \set{t \in [t_0,t_f] | \Psi_p(h^*,t) > \Psi_q(h^*,t)}$ we have $h^*_p(t') = 0$ which shows that $h^*$ satisfies \eqref{eq:de-rate}.
\end{proof}

We conclude this \namecref{sec:SCDESpecialCases} by introducing a first type of \SCDE{} with actual side-constraints: In general, our framework requires us to specify two objects for such a definition: The \setS~$S$ and the \addmEpsDev s~$A_p$. We will make use of this flexibility later on (in particular in \Cref{sec:SCviaNL}) but we can also just take any \setS{} $S \subseteq \Lambda(Q)$ and then derive \addmEpsDev s from it as follows: We say that a potential \epsDev{} is admissible if and only if it leads to another feasible flow, \ie 
	\begin{align}\label{eq:ApglobalSCDE}
		A_{p}(h) \coloneqq \set{(q,\shiftN,\Delta) | H_{p \to q}(h,\shiftN,\Delta) \in S}
	\end{align}
or, equivalently,
	\[M_i(h) \coloneqq \set{h' \in L^2 (\planningInterval)^{\Pc} | \exists p, q, i, \shiftN,\Delta: h' = H_{p \to q}(h,\shiftN,\Delta) \in S}.\]
This imposes a global feasibility constraint on the possible deviations: That is, particles are only allowed to deviate if the resulting alternative flow is feasible again for \emph{all} particles (not just the ones deviating). This is a generalization of the capacitated dynamic equilibria defined in \cite{GHP22}. Here, we will call these types of side-constrained dynamic equilibria \emph{\globalSCDE[fulls]{}}.

\begin{definition}\label{def:strongCDE}
	A \emph{\globalSCDE[full] (\globalSCDE)} with respect to some set $S \subseteq \Lambda(Q)$ is a side-constrained dynamic equilibrium \wrt $S$ and \addmEpsDev s $A_{p}(h)$ defined by \eqref{eq:ApglobalSCDE}.
\end{definition}


\section{Characterization of \texorpdfstring{\SCDE{}}{SCDE} via (Quasi-)Variational Inequalities}\label{sec:characterization}

Similar to the characterizations of unconstrained dynamic equilibria with variational inequalities (\cf \Cref{thm:VICharOfDE}) we now want to characterize side-constrained dynamic equilibria using \emph{quasi}-variational inequalities (QVIs). This is not only of theoretical interest but there are also algorithms solving  them, see \eg \citefull{KanzowSteckQVI}, \citefull{Shehu20} and references therein. Note, however, that the convergence guarantees given by \citeauthor*{Shehu20} require a certain strong monotonicity property for the mapping $h \mapsto \Psi(h,.)$ which, in general, is not satisfied for dynamic flows whereas \citeauthor{KanzowSteckQVI} only need weak-strong continuity but instead require some convexity assumptions on the sets $S$ and $M(h)$.

\subsection{Quasi-Variational Inequality}

Let us define the following tangent cone at $h \in S$ with respect to $A_p$:
\begin{equation}
	T(A_p,h):=\Set{v\in L^2(\planningInterval)^{\Pc} | \begin{array}{l}
			\exists (h^n)_{n \in \IN} \subset M(h), (t_n)_{n \in \IN} \subset \IR_{>0}: \\ 
			\lim_{n\rightarrow\infty} t_n=0,  \lim_{n\rightarrow\infty} \frac{h^n-h}{t_n}=v
		\end{array}}.
\end{equation}
where $M(h) \coloneqq \bigcup_{i \in I}M_i(h)$ is the set of all walk inflows reachable by a single \addmEpsDev{} from $h$. Intuitively, this set $T(A_p,h)$ contains all directions (within the space $L^2(\planningInterval)^\Pc$) in which, according to the sets of \addmEpsDev s, one can move some (infinitesimally) small step starting from~$h$.

We introduce the following assumptions on the correspondences $A_p$:
\begin{definition}
	The correspondences $(A_p)_{p \in \Pc}$ are \emph{closed under rate-reduction} at $h \in S$, if for all $i \in I$, $q,p \in \Pc_i$, $\Delta \in \IR$, $\lambda \in [0,1]$ and $\shiftN \in L_+^2(\planningInterval)$, we have:
	\[(q,\shiftN,\Delta) \in A_p(h) \implies (q,\lambda\shiftN,\Delta) \in A_{p}(h).\]
\end{definition}

\begin{definition}
	The correspondences $(A_p)_{p \in \Pc}$ are \emph{closed under time-restriction} at $h \in S$, if for all $i \in I$, $q,p \in \Pc_i$, $\shiftN \in L_+^2(\planningInterval)$ and measurable $J \subseteq \planningInterval$, we have:
	\[(q,\shiftN,\Delta) \in A_{p}(h) \implies (q,\shiftN\cdot\CharF[J],\Delta) \in A_{p}(h).\]
	Here, $\CharF[J]$ denotes the characteristic function of the set~$J$, \ie we have $\CharF[J](t)=1$ if $t \in J$ and $\CharF[J](t)=0$ otherwise.
\end{definition}

\begin{enumerate}[label=(A\arabic*),resume=Assumptions]
	\item The \addmEpsDev s are closed under rate-reduction at all $h \in S$.\label[asmpt]{ass:closedSpace}
	\item The \addmEpsDev s are closed under time-restriction at all $h \in S$.\label[asmpt]{ass:closedTime}
\end{enumerate}

The first assumption states that whenever flow is allowed to deviate at a certain rate, it is also allowed to deviate at any lower rate. The second assumption states that whenever flow is allowed to deviate during a certain neighbourhood, the same deviation is also allowed during any subset of that neighbourhood.
It will turn out that several well-motivated equilibrium concepts fulfil both
assumptions (see \Cref{obs:CDEcharbyQVI} for more details).

\begin{obs}\label{obs:ClosedSpaceTimeConvex}
	If the trivial deviation is always admissible (\ie $(p,0,0) \in A_p(h)$), then \cref{ass:closedSpace} is a weaker assumption than convexity, \ie if $M_i(h)$ is convex (at $h$), then \ref{ass:closedSpace} holds. This is true because for any $\lambda \in [0,1]$, the walk inflow $H_{p\to q}(h,\lambda\shiftN,\Delta)$ is a convex combination of $h=H_{p\to p}(h,0,0)$ and $H_{p\to q}(h,\shiftN,\Delta)$:
	\[H_{p\to q}(h,\lambda\shiftN,\Delta) = (1-\lambda)\cdot H_{p\to p}(h,0,0) + \lambda\cdot H_{p\to q}(h,\shiftN,\Delta).\]
	\Cref{ass:closedTime}, on the other hand, is independent of convexity, \ie there exist sets $M_i(h)$ which are convex but do not satisfy \ref{ass:closedTime} and sets $M_i(h)$ which satisfy \ref{ass:closedTime} but are not convex. 
	E.g., consider a network with a single commodity with a fixed inflow rate $r = \CharF[{[0,2]}]$, two nodes $s$ and $t$ and two parallel edges $e_1$ and $e_2$ connecting $s$ and $t$ (these are then also the only $s$,$t$-paths $p$ and $q$). We define the following sets $S \subseteq \Lambda(r)$ with corresponding sets $A_{p}(h) \coloneqq \set{(q,\shiftN,0) | H_{p \to q}(h,\shiftN,0) \in S}$:
	\begin{itemize}
		\item $S_1$ contains all walk inflows $h \in \Lambda(r)$ which for (almost) every point in $[0,2]$ sent all flow in exactly one of the two paths. Clearly, the resulting \addmEpsDev s satisfy \ref{ass:closedTime}. The set $S$ is, however, not convex as it contains the walk inflow $h^1$ which sends all flow into $p$ as well as the walk inflow $h^2$ which sends all flow into $q$ but not any of their non-trivial convex combinations.
		\item $S_2 \coloneqq \mathrm{conv}(h^1,h^2)$ is a convex set, but the corresponding sets $A_p(h)$ do not satisfy~\ref{ass:closedTime} since $H_{p\to q}(h^1,\CharF[[0,2]],0) = h^2 \in S_2$ is a feasible walk inflow while the flow $H_{p\to q}(h^1,\CharF[[0,1]],0) \notin S_2$ is not.
	\end{itemize}
\end{obs}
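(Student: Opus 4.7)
The statement splits into two independent claims, so my plan is to tackle each separately.

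For the first claim (convexity implies closure under rate-reduction when trivial deviations are admissible), the strategy is essentially computational: verify the displayed identity and then invoke convexity. More concretely, I would start by checking the identity
\[
H_{p\to q}(h,\lambda\shiftN,\Delta)=(1-\lambda)\,H_{p\to p}(h,0,0)+\lambda\,H_{p\to q}(h,\shiftN,\Delta)
\]
coordinate-wise on $L^2(\planningInterval)^\Pc$, using the definition in~\eqref{eq:H-r}. On walks $w\notin\{p,q\}$ both sides equal $h_w$; on $p$ the right-hand side simplifies to $h_p-\lambda\shiftN$; on $q$ (assuming $q\neq p$) it simplifies to $h_q+\lambda\shiftN(\cdot-\Delta)$. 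The case $q=p$ is handled analogously by direct substitution. Given the identity, both summands lie in $M_i(h)$ — the first by the standing assumption $(p,0,0)\in A_p(h)$, the second by $(q,\shiftN,\Delta)\in A_p(h)$ — so convexity of $M_i(h)$ places the convex combination in $M_i(h)$ too, which translates back to $(q,\lambda\shiftN,\Delta)\in A_p(h)$ under the natural identification between elements of $M_i(h)$ and admissible deviations. The main subtlety here is pinning down this identification, since in principle a flow can lie in $M_i(h)$ while being witnessed by a different triple; I would address this by explicitly noting that the representation $H_{p\to q}(h,\lambda\shiftN,\Delta)$ is already of the required form.

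For the second claim (independence of \ref{ass:closedTime} from convexity) the plan is to exhibit the two examples suggested in the statement. Let the network be a single commodity with source $s$, sink $t$, two parallel edges $e_1,e_2$ and constant inflow rate $r\equiv \CharF[{[0,2]}]$. Let $p=(e_1)$ and $q=(e_2)$; let $h^1$ send all inflow on $p$ and $h^2$ send all inflow on $q$; and let $A_p(h)\coloneqq\{(q,\shiftN,0)\mid H_{p\to q}(h,\shiftN,0)\in S\}$ in each case.

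\emph{The set $S_1$:} I would define $S_1$ as all $h\in\Lambda(r)$ with $h_p(t)h_q(t)=0$ almost everywhere. Non-convexity follows from $h^1,h^2\in S_1$ while $\tfrac12(h^1+h^2)\notin S_1$. For \ref{ass:closedTime}, the key observation is that any admissible $\shiftN$ starting from $h\in S_1$ must agree pointwise with $h_p$ or $0$ (so the resulting flow still has $p$- or $q$-inflow zero at each time); restricting $\shiftN$ to any measurable subset $J$ preserves this pointwise disjunction, hence closure under time-restriction holds.

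\emph{The set $S_2$:} I would take $S_2=\mathrm{conv}(h^1,h^2)$, which is convex by construction. Starting from $h=h^1$, any deviation $(q,\shiftN,0)\in A_p(h^1)$ must produce a flow of the form $\lambda h^1+(1-\lambda)h^2$ with $\lambda$ \emph{constant} in $t$, forcing $\shiftN$ itself to be constant on $[0,2]$. Thus $\shiftN=\CharF[{[0,2]}]$ is admissible (yielding $h^2$) while its restriction $\shiftN\cdot\CharF[{[0,1]}]$ is not, because the resulting flow equals $h^2$ on $[0,1]$ and $h^1$ on $[1,2]$, which is not a convex combination of $h^1,h^2$. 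This witnesses the failure of \ref{ass:closedTime}. I expect no real obstacles here — all verifications reduce to inspecting pointwise support structure of the involved $L^2$ functions — so the whole observation is ultimately just bookkeeping around the definitions of $H_{p\to q}$, $M_i(h)$, and the two closure assumptions.
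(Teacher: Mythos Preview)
Your proposal is correct and follows essentially the same approach as the paper, which embeds the argument directly in the observation: verify the convex-combination identity for $H_{p\to q}(h,\lambda\shiftN,\Delta)$ and then exhibit the two parallel-edge examples $S_1$ and $S_2$. You are in fact more careful than the paper in flagging the subtlety that membership of the convex combination in $M_i(h)$ does not a priori recover the specific triple $(q,\lambda\shiftN,\Delta)\in A_p(h)$; the paper glosses over this point entirely.
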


We now consider the following quasi-variational inequality:

\begin{equation}\label{eq:QVI-SCDE}\tag{\ensuremath{\QVI(\Psi,S,A_p)}}
	\begin{aligned}
		\text{Find }h^* \in  S  \text{ such that:}&\\
		\scalar{\Psi(h^*)}{v} &\geq 0 \text{ for all } v \in T(A_p,h^*).\end{aligned}
\end{equation}

\begin{theorem}\label{thm:VI-fixed-inflow:sufficient}
	Let $h^* \in S$ be given such that $S$ and $(A_p)_{p \in \Pc}$ satisfy \ref{ass:closedSpace} at $h^*$ and $\Psi$ such that \ref{ass:EffectivePathDelayContinuous} holds. If $h^*$ is a solution to the quasi-variational inequality \eqref{eq:QVI-SCDE} then it is an \SCDE{} \wrt $S$ and $(A_p)_{p \in \Pc}$.
\end{theorem}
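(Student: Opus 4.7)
The plan is to argue by contraposition: assume $h^* \in S$ is not an \SCDE{} and construct a direction $v \in T(A_p,h^*)$ with $\scalar{\Psi(h^*)}{v} < 0$, contradicting the fact that $h^*$ solves \eqref{eq:QVI-SCDE}.

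So suppose $h^*$ is not an \SCDE. By definition, there exist $i \in I$, $p \in \Pc_i$, $t \in \planningInterval$ and $(q,\Delta) \in U_p(h^*,t)$ such that $\Psi_p(h^*,t) > \Psi_q(h^*,t+\Delta)$. Invoking \ref{ass:EffectivePathDelayContinuous} (continuity of both $\Psi_p(h^*,\emptyarg)$ and $\Psi_q(h^*,\emptyarg)$ at $t$ and $t+\Delta$ respectively), I would pick a constant $c > 0$ and $\delta_0 > 0$ small enough to guarantee the uniform gap
\[
\Psi_p(h^*,t') - \Psi_q(h^*,t'+\Delta) \geq c \qquad \text{for all } t' \in [t-\delta_0,t+\delta_0]\cap\planningInterval.
\]
Next I would use that $(q,\Delta) \in U_p(h^*,t)$: applying this with $\delta \coloneqq \delta_0$ and any fixed $\varepsilon > 0$ yields some $\shiftN \in L^2_+(\planningInterval)$ with $\shiftN \leq \varepsilon$, $\supp(\shiftN) \subseteq [t-\delta_0,t+\delta_0]$, $\int_{\tStart}^{\tEnd}\shiftN(\zeta)\,d\zeta > 0$, and $(q,\shiftN,\Delta) \in A_p(h^*)$.

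Now I would build the tangent vector. Set $t_n \coloneqq 1/n$ and $h^n \coloneqq H_{p\to q}(h^*, t_n \shiftN, \Delta)$. By \ref{ass:closedSpace} (closure under rate-reduction applied with $\lambda = t_n \in [0,1]$ for $n \geq 1$) we have $(q, t_n\shiftN, \Delta) \in A_p(h^*)$, so $h^n \in M_i(h^*) \subseteq M(h^*)$. Reading off from \eqref{eq:H-r}, the quotient $(h^n - h^*)/t_n$ has coordinate $-\shiftN$ on walk $p$, coordinate $\shiftN(\emptyarg - \Delta)$ on walk $q$, and zero elsewhere, and is therefore constant in $n$. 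Hence the vector
\[
v \coloneqq (v_w)_{w \in \Pc}, \quad v_p = -\shiftN, \quad v_q = \shiftN(\emptyarg - \Delta), \quad v_w = 0 \text{ for } w \notin \{p,q\},
\]
lies in $T(A_p,h^*)$.

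Finally, I would compute, using the substitution $\zeta = t' - \Delta$ in the $q$-integral,
\[
\scalar{\Psi(h^*)}{v} = \int_{\tStart}^{\tEnd}\bigl[\Psi_q(h^*,t'+\Delta) - \Psi_p(h^*,t')\bigr]\shiftN(t')\,dt' \leq -c\int_{\tStart}^{\tEnd}\shiftN(t')\,dt' < 0,
\]
where the first inequality uses the uniform gap on $[t-\delta_0,t+\delta_0] \supseteq \supp(\shiftN)$ and $\shiftN \geq 0$, and the second uses $\int \shiftN > 0$. This contradicts $\scalar{\Psi(h^*)}{v} \geq 0$, completing the proof. The only genuinely non-routine step is obtaining a bona fide element of $T(A_p,h^*)$ from the single deviation $(q,\shiftN,\Delta)$; this is precisely where \ref{ass:closedSpace} is needed to produce the entire family $(q,t_n\shiftN,\Delta) \in A_p(h^*)$. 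Note that \ref{ass:closedTime} is not required for this direction -- it will presumably enter in the converse implication, where one must localise deviations to small neighbourhoods of a violating time $t$.
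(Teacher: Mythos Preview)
Your proof is correct and follows essentially the same approach as the paper's: both argue by contraposition, use \ref{ass:EffectivePathDelayContinuous} to obtain a uniform cost gap on a $\delta$-neighbourhood of~$t$, extract an admissible~$\shiftN$ from $(q,\Delta)\in U_p(h^*,t)$, invoke \ref{ass:closedSpace} to place the direction $\bar h - h^* = v$ in the tangent cone, and compute $\scalar{\Psi(h^*)}{v}<0$. Your version is slightly more explicit (writing out the sequence $t_n=1/n$ and the components of~$v$), and your closing remark on why \ref{ass:closedTime} is unneeded here is accurate and matches the paper's subsequent discussion.
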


To prove this \namecref{thm:VI-fixed-inflow:sufficient} we observe that whenever some flow $h^*$ is not an \SCDE{}, then, due to the continuity of $\Psi(h,\emptyarg)$, there must also be a small \addmEpsDev{} such that \emph{all} particles involved in it would improve by deviating this way. Closedness under rate-reduction then ensures that any such improving \epsDev{} induces an improving direction in the tangent cone at $h^*$, which shows that $h^*$ cannot be a solution to \eqref{eq:QVI-SCDE}.

\begin{proof}
	Let $h^* \in  S$ be a solution to \eqref{eq:QVI-SCDE} and assume that  $h^*$ is not an \SCDE. Then there exists a commodity $i$, walks $p,q \in \Pc_i$, a shift $\Delta$ and some time $t \in \planningInterval$ such that $\Psi_p(h^*,t) > \Psi_q(h^*,t+\Delta)$ and $(q,\Delta) \in U_{p}(h^*,t)$. Since $\Psi_q(h^*,.)$ and $\Psi_p(h^*,.)$ are continuous (by \ref{ass:EffectivePathDelayContinuous}) there must be some  constants $\delta, \varepsilon > 0$ such that $\Psi_p(h^*,t') - \Psi_q(h^*,t'+\Delta) \geq \varepsilon$ holds for all $t' \in [t-\delta,t+\delta]$. From $(q,\Delta) \in U_{p}(h^*,t)$ we then get some function $\shiftN \in L^2_+(\planningInterval)$ with $\shiftN \leq h_p$, $\int_{\tStart}^{\tEnd}\shiftN(\zeta)\diff\zeta >0$, $\supp(\shiftN) \subseteq [t-\delta,t+\delta]$ and $\bar h \coloneqq H_{p\to q}(h^*,\shiftN,\Delta) \in M_i(h^*)$. Because of \ref{ass:closedSpace}, we now have $\bar h- h^* = \lim_{\lambda \searrow 0}\frac{H_{p\to q}(h^*,\lambda\shiftN,\Delta)-h^*}{\lambda} \in T(A_p,h^*)$. But, at the same time we have
	\begin{align*}
		&\scalar{\Psi(h^*)}{\bar h-h^*} 
		= \sum_{w \in \Pc}\int_{\tStart}^{\tEnd} \Psi_w(h^*(t'))\cdot\left(\bar h_w(t')-h^*_w(t')\right)\diff t'\\
		&\quad\,=\int_{t-\delta}^{t+\delta}\Psi_p(h^*,t')\cdot\left(\bar h_p(t')-h_p^*(t')\right)\diff t' + \int_{t+\Delta-\delta}^{t+\Delta+\delta}\Psi_q(h^*,t')\cdot\left(\bar h_q(t')-h_q^*(t')\right)\diff t' \\
		&\quad\,=\int_{t-\delta}^{t+\delta}\Psi_p(h^*,t')\cdot\left(\bar h_p(t')-h_p^*(t')\right) + \Psi_q(h^*,t'+\Delta)\cdot\left(\bar h_q(t'+\Delta)-h_q^*(t'+\Delta)\right)\diff t' \\
		&\quad\,=\int_{t-\delta}^{t+\delta}\left(\Psi_q(h^*,t'+\Delta)-\Psi_p(h^*,t')\right)\cdot\shiftN(t')\diff t' \\
		&\quad\,\leq -\varepsilon \cdot \int_{t-\delta}^{t+\delta}\shiftN(t')\diff t' < 0,
	\end{align*}
	which is a contradiction to $h^*$ being a solution to \eqref{eq:QVI-SCDE}.
\end{proof}

\begin{remark}
	To see why we need \cref{ass:closedSpace} in the statement of \Cref{thm:VI-fixed-inflow:sufficient}, consider again the \setS{} $S_1$ from \Cref{obs:ClosedSpaceTimeConvex} and define 
		\[A_{p}(h^1) \coloneqq \Set{(q,\tfrac{\abs{J}}{2}\cdot\CharF[J],0) | J \subseteq [0,2] \text{ measurable}}.\]
	Then, we have $T(A_p,h^1) = \set{0}$ and, thus, $h^1$ is a solution to \eqref{eq:QVI-SCDE} (regardless of the choice of \effWalkDelay{} operators) while it is not necessarily a \SCDE{} as we have $U_{p}(h^1,t) = \set{(p,0),(q,0)}$ for all $t \in [0,2]$ here.
\end{remark}

\begin{theorem}\label{thm:VI-fixed-inflow:nessecary}
	Let $h^* \in S$ be given such that $S$ and $(A_p)_{p \in \Pc}$ satisfy \labelcref{ass:closedTime,ass:closedSpace} at $h^*$. If $h^*$ is an \SCDE{} \wrt $S$ and $(A_p)_{p \in \Pc}$ then it is also a solution to~\eqref{eq:QVI-SCDE}.
\end{theorem}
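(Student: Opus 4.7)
The plan is to show directly that for every direction $v$ in the tangent cone $T(A_p,h^*)$ we have $\scalar{\Psi(h^*)}{v}\ge 0$. So I pick $v\in T(A_p,h^*)$, unfold its definition to obtain sequences $(h^n)\subset M(h^*)$ and $t_n\searrow 0$ with $(h^n-h^*)/t_n\to v$, and—since each $h^n$ lies in some $M_i(h^*)$—write $h^n=H_{p_n\to q_n}(h^*,\shiftN_n,\Delta_n)$ for appropriate data $(q_n,\shiftN_n,\Delta_n)\in A_{p_n}(h^*)$. A direct computation analogous to the one in the proof of \Cref{thm:VI-fixed-inflow:sufficient} then gives
\[
\scalar{\Psi(h^*)}{h^n-h^*}\;=\;\int_{\tStart}^{\tEnd}\bigl(\Psi_{q_n}(h^*,t+\Delta_n)-\Psi_{p_n}(h^*,t)\bigr)\,\shiftN_n(t)\,\diff t.
\]
So the whole proof reduces to showing that this integrand is non-negative almost everywhere. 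Dividing by $t_n$ and passing to the limit then yields $\scalar{\Psi(h^*)}{v}\ge 0$, which is the desired \eqref{eq:QVI-SCDE}.

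The heart of the argument is therefore the following pointwise claim: for any $(q,\shiftN,\Delta)\in A_p(h^*)$ and almost every $t\in\supp(\shiftN)$, the pair $(q,\Delta)$ lies in $U_p(h^*,t)$. I plan to prove this at every Lebesgue point $t$ of $\CharF[\supp(\shiftN)]$ of density $1$. Fix such $t$ and arbitrary $\delta,\varepsilon>0$. Since $\shiftN\in L^2$ is finite a.e., the sets
\[
J_N:=\bigl\{t'\in[t-\delta,t+\delta]\setMid 0<\shiftN(t')\le N\bigr\}
\]
exhaust $\supp(\shiftN)\cap[t-\delta,t+\delta]$ up to a null set, so by the density-$1$ property $|J_N|>0$ for some $N$. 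Applying \cref{ass:closedTime} yields $(q,\shiftN\cdot\CharF[J_N],\Delta)\in A_p(h^*)$, and then \cref{ass:closedSpace} with scaling factor $\varepsilon/N\in[0,1]$ gives $(q,\tfrac{\varepsilon}{N}\shiftN\cdot\CharF[J_N],\Delta)\in A_p(h^*)$. The function $\gamma:=\tfrac{\varepsilon}{N}\shiftN\cdot\CharF[J_N]$ then satisfies $\gamma\le\varepsilon$, $\supp(\gamma)\subseteq[t-\delta,t+\delta]$ and $\int\gamma>0$, witnessing $(q,\Delta)\in U_p(h^*,t)$.

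Once this is established, the SCDE property \eqref{eq:CDE} applied at every such $t$ delivers $\Psi_{p_n}(h^*,t)\le\Psi_{q_n}(h^*,t+\Delta_n)$ for a.e.\ $t\in\supp(\shiftN_n)$, so the integrand above is non-negative a.e.\ (trivially zero off $\supp(\shiftN_n)$), hence $\scalar{\Psi(h^*)}{h^n-h^*}\ge 0$ for every $n$. Dividing by $t_n>0$ and letting $n\to\infty$ concludes the proof; note that no continuity assumption on $\Psi$ is needed in this direction, only the two closedness properties.

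The main obstacle is the measure-theoretic step 3, where one must simultaneously make the deviation supported in a small neighbourhood of $t$ (via \cref{ass:closedTime}), uniformly bounded by $\varepsilon$ (via \cref{ass:closedSpace}), and of strictly positive total mass. The subtlety is that elements of $L^2$ can be unbounded on every neighbourhood of $t$, so a single application of rate-reduction to all of $\shiftN$ does not suffice — one must first truncate the support to $J_N$ to obtain an essentially bounded piece, and only then rescale. Once this construction is in place, the rest of the argument is routine.
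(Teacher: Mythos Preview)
Your proposal is correct and follows essentially the same approach as the paper: both arguments compute $\scalar{\Psi(h^*)}{h^n-h^*}$ as the integral $\int(\Psi_{q}(h^*,t+\Delta)-\Psi_{p}(h^*,t))\,\shiftN(t)\,\diff t$ and then use \ref{ass:closedTime} and \ref{ass:closedSpace} to localize and rescale $\shiftN$ so as to produce admissible deviations near density points of its support, forcing the integrand to be non-negative a.e.\ by the SCDE condition. The paper phrases this by contradiction (find a bad set of positive measure and a single point where SCDE fails, then invoke \Cref{lemma:NegativeCharacterization}), whereas you argue directly by establishing the slightly stronger statement that $(q,\Delta)\in U_p(h^*,t)$ for a.e.\ $t\in\supp(\shiftN)$; this is a cosmetic difference.

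One small remark: your truncation to $J_N$ to handle potential unboundedness of $\shiftN$ is not actually needed here, since every admissible $\shiftN$ already satisfies $\shiftN\le h^*_p\le B_p$ (or $\le r_i$ in the fixed-rate case), so $\shiftN$ is essentially bounded and a direct rescaling by $\varepsilon/\esssup\shiftN$ (as the paper does with the factor $1/n$) suffices. Your more careful construction is harmless but superfluous.
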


The proof of this \namecref{thm:VI-fixed-inflow:nessecary} is essentially the reverse of the previous one's: That is, we use an improving direction to deduce the existence of an improving \addmEpsDev{} which, due to closedness under rate-reduction and time-restriction, gives us an improving \addmDev.

\begin{proof}
	We show this by contradiction. So, let $h^*$ be an \SCDE{} and assume that there is some $v\in T(A_p,h^*)$ with $\scalar{\Psi(h^*)}{v} < 0$, \ie we have sequences $(h^n) \subseteq M(h^*)$ and $(t_n) \subseteq \IR_{>0}$ with $\scalar{\Psi(h^*)}{\lim_n\tfrac{h^n-h^*}{t_n}}<0$. By continuity of $\scalar{\emptyarg}{\emptyarg}$, this implies
	$\scalar{\Psi(h^*)}{\tfrac{h^n-h^*}{t_n}}<0$ or, equivalently, $\scalar{\Psi(h^*)}{h^n-h^*}<0$ for large enough~$n$.
	Rewriting and using that $h^n$ is of the form $h^n = H_{p \to q}(h^*,\shiftN,\Delta)$ for some $(q,\shiftN,\Delta) \in A_{p}(h^*)$, yields
	\begin{align*}
		&0 > \scalar{\Psi(h^*)}{h^n-h^*}
		= \!\int_{\tStart}^{\tEnd} \Psi_p(h^*,t)  (h_p^n(t)-h_p^*(t)) \diff t + \!\int_{\tStart}^{\tEnd}\Psi_q(h^*,t) (h_q^*(t)-h_q^n(t)) \diff t  \\
		&\quad\quad= \int_{\tStart}^{\tEnd} \Psi_p(h^*,t)  (h_p^n(t)-h_p^*(t)) + \Psi_q(h^*,t+\Delta) (h_q^*(t+\Delta)-h_q^n(t+\Delta)) \diff t  \\
		&\quad\quad= \int_{\tStart}^{\tEnd} \left(\Psi_q(h^*,t+\Delta)- \Psi_p(h^*,t)\right) \cdot \shiftN(t) \diff t.
	\end{align*}
	This implies that there is some subset $J \subseteq \supp(\shiftN)$ of positive measure with 
	\[\left(\Psi_q(h^*,t+\Delta)- \Psi_p(h^*,t)\right) \cdot \shiftN(t) < 0 \text{ for all } t \in J.\]
	Since $\shiftN$ is non-negative, this implies $\Psi_q(h^*,t+\Delta) < \Psi_p(h^*,t)$ as well as $\shiftN(t)>0$ for 
	all $t \in J$. As $J$ has positive measure, it must contain a point $t \in J$ such that the intersection of any neighbourhood of $t$ with $J$ also has positive measure. Defining $J_n \coloneqq [t-\frac{1}{n},t+\frac{1}{n}] \cap J$ then results in a sequence of subsets of $J$ of positive measure satisfying $\liminf_n J_n = \limsup_n J_n = t$. Since we have $H_{p\to q}(h^*,\shiftN,\Delta) \in M_i(h^*)$, \cref{ass:closedTime,ass:closedSpace} ensure that $H_{p \to q}(h^*,\frac{1}{n}\shiftN\cdot\CharF[J_n],\Delta) \in M(h^*)$ holds for all $n \in \INs$ as well. Furthermore, we have $\int_{\tStart}^{\tEnd}\frac{1}{n}\shiftN(t')\cdot\CharF[J_n](t')\diff t' = \frac{1}{n}\int_{J_n}\shiftN(t')\diff t' > 0$ for all $n$ since $\shiftN(t') > 0$ for all $t' \in J$ and $J_n \subseteq J$ has positive measure. Altogether, this shows that $h^*$ is not an \SCDE{} by \Cref{lemma:NegativeCharacterization}.
\end{proof}

\begin{remark}
	To see why we need \cref{ass:closedTime} in the statement of \Cref{thm:VI-fixed-inflow:nessecary}, consider the \setS{} $S_2$ from \Cref{obs:ClosedSpaceTimeConvex} and define the set of \addmEpsDev s 
		\[A_{p}(h^1) \coloneqq \set{(q,\varepsilon\cdot\CharF[[0,2]],0) | \varepsilon \geq 0} \cup \set{(p,\varepsilon\cdot\CharF[J],0) | J \subseteq [0,2] \text{ measurable}, \varepsilon \geq 0}\]
	as in the \namecref{obs:ClosedSpaceTimeConvex}. Clearly, this set $A_{p}(h^1)$  is not closed under time restriction. Furthermore, we have $U_{p}(h^1,t) = \set{(p,0)}$ for all $t \in [0,2]$ and, therefore, $h^1$ is a \SCDE. However, it is also easy to see that for certain choices of the \effWalkDelay{} operators $\Psi_p$ and $\Psi_q$ the flow $h^1$ is not a solution to the quasi-variational inequality \eqref{eq:QVI-SCDE} (\eg choose constant flow independent delays $\Psi_p \equiv 2$ and $\Psi_q \equiv 1$).
\end{remark}

\subsection{Variational Inequality}

Quasi-variational inequalities may be much harder to solve compared to standard variational inequalities since the feasible search space depends on the  solution itself. However, under two additional assumptions, we can also use the following variational inequality to characterize \SCDE{}:s
\begin{equation}\label{eq:VI-SCDE}\tag{\ensuremath{\VI(\Psi,S)}}
	\begin{aligned}
		\text{Find }h^* \in  S  \text{ such that:}&\\
		\scalar{\Psi(h^*)}{h-h^*} &\geq 0 \text{ for all }h \in S.
	\end{aligned}
\end{equation}
Note, that this variational inequality is of exactly the form of~\eqref{eq:ZhongVI} used by \citefulls{zhong11} to define their version of side-constrained dynamic equilibria.

For the sufficiency part, we need the following additional assumption stating that small enough \addmEpsDev s from a feasible flow lead to another feasible flow:
\begin{enumerate}[label=(A\arabic*),resume=Assumptions]
	\item For any $h \in S$ there exists some neighbourhood $V_h$ of $h$ such that $M(h) \cap V_h \subseteq S$.\label[asmpt]{ass:addmDevLeadToFeasFlow}
\end{enumerate}
Note that this assumption is trivially satisfied for \globalSCDE{} as, in this case, by definition any \addmEpsDev{} leads to a feasible flow.

\begin{theorem}\label{thm:VI-sufficient}
	Take any constraint set $S$ and \addmEpsDev s $(A_p)_{p \in \Pc}$ satisfying \labelcref{ass:closedSpace,ass:addmDevLeadToFeasFlow}. Furthermore, assume that $\Psi$ satisfies \ref{ass:EffectivePathDelayContinuous}.
	Then, any solution $h^*$ to the variational inequality~\eqref{eq:VI-SCDE} is an \SCDE{} \wrt $S$ and $(A_p)_{p \in \Pc}$.
\end{theorem}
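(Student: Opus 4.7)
The plan is to reduce this theorem to the previously established \Cref{thm:VI-fixed-inflow:sufficient}, which already states that any solution to the quasi-variational inequality \eqref{eq:QVI-SCDE} under \ref{ass:closedSpace} and \ref{ass:EffectivePathDelayContinuous} is an \SCDE. Since both assumptions are among the hypotheses of the present theorem, it suffices to show that any solution $h^*$ of the (stronger-looking, but not quite stronger) variational inequality \eqref{eq:VI-SCDE} is in fact also a solution of \eqref{eq:QVI-SCDE}. The subtlety here is that \eqref{eq:VI-SCDE} only tests $h^*$ against directions $h-h^*$ with $h \in S$, whereas \eqref{eq:QVI-SCDE} tests against directions in the tangent cone $T(A_p,h^*)$, which a priori may contain directions pointing out of $S$ (this is precisely what allows non-convex $S$ in the framework).

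This is exactly where \ref{ass:addmDevLeadToFeasFlow} will do the work: small admissible deviations from $h^*$ never leave $S$. Concretely, I would pick any $v \in T(A_p,h^*)$ together with witnessing sequences $(h^n)_{n\in\IN} \subset M(h^*)$ and $(t_n)_{n\in\IN} \subset \IR_{>0}$ with $t_n \to 0$ and $\tfrac{h^n - h^*}{t_n} \to v$ in $L^2(\planningInterval)^\Pc$. Then $\|h^n - h^*\|_{L^2} = t_n\|v\|_{L^2} + o(t_n) \to 0$, so $h^n \to h^*$ strongly. Using the neighbourhood $V_{h^*}$ from \ref{ass:addmDevLeadToFeasFlow} together with $h^n \in M(h^*)$, we have $h^n \in M(h^*) \cap V_{h^*} \subseteq S$ for all sufficiently large $n$.

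Plugging these feasible $h^n$ into \eqref{eq:VI-SCDE} yields $\scalar{\Psi(h^*)}{h^n - h^*} \geq 0$ for all large $n$. Dividing by $t_n > 0$ and passing to the limit (the scalar product is continuous in its second argument on $L^2(\planningInterval)^\Pc$ and $\Psi(h^*)$ is fixed, so this is immediate once one notes that $\Psi(h^*) \in L^2(\planningInterval)^\Pc$ by \ref{ass:EffectivePathDelayContinuous} combined with the bounded planning horizon), we obtain
\[
\scalar{\Psi(h^*)}{v} \;=\; \lim_{n \to \infty} \scalar{\Psi(h^*)}{\tfrac{h^n - h^*}{t_n}} \;\geq\; 0.
\]
Since $v \in T(A_p,h^*)$ was arbitrary, $h^*$ solves \eqref{eq:QVI-SCDE}, and an application of \Cref{thm:VI-fixed-inflow:sufficient} finishes the proof.

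The only point requiring any real care is the transition from $h^n \in M(h^*)$ to $h^n \in S$: without \ref{ass:addmDevLeadToFeasFlow} this step would fail, which is exactly why this assumption (trivially satisfied for \globalSCDE{} by definition) is the key ingredient distinguishing the VI characterization from the QVI characterization. Everything else amounts to a straightforward continuity and limit argument.
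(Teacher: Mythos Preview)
Your proposal is correct and follows essentially the same approach as the paper: show that a solution of \eqref{eq:VI-SCDE} solves \eqref{eq:QVI-SCDE} by taking approximating sequences from the tangent cone, noting they eventually lie in $S$ via \ref{ass:addmDevLeadToFeasFlow}, and then invoke \Cref{thm:VI-fixed-inflow:sufficient}. In fact you make explicit the point that $h^n \to h^*$ strongly (which is what actually allows you to use the neighbourhood $V_{h^*}$ from \ref{ass:addmDevLeadToFeasFlow}), a step the paper leaves implicit.
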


\begin{proof}
	Let $h^* \in S$ be a solution to the variational inequality~\eqref{eq:VI-SCDE}. We claim that $h^*$ is then also a solution to the quasi-variational inequality~\eqref{eq:QVI-SCDE}. In order to show this, take any $v \in T(S,A_p,h^*)$. Then there exist sequences of $(h^n)_{n \in \IN} \subset M(h^*)$ and $(t_n)_{n \in \IN} \subset \IR_{>0}$ such that $\lim_{n\rightarrow\infty}\frac{h^n-h^*}{t_n} = v$. Using the continuity of $\scalar{.}{.}$, we get
	\begin{align*}
		\scalar{\Psi(h^*)}{v} = \scalar{\Psi(h^*)}{ \lim_{n\rightarrow\infty}\tfrac{h^n-h^*}{t_n}} =  \lim_{n\rightarrow\infty}\tfrac{1}{t_n}\scalar{\Psi(h^*)}{h^n-h^*} \geq 0.
	\end{align*}
	The inequality at the end holds since $h^n \in M(h)$ implies $h^n \in S$ for large enough $n$ due to \ref{ass:addmDevLeadToFeasFlow} and since $h^*$ is a solution to \eqref{eq:VI-SCDE}. Thus, we can now apply \Cref{thm:VI-fixed-inflow:sufficient} to conclude that $h^*$ is indeed an \SCDE.
\end{proof}

Now, whenever in addition to the assumptions of \Cref{thm:VI-sufficient} we know that \eqref{eq:VI-SCDE} has a solution, we get a first existence result for \SCDE{} generalizing the existence theorem for unconstrained dynamic equilibria (\Cref{thm:ExistenceUnconstrained}):

\begin{corollary}\label{cor:ExistenceViaVI}
	Let $S$ be a convex, non-empty, closed and bounded set and $A_p$ satisfying \labelcref{ass:closedSpace,ass:addmDevLeadToFeasFlow}. Furthermore, assume that \ref{ass:FinitelyManyWalks} holds and $\Psi$ satisfies \labelcref{ass:PsiBounded,ass:EffectivePathDelayContinuous,ass:PsiWScont}. Then, there exists an \SCDE{} \wrt $S$ and $A_p$.
\end{corollary}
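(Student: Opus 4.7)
The plan is to combine Lions' theorem (\Cref{thm:Lions}) with \Cref{thm:VI-sufficient}. That is, I would first establish the existence of a solution $h^*$ to the variational inequality \eqref{eq:VI-SCDE} via \Cref{thm:Lions}, and then invoke \Cref{thm:VI-sufficient} to conclude that this $h^*$ is indeed an \SCDE{} with respect to $S$ and $(A_p)_{p \in \Pc}$.

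For the first step, I need to verify the hypotheses of \Cref{thm:Lions} with $C := S$ and $\A := \Psi$. By assumption, $S \subseteq L^2(\planningInterval)^\Pc$ is non-empty, convex, closed and bounded, so it only remains to show that $\Psi \colon S \to L^2(\planningInterval)^\Pc$ is well-defined and sequentially weak-strong continuous. Well-definedness follows from \ref{ass:PsiBounded}: for each $p \in \Pc$ and $h \in S$, the function $\Psi_p(h,\cdot)$ is bounded and measurable on the bounded interval $\planningInterval$, hence lies in $L^2(\planningInterval)$; since by \ref{ass:FinitelyManyWalks} the index set $\Pc$ is finite, we obtain $\Psi(h) \in L^2(\planningInterval)^\Pc$.

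For the sequential weak-strong continuity of $\Psi$, consider any sequence $(h^n)_{n \in \IN} \subseteq S$ converging weakly in $L^2(\planningInterval)^\Pc$ to some $h \in S$ (closedness of $S$ together with convexity implies weak closedness). Assumption \ref{ass:PsiWScont} states that for each $p \in \Pc$ the map $h \mapsto \Psi_p(h,\cdot)$ is sequentially weak-strong continuous into $L^2(\planningInterval)$, so $\Psi_p(h^n,\cdot) \to \Psi_p(h,\cdot)$ strongly in $L^2(\planningInterval)$ for every $p$. Since $\Pc$ is finite by \ref{ass:FinitelyManyWalks}, strong convergence in each coordinate is equivalent to strong convergence in the finite product space $L^2(\planningInterval)^\Pc$, giving $\Psi(h^n) \to \Psi(h)$ strongly. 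Thus \Cref{thm:Lions} applies and yields some $h^* \in S$ solving \eqref{eq:VI-SCDE}.

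Finally, the hypotheses of \Cref{thm:VI-sufficient} are exactly \ref{ass:closedSpace}, \ref{ass:addmDevLeadToFeasFlow} and \ref{ass:EffectivePathDelayContinuous}, all of which are among the assumptions of the corollary. Applying that theorem to the VI solution $h^*$ yields that $h^*$ is an \SCDE{} with respect to $S$ and $(A_p)_{p \in \Pc}$. I do not expect any real obstacle here since the proof is purely an assembly of existing pieces; the only nontrivial bookkeeping is observing that componentwise weak-strong continuity of $\Psi$ lifts to the finite product, which is where the finiteness assumption \ref{ass:FinitelyManyWalks} enters in an essential way.
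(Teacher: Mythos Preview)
Your proposal is correct and follows exactly the same approach as the paper: apply \Cref{thm:Lions} to obtain a solution of \eqref{eq:VI-SCDE} and then invoke \Cref{thm:VI-sufficient} to conclude that this solution is an \SCDE. Your additional verification that $\Psi$ is well-defined into $L^2(\planningInterval)^\Pc$ and sequentially weak-strong continuous on the finite product (using \ref{ass:PsiBounded} and \ref{ass:FinitelyManyWalks}) is correct and simply makes explicit what the paper leaves implicit.
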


\begin{proof}
	By \Cref{thm:Lions} the variational inequality \eqref{eq:VI-SCDE} has a solution, which, by \Cref{thm:VI-sufficient}, is an \SCDE.
\end{proof}

\begin{remark}
	Note that for \globalSCDE{} with convex feasibility set~$S$ both \cref{ass:addmDevLeadToFeasFlow,ass:closedSpace} hold automatically. In particular, the existence result for capacitated dynamic equilibria in \cite[Theorem 6]{GHP22} is a special case of the above \namecref{cor:ExistenceViaVI}.
	
	On the other hand, the counterexamples from \Cref{sec:counter} shows not only that $S$ defined by volume-constraints can be non-convex, but also that \cref{ass:closedSpace} does not necessarily hold for \globalSCDE{} with such feasibility sets. Thus, even in cases where the variational inequality~\eqref{eq:VI-SCDE} has a solution, it is not clear whether such a solution is also an \SCDE.
\end{remark}

For the necessity part we only consider the case of fixed network inflow rates and require the following additional property of the constraint set $S$ and the \addmEpsDev s~$A_p$.

\begin{definition}\label{def:elementary}
	The set $S \subseteq \Lambda(r)$ is called \emph{closed with respect to elementary directions}, 
	if for all $h,h'\in S$ the following holds true:
	Whenever there exist $i\in I, p,q\in \Pc_i$ and $J \subset \planningInterval$ with positive measure such that $h_p(t) - h'_p(t) > 0$ and $h'_q(t) - h_q(t) > 0$
	for all $t\in J$, we have $(q,0) \in U_{p}(h,t)$ for some $t \in J$.
\end{definition}

This property states that in any walk inflow $h \in S$ particles are allowed to switch from some walk $p$ to another walk $q$ if there exists another feasible walk inflow $h'$ which has a lower inflow rate into $p$ and (during the same time) a higher inflow rate into $q$. 

\begin{theorem}\label{thm:VI-necessary}
	Take any constraint set $S \subseteq \Lambda(r)$ with fixed inflow rates and \addmEpsDev s $(A_p)_{p \in \Pc}$ such that $S$ is closed with respect to elementary directions. Then, every \SCDE{} $h^*\in S$ \wrt $S$ and $(A_p)_{p \in \Pc}$ is a solution to the variational inequality~\eqref{eq:VI-SCDE}.
\end{theorem}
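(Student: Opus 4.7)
The plan is to fix an arbitrary $h \in S$ and show $\scalar{\Psi(h^*)}{h - h^*} \geq 0$ via a pointwise argument on the integrand. First I would note that because both $h, h^* \in \Lambda(r)$, for each commodity $i$ the identity $\sum_{p \in \Pc_i}(h_p(t) - h^*_p(t)) = 0$ holds for almost every $t \in \planningInterval$. Writing $P^+_i(t) \coloneqq \Set{p \in \Pc_i | h_p(t) > h^*_p(t)}$ and $P^-_i(t) \coloneqq \Set{p \in \Pc_i | h_p(t) < h^*_p(t)}$, the goal is to show that, at almost every $t$, every ``losing'' path $p \in P^-_i(t)$ has effective walk delay no larger than every ``gaining'' path $q \in P^+_i(t)$; combined with the above balance relation this is enough to conclude.

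The core step uses closedness with respect to elementary directions. For each ordered pair $p, q \in \Pc_i$ I would define $J_{p,q} \coloneqq \Set{t \in \planningInterval | h^*_p(t) > h_p(t),\ h_q(t) > h^*_q(t)}$ and $K_{p,q} \coloneqq \Set{t \in J_{p,q} | (q, 0) \in U_p(h^*, t)}$, and aim to show that $J_{p,q} \setminus K_{p,q}$ has measure zero. This is the step I expect to be the main obstacle: \Cref{def:elementary} only guarantees $(q,0) \in U_p(h^*,t)$ for \emph{some} $t$, whereas I need it for \emph{almost every} $t$. I would handle this by contradiction: if $J_{p,q} \setminus K_{p,q}$ had positive measure then re-applying \Cref{def:elementary} with that set playing the role of $J$ (and $h^*, h$ in the roles of $h, h'$) would produce a point of $J_{p,q} \setminus K_{p,q}$ at which $(q, 0) \in U_p(h^*, t)$, directly contradicting the definition of $K_{p,q}$. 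Combined with the SCDE condition~\eqref{eq:CDE} this yields $\Psi_p(h^*, t) \leq \Psi_q(h^*, t)$ for almost every $t \in J_{p,q}$, and taking a countable union over the (at most countable) pairs $(p,q)$ and commodities $i$ removes a single null set $N$ outside of which the inequality holds for \emph{all} simultaneously ``active'' pairs.

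Having established this ordering, the remainder is a routine min--max gap calculation. For $t \in \planningInterval \setminus N$ and a commodity $i$ with both $P^{\pm}_i(t)$ non-empty, set $\alpha_i(t) \coloneqq \max_{p \in P^-_i(t)} \Psi_p(h^*, t)$, $\beta_i(t) \coloneqq \min_{q \in P^+_i(t)} \Psi_q(h^*, t)$, and $X_i(t) \coloneqq \sum_{p \in P^+_i(t)}(h_p(t) - h^*_p(t)) = \sum_{p \in P^-_i(t)}(h^*_p(t) - h_p(t)) \geq 0$. The previous step gives $\alpha_i(t) \leq \beta_i(t)$, whence
\begin{equation*}
	\sum_{p \in \Pc_i} \Psi_p(h^*, t)\bigl(h_p(t) - h^*_p(t)\bigr) \geq \beta_i(t) X_i(t) - \alpha_i(t) X_i(t) \geq 0.
\end{equation*}
If instead one of $P^{\pm}_i(t)$ is empty, the balance relation forces both to be empty and the commodity-$i$ contribution vanishes. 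Summing over $i$ and integrating over $\planningInterval$ then gives $\scalar{\Psi(h^*)}{h - h^*} \geq 0$, completing the argument.
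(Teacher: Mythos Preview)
Your overall strategy is sound and genuinely different from the paper's argument. The paper proceeds by contradiction: assuming $\scalar{\Psi(h^*)}{h-h^*}<0$, it constructs an explicit \emph{transshipment decomposition} $h-h^*=\sum_{p,q}h_{p\to q}$ via functions $g_{p\to q}(t)$ that split the pointwise imbalance at each time $t$ into elementary $p\to q$ moves (essentially solving a bipartite transportation problem at every $t$). Linearity then gives $\sum_{p,q}\int g_{p\to q}(t)(\Psi_p-\Psi_q)\,\mathrm{d}t<0$, so some single pair $(p,q)$ contributes a negative term on a set of positive measure; one application of closedness with respect to elementary directions and the SCDE condition then yields the contradiction. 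Your route avoids this decomposition entirely: you handle each pair $(p,q)$ separately, upgrade the ``some $t$'' in \Cref{def:elementary} to ``almost every $t$'' by iterating the definition on the exceptional set, and finish with a clean pointwise min--max estimate. The paper's approach buys a single invocation of the elementary-directions property and exhibits the decomposition explicitly; yours buys a more elementary direct proof that sidesteps the transshipment claim altogether.

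One small technical repair is needed in your core step. The set $K_{p,q}=\{t\in J_{p,q}:(q,0)\in U_p(h^*,t)\}$ need not be measurable---$U_p$ is defined by a quantifier over the abstract correspondence $A_p$---so you cannot directly speak of $J_{p,q}\setminus K_{p,q}$ having positive measure in order to feed it back into \Cref{def:elementary}. The fix is to fold the SCDE step into the contradiction: work instead with the \emph{measurable} set $B_{p,q}\coloneqq\{t\in J_{p,q}:\Psi_p(h^*,t)>\Psi_q(h^*,t)\}$. If $B_{p,q}$ had positive measure, \Cref{def:elementary} applied to $J=B_{p,q}$ yields some $t\in B_{p,q}$ with $(q,0)\in U_p(h^*,t)$, and then the SCDE condition gives $\Psi_p(h^*,t)\leq\Psi_q(h^*,t)$, contradicting $t\in B_{p,q}$. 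With this adjustment your argument goes through as written.
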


\begin{proof}
	Let $h^* \in S$ be an \SCDE. We have to show that $h^*$ is a solution to~\eqref{eq:VI-SCDE}, \ie that for any $h\in S$ we have $\scalar{\Psi(h^*)}{h-h^*} \geq 0$. So, assume for contradiction that this is not the case, \ie there exists some $h \in S$ with $\scalar{\Psi(h^*)}{h-h^*} < 0$.
	
	We now define for any pair of walks $p$ and $q$ a function $g_{p \to q}: \planningInterval \to \IR_{\geq 0}$ by 
	\begin{align*}
		g_{p \to q}(t) \coloneqq \begin{cases}
			\left(h_p(t)-h^*_p(t)\right)\cdot\frac{h^*_q(t)-h_q(t)}{\sum_{\substack{q' \in \Pc:\\ h_{q'}(t) < h^*_{q'}(t)}}\left(h^*_{q'}(t)-h_{q'}(t)\right)}, &\text{ if } h_p(t) > h^*_p(t), h_q(t) < h^*_q(t) \\
			0,												&\text{ else }.
		\end{cases}
	\end{align*}
	First, we observe that these functions are non-negative, bounded, well-defined and measurable. Now we define for each of these functions a corresponding function $h_{p \to q} \in L^2(\planningInterval)^\Pc$ by setting
	\begin{equation}
		(h_{p\rightarrow q})_w(t) \coloneqq 
		\begin{cases} 
			0, 								&\text{ if }w\not\in \{p,q\}\\
			g_{p\rightarrow q}(t)\in \R_+, 	&\text{ if }w=p\\
			-g_{p\rightarrow q}(t)\in \R_-, &\text{ if }w=q.
		\end{cases}
	\end{equation}
	We now claim that these functions add up to precisely the difference between $h$ and $h^*$:
	
	\begin{claim}\label{claim:TranshipmentSolution}
		We have $h-h^* = \sum_{p,q \in \Pc}h_{p\to q}$.
	\end{claim}
	
	\begin{proofClaim}
		Let $w \in \Pc$ be any walk and $t \in \planningInterval$ be any time. Then, we distinguish three cases:
		\begin{proofbycases}
			\caseitem{$h_w(t) = h^*_w(t)$} In this case, we have $g_{p \to w}(t) = 0 = g_{w \to q}(0)$ for all $p, q \in \Pc$ and, thus,
			\begin{align*}
				\left(\sum_{p,q \in \Pc}h_{p\to q}\right)_w 
				&= \left(\sum_{p \in \Pc}h_{p\to w}\right)_w + \left(\sum_{q \in \Pc}h_{w\to q}\right)_w \\
				&= \sum_{p \in \Pc}-g_{p\to w} + \sum_{q \in \Pc}g_{w\to q} = 0 = (h_w(t) - h^*_w(t)).\\
			\end{align*}
			
			\caseitem{$h_w(t) > h^*_w(t)$} In this case, we have $g_{p \to w}(t) = 0$ for all $p \in \Pc$ and $g_{w \to q}(t) = 0$ for all $q \in \Pc$ with $h_q(t) \geq h^*_q(t)$. For all other $q$, we have $g_{w \to q}(t) = \left(h_w(t)-h^*_w(t)\right)\cdot\frac{h^*_q(t)-h_q(t)}{\sum_{q' \in \Pc: h_{q'}(t) < h^*_{q'}(t)}\left(h^*_{q'}(t)-h_{q'}(t)\right)}$ and, thus,
			\begin{align*}
				\left(\sum_{p,q \in \Pc}h_{p\to q}\right)_w 
				&= \sum_{p \in \Pc}-g_{p\to w} + \sum_{q \in \Pc}g_{w\to q} \\
				&= 0 + \sum_{\substack{q \in \Pc:\\ h_q(t) < h^*_q(t)}}\left(h_w(t)-h^*_w(t)\right)\cdot\frac{h^*_q(t)-h_q(t)}{\sum_{q' \in \Pc: h_{q'}(t) < h^*_{q'}(t)}\left(h^*_{q'}(t)-h_{q'}(t)\right)} \\
				&= h_w(t)-h^*_w(t).
			\end{align*}
			Note, that we need fixed inflow rates (\ie $S \subseteq \Lambda(r)$) here to ensure that there exists at least one walk $q$ with $h_q(t)-h^*_q(t)$. This is used in the last equality.
			
			\caseitem{$h_w(t) < h^*_w(t)$} In this case, we have $g_{w \to q}(t) = 0$ for all $q \in \Pc$ and $g_{p \to w}(t) = 0$ for all $p \in \Pc$ with $h_p(t) \leq h^*_p(t)$. For all other $p$, we have $g_{p \to w}(t) = \left(h_p(t)-h^*_p(t)\right)\cdot\frac{h^*_w(t)-h_w(t)}{\sum_{q' \in \Pc: h_{q'}(t) < h^*_{q'}(t)}\left(h^*_{q'}(t)-h_{q'}(t)\right)}$ and, thus,
			\begin{align*}
				\left(\sum_{p,q \in \Pc}h_{p\to q}\right)_w 
				&= \sum_{p \in \Pc}-g_{p\to w} + \sum_{q \in \Pc}g_{w\to q}& \\
				&= -\sum_{\mathclap{\substack{p \in \Pc:\\ h_p(t) > h^*_p(t)}}}\left(h_p(t)-h^*_p(t)\right)\cdot\frac{h^*_w(t)-h_w(t)}{\sum_{q' \in \Pc: h_{q'}(t) < h^*_{q'}(t)}\left(h^*_{q'}(t)-h_{q'}(t)\right)} + 0& \\
				&= -\left(h^*_w(t)-h_w(t)\right). &\qedhere
			\end{align*}
		\end{proofbycases}
	\end{proofClaim}
	
	Now, using our initial assumption $\scalar{\Psi(h^*)}{h-h^*} < 0$, we get
	\begin{align*}
		0 &> \scalar{\Psi(h^*)}{h-h^*} 
		\overset{\text{\Cref{claim:TranshipmentSolution}}}{=} \scalar{\Psi(h^*)}{\sum_{p,q\in \Pc}h_{p\rightarrow q}}\\
		&=\sum_{p,q\in \Pc} \scalar{\Psi(h^*)}{h_{p\rightarrow q}}
		=\sum_{p,q\in \Pc}\sum_{w \in \Pc} \int_{\tStart}^{\tEnd} \Psi_w(h^*,t)\cdot (h_{p\rightarrow q})_w(t) dt \\
		&= \sum_{p,q\in \Pc} \int_{\tStart}^{\tEnd} \Psi_p(h^*,t)\cdot g_{p\rightarrow q}(t) dt + \int_{\tStart}^{\tEnd} \Psi_q(h^*,t)\cdot (-g_{p\rightarrow q}(t)) dt \\
		&= \sum_{p,q\in \Pc} \int_{\tStart}^{\tEnd}g_{p\rightarrow q}(t) (\Psi_p(h^*,t)-\Psi_q(h^*,t))\diff t.
	\end{align*}
	Since  $g_{p\rightarrow q}\geq 0$, there must be a subset $J\subset \planningInterval$ of positive measure
	with $g_{p\rightarrow q}(t)>0 $ and $\Psi_p(h^*,t)-\Psi_q(h^*,t)<0$ for all $t\in J$. 	
	As $g_{p\rightarrow q}(t)>0 $ implies $h^*_p(t)-h_p(t)<0$ and $h^*_q(t)-h_q(t)>0$ and $S$ is  closed with respect to elementary directions, we get $(p,0) \in U_{q}(h^*,t)$ for some time $t\in J$ which, together with $\Psi_p(h^*,t)-\Psi_q(h^*,t)<0$, contradicts that $h^*$ is an \SCDE.
\end{proof}

\begin{obs}
	For any fixed time $t \in \planningInterval$ the values $g_{p\rightarrow q}(t), p,q\in \Pc$ defined in the proof above solve a transshipment problem defined as follows:
	We create a complete bipartite graph $G=(V_1(t)\cup V_2(t), E(t))$,
	where nodes in $V_1(t) \subseteq \Pc$ are surplus nodes, that is, 
	they fulfil $b_p(t):=h_p(t)-h^*_p(t)>0$, and nodes in $ V_2(t)\subseteq \Pc$
	are deficit nodes fulfilling $b_q(t):=h_q(t)-h^*_q(t)<0$.
	Note that obviously $V_1(t)\cap V_2(t)=\emptyset$ for all $t\in \planningInterval$.
	For every arc $(p,q)\in E(t):=V_1(t)\times V_2(t)$ we define
	capacities $c_{(p,q)}(t):=\min\{b_p(t), b_q(t)\}$.
\end{obs}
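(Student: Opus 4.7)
The plan is to verify the three conditions that make $(g_{p\rightarrow q}(t))_{(p,q)\in E(t)}$ a feasible solution to the transshipment problem described: flow conservation at the surplus nodes, flow conservation at the deficit nodes, and the arc capacity constraints. All three verifications are direct calculations using the explicit formula for $g_{p\rightarrow q}(t)$ from the proof of \Cref{thm:VI-necessary}, so the main ``work'' is simply bookkeeping.

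First, for any surplus node $p\in V_1(t)$, I would compute the total outflow
\begin{equation*}
\sum_{q\in V_2(t)} g_{p\rightarrow q}(t) \;=\; b_p(t)\cdot\frac{\sum_{q\in V_2(t)}\bigl(h^*_q(t)-h_q(t)\bigr)}{\sum_{q'\in V_2(t)}\bigl(h^*_{q'}(t)-h_{q'}(t)\bigr)} \;=\; b_p(t),
\end{equation*}
so the entire supply is routed out. Second, for the deficit side I would establish the global balance identity $\sum_{p\in V_1(t)}b_p(t) = \sum_{q\in V_2(t)}(-b_q(t))$, which follows from $\sum_{w\in\Pc}(h_w(t)-h^*_w(t))=0$ (a direct consequence of $h,h^*\in\Lambda(r)$ sharing the same per-commodity inflow rate $r_i(t)$). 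With this identity, the inflow at any $q\in V_2(t)$ computes to
\begin{equation*}
\sum_{p\in V_1(t)} g_{p\rightarrow q}(t) \;=\; \frac{h^*_q(t)-h_q(t)}{\sum_{q'\in V_2(t)}\bigl(h^*_{q'}(t)-h_{q'}(t)\bigr)}\cdot\sum_{p\in V_1(t)}b_p(t) \;=\; h^*_q(t)-h_q(t) \;=\; -b_q(t),
\end{equation*}
which exactly matches the demand.

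Third, for the arc capacity constraints, I would first read off directly from the definition that $g_{p\rightarrow q}(t)$ is the product of $b_p(t)$ with a ratio in $[0,1]$, giving $g_{p\rightarrow q}(t)\leq b_p(t)$. Rewriting the same formula using the balance identity from the previous step yields $g_{p\rightarrow q}(t) = (h^*_q(t)-h_q(t))\cdot \frac{b_p(t)}{\sum_{p'\in V_1(t)}b_{p'}(t)} \leq h^*_q(t)-h_q(t) = -b_q(t)$. Combining the two bounds gives $g_{p\rightarrow q}(t)\leq \min\{b_p(t),-b_q(t)\}$, which is the intended capacity $c_{(p,q)}(t)$ (the literal expression $\min\{b_p(t),b_q(t)\}$ in the observation being understood as a typo, since $b_q(t)<0$ for $q\in V_2(t)$).

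The only genuine subtlety here is the balance identity used in the second and third steps, and this relies crucially on $S\subseteq \Lambda(r)$ (fixed inflow rates) so that the per-commodity inflow rates of $h$ and $h^*$ cancel. Everything else reduces to reading off the definition, so I do not expect a real obstacle; the observation is essentially just a reinterpretation of the Cauchy-Schwarz-style decomposition already carried out in the proof of \Cref{thm:VI-necessary}.
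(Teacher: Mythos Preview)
Your verification is correct and complete. The paper states this observation without proof, so you have supplied what the paper omits: a direct check of supply/demand balance and of the arc bounds, all read off from the explicit formula for $g_{p\to q}(t)$ given in the proof of \Cref{thm:VI-necessary}. Your identification of the balance identity $\sum_{p\in V_1(t)}b_p(t)=\sum_{q\in V_2(t)}(-b_q(t))$ as the one place where $S\subseteq\Lambda(r)$ is genuinely used is accurate, and your remark that the literal capacity $\min\{b_p(t),b_q(t)\}$ must be a typo for $\min\{b_p(t),-b_q(t)\}$ (equivalently $\min\{|b_p(t)|,|b_q(t)|\}$) is well taken, since $b_q(t)<0$ for $q\in V_2(t)$ would otherwise give negative capacities. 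The only cosmetic point: the phrase ``Cauchy--Schwarz-style decomposition'' at the end is a misnomer; the decomposition in the proof of \Cref{thm:VI-necessary} is a proportional allocation (each surplus is split across deficits in proportion to their sizes), not an application of Cauchy--Schwarz.
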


\begin{remark}
	Clearly $S=\Lambda(r)$ and $A_p$ defined by \eqref{eq:ApglobalSCDE} also satisfy the assumptions of \Cref{thm:VI-necessary}. Thus, \Cref{thm:VI-sufficient,thm:VI-necessary} together are a generalization of the characterization of unconstrained dynamic equilibria with fixed inflow rates by variational inequalities (\ie \Cref{thm:VICharOfDE}).
\end{remark}

Another interesting example for which \Cref{def:elementary} applies is the case of monotone box-constraints.
\begin{example}
	Consider continuous and non-decreasing functions $z_p:\R_+\rightarrow\R_+, p\in\Pc$ and continuous functions 	$v_p:\planningInterval\rightarrow\R_+, p\in\Pc$.
	We get that the set
		\[ S:=\Set{ h\in \Lambda(r) | z_p(h_p(t)) \leq v_p(t) \text{ for all } p\in\Pc \text{ and almost all } t \in \planningInterval}\]
	is closed with respect to elementary directions (for \addmEpsDev s defined by~\eqref{eq:ApglobalSCDE}).
	To see this, let $h,h'\in S$ with $h_p(t)-h'_p(t) > 0$ and $h'_q(t)-h_q(t) > 0$
	for all $t\in J$, where $J$ is some set of positive measure. 
	
	Then, there must also exist some $\varepsilon > 0$ and a subset $J' \subseteq J$ of positive measure with $h_p(t)-h'_p(t) \geq \varepsilon$ and $h'_q(t)-h_q(t) \geq \varepsilon$ for all $t \in J'$. We can then construct a sequence of sets $J_n \subseteq J'$ of positive measure satisfying $J_{n+1} \subseteq J_n$ and $\lim_n \inf J_n = \lim_n \sup J_n = t$ for some $t \in J'$.	
	Then, for any $n \in \INs$ we have
		\[ H_{p \to q}(h,\tfrac{\varepsilon}{n}\cdot\CharF[J_n],,0)_w(t) \leq \max\set{h_w(t),h'_w(t)}\]
	for all $t \in J_n$ and $w\in \Pc$. Since both $h$ and $h'$ are from $S$, this then implies
		\begin{align*}
			&z_w\left(H_{p \to q}(h,\tfrac{\varepsilon}{n}\cdot\CharF[J_n],0)_w(t)\right) \leq z_w\left(\max\set{h_w(t),h'_w(t)}\right) \\
			&\quad\quad\quad= \max\set{z_w(h_w(t)),z_w(h'_w(t))} \leq v_w(t)
		\end{align*}
	for almost all $t \in J_n$ and, thus, $H_{p \to q}(h,\tfrac{\varepsilon}{n}\cdot\CharF[J_n],0) \in S$. Since $\int_{J_n}\frac{\varepsilon}{n}\diff t' > 0$ and $t \in [\inf J_n,\sup J_n]$ hold as well, this is enough to show $(q,0) \in U_{p}(h,t)$.
\end{example}


\section{Edge Volume Constraints and Network-Loading}\label{sec:SCviaNL}

In this section, we will now come back to more concrete \SCDE[fulls], where flows are feasible whenever they obey certain ``capacity constraints'' on the edges of the network. For the following definitions we will only consider the case of fixed flow volume (and free departure time choice) but note that all definitions can be easily transferred to the case of fixed network inflow rates by choosing $S \cap \Lambda(r)$ instead of the sets $S$ defined here and only allowing \addmEpsDev s with $\Delta=0$. Furthermore, to simplify the notation we will not explicitly state the condition $H_{p \to q}(h,\shiftN,\Delta) \in \Lambda(Q)$ on elements of $A_p(h)$ whenever we define such sets here, \ie this condition should always be implicitly added when reading a definition of $A_p(h)$ in this section. Finally, statements of the form ``$\forall t \in \supp(\shiftN)\dots$'' for some $\shiftN \in L^2(\planningInterval)$ should always be interpreted as: ``There exists some representative of~$\shiftN$ such that $\forall t \in \supp(\shiftN)\dots$''.

In order to formally define \SCDE{} with capacity constraints, we assume that for every edge $e$, we are given a capacity function $c_e: \IR_{\geq 0} \to \IR$ and that the flow model is equipped with a weak form of network-loading associating with every walk inflow $h$ two types of functions:
\begin{itemize}
	\item \emph{Arrival time} functions $\tau^j_p(h,.): [t_0,t_f] \to \IR_{\geq 0}$ such that for every time $t \in \planningInterval$, walk $p =(v_1,v_2,\dots,v_{\abs{p}+1}) \in \Pc$ and $j \in \set{1,\dots,\abs{p}+1}$, the value $\tau^j_p(h,t)$ denotes the time at which a particle entering walk $p$ at time $t$ will arrive at the $j$-th node on walk $p$ (or, equivalently leaves the $(j-1)$-th edge/enters the $j$-th edge of walk $p$). In particular, $\tau^1_p(h,t)$ denotes the time at which a particle starts its journey and  $\tau^{\abs{p}+1}_p(h,t)$ denotes the time at which the particle arrives at the end of walk $p$ (\ie the sink).
	\item \emph{Edge-load} functions $f_e(h,.): \IR_{\geq 0} \to \IR$ such that for every edge $e$ and time $\theta \in \IR_{\geq 0}$, the value $f_e(h,\theta)$ denotes some measure of the flow induced by $h$ on edge $e$ at time~$\theta$ and which, in a feasible flow, has to be bounded by the edge capacity $c_e(\theta)$. 
\end{itemize}

\begin{remark}
	Given a model with a full network-loading (as described in \Cref{sec:counter}), the arrival time functions would be defined using the corresponding walk-delay functions $D_p$, i.e.
		\[\tau_p^j(h,t) \coloneqq t + D_{p|_{j-1}}(h,t),\]
	where $p|_{j-1}$ is the prefix of $p$ of length $j-1$. The edge-load function could then, for example, be the flow volume $\flowVolume[e](h,\theta)$, the queue length $q_e(h,\theta)$, the cumulative inflow $\int_0^\theta f^+_e(\zeta)\diff\zeta$ or the current edge inflow rate $f^+_e(\theta)$.
\end{remark}

We now define the set of all walk inflows resulting in network flows obeying these edge capacities by
\begin{align}\label{eq:FeasibilitySetforEdgeLoad}
	S \coloneqq \Set{h \in \Lambda(Q) | f_e(h,\theta) \leq c_e(\theta) \text{ \fa } e \in E \text{ and } \theta \in \IR_{\geq 0}}.
\end{align}
Using this \setS{} in order to define a \globalSCDE{} (\cf \Cref{def:strongCDE}) results in an equilibrium that we call \emph{\globalEL[full] (\globalEL)}. 
Since, as discussed in \Cref{sec:counter}, the \setS{} defined by \eqref{eq:FeasibilitySetforEdgeLoad} need not be convex, we can, in general, not use the corresponding variational inequality~\eqref{eq:VI-SCDE} to characterize those types of equilibria. Furthermore, the \addmDev s need not even satisfy \ref{ass:closedTime} or \ref{ass:closedSpace} (see \Cref{ex:NonConvexitOfEdgeLoadConstraints}), so we also cannot apply the quasi-variational inequality.
Another problem of \globalEL{} is that this definition is rather restrictive in what counts as an \addmEpsDev{} (and, in turn, leads to a rather weak type of equilibrium). Namely, particles are only allowed to deviate, if their deviation leads to a new flow which is again feasible for \emph{all} particles -- in particular also for the particles which are not themselves involved in the deviation. In \Cref{ex:DifferenceBetweenLocalAndGlobalFeas}, we provide an instance with a \globalSCDE{} in which particles seem to have a better alternative to their current route choice but are not allowed to deviate because that would lead to infeasibility for \emph{other} particles not involved in the deviation. 

\subsection{Dynamic Equilibria of Type LP, MNS and BS}

In the following,  we want to allow for deviations in which the deviating particles themselves will not violate the capacity constraints while ignoring potential violations by particles not directly involved in the deviation. To formalize this in terms of \addmEpsDev s, we need to answer two questions: At which (time) points during its potential alternative journey must a particle check whether it would violate some capacity constraint and how does a particle check whether it would violate a capacity constraint? For the first question we will consider two potential answers 
\begin{enumerate}[label=\alph*)]
	\item Whenever it enters a new edge, \ie at all points $\tau^j_q(h,t)$ for $j = 1, \dots, \abs{q}$ or
	\item Whenever a particle travels along an edge, \ie at all points $\theta \in [\tau^j_q(h,t),\tau^{j+1}_q(h,t)]$ for $j = 1, \dots, \abs{q}$. 
\end{enumerate}
Since a) clearly allows more deviations than b), we will call equilibria using a) \emph{strong} and equilibria using b) \emph{weak}. For the second question we will propose three different answers. First, we can follow the approach of Larsson and Patriksson in the static model (\cf \Cref{def:weakWE}) and require that alternative walks are truly unsaturated at the time of deviation. In other words, at any time at which a deviating particle would arrive at an edge/travel along an edge of the alternative walk (according to the travel times of the current flow), there must be some additional room left on this edge (i.e., $f_e < c_e$ must hold). We can formalize this by
\begin{align}\label{eq:FeasibleDeviationsAlwaysAdditionalSpaceEnter}
	A_{p}(h) \coloneqq \Set{(q,\shiftN,\Delta) | \begin{array}{l}
			\forall t \in \supp(\shiftN)+\Delta, e = (v_j,v_{j+1}) \in q: \\
			f_e(h,\tau^j_{q}(h,t)) < c_e(\tau^j_{q}(h,t))
		\end{array}}
\end{align}
and
\begin{align}\label{eq:FeasibleDeviationsAlwaysAdditionalSpaceTravelling}
	A_{p}(h) \coloneqq \Set{(q,\shiftN,\Delta) | \begin{array}{l}
			\forall t \in \supp(\shiftN)+\Delta, e = (v_j,v_{j+1}) \in q: \\
			f_e(h,\theta) < c_e(\theta) \text{ \fa } \theta \in [\tau^j_q(h,t),\tau^{j+1}_q(h,t)]
		\end{array}}.
\end{align}
We call the resulting equilibrium a \emph{\sCDEu[full] (\sCDEu)} or a  \emph{\wCDEu[full] (\wCDEu)}, respectively. While this equilibrium notion seems quite intuitive, it has the same drawback as noted by \citeauthor*{Marcotte04} for LP-equilibria in the static model: Namely, one can consider a network consisting of two paths of different length that share their first edge. Then, a flow which only sends flow over the longer path can still be an equilibrium if this flow completely uses the available capacity on the shared first edge. 

A more lenient definition, thus, would allow $f_e \leq c_e$ to be tight on any common prefix of $p$ and $q$, \ie 
\begin{align}\label{eq:FeasibleDeviationsAdditionalSpaceExceptCommonPrefixEnter}
	A_{p}(h) &\coloneqq \Set{\!(q,\shiftN,0) | \!\!\begin{array}{l}
			\exists \text{ a common prefix } w \text{ of } p \text{ and } q \text{ with } q = w\tilde{q} \text{ and} \\
			\forall t \in \supp(\shiftN), e = (v_j,v_{j+1}) \in \tilde{q}: f_e(h,\tau^j_{q}(h,t)) < c_e(\tau^j_{q}(h,t)))
	\end{array}\!\!\!} \\
	& \cup \Set{(q,\shiftN,\Delta) | \!\begin{array}{l}
			\forall t \in \supp(\shiftN)+\Delta, e = (v_j,v_{j+1}) \in q: f_e(h,\tau^j_{q}(h,t)) < c_e(\tau^j_{q}(h,t))
	\end{array}\!}\notag
\end{align}
and 
\begin{align}
	A_{p}(h) &\coloneqq \Set{(q,\shiftN,0) | \begin{array}{l}
			\exists \text{ a common prefix } w \text{ of } p \text{ and } q \text{ with } q = w\tilde{q} \text{ and} \\
			\forall t \in \supp(\shiftN), e = (v_j,v_{j+1}) \in \tilde{q}: \\ f_e(h,\theta) < c_e(\theta) \text{ for all } \theta \in [\tau^j_q(h,t),\tau^{j+1}_q(h,t)]
	\end{array}\!\!\!} \notag\\
	& \cup \Set{(q,\shiftN,\Delta) | \begin{array}{l}
			\forall t \in \supp(\shiftN)+\Delta, e = (v_j,v_{j+1}) \in q: \\ f_e(h,\theta) < c_e(\theta) \text{ for all } \theta \in [\tau^j_q(h,t),\tau^{j+1}_q(h,t)]
	\end{array}}.\label{eq:FeasibleDeviationsAdditionalSpaceExceptCommonPrefixTravelling}
\end{align}
Here, $w\tilde{q}$ denotes the walk obtained by concatenating the walks $w$ and $\tilde{q}$. As this definition is inspired by the drawbacks of the LP-equilibrium noted by \citeauthor*{Marcotte04}, we call the resulting equilibrium a \emph{\sCDEuP[full] (\sCDEuP)} and a \emph{\wCDEuP[full] (\wCDEuP)}, respectively.

Finally, we can also require  -- similarly to the \BS-equilibrium from the static model -- that  the new flow (obtained after a potential deviation) must satisfy the capacity constraints at all points relevant for the deviating particles. This is formalized in the following two types of \addmEpsDev s:
\begin{align}\label{eq:UnsaturatedPathsFeasibleAfterDeviationEnter}
	A_{p}(h) \coloneqq \Set{\!(q,\shiftN,\Delta) |\!\! \begin{array}{l}
				\forall t \in \supp(\shiftN)+\Delta, e = (v_j,v_{j+1}) \in q: \\ 	
				f_e(h',\tau_q^j(h',t)) \leq c_e(\tau_q^j(h',t)) \text{ where } h' = H_{p \to q}(h,\shiftN,\Delta)
			\end{array}\!\!\!}
\end{align}
and
\begin{align}\label{eq:UnsaturatedPathsFeasibleAfterDeviationTravelling}
	A_{p}(h) \coloneqq \Set{\!(q,\shiftN,\Delta) |\!\! \begin{array}{l}
				\forall t \in \supp(\shiftN)+\Delta, e = (v_j,v_{j+1}) \in q, \theta \in [\tau^j_q(h',t),\tau^{j+1}_q(h',t)]: \\
				f_e(h',\theta) \leq c_e(\theta) \text{ where } h' = H_{p \to q}(h,\shiftN,\Delta)
			\end{array}\!\!\!}.
\end{align}
We call this type of equilibrium a \emph{\sCDEdf[full] (\sCDEdf)} or a \emph{\wCDEdf[full] (\wCDEdf)}, respectively, as the \BS[full] in the static model also requires that the costs cannot decrease by switching to any other walk whenever such a switch would result in another feasible (static) flow (\cf \Cref{def:BS}). Note, however, that in the static model, feasibility for the particles involved in the deviation typically (if the cost functions are separable and nondecreasing) also ensures feasibility for all other particles -- thus, the difference between \globalEL{} and \sCDEdf{}/\wCDEdf{} vanishes there, whereas \globalEL{} are strictly weaker than both \sCDEdf{} and \wCDEdf{} in the dynamic setting.

The following definition summarizes all the different types of equilibria defined in this section:
\begin{definition}\label{def:TypesOfCDE}
	Let the \setS{} $S$ be defined in \eqref{eq:FeasibilitySetforEdgeLoad}. Then, a side-constrained dynamic equilibrium is 
	\begin{itemize}
		\item a \emph{\globalEL[full] (\globalEL)} if the \addmEpsDev{}s are defined by \eqref{eq:ApglobalSCDE},
		\item a \emph{\sCDEu[full] (\sCDEu)} if the \addmEpsDev{}s are defined by \eqref{eq:FeasibleDeviationsAlwaysAdditionalSpaceEnter},
		\item a \emph{\wCDEu[full] (\wCDEu)} if the \addmEpsDev{}s are defined by \eqref{eq:FeasibleDeviationsAlwaysAdditionalSpaceTravelling},
		\item a \emph{\sCDEuP[full] (\sCDEuP)} if the \addmEpsDev{}s are defined by \eqref{eq:FeasibleDeviationsAdditionalSpaceExceptCommonPrefixEnter} and
		\item a \emph{\wCDEuP[full] (\wCDEuP)} if the \addmEpsDev{}s are defined by \eqref{eq:FeasibleDeviationsAdditionalSpaceExceptCommonPrefixTravelling},
		\item a \emph{\sCDEdf[full] (\sCDEdf)} if the \addmEpsDev{}s are defined by \eqref{eq:UnsaturatedPathsFeasibleAfterDeviationEnter},
		\item a \emph{\wCDEdf[full] (\wCDEdf)} if the \addmEpsDev{}s are defined by \eqref{eq:UnsaturatedPathsFeasibleAfterDeviationTravelling}.
	\end{itemize}
\end{definition}

The relationships between these different equilibrium concepts are captured in the following \namecref{prop:RelationshipsOfCDE}:

\begin{proposition}\label{prop:RelationshipsOfCDE}
	Denoting the sets of equilibria by their respective names we have the following relations between them:
		\begin{center}
			\begin{tikzpicture}[node distance=3cm,every node/.append style={text depth=0.25ex}]
				\node(sL){\sCDEu};
				\node(wL)[right of=sL]{\wCDEu};
				\node(sM)[below of=sL, node distance=1cm]{\sCDEuP};
				\node(wM)[right of=sM]{\wCDEuP};
				
				\path(sL) --node[sloped]{$\subseteq$} (wL);
				\path(sM) --node[sloped]{$\subseteq$} (sL);
				\path(sM) --node[sloped]{$\subseteq$} (wM);
				\path(wM) --node[sloped]{$\subseteq$} (wL);
			\end{tikzpicture}
		\end{center}
	and
		\begin{center}
			\begin{tikzpicture}[node distance=3cm,every node/.append style={text depth=0.25ex}]
				\node(sB){\sCDEdf};
				\node(wB)[right of=sB]{\wCDEdf};
				\node(sC)[right of=wB]{\globalEL};
				
				\path(sB) --node[sloped]{$\subseteq$} (wB)
							--node[sloped]{$\subseteq$} (sC);
			\end{tikzpicture}
		\end{center}
	If both $f_e$ and $\tau^j_p$ depend continuously on $h$ while $f_e(h,.), \tau^j_p(h,.)$ and $c_e$ are all continuous functions, then we additionally have
		\begin{center}
			\begin{tikzpicture}[node distance=5cm,every node/.append style={text depth=0.25ex}]
				\node(sL){\sCDEu};
				\node(wL)[right of=sL]{\wCDEu};
				\node(sB)[below of=sL,node distance=1cm]{\sCDEdf};
				\node(wB)[right of=sB]{\wCDEdf};
				
				\path(sB) --node[sloped]{$\subseteq$} (sL);
				\path(wB) --node[sloped]{$\subseteq$} (wL);
				\path(sB) --node{and} (wL);
			\end{tikzpicture}
		\end{center}
	Furthermore, in general, all these inclusions are proper and the concepts of \globalEL{} and \sCDEu{} are independent of each other, \ie neither includes the other.
\end{proposition}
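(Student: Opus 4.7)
The plan is to reduce every inclusion claim to a containment statement about the sets of \addmEpsDev{}s via \Cref{lemma:ConditiononApForStrongerEquilibria}: to show that equilibria of type $X$ form a subset of equilibria of type $Y$, it suffices to exhibit, for every nontrivial $(q,\shiftN,\Delta) \in A_p^Y(h^*)$, some nontrivial $(q,\shiftN',\Delta) \in A_p^X(h^*)$ with $\shiftN' \leq \shiftN$. All inclusions not requiring continuity will be obtained by direct containments of the deviation sets (taking $\shiftN' = \shiftN$).

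Comparing the definitions \eqref{eq:FeasibleDeviationsAlwaysAdditionalSpaceEnter}--\eqref{eq:UnsaturatedPathsFeasibleAfterDeviationTravelling} and \eqref{eq:ApglobalSCDE}, the weak variants check the capacity condition at every point $\theta \in [\tau_q^j, \tau_q^{j+1}]$, whereas the strong variants only check it at the entry points $\tau_q^j$, giving $A_p^{\text{weak}} \subseteq A_p^{\text{strong}}$ in each of the LP, MNS and BS families. The MNS variants add, by an explicit union, all deviations in which a common prefix of $p$ and $q$ may already be saturated, hence $A_p^{\text{LP}} \subseteq A_p^{\text{MNS}}$. Finally, a deviation admissible for the \globalEL{} keeps \emph{every} edge of the network below its capacity under the new flow, so in particular the edges traversed by the deviating particles do, yielding $A_p^{\globalEL{}} \subseteq A_p^{\wCDEdf{}} \subseteq A_p^{\sCDEdf{}}$. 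These three observations, combined with \Cref{lemma:ConditiononApForStrongerEquilibria}, give all the inclusions in the first two diagrams.

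For the continuity-based inclusions $\sCDEdf{} \subseteq \sCDEu{}$ and $\wCDEdf{} \subseteq \wCDEu{}$, take any LP-admissible $(q,\shiftN,\Delta)$; by definition, the strict inequality $f_e(h^*, \tau_q^j(h^*, t)) < c_e(\tau_q^j(h^*, t))$ holds on the relevant (bounded, closed) sets of time-points. Since $f_e(h^*,\cdot)$, $\tau_q^j(h^*,\cdot)$ and $c_e$ are continuous, these strict inequalities hold with a uniform positive slack. By the assumed continuity of $h \mapsto f_e(h,\cdot)$ and $h \mapsto \tau_q^j(h,\cdot)$, the perturbed flow $h' = H_{p \to q}(h^*, \lambda \shiftN, \Delta)$ can be made arbitrarily close to $h^*$ in $L^2$ by choosing $\lambda \in (0,1]$ small enough, so that the corresponding values stay within the slack and the BS condition $f_e(h', \tau_q^j(h',t)) \leq c_e(\tau_q^j(h',t))$ holds. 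Setting $\shiftN' := \lambda \shiftN$ and invoking \Cref{lemma:ConditiononApForStrongerEquilibria} yields the desired inclusions, and $\sCDEdf{} \subseteq \wCDEu{}$ then follows by composition.

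The last part of the proposition -- properness of all inclusions and independence of \globalEL{} and \sCDEu{} -- will be established by small explicit counterexamples, each exploiting exactly one of the conceptual differences (strict vs.\ non-strict, common-prefix allowance, local vs.\ global feasibility). The two directions of independence are the most delicate: for $\globalEL{} \not\subseteq \sCDEu{}$ one needs a flow in which every deviation to a strictly cheaper unsaturated walk would destroy feasibility for \emph{other} particles (see the forthcoming \Cref{ex:DifferenceBetweenLocalAndGlobalFeas}); for $\sCDEu{} \not\subseteq \globalEL{}$ one needs a flow in which every alternative walk is saturated at some edge under the current flow, yet a small deviation onto such a walk still yields a globally feasible improving flow, which can be arranged by letting deviating particles slip onto an edge just before its saturation window. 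I expect the main obstacle to be the continuity argument in the previous paragraph and the careful verification of feasibility conditions in these hand-crafted instances; the remainder of the proof is purely set-theoretic bookkeeping.
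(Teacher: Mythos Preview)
Your argument for the first two diagrams is exactly the paper's: direct containment of the $A_p$-sets plus \Cref{lemma:ConditiononApForStrongerEquilibria}. Likewise, the properness/independence part is handled in the paper by the same kind of explicit examples you outline (in particular \Cref{ex:DifferenceBetweenLocalAndGlobalFeas} for $\globalEL{}\not\subseteq\sCDEu{}$), so that portion is fine as a plan.

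The continuity-based inclusions are where you diverge from the paper, and there is a small but real gap. You try to stay at the level of $A_p$-sets and invoke \Cref{lemma:ConditiononApForStrongerEquilibria} once more: given an LP-admissible $(q,\shiftN,\Delta)$ you want $\shiftN'=\lambda\shiftN$ to be BS-admissible for small~$\lambda$. To make the perturbation argument work you assert that the strict inequalities hold ``on the relevant (bounded, closed) sets of time-points'' and hence with a uniform positive slack. But $\supp(\shiftN)$ is only defined up to null sets and need not be closed; the LP condition may hold on all of $\supp(\shiftN)+\Delta$ with the gap $c_e-f_e$ tending to~$0$ somewhere, and then no single $\lambda>0$ secures the BS inequality on the whole support simultaneously. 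The fix is easy (restrict first to a positive-measure subset of $\supp(\shiftN)$ on which the slack is at least~$1/n$, then scale), but it is a step you have to add.

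The paper sidesteps this entirely by \emph{not} comparing $\gamma$-deviations here: it compares the admissible-deviation sets $U_p(h,t)$ instead. Given $(q,\Delta)\in U_p^{\mathrm{LP}}(h,t)$, continuity in~$t$ provides a compact neighbourhood $N$ of $t+\Delta$ on which the strict LP inequality holds (now uniform slack is automatic), and continuity in~$h$ then shows that $(q,\varepsilon\CharF[N-\Delta],\Delta)$ is BS-admissible for small~$\varepsilon$, whence $(q,\Delta)\in U_p^{\mathrm{BS}}(h,t)$. This is cleaner because the compactness you need is built in from the start.
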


\begin{proof}
	The inclusions in the first two diagrams all follow directly from \Cref{lemma:ConditiononApForStrongerEquilibria}. E.g., \sCDEuP{} allow more \epsDev s than \sCDEu{} and, therefore, \sCDEuP{} is a stronger equilibrium concept than \sCDEu.
	
	For the inclusion \sCDEdf{} $\subseteq$ \sCDEu{}, we  cannot compare \addmEpsDev s but have to compare \addmDev s instead. That is, assume that $(q,\Delta) \in U_p(h,t)$ is an \addmDev{} according to \sCDEu. Then, due to the continuity of $f_e(h,.), \tau^j_p(h,.)$ and $c_e$, there must be some neighbourhood $N \subseteq \planningInterval$ of $t+\Delta$ such that for all $t' \in N$ and $e=(v_j,v_{j+1}) \in q$, we have $f_e(h,\tau_q^j(h,t')) < c_e(\tau_q^j(h,t'))$. Since all $f_e$ and $\tau_q^j$ also depend continuously on $h$, we then have $f_e(h^\varepsilon,\tau_q^j(h^\varepsilon,t')) \leq c_e(\tau_q^j(h^\varepsilon,t'))$ for all $t'$ in some smaller neighbourhood $N' \subseteq N$ and $h^\varepsilon \coloneqq H_{p \to q}(h,\varepsilon\CharF[N-\Delta],\Delta)$ for small enough $\varepsilon$. This then implies $(q,\varepsilon\CharF[N-\Delta],\Delta) \in A_p(h)$ according to \sCDEdf{} and, consequently, $(q,\Delta) \in U_p(h,t)$ according to \sCDEdf. Thus, all \addmDev s according to \sCDEu{} are \addmDev s according to \sCDEdf{} as well and, therefore, the latter is a stronger equilibrium concept than the former. The inclusion \wCDEdf{} $\subseteq$ \wCDEu{} can be shown in the same way.
	
	For the independence of \globalEL{} and \sCDEu{}, we note that the corresponding notion of \addmDev s can be both stricter for \globalEL{}  (a deviation might only be inadmissible because of infeasibility for particles not directly involved in the deviation -- see \Cref{ex:DifferenceBetweenLocalAndGlobalFeas}) and stricter for \sCDEu{}  (a deviation might still be possible even if $f_e \leq c_e$ is tight at relevant times because an increased inflow into the alternative walk $q$ does not necessarily increase $f_e$ on all of its edges). The former part also shows that there can exist \globalEL{} which are not a \sCDEdf{} while the latter part shows that there can exist \sCDEu{} which are not a \sCDEdf. 
\end{proof}

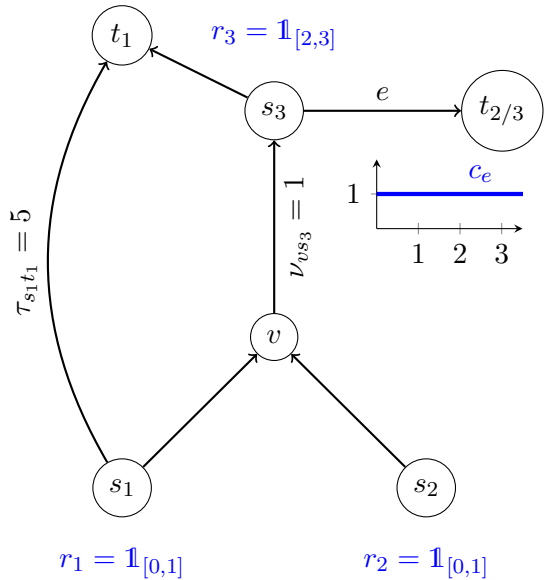
\begin{figure}[h]
	\centering
	\begin{tikzpicture}
	\node[namedVertex] (s1)at(0,0) {$s_1$};
	\node[namedVertex] (s2)at(4,0) {$s_2$};
	\node[namedVertex] (v)at(2,2) {$v$};
	\node[namedVertex] (s3)at(2,5) {$s_3$};
	\node[namedVertex] (t1)at(0,6) {$t_1$};
	\node[namedVertex] (t23)at(5,5) {$t_{2/3}$};
	
	\draw[edge] (s1) -- (v);
	\draw[edge] (s1) to[bend left=30] node[sloped,above,rotate=180]{$\tau_{s_1 t_1}=5$} (t1);
	\draw[edge] (s2) -- (v);
	\draw[edge] (v) -- node[sloped,below]{$\nu_{vs_3}=1$} (s3);
	\draw[edge] (s3) -- (t1);
	\draw[edge] (s3) -- node[above]{$e$} node[pos=.8](e){} (t23);
	
	\node[below of=e, anchor=north,node distance=.5cm] {
		\begin{tikzpicture}[scale=1,solid,black,
			declare function={
				c(\x)= 1;			
			}]

			\begin{axis}[xmin=0,xmax=3.5,ymax=2, ymin=0, samples=500,width=3.5cm,height=2.5cm,
				axis x line*=bottom, axis y line*=left, axis lines=middle, xtick={1,2,3,4}, ytick={1}]
				\addplot[blue, ultra thick,domain=0:5] {c(x)} node[above,pos=.5]{$c_{e}$};
			\end{axis}
			
		\end{tikzpicture}
	};

	\node[below of=s1,blue](){$r_1=\CharF[{[0,1]}]$};
	\node[below of=s2,blue](){$r_2=\CharF[{[0,1]}]$};
	\node[above of=s3,blue](){$r_3=\CharF[{[2,3]}]$};
\end{tikzpicture}
	\caption{A three commodity network with fixed network inflow rates. All values of $\tau_e$ not explicitly given in the figure are $1$ and all $\nu_e$ not given are infinity. Using the Vickrey point queue model for the edge dynamics and the capacity constraint on edge $e$ as volume or inflow rate constraint, this network has a unique feasible flow which is a \globalEL{} but neither a \wCDEdf{} nor a \wCDEu.}\label{fig:NonExistence}
\end{figure}

\begin{example}\label{ex:DifferenceBetweenLocalAndGlobalFeas}
	Consider the three commodity network with fixed network inflow rates given in \Cref{fig:NonExistence}. We use the Vickrey point queue model for the edge dynamics and the edge capacity function of edge $e=(s_3,t_{2/3})$ as volume or inflow rate constraint. Then, this network has a unique feasible flow: Commodity~$1$ sends all its flow via the direct edge towards $t_1$ and the other two commodities send all flow along their only available path. This is the only feasible flow (up to changes on a subset of measure zero) since, if commodity~$1$ were to send any of its flow along the alternative path via $v$ and $s_3$, this would result in a congestion on edge $vs_3$. This, in turn, would lead to some of commodity~$2$'s particles arriving at $s_3$ after time $2$ and, thus, entering edge $e$ at the same time as the particles of commodity~$3$. This then leads to  a violation of the capacity constraint on edge~$e$.
	
	Consequently, this unique flow is a \globalEL{} (since there are no \addmDev s). However, it is neither a \wCDEdf{} nor a \wCDEu{} as particles of commodity~$1$ could deviate to the shorter path without violating any capacity constraint \emph{themselves}. In particular, this network does not have any \wCDEdf{} or \wCDEu{}.
\end{example}

\begin{obs}\label{obs:CDEcharbyQVI}
	Admissible $\gamma$-deviations defined by either \eqref{eq:FeasibleDeviationsAlwaysAdditionalSpaceEnter}, \eqref{eq:FeasibleDeviationsAlwaysAdditionalSpaceTravelling}, \eqref{eq:FeasibleDeviationsAdditionalSpaceExceptCommonPrefixEnter} or \eqref{eq:FeasibleDeviationsAdditionalSpaceExceptCommonPrefixTravelling} clearly satisfy both \labelcref{ass:closedSpace,ass:closedTime}. Thus, \sCDEu, \wCDEu, \sCDEuP{} and \wCDEuP{} with continuous \effWalkDelay s (\ie satisfying \cref{ass:EffectivePathDelayContinuous}) are all characterized by their corresponding quasi-variational inequality \eqref{eq:QVI-SCDE} (\cf \Cref{thm:VI-fixed-inflow:nessecary,thm:VI-fixed-inflow:sufficient}).
\end{obs}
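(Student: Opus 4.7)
The proof is essentially a verification that should follow the pattern below. The key observation is that in each of the four definitions of $A_p(h)$, every condition imposed on a triple $(q,\shiftN,\Delta)$ depends on $\shiftN$ only through (i) its support together with the fixed data $h$ and $\Delta$, and (ii) the implicit monotone feasibility conditions inherited from requiring $H_{p\to q}(h,\shiftN,\Delta) \in \Lambda(Q)$, namely $\shiftN \leq h_p$, $h_q(\cdot)+\shiftN(\cdot-\Delta) \leq B_q$ and $\supp(\shiftN(\cdot-\Delta)) \subseteq \planningInterval$. Both types of conditions are preserved when $\shiftN$ is replaced by a function that is pointwise dominated by $\shiftN$ and whose support is contained in $\supp(\shiftN)$.

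To verify \ref{ass:closedSpace}, I would fix $\lambda \in [0,1]$ and $(q,\shiftN,\Delta) \in A_p(h)$, and observe that $\supp(\lambda\shiftN) \subseteq \supp(\shiftN)$. Consequently, every universally-quantified clause of the form ``$\forall t \in \supp(\shiftN)+\Delta$\textellipsis'' or ``$\forall t \in \supp(\shiftN)$\textellipsis'' appearing in \eqref{eq:FeasibleDeviationsAlwaysAdditionalSpaceEnter}--\eqref{eq:FeasibleDeviationsAdditionalSpaceExceptCommonPrefixTravelling} continues to hold after replacing $\shiftN$ by $\lambda\shiftN$, since the tested inequalities $f_e(h,\cdot) < c_e(\cdot)$ do not depend on the values of $\shiftN$ but only on $h$, $\Delta$, $q$ and the relevant time. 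The feasibility conditions are preserved since $\lambda\shiftN \leq \shiftN \leq h_p$ and $h_q + \lambda\shiftN(\cdot-\Delta) \leq h_q + \shiftN(\cdot-\Delta) \leq B_q$. For the unions in \eqref{eq:FeasibleDeviationsAdditionalSpaceExceptCommonPrefixEnter} and \eqref{eq:FeasibleDeviationsAdditionalSpaceExceptCommonPrefixTravelling}, the same common prefix $w$ of $p$ and $q$ continues to witness that $(q,\lambda\shiftN,\Delta)$ belongs to whichever branch of the union originally contained $(q,\shiftN,\Delta)$.

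The verification of \ref{ass:closedTime} is essentially identical, with $\lambda\shiftN$ replaced by $\shiftN\cdot\CharF[J]$ for a measurable $J \subseteq \planningInterval$: we use $\supp(\shiftN\cdot\CharF[J]) \subseteq \supp(\shiftN)$ together with the pointwise bound $\shiftN\cdot\CharF[J] \leq \shiftN$. Given that the two closure properties hold and that \ref{ass:EffectivePathDelayContinuous} is assumed, the characterization by the quasi-variational inequality~\eqref{eq:QVI-SCDE} then follows in each of the four cases by combining \Cref{thm:VI-fixed-inflow:sufficient} and \Cref{thm:VI-fixed-inflow:nessecary}. I do not anticipate any genuine obstacle; the only minor point requiring care is that the implicit $\Lambda(Q)$-membership conditions (suppressed in the notation of this section) must be checked alongside the explicit support-based conditions, which is exactly the monotonicity argument sketched above.
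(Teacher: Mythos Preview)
Your proposal is correct and matches the paper's approach: the paper states this as an observation without a detailed proof, simply noting that the four definitions ``clearly satisfy'' \ref{ass:closedSpace} and \ref{ass:closedTime} and then invoking \Cref{thm:VI-fixed-inflow:sufficient,thm:VI-fixed-inflow:nessecary}. Your verification---that the defining conditions depend on $\shiftN$ only through $\supp(\shiftN)$ (together with the monotone $\Lambda(Q)$-feasibility constraints), and hence are preserved under both $\shiftN \mapsto \lambda\shiftN$ and $\shiftN \mapsto \shiftN\cdot\CharF[J]$---is exactly the reasoning the paper leaves implicit.
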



\subsection{Existence Results}

As shown in \Cref{sec:counter}, feasibility sets defined by capacity constraints need not be convex. Thus, it is not clear whether \eqref{eq:VI-SCDE} has a solution (at least, we cannot apply \Cref{thm:Lions}). Furthermore, in general it is not guaranteed that small enough \addmEpsDev s from a feasible flow lead to another feasible flow (\ie \cref{ass:addmDevLeadToFeasFlow} need not hold) and, therefore, even if we had a solution to the variational inequality, this would not be guaranteed to also be an \SCDE. See \Cref{ex:DifferenceBetweenLocalAndGlobalFeas} for such an instance.

To still be able to show existence of \sCDEuP{} and \wCDEuP{} (under certain additional assumptions), we will therefore employ a different approach using an augmented Lagrangian relaxation of the hard capacity constraints. Before we come to the existence theorem for \sCDEuP, we introduce the additional assumptions we will need there. Note that some of the assumptions have to be stated slightly differently for the setting with and without departure time choice (DTC). In particular, we will use $\Lambda$ to refer to $\Lambda(Q)$ in the case with DTC and to $\Lambda(r)$ in the case without DTC.

We start with the assumptions on the underlying network:
\begin{enumerate}[label=(A\arabic*),resume=Assumptions]
	\item For any edge $e$ and walk inflow $h$, we have $f_e(h,0) \leq c_e(0)$.\label[asmpt]{ass:FeasibleAtStart}
	\item With DTC: For every $h\in \Lambda(Q)$ and $i\in I$, there exists some $p\in \Pc_i$ and $J \subseteq [t_0,t_f]$ of positive measure such that for all $t \in J$, we have $h_p(t) < B_p$ and $f_e(h, \tau^j_p(h,t))\leq c_e(\tau^j_p(h,t))$ for all $e = (v_j,v_{j+1})\in p$.\label[asmpt]{ass:ExistenceOfUnsaturatedPath} 
	
	Without DTC: For every $h\in \Lambda(r)$ and $i\in I$ and $t \in \planningInterval$, there exists some $p\in \Pc_i$ such that we have $f_e(h, \tau^j_p(h,t))\leq c_e(\tau^j_p(h,t))$ for all $e = (v_j,v_{j+1})\in p$.
	\item For any edge $e$, the capacity function $c_e$ is non-negative.\label[asmpt]{ass:CapacityNN} 
	\item For any edge $e$, the capacity function $c_e$ is non-decreasing.\label[asmpt]{ass:CapacityNonDecreasing}
\end{enumerate}
Here, \cref{ass:FeasibleAtStart} states that there can be no capacity violation at the start (\ie in the empty network). \Cref{ass:ExistenceOfUnsaturatedPath} essentially states that there is always an outside option, \ie the edge capacities are chosen large enough such that there is always at least one walk which is not oversaturated. Note, that this can be easily accomplished either by having one walk with large enough capacities or by using elastic demands. \Cref{ex:DifferenceBetweenLocalAndGlobalFeas} shows why this assumption is necessary for the existence of \sCDEu. To see why \cref{ass:CapacityNonDecreasing} is important for the existence of equilibria, we refer to \Cref{ex:NonExistenceForNonConstantCapacity}.

Next, we state the required assumptions on the flow dynamics, \ie the network-loading, effective walk-delay and edge-load functions (note that, for many flow models these assumptions are known to always be satisfied -- \cf \Cref{lemma:VickreyModelSatisfiesAssumptions}):

\begin{enumerate}[label=(A\arabic*),resume=Assumptions]
	\item For any edge $e$ and walk inflow $h$, the function $f_e(h,\emptyarg)$ is continuous.\label[asmpt]{ass:fContinuous}
	\item For any edge $e$ and any weakly convergent sequence $h^n$, the sequence $f_e(h^n,\emptyarg)$ converges uniformly. \label[asmpt]{ass:fConvergesUniform}
	\item For any $h \in \Lambda$, we have that $h$ and $f(h)$ satisfy the \emph{principle of causation}: For any edge $e\in E$ and interval $[a,b] \subseteq \IR_{\geq 0}$ with $0 \leq f_e(h,a) < f_e(h,b)$ there exists some walk $p \in  \Pc$ and subset $J^{-1}\subseteq[t_0,t_f]$ of positive measure such that we have $e = (v_j,v_{j+1}) \in p$, $h_p(t')>0$ for all $t'\in J^{-1}$ and $\tau_p^j(h,t')\in [a,b]$ for all $t'\in J^{-1}$. \label[asmpt]{ass:StrongPrincipleOfCausation}
	\item For any walk $p$ and $j \in \set{1, \dots, \abs{p}+1}$ and any weakly convergent sequence $h^n$, the sequence $\tau_p^j(h^n,\emptyarg)$ converges uniformly. \label[asmpt]{ass:tauConvergesUniform}
	\item For any two walks $p$ and $q$ that share a common prefix $w$, we have $\tau^j_p(h,t) = \tau^j_q(h,t)$ for all $e=(v_j,v_{j+1}) \in w$, $h \in S$ and $t \in \planningInterval$.\label[asmpt]{ass:tEqualOnCommonPrefix}
	\item The mapping $\Lambda \to C(\planningInterval)^\Pc, h \mapsto \Psi(h,\emptyarg)$ is sequentially weak-strong continuous, \ie for any walk $p \in \Pc$ and any weakly convergent sequence $h^n$, the sequence $\Psi_p(h^n,\emptyarg)$ converges uniformly.\label[asmpt]{ass:PsiConvergesUniform}
	\item There exists some $M \in \IR$ such that for all $h \in S$, $p \in \Pc$ and almost all $t \in \planningInterval$, we have $\Psi_p(h,t) \leq M$.\label[asmpt]{ass:PsiBoundedFlowIndependent}
\end{enumerate}
Here, \cref{ass:fContinuous,ass:fConvergesUniform,ass:tauConvergesUniform} are standard continuity assumptions. \Cref{ass:StrongPrincipleOfCausation} states that, whenever the edge load increases during some interval, then there must exist earlier times at which strictly positive flow is injected into some walk containing~$e$ and contributing to this increased edge load. 
\Cref{ass:tEqualOnCommonPrefix} requires that intermediate travel times only depend on the part of the walk already traversed and not on future route choices. Finally, \labelcref{ass:PsiConvergesUniform} is a strengthening of \labelcref{ass:PsiWScont} as convergence with respect to the uniform norm implies convergence with respect to the $L^2$-norm while \labelcref{ass:PsiBoundedFlowIndependent} is a strengthening of \labelcref{ass:PsiBounded} as we are now asking for the effective walk-delay operator to be uniformly bounded instead of just being bounded for any fixed walk-inflow~$h$.

\begin{theorem}\label{thm:ExistenceFDAddSpaceExCP}
	Under \cref{ass:PsiBoundedFlowIndependent,ass:FinitelyManyWalks,ass:PathInflowBounds,ass:PsiConvergesUniform,ass:EffectivePathDelayContinuous,ass:StrongPrincipleOfCausation,ass:fContinuous,ass:fConvergesUniform,ass:CapacityNonDecreasing,ass:CapacityNN,ass:ExistenceOfUnsaturatedPath,ass:tauConvergesUniform,ass:tEqualOnCommonPrefix,ass:FeasibleAtStart} there always exist \sCDEu{} and \sCDEuP{} (both with and without DTC).
\end{theorem}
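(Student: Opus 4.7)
The strategy is an augmented Lagrangian penalty approach, in the spirit of the static treatment in~\cite{Larsson95}. For each $k\in\IN$, I would define the penalized effective walk-delay operator
\begin{equation*}
\Psi^k_p(h,t) := \Psi_p(h,t) + k\sum_{j=1}^{|p|}\max\bigl\{0,\ f_{e_j}(h,\tau^j_p(h,t))-c_{e_j}(\tau^j_p(h,t))\bigr\},
\end{equation*}
where $e_j$ denotes the $j$-th edge of~$p$. Using \cref{ass:fContinuous,ass:fConvergesUniform,ass:tauConvergesUniform,ass:PsiConvergesUniform,ass:EffectivePathDelayContinuous,ass:PsiBoundedFlowIndependent,ass:PsiBounded} together with Lipschitz continuity of~$c_e$, one checks that $\Psi^k$ inherits \cref{ass:PsiBounded,ass:PsiWScont,ass:EffectivePathDelayContinuous} from~$\Psi$. \Cref{thm:ExistenceUnconstrained} thus produces an unconstrained dynamic equilibrium $h^k\in\Lambda(Q)$ (or $\Lambda(r)$) for $\Psi^k$ with commodity-wise equilibrium value~$\nu_i^k$. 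Since $\Lambda(Q)$ is bounded, closed and convex in $L^2(\planningInterval)^\Pc$, I extract a weakly convergent subsequence $h^k\rightharpoonup h^*\in\Lambda(Q)$.

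The main obstacle is to show that the limit $h^*$ is feasible in the sense of~\eqref{eq:FeasibilitySetforEdgeLoad}. Assume the contrary: $f_e(h^*,\theta^*)>c_e(\theta^*)$ for some $e,\theta^*$. By \cref{ass:fContinuous,ass:fConvergesUniform} and Lipschitz continuity of~$c_e$, there is a $\delta>0$ and an interval~$[\theta_1,\theta_2]\ni\theta^*$ on which $f_e(h^k,\cdot)-c_e(\cdot)\geq\delta/2$ for all sufficiently large~$k$. Since $f_e(h^k,0)\leq c_e(0)\leq c_e(\theta^*)$ by \cref{ass:FeasibleAtStart,ass:CapacityNonDecreasing,ass:CapacityNN}, the principle of causation \cref{ass:StrongPrincipleOfCausation} applied to~$h^k$ yields a walk $p\in\Pc_i$ (for some commodity~$i$) and a positive-measure set of departure times $t'$ with $h^k_p(t')>0$ and $\tau^j_p(h^k,t')\in[\theta_1,\theta_2]$, so that $\Psi^k_p(h^k,t')\geq k\delta/2$ there. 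On the other hand, \cref{ass:ExistenceOfUnsaturatedPath} supplies, for the same commodity~$i$, a walk $p'\in\Pc_i$ and a positive-measure time set on which $h^k_{p'}<B_{p'}$ and every edge of~$p'$ is unsaturated under~$h^k$ (propagating the property from~$h^*$ to~$h^k$ via \cref{ass:fConvergesUniform,ass:tauConvergesUniform}); hence the penalty on~$p'$ vanishes there and $\Psi^k_{p'}(h^k,\cdot)\leq M$ by \cref{ass:PsiBoundedFlowIndependent}. The unconstrained equilibrium condition~\eqref{eq:de-volume} for~$h^k$ then forces $k\delta/2\leq\nu_i^k\leq M$, a contradiction for large~$k$.

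To establish the equilibrium property, fix $p\in\Pc_i$, $t\in\planningInterval$ and an admissible deviation $(q,\Delta)\in U_p(h^*,t)$. In the sLPDE case, admissibility supplies a neighbourhood~$V$ of~$t+\Delta$ on which every edge of~$q$ is strictly unsaturated under~$h^*$; \cref{ass:fConvergesUniform,ass:tauConvergesUniform} carry this strict inequality to~$h^k$ on~$V$ for all large~$k$, so $\Psi^k_q(h^k,\cdot)=\Psi_q(h^k,\cdot)$ on~$V$. A weak-convergence argument based on the admissibility bounds ($\shiftN\leq h^*_p$ and $h^*_q+\shiftN(\cdot-\Delta)\leq B_q$ on relevant positive-measure sets) yields positive-measure sets of $\tilde t$ arbitrarily close to~$t$ with both $h^k_p(\tilde t)>0$ and $h^k_q(\tilde t+\Delta)<B_q$. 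Combining the two halves of~\eqref{eq:de-volume} for~$h^k$ then gives
\begin{equation*}
\Psi_p(h^k,\tilde t)\ \leq\ \Psi^k_p(h^k,\tilde t)\ \leq\ \nu_i^k\ \leq\ \Psi^k_q(h^k,\tilde t+\Delta)\ =\ \Psi_q(h^k,\tilde t+\Delta).
\end{equation*}
Letting $k\to\infty$ using \cref{ass:PsiConvergesUniform} and extending by continuity \cref{ass:EffectivePathDelayContinuous} from $\tilde t$ to~$t$ delivers $\Psi_p(h^*,t)\leq\Psi_q(h^*,t+\Delta)$. The sMNSDE case is analogous; the only genuinely new deviations are common-prefix deviations $q=w\tilde q$ with $\Delta=0$, for which \cref{ass:tEqualOnCommonPrefix} implies $\tau^j_p(h^k,\cdot)=\tau^j_q(h^k,\cdot)$ on every edge of the shared prefix~$w$, so that the penalty contributions of~$w$ cancel identically between $\Psi^k_p$ and $\Psi^k_q$; the argument then reduces to the vanishing of penalties on the strictly unsaturated tail~$\tilde q$, exactly as above.

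The hardest step is the feasibility argument, since it requires coordinating the principle of causation with the existence of an unsaturated alternative walk to simultaneously force $\nu_i^k\geq k\delta/2$ and $\nu_i^k\leq M$. Monotonicity \cref{ass:CapacityNonDecreasing} and non-negativity \cref{ass:CapacityNN} of the capacities are used to guarantee that the causation interval can be chosen inside~$[0,\theta^*]$ and that the unsaturated alternative walk remains unsaturated under the perturbed flow~$h^k$ via uniform convergence.
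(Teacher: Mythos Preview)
Your overall strategy matches the paper's: define a penalized operator, extract unconstrained equilibria for increasing penalty parameters, pass to a weak limit, and verify feasibility and the equilibrium condition. There are, however, two genuine gaps in the execution.

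First, you claim that $\Psi^k$ inherits \cref{ass:PsiBounded}; but \cref{ass:PsiBounded} is not among the theorem's hypotheses (only the weaker \cref{ass:PsiBoundedFlowIndependent} is, and that one speaks only of $\Psi$ on~$S$), and nothing in the listed assumptions bounds the edge-load functions $f_e(h,\cdot)$, so the penalty term---and hence $\Psi^k$---need not be bounded. Without boundedness you cannot invoke \Cref{thm:ExistenceUnconstrained}. The paper closes this gap by first proving (its \Cref{claim:BoundOnEssInf}) that for \emph{every} $h\in\Lambda$ and every penalty level there is a walk of penalized cost at most~$M$, and then replacing $\Psi^\lambda$ by the truncated operator $\truncated{\Psi^\lambda}{M+1}$ via \Cref{lemma:RestrictToTruncatedPsi}; the truncated operator is trivially bounded and has the same set of unconstrained equilibria. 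Second---and this is the same misstep in a different place---your feasibility argument applies \cref{ass:ExistenceOfUnsaturatedPath} to the limit~$h^*$ and then ``propagates'' the resulting unsaturated walk to~$h^k$ via uniform convergence. That fails because \cref{ass:ExistenceOfUnsaturatedPath} yields only the \emph{non-strict} inequality $f_e\leq c_e$, which uniform convergence does not preserve. The correct (and simpler) move is to apply \cref{ass:ExistenceOfUnsaturatedPath} directly to each~$h^k$: the hypothesis quantifies over \emph{all} $h\in\Lambda$, so one immediately obtains a walk with zero penalty under~$h^k$ and hence $\nu_i^k\leq M$, with no limiting argument needed. (Incidentally, Lipschitz continuity of~$c_e$ is not among the theorem's hypotheses; only \cref{ass:CapacityNN,ass:CapacityNonDecreasing} are used, the latter to ensure that an interval of over-saturation is preceded by a point where the principle of causation~\cref{ass:StrongPrincipleOfCausation} can bite.)
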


The main steps for proving this \namecref{thm:ExistenceFDAddSpaceExCP} are as follows: We start by relaxing the capacity constraints and instead include a (proportional) penalty for violating the edge capacities in the \effWalkDelay{} operator. Next, we show that we may assume \wlofg that this adjusted walk-delay operator is still bounded, as the existence of an outside option (\ie \ref{ass:ExistenceOfUnsaturatedPath}) guarantees that in an (unconstrained) equilibrium particles will never pay high penalties. We can then apply the usual existence results for unconstrained dynamic equilibria to get a weakly convergent sequence of such equilibria with respect to higher and higher penalties. It then remains to show that the limit point of this sequence is both feasible \wrt $S$ and an equilibrium \wrt the given $A_p$. The former is relatively straightforward as the increasing penalties together with the existence of an outside option imply that fewer and fewer particles may violate a capacity constraint at any point in time. The latter is more technical but, intuitively, we just have to show that, due to our continuity assumptions on the flow dynamics, an improving \addmDev{} in the limit would imply the existence of an improving deviation within the sequence of (unconstrained) equilibria.

\begin{proof}
	As every \sCDEuP{} is also a \sCDEu{} (see \Cref{prop:RelationshipsOfCDE}), it suffices to show the existence of the former. 
	We start by defining penalty functions $\xi_e(h,\theta):=\max\{0,f_e(h,\theta)-c_e(\theta)\},e\in E,\theta\in \IR_{\geq 0}$ and write $\xi_p(h,t):=\sum_{e = (v_j,v_{j+1})\in p} \xi_e(h,\tau^j_p(h,t))$ for all $t \in \planningInterval$.
	By \cref{ass:PsiConvergesUniform,ass:fConvergesUniform,ass:tauConvergesUniform}, the new $\lambda$-parametrized \effWalkDelay{} operator
		\[ \Psi_p^{\lambda}(h,t) \coloneqq \Psi_p(h,t)+\lambda \xi_p(h,t)\text{ for all }t\in [t_0, t_f], p\in  \Pc,\]
	satisfies \ref{ass:PsiWScont} for any $\lambda>0$ as well. 
	
	\begin{claim}\label{claim:BoundOnEssInf}
		There exists some $M\in \R_+$ such that we have 
			\[\min_{p\in \Pc_i}\essinf\set{\Psi^{\lambda}_p(h,t) | t \in \planningInterval: h_p(t) < B_p} \leq M\]
		for all $\lambda \geq 0$, $i \in I$ and $h \in \Lambda(Q)$ for the case with DTC and 
			\[\sup_{t \in \planningInterval}\min_{p\in \Pc_i}{\Psi^{\lambda}_p(h,t)} \leq M\]
		for all $\lambda \geq 0$, $i \in I$ and $h \in \Lambda(r)$ for the case without DTC.
	\end{claim}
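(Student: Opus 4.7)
The plan is to use \Cref{ass:ExistenceOfUnsaturatedPath} to exhibit, for each $h$ and $i$, a walk on which the penalty term $\xi_p(h,\cdot)$ vanishes on a set of positive measure, and then bound the resulting $\Psi^\lambda_p$ uniformly via \Cref{ass:PsiBoundedFlowIndependent}. Let $M$ denote the flow-independent upper bound on $\Psi$ guaranteed by \ref{ass:PsiBoundedFlowIndependent}; I claim that this same $M$ works for both forms of the statement.

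For the setting with DTC, fix $\lambda \geq 0$, $i \in I$ and $h \in \Lambda(Q)$. By \Cref{ass:ExistenceOfUnsaturatedPath}, there exists some walk $p \in \Pc_i$ and a measurable set $J \subseteq \planningInterval$ of positive measure such that $h_p(t) < B_p$ and $f_e(h,\tau_p^j(h,t)) \leq c_e(\tau_p^j(h,t))$ for every $t \in J$ and $e = (v_j,v_{j+1}) \in p$. By definition of the penalty functions, this yields $\xi_e(h,\tau_p^j(h,t)) = 0$ for all such $t$ and $e$, hence $\xi_p(h,t) = 0$ and consequently
\[
\Psi^{\lambda}_p(h,t) \;=\; \Psi_p(h,t) \;\leq\; M \qquad \text{for almost all } t \in J.
\]
Since $J$ has positive measure and is contained in $\{t \in \planningInterval : h_p(t) < B_p\}$, the essential infimum of $\Psi^\lambda_p(h,\cdot)$ on this latter set is at most $M$. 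Taking the minimum over $p \in \Pc_i$ can only make this smaller, so the desired bound holds.

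For the setting without DTC, fix $\lambda \geq 0$, $i \in I$, $h \in \Lambda(r)$ and $t \in \planningInterval$. The corresponding part of \Cref{ass:ExistenceOfUnsaturatedPath} provides a walk $p \in \Pc_i$ (depending on $t$) with $f_e(h,\tau_p^j(h,t)) \leq c_e(\tau_p^j(h,t))$ for every edge $e = (v_j,v_{j+1}) \in p$. Again $\xi_p(h,t) = 0$, whence $\Psi^\lambda_p(h,t) = \Psi_p(h,t) \leq M$ by \Cref{ass:PsiBoundedFlowIndependent}, and hence $\min_{p \in \Pc_i} \Psi^\lambda_p(h,t) \leq M$. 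Since this holds for every $t \in \planningInterval$, passing to the supremum over $t$ preserves the bound.

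I do not anticipate any real obstacle here; the statement is essentially a direct unpacking of \Cref{ass:ExistenceOfUnsaturatedPath} combined with the uniform bound in \Cref{ass:PsiBoundedFlowIndependent}. The only mild subtlety is to confirm that the existence of an unsaturated walk (in the two versions of \Cref{ass:ExistenceOfUnsaturatedPath}) really makes the penalty term vanish exactly on the relevant set in each of the two statements — but this is immediate from the definitions of $\xi_e$ and $\xi_p$, and the bound $M$ may be taken independently of $\lambda$ precisely because $\xi_p(h,t)$ is zero wherever we evaluate $\Psi^\lambda_p$.
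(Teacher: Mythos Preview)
Your proof is correct and follows essentially the same approach as the paper: invoke \ref{ass:ExistenceOfUnsaturatedPath} to obtain an unsaturated walk on which the penalty $\xi_p$ vanishes, then bound $\Psi^\lambda_p = \Psi_p$ there by the uniform constant $M$ from \ref{ass:PsiBoundedFlowIndependent}. The paper's argument is the same, just slightly terser.
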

	
	\begin{proofClaim}
		We start with the case with DTC: 
		By assumption~\ref{ass:ExistenceOfUnsaturatedPath}, for every $h$ and $i \in I$ there exist $p\in \Pc_i$ and $J \subseteq \planningInterval$ with positive measure such that $h_p(t) < B_p$ and $f_e(h,\tau^j_p(h,t)) \leq c_e(\tau^j_p(h,t))$ 
		and, hence, $\xi_e(h,\tau^j_p(h,t)) = 0$ for all $e = (v_j,v_{j+1}) \in p$ and $t \in J$. Thus, using \ref{ass:PsiBoundedFlowIndependent}, we have $\Psi^{\lambda}_p(h,t) = \Psi_p(h,t) + 0 \leq M$ for all $t \in J$. Since $J$ has positive measure, this implies the claim's first part.
		
		Now for the case without DTC take any time $t \in \planningInterval$. By \cref{ass:ExistenceOfUnsaturatedPath} there exists some walk $p \in \Pc_i$ with $f_e(h,\tau^j_p(h,t)) \leq c_e(\tau^j_p(h,t))$ 
		and, hence, $\xi_e(h,\tau^j_p(h,t)) = 0$ for all $e = (v_j,v_{j+1}) \in p$. Thus, using \ref{ass:PsiBoundedFlowIndependent} we have $\Psi^{\lambda}_p(h,t) = \Psi_p(h,t) + 0 \leq M$, which proves the claim's second part.
	\end{proofClaim} 
	
	Using this claim we can now, in the setting without side-constraints, apply \Cref{lemma:RestrictToTruncatedPsi} to replace $\Psi^{\lambda}$ by the truncated effective walk delay operator $\truncated{\Psi^\lambda}{M+1}$ which is, in particular, bounded (\ie satisfies \ref{ass:PsiBounded}) while yielding only dynamic equilibria which are also equilibria \wrt $\Psi^\lambda$. As $\truncated{\Psi^\lambda}{M+1}$ still satisfies \ref{ass:PsiWScont} while \ref{ass:FinitelyManyWalks} and \ref{ass:PathInflowBounds} are part of the theorem's assumption, we can apply \Cref{thm:ExistenceUnconstrained} to obtain for any $\lambda \geq 0$ an unconstrained dynamic equilibrium with respect to $\Psi^\lambda$ (with or without DTC).
	
	Let us now take a sequence of strictly positive numbers $(\lambda_n)_{n\in \IN}$ with $\lambda_n\rightarrow\infty$ and a corresponding sequence $h^n\in \Lambda$ of unconstrained dynamic equilibria with respect to $\Psi^{\lambda_n}$.
	By taking subsequences, $h^n$ weakly converges to some $h^*\in \Lambda$ as $\Lambda$ is bounded, closed and convex (\cf \cite[Corollary~7.32]{FunctionalAnalysisHundertmark}). We will now show that this $h^*$ is a \sCDEuP{} by first showing that it is feasible and then that it also satisfies the \SCDE-condition~\eqref{eq:CDE} for \addmEpsDev s defined by \eqref{eq:FeasibleDeviationsAdditionalSpaceExceptCommonPrefixEnter}.
	
	\begin{claim}\label{claim:hStarInSr}
		$h^*$ is feasible, i.e.\ $h^*\in S$.
	\end{claim}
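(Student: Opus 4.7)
The plan is to argue by contradiction: if $h^*$ were infeasible, then the sequence of (unconstrained) equilibria $h^n$ with respect to $\Psi^{\lambda_n}$ would be forced to send a positive-measure set of particles through an edge with ever-growing penalty cost, contradicting the uniform bound $M$ on used-walk costs given by \Cref{claim:BoundOnEssInf}. Concretely, I would first suppose that $f_e(h^*,\theta^*) > c_e(\theta^*)$ for some edge $e \in E$ and time $\theta^* \geq 0$. By the continuity of $f_e(h^*,\cdot)$ (from \ref{ass:fContinuous}) and of $c_e$, this strict inequality extends to a compact interval $[\alpha,\beta] \ni \theta^*$ in the form $f_e(h^*,\theta) \geq c_e(\theta) + \varepsilon$ for some $\varepsilon > 0$. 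The uniform convergence $f_e(h^n,\cdot) \to f_e(h^*,\cdot)$ supplied by \ref{ass:fConvergesUniform} then upgrades this to $f_e(h^n,\theta) \geq c_e(\theta) + \varepsilon/2$ on $[\alpha,\beta]$ for all sufficiently large~$n$.

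The next step is to convert this edge-level violation into a positive-measure set of particles that each experience a penalty of at least $\lambda_n \varepsilon/2$ on edge~$e$ under $h^n$. Combining \ref{ass:FeasibleAtStart} with the monotonicity of $c_e$ from \ref{ass:CapacityNonDecreasing} gives $f_e(h^n,0) \leq c_e(0) \leq c_e(\alpha) < f_e(h^n,\alpha)$, so $f_e(h^n,\cdot)$ must grow somewhere between $0$ and~$\alpha$. After selecting an appropriate sub-interval of $[\alpha,\beta]$ on which $f_e(h^n,\cdot)$ itself strictly increases, applying the principle of causation \ref{ass:StrongPrincipleOfCausation} would give a walk $p \in \Pc$ with $e$ as its $j$-th edge and a positive-measure set $J^{-1} \subseteq \planningInterval$ of entry times with $h^n_p(t') > 0$ and $\tau_p^j(h^n,t') \in [\alpha,\beta]$ for all $t' \in J^{-1}$. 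Each such particle then contributes $\lambda_n \xi_e(h^n,\tau_p^j(h^n,t')) \geq \lambda_n \varepsilon/2$ to $\Psi^{\lambda_n}_p(h^n,t')$. On the other hand, since $h^n$ is an unconstrained dynamic equilibrium with respect to $\Psi^{\lambda_n}$ and $h^n_p(t') > 0$ on $J^{-1}$, \Cref{claim:BoundOnEssInf} forces $\Psi^{\lambda_n}_p(h^n,t') \leq M$ for almost every $t' \in J^{-1}$. Combining the two bounds would yield $\lambda_n \varepsilon/2 \leq M$ for all large~$n$, contradicting $\lambda_n \to \infty$.

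The main obstacle I anticipate is the sub-interval extraction in the second paragraph: a naive application of \ref{ass:StrongPrincipleOfCausation} to the interval $[0,\beta]$ only places the arrival time $\tau_p^j(h^n,t')$ somewhere in $[0,\beta]$, and nothing a priori rules out all such arrival times lying in $[0,\alpha)$, where no violation has yet occurred. To ensure that $\tau_p^j(h^n,t')$ actually lands in $[\alpha,\beta]$, I plan to pick the earliest time $\theta_n \in [0,\alpha]$ at which the difference $f_e(h^n,\cdot) - c_e(\cdot)$ attains some intermediate level such as $\varepsilon/4$; because $c_e$ is non-decreasing, the strict increase of this difference just past $\theta_n$ can be promoted to a strict increase of $f_e(h^n,\cdot)$ itself on a sub-interval meeting $[\alpha,\beta]$, to which the principle of causation can then be applied without losing control of the arrival time. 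Everything else—continuity inputs, the bounded-outside-option bound~$M$, and the final passage to the contradiction—follows directly from the assumptions and uniform convergences already in hand.
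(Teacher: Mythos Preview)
Your approach is essentially the same as the paper's: assume a violation $f_e(h^*,\theta)>c_e(\theta)$, transfer it to $h^n$ via uniform convergence \ref{ass:fConvergesUniform}, use \ref{ass:FeasibleAtStart} and \ref{ass:CapacityNonDecreasing} to find a sub-interval on which both $f_e(h^n,\cdot)$ strictly increases and the violation persists at a uniform level, invoke the principle of causation \ref{ass:StrongPrincipleOfCausation} to obtain a positive-measure set of used particles with penalty at least $\lambda_n\cdot\mathrm{const}$, and contradict the bound $M$ from \Cref{claim:BoundOnEssInf} via the equilibrium property of $h^n$.

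Two small points. First, you invoke ``continuity of $c_e$'', but only \ref{ass:CapacityNN} and \ref{ass:CapacityNonDecreasing} are assumed here; the fix is simply to work on a one-sided interval $[\alpha,\theta^*]$ and use monotonicity of $c_e$ (so $c_e(\vartheta)\leq c_e(\theta^*)$ there), exactly as the paper implicitly does. Second, your sub-interval extraction is more elaborate than needed: rather than trying to force the causation arrival times into your original interval $[\alpha,\beta]$, the paper simply takes the latest $\theta'<\theta$ with $f_e(h^n,\theta')=c_e(\theta)+\delta/2$; then $[\theta',\theta]$ automatically satisfies both $f_e(h^n,\theta')<f_e(h^n,\theta)$ and $f_e(h^n,\vartheta)-c_e(\vartheta)\geq\delta/2$ throughout (the latter because $c_e(\vartheta)\leq c_e(\theta)$), so wherever in $[\theta',\theta]$ the arrival time lands, the penalty bound applies.
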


	\begin{proofClaim}
		Suppose that $h^* \notin S$, \ie there exists some $e\in E$ and some time $\theta \in \IR_{\geq 0}$ with $\xi_e(h^*,\theta)>0$. 	Now, by assumption~\ref{ass:fConvergesUniform}, $f(h^n)$ converges uniformly to $f(h^*)$ and, in particular, there exists some $\delta > 0$ such that for large enough $n \in \IN$ we have 
			\[f_e(h^n,\theta) - c_e(\theta) = \xi_e(h^n,\theta) \geq \delta.\]
		
		Using \cref{ass:FeasibleAtStart,ass:fContinuous,ass:CapacityNN,ass:CapacityNonDecreasing}, there must be some proper interval $[\theta',\theta]$ such that $f_e(h^n,\theta') < f_e(h^n,\theta)$ and $f_e(h^n,\vartheta) - c_e(\vartheta) \geq \frac{\delta}{2}$ for all $\vartheta \in [\theta',\theta]$. By~\ref{ass:StrongPrincipleOfCausation} this gives us some $p_n \in \Pc$ with $e = (v_{j_n},v_{j_n+1}) \in p_n$ and some set $J^{-1}_{n} \subseteq [t_0,t_f]$ of positive measure, such that for all $t' \in J^{-1}_{n}$ we have
			\[h^n_{p_n}(t')>0 \text{ and } \tau^{j_n}_{p_n}(t',h^n) \in [\theta',\theta]\]
		and, thus, 
			\[\Psi^{\lambda_n}_{p_n}(h^n,t') \geq \lambda_n\xi_e(h^n,\tau^{j_n}_{p_n}(t',h^n)) \geq \lambda_n\tfrac{\delta}{2}.\]
		Since $\lambda_n \to \infty$, there exists some $n \in \IN$ with $\lambda_n\frac{\delta}{2} > M$ and, hence, for this $n$ we have a set $J^{-1}_{n}$ of positive measure such that for all $t' \in J^{-1}_{n}$ we have $h^n_{p_n}(t') > 0$ as well as $\Psi^{\lambda_n}_{p_n}(h^n,t') > M$. At the same time we have, by \Cref{claim:BoundOnEssInf}, \[M \geq \min_{p\in \Pc_i}\essinf\set{\Psi^{\lambda^n}_{p}(h^n,t) | t \in \planningInterval: h^n_p(t) < B_p}  = \nu_i\] or \[M \geq \sup_{t \in \planningInterval}\min_{p \in \Pc_i}\set{\Psi^{\lambda^n}_{p}(h^n,t)}\] for the case with or without DTC, respectively. But this is now a contradiction to $h^n$ being an unconstrained dynamic equilibrium.
	\end{proofClaim}

	\begin{claim}\label{claim:ExistenceFDAddSpaceExCPisEq}
		$h^*$ is an \SCDE{} \wrt the \addmEpsDev s defined in \eqref{eq:FeasibleDeviationsAdditionalSpaceExceptCommonPrefixEnter}.
	\end{claim}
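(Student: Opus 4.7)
The plan is to argue by contradiction: suppose $h^*$ is not a \sCDEuP, so there exist $i\in I$, $p\in\Pc_i$, $t\in\planningInterval$, and $(q,\Delta)\in U_p(h^*,t)$ (with $U_p$ induced by~\eqref{eq:FeasibleDeviationsAdditionalSpaceExceptCommonPrefixEnter}) such that $\varepsilon_0 := \Psi_p(h^*,t) - \Psi_q(h^*,t+\Delta) > 0$. The overall strategy is to exhibit, for all large $n$, an unconstrained admissible deviation for $h^n$ that strictly decreases the penalized cost $\Psi^{\lambda_n}$, which will contradict the variational inequality characterization (\Cref{thm:VICharOfDE}) of $h^n$ as a dynamic equilibrium for $\Psi^{\lambda_n}$.

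First I would use \ref{ass:EffectivePathDelayContinuous} to find $\delta > 0$ with $\Psi_p(h^*,\cdot) - \Psi_q(h^*,\cdot+\Delta) > \varepsilon_0/2$ on $[t-\delta,t+\delta]$. The admissibility of $(q,\Delta)$ at $t$ together with the definition of $A_p$ in~\eqref{eq:FeasibleDeviationsAdditionalSpaceExceptCommonPrefixEnter} yields a $\gamma\in L^2_+$ with $\supp(\gamma)\subseteq[t-\delta,t+\delta]$, $\int\gamma>0$, $H_{p\to q}(h^*,\gamma,\Delta)\in\Lambda(Q)$, and — the key feature — satisfying the strict capacity inequalities $f_e(h^*,\tau^j_q(h^*,\cdot)) < c_e(\tau^j_q(h^*,\cdot))$ at all relevant edges $e$ of $q$ (or of the non-shared suffix $\tilde q$ in the common-prefix sub-case).

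The central technical step is to transfer these strict inequalities to $h^n$. Combining the uniform convergences \ref{ass:fConvergesUniform}, \ref{ass:tauConvergesUniform} with continuity of $f_e(h^*,\cdot)$ from \ref{ass:fContinuous} and monotonicity of $c_e$ from \ref{ass:CapacityNonDecreasing}, the penalties $\xi_e(h^n,\tau^j_q(h^n,\cdot))$ on the relevant edges of $q$ (or $\tilde q$) vanish identically on $\supp(\gamma)$ for all large $n$; in the common-prefix sub-case, the prefix contributions to $\xi_p$ and $\xi_q$ cancel in their difference by~\ref{ass:tEqualOnCommonPrefix}, yielding $\xi_p(h^n,\cdot)-\xi_q(h^n,\cdot+\Delta)\geq 0$ on $\supp(\gamma)$. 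Combined with the uniform convergence \ref{ass:PsiConvergesUniform}, this produces $\Psi^{\lambda_n}_p(h^n,\cdot) - \Psi^{\lambda_n}_q(h^n,\cdot+\Delta) > \varepsilon_0/4$ on $\supp(\gamma)$ for all sufficiently large $n$.

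Finally, I would set $\gamma^n := \min\{\gamma,\,h^n_p,\,B_q-h^n_q(\cdot-\Delta)\}$, which is automatically admissible as an unconstrained deviation for $h^n$. Applying the VI of \Cref{thm:VICharOfDE} to the unconstrained equilibrium $h^n$ (with respect to $\Psi^{\lambda_n}$) with test point $H_{p\to q}(h^n,\gamma^n,\Delta)\in\Lambda(Q)$ gives $\int(\Psi^{\lambda_n}_q(h^n,\cdot+\Delta) - \Psi^{\lambda_n}_p(h^n,\cdot))\,\gamma^n \geq 0$, which together with the pointwise bound from the previous paragraph forces $\int\gamma^n = 0$ for every large $n$. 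The main obstacle will be to rule out exactly this: weak $L^2$-convergence $h^n\rightharpoonup h^*$ gives only integrated control over $h^n_p$ and $h^n_q$, so one must combine the integrated convergences $\int_{\supp(\gamma)}h^n_p\to\int_{\supp(\gamma)}h^*_p\geq\int\gamma$ and $\int_{\supp(\gamma)+\Delta}(B_q-h^n_q)\to\int_{\supp(\gamma)+\Delta}(B_q-h^*_q)\geq\int\gamma$ (both coming from the admissibility of $\gamma$ for $h^*$) with a careful triple-minimum estimate — choosing $\gamma$ as a small multiple of $\CharF[J]$ if necessary — to produce a uniform lower bound $\int\gamma^n\geq c>0$ for large $n$, yielding the desired contradiction.
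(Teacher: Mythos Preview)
Your overall architecture matches the paper's: assume a violating \addmDev{} $(q,\Delta)\in U_p(h^*,t)$, use continuity plus the uniform convergences \ref{ass:PsiConvergesUniform}, \ref{ass:fConvergesUniform}, \ref{ass:tauConvergesUniform} to transfer both the cost gap and the strict capacity slack from $h^*$ to $h^n$, cancel the prefix penalties via \ref{ass:tEqualOnCommonPrefix}, and derive a contradiction to $h^n$ being an unconstrained equilibrium for $\Psi^{\lambda_n}$. Two points deserve comment.

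First, a minor one: you claim the penalties $\xi_e(h^n,\tau^j_q(h^n,\cdot))$ vanish \emph{identically on} $\supp(\gamma)$. This needs a uniform positive lower bound on the capacity gap $c_e(\tau^j_q(h^*,\cdot))-f_e(h^*,\tau^j_q(h^*,\cdot))$ over $\supp(\gamma)$, which you do not have a priori. The paper handles this by passing to a subset $J\subseteq\supp(\gamma)$ with $\int_J\gamma>0$ on which the gap is bounded away from zero; you should do the same.

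Second, and this is the substantive gap: your final step via the VI does not close. You set $\gamma^n=\min\{\gamma,h^n_p,B_q-h^n_q(\cdot-\Delta)\}$ and need $\int\gamma^n\geq c>0$ for large $n$, but weak convergence gives only the two \emph{separate} integral limits you wrote down, and these do \emph{not} control the integral of the pointwise minimum. A concrete obstruction: take $\gamma=\eta\CharF[{[0,1]}]$, $h^n_p=2\CharF[A_n]$, $B_q-h^n_q(\cdot-\Delta)=\CharF[{[0,1]\setminus A_n}]$ with $|A_n|=\tfrac12$ and $A_n$ oscillating so that both sequences converge weakly to positive constants---yet $\gamma^n\equiv 0$ for every $n$. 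Replacing $\gamma$ by a small multiple of an indicator does nothing to prevent this. The paper sidesteps the issue entirely by \emph{not} building a test flow for the VI: it uses weak convergence only to produce, for each large $n$, a subset $J_n\subseteq J$ of positive measure on which $h^n_p>0$ (which \emph{does} follow from $\int_J h^n_p\to\int_J h^*_p>0$), and then appeals directly to the pointwise definition of an unconstrained dynamic equilibrium for $\Psi^{\lambda_n}$: having $h^n_p(t')>0$ together with $\Psi^{\lambda_n}_p(h^n,t')>\Psi^{\lambda_n}_q(h^n,t'+\Delta)$ on a set of positive measure already contradicts \eqref{eq:de-rate} (or \eqref{eq:de-volume}). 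Switching your last paragraph to this argument removes the obstacle.
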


	\begin{proofClaim}
		Suppose that $h^*\in S$ is not an equilibrium, that is, there
		is some $i\in I,p,q\in \Pc_i$ and $t \in [t_0,t_f]$ with $(q,\Delta)\in U_{p}(h, t)$ and $\Psi_p(h^*,t)>\Psi_q(h^*,t+\Delta)$. We distinguish two cases: If $\Delta = 0$, we define $w$ as the maximal common prefix of $p$ and $q$, \ie $p = w\tilde{p}$ and $q = w\tilde{q}$ for some subwalks $\tilde{p}$ and $\tilde{q}$. If $\Delta\neq 0$, we define $w$ as the empty walk and $\tilde p \coloneqq p$ and $\tilde q \coloneqq q$.
			
		\begin{itemize}
			\item With \ref{ass:EffectivePathDelayContinuous} and \ref{ass:PsiConvergesUniform}, \ie continuity of $\Psi_p(h^*,\emptyarg)$ and uniform convergence  of $\Psi_p(h^n,\emptyarg)$ to $\Psi_p(h^*,\emptyarg)$, we get 
			the existence of some $\delta > 0$ such that
				\begin{align}\label{claim:ExistenceFDAddSpaceExCPisEq:Step:BetterAlternative}
					\Psi_p(h^n,t')>\Psi_q(h^n,t'+\Delta) \text{ for all } t' \in [t-\delta,t+\delta] \text{ for $n$ large enough.}
				\end{align}
			
			\item From $(q,\Delta)\in U_{p}(h^*, t)$, we get a function $\shiftN$ with $\supp(\shiftN) \subseteq [t-\delta,t+\delta]$, $\int_{\tStart}^{\tEnd}\shiftN(t')\diff t' > 0$ and $(q,\shiftN,\Delta) \in A_{p}(h^*)$. By the definition of \addmEpsDev s (\ie \eqref{eq:FeasibleDeviationsAdditionalSpaceExceptCommonPrefixEnter}) this implies 	
				\[f_e(h^*,\tau^j_{q}(h^*,t')) < c_e(\tau^j_{q}(h^*,t')) \text{ for all } e = (v_j,v_{j+1}) \in \tilde{q}, t' \in \supp(\shiftN)+\Delta.\]
			
			\item With \ref{ass:fConvergesUniform} and \ref{ass:tauConvergesUniform}, \ie the uniform convergence of $f_e(h^n,\emptyarg)$ to $f_e(h^*,\emptyarg)$ and $\tau^j_p(h^n,\emptyarg)$ to $\tau^j_p(h^*,\emptyarg)$, we have the existence of some $N \in \IN$ and $J \subseteq \supp(\shiftN)$ such that $\int_J \shiftN(t')\diff t' > 0$ and 
		 		\[f_e(h^n,\tau^j_{q}(h^n,t')) < c_e(\tau^j_{q}(h^n,t')) \text{ for all } n \geq N, e = (v_j,v_{j+1}) \in \tilde{q} \text{ and } t' \in J+\Delta.\]
			
			\item With weak convergence of $h^n\rightarrow h^*$ we now get the existence of some $J_n\subseteq J$ of positive measure with $h_p^n(t')>0$ for all $t' \in J_n$ for $n$ large enough:
			
			To see this, let $\CharF[J,p] \in L^2(\planningInterval)^\Pc$ be the characteristic function of the measurable set $J$ for walk $p$ and the zero function for all other walks. Then the weak convergence of $h^n$ to $h^*$ implies
				\[\lim_n \int_{J} h^n_{p}(t') \diff t' = \lim_n \scalar{h^n}{\CharF[J,p]} = \scalar{h^*}{\CharF[J,p]} = \int_{J} h^*_{p}(t')\diff t'\geq \int_J\shiftN(t')\diff t' > 0.\]
			This implies $\int_{J} h^n_{p}(t') \diff t' > 0$ for large enough $n$ and, thus, for any such $n$ there exists a subset $J_n \subseteq J$ of positive measure with $h^n_{p}(t') > 0$ for all $t' \in J_n$. 
		\end{itemize}
	
		Taking all this together we can now show a contradiction as follows: Choose $n \in \IN$ large enough such that all the previous statements hold. Then, in particular, we have $f_e(h^n,\tau^j_{q}(h^n,t')) < c_e(\tau^j_{q}(h^n,t'))$ and, thus, 
		\begin{align}\label{claim:ExistenceFDAddSpaceExCPisEq:Step:NoPenalty}
			\xi_e(h^n,\tau^j_{q}(h^n,t')) = 0 \text{ for all } e = (v_j,v_{j+1}) \in \tilde{q} \text{ and } t' \in J_n + \Delta.
		\end{align}
		
		This then implies
			\begin{align*}
				\Psi_p^{\lambda_n}(h^n,t') 
					&= \Psi_p(h^n,t') + \lambda_n \xi_p(h^n,t') \\
					&= \Psi_p(h^n,t') + \lambda_n \sum_{e=(v_j,v_{j+1}) \in w\tilde{p}}\xi_e(h^n,\tau^j_p(h^n,t')) \\
					&\geq \Psi_p(h^n,t') + \lambda_n \sum_{e=(v_j,v_{j+1}) \in w}\xi_e(h^n,\tau^j_p(h^n,t')) \\
					\overset{\text{\eqref{claim:ExistenceFDAddSpaceExCPisEq:Step:BetterAlternative}}}&{>} \Psi_q(h^n,t'+\Delta) + \lambda_n \sum_{e=(v_j,v_{j+1}) \in w}\xi_e(h^n,\tau^j_p(h^n,t')) \\
					\overset{\ref{ass:tEqualOnCommonPrefix}}&{=} \Psi_q(h^n,t'+\Delta) + \lambda_n \sum_{e=(v_j,v_{j+1}) \in w}\xi_e(h^n,\tau^j_q(h^n,t'+\Delta)) \\
					\overset{\text{\eqref{claim:ExistenceFDAddSpaceExCPisEq:Step:NoPenalty}}}&{=} \Psi_q(h^n,t'+\Delta) + \lambda_n \sum_{e=(v_j,v_{j+1}) \in w\tilde{q}}\xi_e(h^n,\tau^j_q(h^n,t'+\Delta)) \\
					&= \Psi_q(h^n,t'+\Delta) + \lambda_n \xi_q(h^n,t') = \Psi_q^{\lambda_n}(h^n,t'+\Delta) 
			\end{align*}
		for all $t' \in J_n$. For the fourth to last equality, note that $w$ is the empty walk whenever we have $\Delta \neq 0$.
		
		Now, at the same time, $t' \in J_n$ also implies $h_p^n(t')>0$. Together with the fact that $J_n$ has positive measure, these two statements are now a contradiction to $h^n$ being an unconstrained dynamic equilibrium.
	\end{proofClaim}
	Finally, \Cref{claim:hStarInSr,claim:ExistenceFDAddSpaceExCPisEq} together imply that $h^*$ is a \sCDEuP.
\end{proof}

Note, that the same type of proof cannot be applied to show existence of \sCDEdf{}. While everything up to and including \Cref{claim:hStarInSr} still holds (since the constraint set $S$ remains unchanged), \Cref{claim:ExistenceFDAddSpaceExCPisEq} is not guaranteed to hold any more for \addmEpsDev s defined by \eqref{eq:UnsaturatedPathsFeasibleAfterDeviationEnter}. 
In fact there exists an instance wherein the sequence $h^n$ from the proof of \Cref{thm:ExistenceFDAddSpaceExCP} converges to some $h^*$ which is not a \sCDEdf:

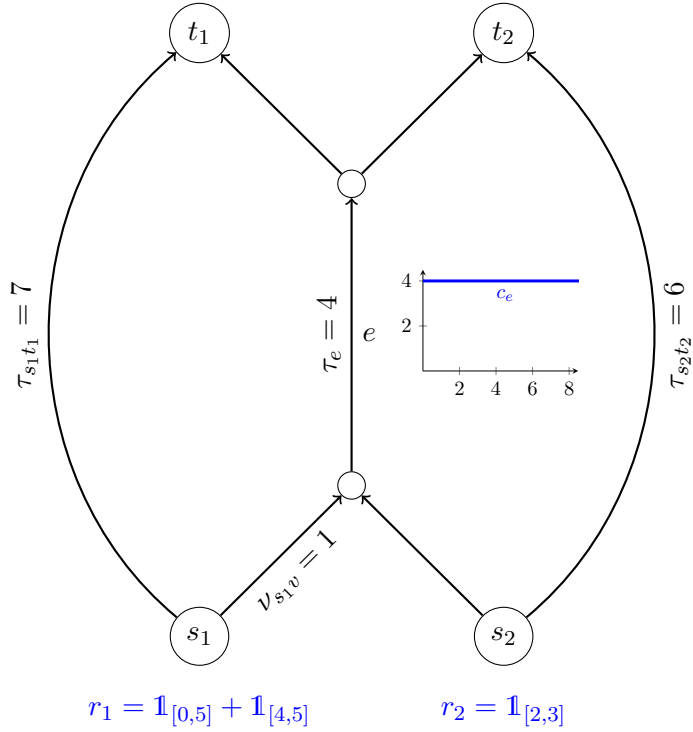
\begin{figure}
	\centering
	\begin{tikzpicture}
	\node[namedVertex] (s1) at (0,0) {$s_1$};
	\node[namedVertex] (s2) at (4,0) {$s_2$};
	\node[namedVertex] (v) at (2,2) {};
	\node[namedVertex] (w) at (2,6) {};
	\node[namedVertex] (t1) at (0,8) {$t_1$};
	\node[namedVertex] (t2) at (4,8) {$t_2$};
	
	\draw[edge] (s1) to[bend left=50] node[above,sloped,rotate=180]{$\tau_{s_1t_1}=7$} (t1);
	\draw[edge] (s1) -- node[below,sloped]{$\nu_{s_1v} = 1$} (v);
	\draw[edge] (v) -- node[right](e){$e$} node[sloped,above]{$\tau_e=4$} (w);
	\draw[edge] (w) -- (t1);
	\draw[edge] (s2) -- (v);
	\draw[edge] (w) -- (t2);
	\draw[edge] (s2) to[bend right=50] node[below,sloped]{$\tau_{s_2t_2}=6$} (t2);
		
	\node[right of=e, anchor=west,node distance=.2cm] {
		\begin{tikzpicture}[scale=.7,solid,black,
			declare function={
				c(\x)= 4;			
			}]

			\begin{axis}[xmin=0,xmax=8.5,ymax=4.5, ymin=0, samples=500,width=4.5cm,height=3.5cm,
				axis x line*=bottom, axis y line*=left, axis lines=middle, xtick={2,4,6,8},ytick={2,4}]
				\addplot[blue, ultra thick,domain=0:9] {c(x)} node[below,pos=.5]{$c_{e}$};
			\end{axis}
			
		\end{tikzpicture}
	};
	
	\node[below of=s1,blue](){$r_1=\CharF[{[0,5]}] + \CharF[{[4,5]}]$};
	\node[below of=s2,blue](){$r_2=\CharF[{[2,3]}]$};
\end{tikzpicture}
	\caption{An instance with volume constraints, fixed network inflow rates and the Vickrey queueing model for the edge dynamics. All values of $\tau$ not explicitly given are $1$ and all missing values of $\nu$ are infinite. In this network there exists a sequence of equilibrium flows for the instance with increasing prices (instead of strict volume constraints) which converges to a feasible flow which is a \sCDEuP{} but not a \sCDEdf.}\label{fig:CounterExamplePrices}
\end{figure}

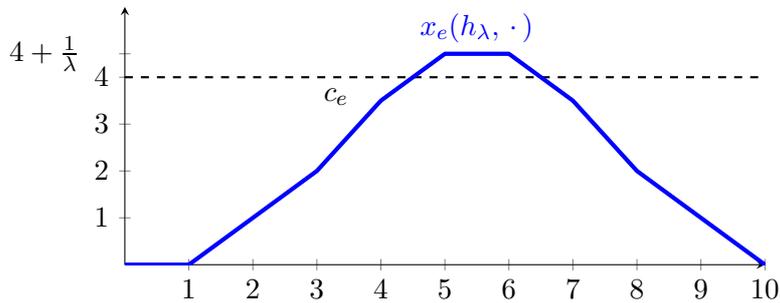
\begin{figure}
	\centering
	\begin{tikzpicture}[scale=1,solid,black,
	declare function={
		c(\x)= 4;	
		vol(\x)= 0 + and(\x>1,\x <= 5)*(\x-1) + and(\x > 5, \x <= 6)*4 + and(\x > 6, \x <=10)*(4-(\x-6)) + and(\x > 3,\x<4)*(.5*(\x-3)) + and(\x >= 4, \x <= 7)*.5 + and(\x > 7, \x <8)*(.5-.5*(\x-7));		
	}]

	\begin{axis}[xmin=0,xmax=10,ymax=5.5, ymin=0, samples=500,width=10cm,height=5cm,
		axis x line*=bottom, axis y line*=left, axis lines=middle, xtick={1,2,3,4,5,6,7,8,9,10},ytick={1,2,3,4,4.5},yticklabels={$1$,$2$,$3$,$4$,$4+\tfrac{1}{\lambda}\quad$}]
		\addplot[thick,domain=0:11,dashed] {c(x)} node[below,pos=.3]{$c_{e}$};
		\addplot[ultra thick,domain=0:11,blue] {vol(x)} node[above,pos=.5]{$x_{e}(h_\lambda,\emptyarg)$};
	\end{axis}

\end{tikzpicture}
	\caption{The flow volume on edge~$e$ of the network from \Cref{fig:CounterExamplePrices} induced by the flow split described in \Cref{ex:CounterExamplePrices}.}\label{fig:CounterExamplePricesVolGraph}
\end{figure}

\begin{example}\label{ex:CounterExamplePrices}
	The two-commodity network with fixed network inflow rates in \Cref{fig:CounterExamplePrices} shows why existence of \sCDEdf{} cannot be shown by relaxing the constraints and replacing them with fees for violation (as done in the proof of \Cref{thm:ExistenceFDAddSpaceExCP} for \sCDEu{} and \sCDEuP). The given network uses the Vickrey point queue model for the edge dynamics and has a constant volume constraint of $4$ on the central edge. For any price $\lambda$ the following is an equilibrium: Commodity~$1$ uses the central path at a rate of $1$ during the interval $[0,5]$ and (additionally) the direct edge towards $t_1$ at a rate of $1$ during the interval $[4,5]$. Commodity~$2$ uses the central path at a rate of $1/\lambda$ and sends everything else over the direct edge towards $t_2$. The flow volume on edge $e$ induced by such a flow split is depicted in \Cref{fig:CounterExamplePricesVolGraph}. Note, that during the interval $[3,4]$ this flow volume is less than $4$ and, thus, the particles of commodity~$2$ starting during $[2,3]$ are indifferent between the central and the right path. During the interval $[5,6]$, on the other hand, the capacity constraint on edge~$e$ is violated and particles entering during this interval have to pay a penalty of $\lambda\cdot\frac{1}{\lambda}=1$. In particular, particles of commodity~$1$ starting during $[4,5]$ are indifferent between the left and the central path. Therefore, the flow split described above is indeed an unconstrained equilibrium for price~$\lambda$. 
	
	Letting $\lambda$ go to $\infty$ these equilibria converge to a flow where commodity~$2$ sends everything along the direct edge while commodity~$1$ uses the same split as for every $\lambda$. This is clearly a feasible flow and also a \sCDEuP{} since the capacity constraint on edge $e$ is tight during $[5,7]$ and, thus, the central path is not an \addmDev{} during $[4,5]$. However, it is not a \sCDEdf{} as, due to the edge dynamics on edge $s_1v$, sending more flow into the central path during $[4,5]$ would not actually increase the flow volume on edge $e$ and, therefore, not violate the volume constraint on this edge. Thus, the central path is an \addmDev{} for \addmEpsDev s defined by \eqref{eq:UnsaturatedPathsFeasibleAfterDeviationEnter}.
\end{example}

The following example shows why it is important for the previous existence proof that the capacity functions $c_e$ are non-decreasing (\ie why we need \cref{ass:CapacityNonDecreasing}). 

\begin{figure}
	\centering
	\begin{tikzpicture}
	\node[namedVertex] (s1)at(0,0) {$s_1$};
	\node[namedVertex] (t1)at(4,0) {$t_1$};
	
	\draw[edge] (s1) --node(e1)[above]{$e_1$} node[sloped,below]{$\tau_{e_1}=2$} (t1);
	\draw[edge] (s1) to[bend right=60] node[above]{$e_2$} node[sloped,below]{$\tau_{e_2}=5$} (t1);
	
	\node[above of=e1, anchor=south,node distance=.15cm] {
		\begin{tikzpicture}[scale=1,solid,black,
			declare function={
				c(\x)= 0 + (\x <= 1)*1 + and(\x > 1, \x<=2)*(2-\x);	
			}]

			\begin{axis}[xmin=0,xmax=3.5,ymax=1.5, ymin=0, samples=500,width=4.5cm,height=3cm,
				axis x line*=bottom, axis y line*=left, axis lines=middle, xtick={1,2}, ytick={1}]
				\addplot[blue, ultra thick,domain=0:5] {c(x)} node[above,pos=.5]{$c_{e_1}$};
			\end{axis}
			
		\end{tikzpicture}
	};
	
	\node[left of=s1,blue, node distance=1.5cm](){$r_1=\CharF[{[0,1]}]$};
\end{tikzpicture}
	\caption{A single-commodity instance without a \sCDEu. The two edges have a fixed (flow independent) travel time of $\tau_e$. The edge $e_1$ has an edge capacity limiting the total flow volume on the edge.}\label{fig:NonExistenceForNonConstantCapacity}
\end{figure}
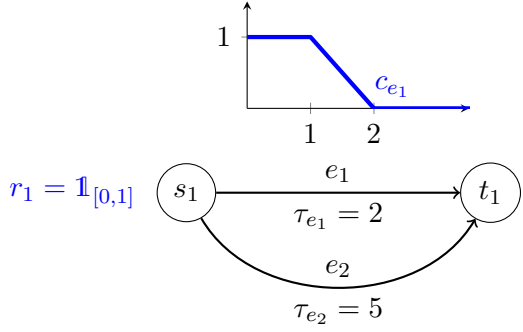

\begin{example}\label{ex:NonExistenceForNonConstantCapacity}
	In the network with a decreasing capacity function depicted in \Cref{fig:NonExistenceForNonConstantCapacity} the only feasible flow is to send all flow over the edge $e_2$ as sending any flow into edge $e_1$ will lead to a positive flow volume on this edge during the interval $[2,3]$ (which the edge capacity does not allow for). However, switching from edge $e_2$ to edge $e_1$ is an \addmDev{} according to both \eqref{eq:FeasibleDeviationsAlwaysAdditionalSpaceEnter} and \eqref{eq:UnsaturatedPathsFeasibleAfterDeviationEnter} during $[0,1]$ as there is still capacity left on edge $e_1$ at the time these particles would enter edge $e_1$. Thus, this network has no \sCDEdf{} or \sCDEu. At the same time, deviating from edge $e_2$ to edge $e_1$ is not an \addmDev{} according to \eqref{eq:FeasibleDeviationsAdditionalSpaceExceptCommonPrefixTravelling} or \eqref{eq:UnsaturatedPathsFeasibleAfterDeviationTravelling} and, thus, sending all flow via edge $e_2$ is both a \wCDEdf{} and a \wCDEuP.
\end{example}

As this example already suggests, for the existence of \wCDEuP{}, we may remove the assumption that $c_e$ is non-decreasing and allow more general capacity functions. On the flip side, we have to impose stronger assumptions on the underlying network and the flow dynamics then. We start again with the assumptions on the network:

\begin{enumerate}[label=(A\arabic*),resume=Assumptions]
	\item For any edge $e$ and walk inflow $h$, we have $f_e(h,\theta) \leq c_e(\theta)$ for all $\theta \in [0,D_e(h,0)]$.\label[asmpt]{ass:FeasibleAtStartInterval}
	\item For the case with DTC: For every $h\in \Lambda(Q)$ and $i\in I$, there exists some $p\in \Pc_i$ and $J \subseteq [t_0,t_f]$ of positive measure such that for all $t \in J$, we have $h_p(t) < B_p$ and $f_e(h, \theta)\leq c_e(\theta)$ for all $e = (v_j,v_{j+1})\in p$ and $\theta \in [\tau^j_p(h,t),\tau^{j+1}_p(h,t)]$.\label[asmpt]{ass:ExistenceOfUnsaturatedPathInterval}
	
	For the case without DTC: For every $h\in \Lambda(r)$, $t \in \planningInterval$ and $i\in I$, there exists some $p\in \Pc_i$ such that we have $f_e(h, \theta)\leq c_e(\theta)$ for all $e = (v_j,v_{j+1})\in p$ and $\theta \in [\tau^j_p(h,t),\tau^{j+1}_p(h,t)]$.
	\item For any edge $e$ the capacity function $c_e$ is continuous on $\IR_{\geq 0}$.\label[asmpt]{ass:CapacityContinuous} 
\end{enumerate}
Similar to \ref{ass:FeasibleAtStart}, \cref{ass:FeasibleAtStartInterval} ensures that feasibility is always guaranteed at the start. If no particle can reach a capacitated edge ``right away'' (\ie before time $\max_e D_e(h,0)$), this is again equivalent to having no capacity-violations on empty edges. Next, \cref{ass:ExistenceOfUnsaturatedPathInterval} ensures that there is always an outside option (similar to \ref{ass:ExistenceOfUnsaturatedPath}). Finally, the continuity of the capacity function (\ie \ref{ass:FeasibleAtStartInterval}) is a new type of assumption as, previously, we only required it to be non-negative (and non-decreasing).

Next, we require that the walk travel times are induced by edge-delay functions (as is the case, e.g., in the Vickrey point queue and the linear edge delay model):
\begin{enumerate}[label=(A\arabic*),resume=Assumptions]
	\item The walk travel times are induced by edge delay functions $D_e(h,t)$, \ie for any walk $p$ and edge $e = (v_j,v_{j+1}) \in p$, we have $\tau^{j+1}_p(h,t) = \tau^j_p(h,t) + D_e(h, \tau^j_p(h,t))$.\label[asmpt]{ass:tauInducedByEdgeDelays}
	\item The edge delays are non-negative, \ie $D_e(h,\theta) \geq 0$ for all $h,\theta$ and $e$.\label[asmpt]{ass:EdgeDelaysNonNegative}
	\item For any edge~$e$ and walk inflow~$h$, the edge delay $D_e(h,\emptyarg)$ is continuous.\label[asmpt]{ass:EdgeDelaysContinuous}
	\item For any edge $e$ and any weakly convergent sequence $h^n$, the sequence $D_e(h^n,\emptyarg)$ converges uniformly.\label[asmpt]{ass:EdgeDelaysConvergeUniform}
\end{enumerate}
\Cref{ass:tauInducedByEdgeDelays} essentially imposes a certain edge-separability of the walk travel times (and, therefore, strengthens~\ref{ass:tEqualOnCommonPrefix}) while \labelcref{ass:EdgeDelaysContinuous,ass:EdgeDelaysConvergeUniform} are typical continuity assumptions on the edge travel times. 
Finally, we give the adjusted assumptions on the flow dynamics:

\begin{enumerate}[label=(A\arabic*),resume=Assumptions]
	\item For any edge $e$ and walk inflow $h$, the function $f_e(h,\emptyarg)$ is uniformly continuous.\label[asmpt]{ass:fUniformlyContinuous}
	\item The edge load function satisfies the \emph{FIFO-compatible principle of causation}, \ie for any two time $a < b$ with $f_e(h,b) > 0$ and $b > a + D_e(h,a)$, there exists some walk $p \in \Pc$ with $e=(v_j,v_{j+1}) \in p$ and some set $J^{-1} \subseteq \planningInterval$ of positive measure such that $h_p(t) > 0$ and $\tau^j_p(h,t) \in [a,b]$ for all $t \in J^{-1}$.\label[asmpt]{ass:FIFOcompatiblePrincipleOfCausation}
\end{enumerate}
Here, \cref{ass:fUniformlyContinuous} is a strengthening of~\Cref{ass:fContinuous}. The FIFO-compatible principle of causation~\ref{ass:FIFOcompatiblePrincipleOfCausation} states that, whenever the edge-load function is positive, there must be particles that have entered but not left the edge before that time (assuming that the edge dynamics satisfy the FIFO principle).

\begin{theorem}\label{thm:ExistenceFDAddSpaceExCPInterval}
	Under \cref{ass:PsiBoundedFlowIndependent,ass:FinitelyManyWalks,ass:PathInflowBounds,ass:PsiConvergesUniform,ass:EffectivePathDelayContinuous,ass:tauInducedByEdgeDelays,ass:EdgeDelaysConvergeUniform,ass:FIFOcompatiblePrincipleOfCausation,ass:fUniformlyContinuous,ass:fConvergesUniform,ass:CapacityNN,ass:CapacityContinuous,ass:FeasibleAtStartInterval,ass:EdgeDelaysNonNegative,ass:EdgeDelaysContinuous,ass:ExistenceOfUnsaturatedPathInterval}, there always exist \wCDEu{} and \wCDEuP{} (both with and without DTC).
\end{theorem}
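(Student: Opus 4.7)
The plan is to adapt the augmented-Lagrangian penalty approach from the proof of \Cref{thm:ExistenceFDAddSpaceExCP} to the weak setting. The key difference is that a \wCDEuP{} requires strict capacity slack on the \emph{entire} edge traversal interval $[\tau^j_q(h,t),\tau^{j+1}_q(h,t)]$, not merely at the entry instant. Accordingly, I would replace the point-wise penalty by an integrated one:
\[
	\xi_p(h,t) \coloneqq \sum_{e=(v_j,v_{j+1})\in p} \int_{\tau^j_p(h,t)}^{\tau^{j+1}_p(h,t)} \max\{0,\, f_e(h,\theta)-c_e(\theta)\} \diff\theta,
\]
and set $\Psi^{\lambda}_p(h,t) \coloneqq \Psi_p(h,t) + \lambda \xi_p(h,t)$. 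By \cref{ass:PsiConvergesUniform,ass:fConvergesUniform,ass:EdgeDelaysConvergeUniform,ass:tauInducedByEdgeDelays,ass:CapacityContinuous}, together with the uniform continuity of $f_e(h,\cdot)$ from \cref{ass:fUniformlyContinuous}, the mapping $h \mapsto \Psi^\lambda(h,\cdot)$ is sequentially weak-strong continuous, \ie satisfies \ref{ass:PsiWScont}.

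Next, I would prove an analogue of \Cref{claim:BoundOnEssInf}: by \cref{ass:ExistenceOfUnsaturatedPathInterval}, for every $h \in \Lambda$ and $i \in I$ there is a walk $p \in \Pc_i$ on which $\xi_p(h,\cdot)$ vanishes identically on a set of positive measure (with DTC) resp.\ at every time $t \in \planningInterval$ (without DTC), and hence $\Psi^{\lambda}_p(h,\cdot) \leq M$ there by \ref{ass:PsiBoundedFlowIndependent}. \Cref{lemma:RestrictToTruncatedPsi} then lets us work with a truncated, bounded effective walk-delay operator, and \Cref{thm:ExistenceUnconstrained} yields, for any sequence $\lambda_n \to \infty$, unconstrained dynamic equilibria $h^n$ with respect to $\Psi^{\lambda_n}$. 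Passing to a weakly convergent subsequence gives $h^n \rightharpoonup h^* \in \Lambda$. It then remains to show (i) $h^* \in S$ and (ii) $h^*$ satisfies~\eqref{eq:CDE} for the \addmEpsDev s~\eqref{eq:FeasibleDeviationsAdditionalSpaceExceptCommonPrefixTravelling}.

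For (i), suppose for contradiction that $f_e(h^*,\theta^*) > c_e(\theta^*)$ for some edge $e$ and time $\theta^*$. By \cref{ass:fUniformlyContinuous,ass:CapacityContinuous,ass:fConvergesUniform,ass:FeasibleAtStartInterval} there exist $\eta > 0$, a non-degenerate interval $[a,b]$ with $\theta^* \in [a,b]$ and $a > D_e(h^n,0)$, and $N \in \IN$ such that $f_e(h^n,\theta) - c_e(\theta) \geq \eta$ for all $n \geq N$ and all $\theta \in [a,b]$. By shrinking and using \cref{ass:EdgeDelaysContinuous,ass:EdgeDelaysConvergeUniform} we may further assume $b - a > D_e(h^n,a)$, so that \cref{ass:FIFOcompatiblePrincipleOfCausation} produces a walk $p_n$ with $e = (v_j,v_{j+1}) \in p_n$ and a positive-measure set $J^{-1}_n \subseteq \planningInterval$ of times $t'$ with $h^n_{p_n}(t') > 0$ and $\tau^j_{p_n}(h^n,t') \in [a,b]$. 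Using \cref{ass:tauInducedByEdgeDelays,ass:EdgeDelaysNonNegative,ass:EdgeDelaysContinuous} the subinterval of $[\tau^j_{p_n}(h^n,t'),\tau^{j+1}_{p_n}(h^n,t')]$ on which the integrand exceeds $\eta$ has uniform positive length for $t' \in J^{-1}_n$, so $\xi_{p_n}(h^n,t') \geq \eta\cdot\delta'$ for a fixed $\delta' > 0$. Thus $\Psi^{\lambda_n}_{p_n}(h^n,t') \geq \lambda_n \eta \delta' > M$ for $n$ large, contradicting the unconstrained-equilibrium property of $h^n$ via \Cref{claim:BoundOnEssInf}'s analogue.

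For (ii), I would essentially copy \Cref{claim:ExistenceFDAddSpaceExCPisEq} with the integrated penalty. Assume $(q,\Delta) \in U_p(h^*,t)$ and $\Psi_p(h^*,t) > \Psi_q(h^*,t+\Delta)$; let $w$ be the maximal common prefix of $p,q$ if $\Delta = 0$ and the empty walk otherwise, with $q = w\tilde q$. The weak admissibility definition~\eqref{eq:FeasibleDeviationsAdditionalSpaceExceptCommonPrefixTravelling} provides strict slack $f_e(h^*,\theta) < c_e(\theta)$ on \emph{all} of $[\tau^j_q(h^*,t'),\tau^{j+1}_q(h^*,t')]$ for each $e \in \tilde q$ and $t'$ near $t$. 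By uniform convergence of $f_e(h^n,\cdot)$, $\tau^j_q(h^n,\cdot)$ and $c_e$'s continuity, this slack persists for $h^n$ with $n$ large, so $\xi_e(h^n,\theta) = 0$ on the corresponding intervals and hence the $\tilde q$-contribution to $\xi_q(h^n,t'+\Delta)$ vanishes. Combined with $\Psi_p(h^n,t') > \Psi_q(h^n,t'+\Delta)$ (from~\ref{ass:PsiConvergesUniform} and~\ref{ass:EffectivePathDelayContinuous}), \cref{ass:tEqualOnCommonPrefix} applied to the common prefix $w$ and positive-measure support of $h^n_p$ near $J$ (by weak convergence), we deduce $\Psi^{\lambda_n}_p(h^n,t') > \Psi^{\lambda_n}_q(h^n,t'+\Delta)$ on a positive-measure set of $t'$ with $h^n_p(t') > 0$, contradicting that $h^n$ is an unconstrained equilibrium. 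Since every \wCDEuP{} is also a \wCDEu{} by \Cref{prop:RelationshipsOfCDE}, this establishes both existence statements.

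The main obstacle is the feasibility step: unlike in the strong case, a capacity violation at a single instant no longer directly translates into a large walk-penalty via the strong principle of causation~\ref{ass:StrongPrincipleOfCausation}. One has to leverage a sustained violation over an interval $[a,b]$ with $b-a > D_e(h^n,a)$ in order to invoke the FIFO-compatible principle~\ref{ass:FIFOcompatiblePrincipleOfCausation} and obtain a walk whose integrated penalty is uniformly bounded below by a positive constant. Arranging this interval uniformly for all sufficiently large $n$, using only the continuity and uniform-convergence hypotheses on $f_e$, $c_e$ and $D_e$, is the technically most delicate part of the argument.
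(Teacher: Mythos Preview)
Your penalty idea and the overall architecture (relax, take unconstrained equilibria $h^n$ for $\lambda_n\to\infty$, pass to a weak limit, verify feasibility and the equilibrium condition) are sound, and your equilibrium step~(ii) is essentially the right adaptation of \Cref{claim:ExistenceFDAddSpaceExCPisEq}. The route, however, is genuinely different from the paper's and the feasibility step~(i) contains a real gap.

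\textbf{The paper's approach.} Rather than redoing the penalty argument with an integrated penalty, the paper defines a transformed edge load
\[
  \tilde f_e(h,\theta)\coloneqq\max_{\zeta\in[\theta,\theta+D_e(h,\theta)]}\bigl(f_e(h,\zeta)-c_e(\zeta)\bigr)
\]
and constant capacities $\tilde c_e\equiv 0$, shows that the \emph{strong} \addmEpsDev s for $(\tilde f,\tilde c)$ coincide with the \emph{weak} ones for $(f,c)$, checks that $(\tilde f,\tilde c)$ satisfies all hypotheses of \Cref{thm:ExistenceFDAddSpaceExCP} (in particular~\ref{ass:StrongPrincipleOfCausation} via~\ref{ass:FIFOcompatiblePrincipleOfCausation}), and then simply invokes that theorem. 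The effect of this reduction is that the penalty becomes pointwise in the transformed model, so that a particle entering edge~$e$ at a time where $\tilde f_e>0$ automatically carries a penalty bounded below by the violation size---no ``overlap length'' factor appears.

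\textbf{The gap in your feasibility step.} Your sentence ``By shrinking \dots we may further assume $b-a>D_e(h^n,a)$'' does not work: shrinking the violation interval $[a,b]$ makes $b-a$ smaller, and nothing in the hypotheses forces a capacity violation to persist for longer than an edge traversal time. If instead you enlarge the interval (decrease $a$) so that $b-a>D_e(h^n,a)$ holds, then~\ref{ass:FIFOcompatiblePrincipleOfCausation} only yields particles with $\tau^j_{p_n}(h^n,t')\in[a,b]$, which does not force the traversal interval $[\tau^j,\tau^{j+1}]$ to meet the (possibly much shorter) original violation interval at all, let alone with uniformly positive length. Hence your integrated penalty $\int_{\tau^j}^{\tau^{j+1}}[f_e-c_e]_+$ need not be bounded below, and the contradiction with the bound from~\ref{ass:ExistenceOfUnsaturatedPathInterval} does not follow. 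If you want to keep a direct argument, replace the integrated penalty by the max-based one $\sum_e\max_{\theta\in[\tau^j,\tau^{j+1}]}[f_e(h,\theta)-c_e(\theta)]_+$; this eliminates the length factor and is precisely the paper's reduction unwound.
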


To prove this \namecref{thm:ExistenceFDAddSpaceExCPInterval} we replace the given edge-load function by a new one that incorporates the capacity function and has the property that it is larger than~$0$ at time~$\theta$ for an edge~$e$ if and only if there will be a violation of the (original) capacity function during the time interval particles entering edge~$e$ at time~$\theta$ spend travelling on this edge. Hence, we can replace all capacity functions by the constant zero-function and are back in the case of the previous \namecref{thm:ExistenceFDAddSpaceExCP}. The remaining, technical aspects of the proof are a), showing that strong equilibria \wrt the adjusted edge-loads and capacities are weak equilibria \wrt the original ones and vice versa, and b), showing that the assumptions of this \namecref{thm:ExistenceFDAddSpaceExCPInterval} imply the assumptions of \Cref{thm:ExistenceFDAddSpaceExCP} for the adjusted functions.

\begin{proof}
	By \Cref{prop:RelationshipsOfCDE} we only need to show the existence of a \wCDEuP. To do that we define a new edge-load function 
		\[\tilde{f}_e(h,\theta) \coloneqq \max\Set{f_e(h,\zeta)-c_e(\zeta) | \zeta \in [\theta,\theta+D_e(h,\theta)]}\]
	and new constant edge-capacity functions $\tilde{c}_e(\theta) \coloneqq 0$ for all $\theta \in \IR_{\geq 0}$.
	
	\begin{claim}\label{claim:EquivaleneOfAddmEpsSetsWithAndWithoutInterval}
		An \addmEpsDev{} with respect to $\tilde{f}_e$, $\tilde{c}_e$ and \eqref{eq:FeasibleDeviationsAdditionalSpaceExceptCommonPrefixEnter} is an \addmEpsDev{} with respect to $f_e$, $c_e$ and \eqref{eq:FeasibleDeviationsAdditionalSpaceExceptCommonPrefixTravelling} and vice versa.
	\end{claim}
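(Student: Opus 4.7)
The plan is to prove the equivalence by unfolding the definitions and observing that $\tilde f_e$ and $\tilde c_e$ are designed precisely so that a pointwise ``check at the entry time'' condition with respect to $(\tilde f_e,\tilde c_e)$ encodes a ``check throughout the traversal interval'' condition with respect to $(f_e,c_e)$.

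First I fix a walk inflow $h$, a walk $q$ with edge $e=(v_j,v_{j+1})\in q$, and a time $t\in\planningInterval$, and set $\theta \coloneqq \tau^j_q(h,t)$. By \ref{ass:tauInducedByEdgeDelays}, the interval over which the maximum in $\tilde f_e(h,\theta)$ is taken satisfies
\[
[\theta,\theta+D_e(h,\theta)] \;=\; [\tau^j_q(h,t),\,\tau^{j+1}_q(h,t)].
\]
Assumptions \ref{ass:fUniformlyContinuous} and \ref{ass:CapacityContinuous} guarantee that $\zeta\mapsto f_e(h,\zeta)-c_e(\zeta)$ is continuous, hence attains its maximum on this compact interval. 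Consequently,
\[
\tilde f_e(h,\tau^j_q(h,t)) < \tilde c_e(\tau^j_q(h,t)) = 0
\;\Longleftrightarrow\;
f_e(h,\zeta)<c_e(\zeta) \text{ for all } \zeta\in[\tau^j_q(h,t),\tau^{j+1}_q(h,t)].
\]

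Next I compare the two set-definitions clause-by-clause. Both \eqref{eq:FeasibleDeviationsAdditionalSpaceExceptCommonPrefixEnter} (applied with $(\tilde f_e,\tilde c_e)$) and \eqref{eq:FeasibleDeviationsAdditionalSpaceExceptCommonPrefixTravelling} (applied with $(f_e,c_e)$) are defined as the union of a ``$\Delta=0$ with common prefix'' clause and a ``general $\Delta$'' clause, the only difference being that the former checks a single entry time per edge while the latter checks the whole traversal interval. Applying the equivalence above to each edge of $\tilde q$ (in the common-prefix clause) and each edge of $q$ (in the general clause), and to each $t\in\supp(\shiftN)$ respectively $t\in\supp(\shiftN)+\Delta$, turns every clause of one definition into the corresponding clause of the other. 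Hence the two sets $A_p(h)$ coincide.

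I do not anticipate any real obstacle in this step: the substantive continuity and FIFO-type assumptions are only needed later in the overall existence proof of \Cref{thm:ExistenceFDAddSpaceExCPInterval} in order to invoke \Cref{thm:ExistenceFDAddSpaceExCP} on the modified data $(\tilde f_e,\tilde c_e)$. The present claim itself is essentially a bookkeeping lemma, whose only nontrivial ingredients are the identification of intervals via \ref{ass:tauInducedByEdgeDelays} and the attainment of the maximum via the continuity of $f_e(h,\cdot)-c_e(\cdot)$ on a compact set.
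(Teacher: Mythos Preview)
Your proof is correct and follows essentially the same approach as the paper: both arguments unfold the definitions, use \ref{ass:tauInducedByEdgeDelays} to identify $[\theta,\theta+D_e(h,\theta)]$ with $[\tau^j_q(h,t),\tau^{j+1}_q(h,t)]$, and then observe that the pointwise condition $\tilde f_e < \tilde c_e = 0$ at the entry time is exactly the interval condition $f_e < c_e$ on the traversal interval. Your presentation is slightly cleaner in that you state the key equivalence once (and make explicit that continuity of $f_e(h,\cdot)-c_e(\cdot)$ on a compact interval is what guarantees the maximum is attained, so that ``all values negative'' really is equivalent to ``maximum negative''), whereas the paper writes out the two directions separately.
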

	
	\begin{proofClaim}
		First, let $(q,\shiftN,\Delta)$ be an \addmEpsDev{} from $p$ with respect to $\tilde{f}_e$, $\tilde{c}_e$ and \eqref{eq:FeasibleDeviationsAdditionalSpaceExceptCommonPrefixEnter} and $\tilde q$ the suffix of~$q$ starting after the maximal common prefix of$~p$ and~$q$. Then, for any $t \in \supp(\shiftN)+\Delta$ and $e = (v_j,v_{j+1}) \in \tilde{q}$, we have 
		\begin{align*}
			0 &= \tilde{c}_e(\tau^j_q(h,t)) > \tilde{f}_e(h,\tau^j_q(h,t)) \\
			&= \max\Set{f_e(h,\theta)-c_e(\theta) | \theta \in [\tau^j_q(h,t),\tau^j_q(h,t)+D_e(h,\tau^j_q(h,t))]} \\
			\overset{\text{\ref{ass:tauInducedByEdgeDelays}}}&{=} \max\Set{f_e(h,\theta)-c_e(\theta) | \theta \in [\tau^j_q(h,t),\tau^{j+1}_q(h,t)]} 
		\end{align*}
		and, therefore $f_e(h,\theta) < c_e(\theta)$ for all $\theta \in [\tau^j_q(h,t),\tau^{j+1}_q(h,t)]$ which implies that $(q,\shiftN,\Delta)$ is an \addmEpsDev{} from $p$ with respect to $f_e$, $c_e$ and \eqref{eq:FeasibleDeviationsAdditionalSpaceExceptCommonPrefixTravelling}.
		
		Now, if $(q,\shiftN,\Delta)$ is an \addmEpsDev{} from $p$ with respect to $f_e$, $c_e$ and \eqref{eq:FeasibleDeviationsAdditionalSpaceExceptCommonPrefixTravelling} and $\tilde q$ as before, then, we have $f_e(h,\theta) > c_e(\theta)$ for all $t \in \supp(\shiftN)+\Delta$, $\theta \in [\tau^j_q(h,t),\tau^{j+1}_q(h,t)]$ and $e = (v_j,v_{j+q}) \in \tilde{q}$ implying -- by the same calculation as before -- that $\tilde{c}_e(\tau^j_q(h,t)) = 0 > \tilde{f}_e(h,\tau^j_q(h,t))$ for all such $t$ and $e$. Thus, $(q,J,\varepsilon,\Delta)$ is an \addmEpsDev{} from $p$ with respect to $\tilde{f}_e$, $\tilde{c}_e$ and~\eqref{eq:FeasibleDeviationsAdditionalSpaceExceptCommonPrefixEnter}.
	\end{proofClaim}
	
	Now, using assumption~\ref{ass:EdgeDelaysNonNegative} and the same calculations as in the proof of the previous claim, we observe that the constraint set~$S$ defined using $\tilde{f}$ and $\tilde{c}$ is the same as the one defined by $f$ and $c$. Thus, \Cref{claim:EquivaleneOfAddmEpsSetsWithAndWithoutInterval} implies that any \sCDEuP{} for $\tilde{f}$ and $\tilde{c}$ is also a \wCDEuP{} for $f$ and $c$. Consequently, to show existence of such an equilibrium, it suffices to show that we can apply \Cref{thm:ExistenceFDAddSpaceExCP} to the instance with $\tilde{f}$ and $\tilde{c}$. 

	\begin{claim}
		The new edge-load function $\tilde{f}$ satisfies \cref{ass:StrongPrincipleOfCausation,ass:fContinuous,ass:fConvergesUniform,ass:ExistenceOfUnsaturatedPath}.
	\end{claim}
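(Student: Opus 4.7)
The plan is to verify each of the four assumptions for $\tilde{f}$ in turn, with the first three being relatively routine consequences of the continuity and convergence hypotheses, and the fourth (the strong principle of causation) being the main technical challenge.

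For \cref{ass:fContinuous}, I would observe that $\tilde{f}_e(h,\theta)$ is the parametric maximum of the continuous function $g_e(\zeta) \coloneqq f_e(h,\zeta) - c_e(\zeta)$ (continuous by \cref{ass:fContinuous} for $f$ together with \cref{ass:CapacityContinuous}) over the compact interval $[\theta, \theta + D_e(h,\theta)]$ whose endpoints depend continuously on $\theta$ by \cref{ass:EdgeDelaysContinuous}. A standard argument -- pick $\theta_n \to \theta$, extract max-achievers $\zeta_n \in [\theta_n, \theta_n + D_e(h,\theta_n)]$ along a subsequence, pass to the limit using continuity of $g_e$ -- then yields continuity of $\tilde{f}_e(h,\cdot)$.

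For \cref{ass:fConvergesUniform}, fix a weakly convergent sequence $h^n \rightharpoonup h^*$ and decompose
\begin{align*}
	|\tilde{f}_e(h^n,\theta) - \tilde{f}_e(h^*,\theta)| \leq \sup_{\zeta} |g_e^n(\zeta) - g_e^*(\zeta)| + \sup_{\theta} \Bigl|\max_{[\theta, T^n(\theta)]} g_e^* - \max_{[\theta, T^*(\theta)]} g_e^*\Bigr|,
\end{align*}
where $T^n(\theta) \coloneqq \theta + D_e(h^n,\theta)$. The first term tends to $0$ uniformly by \cref{ass:fConvergesUniform} for $f$ (and continuity of $c_e$); the second tends to $0$ uniformly by combining the uniform convergence $D_e(h^n,\cdot) \to D_e(h^*,\cdot)$ from \cref{ass:EdgeDelaysConvergeUniform} with the uniform continuity of $g_e^*$ on the relevant compact domain (guaranteed by \cref{ass:fUniformlyContinuous} and \cref{ass:CapacityContinuous}). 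For \cref{ass:ExistenceOfUnsaturatedPath}, I just invoke \cref{ass:ExistenceOfUnsaturatedPathInterval}: it gives a walk $p$ and a set $J$ of positive measure (or a point $t$) such that $f_e(h,\theta) \leq c_e(\theta)$ for all $\theta \in [\tau^j_p(h,t), \tau^{j+1}_p(h,t)]$. By \cref{ass:tauInducedByEdgeDelays} this interval equals $[\tau^j_p(h,t), \tau^j_p(h,t) + D_e(h, \tau^j_p(h,t))]$, so taking the max yields $\tilde{f}_e(h, \tau^j_p(h,t)) \leq 0 = \tilde{c}_e(\tau^j_p(h,t))$ directly.

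For \cref{ass:StrongPrincipleOfCausation}, suppose $0 \leq \tilde{f}_e(h,a) < \tilde{f}_e(h,b)$. By the continuity of $g_e$ on the compact interval $[b, b + D_e(h,b)]$, pick $\zeta^* \in [b, b + D_e(h,b)]$ with $g_e(\zeta^*) = \tilde{f}_e(h,b)$. If we had $\zeta^* \in [a, a + D_e(h,a)]$, this would force $\tilde{f}_e(h,a) \geq g_e(\zeta^*) = \tilde{f}_e(h,b)$, contradicting the hypothesis; since $\zeta^* \geq b > a$, we conclude $\zeta^* > a + D_e(h,a)$. Also $f_e(h,\zeta^*) > c_e(\zeta^*) \geq 0$ by \cref{ass:CapacityNN}, so \cref{ass:FIFOcompatiblePrincipleOfCausation} applied with times $a$ and $\zeta^*$ produces a walk $p$ with $e = (v_j,v_{j+1}) \in p$ and a set $J^{-1}$ of positive measure with $h_p(t') > 0$ and $\tau^j_p(h,t') \in [a,\zeta^*]$ for all $t' \in J^{-1}$. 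The main obstacle is that $\zeta^*$ may exceed $b$, so we need to pin the entry times into $[a,b]$ rather than $[a,\zeta^*]$. I would attack this by restricting $J^{-1}$ to $\{t' \in J^{-1} : \tau^j_p(h,t') \in [a,b]\}$ and assume for contradiction that this subset has measure zero. Then all (but a null set of) the particles given by the FIFO principle enter $e$ in $(b,\zeta^*]$, and I would argue that, by \cref{ass:tauInducedByEdgeDelays} and \cref{ass:EdgeDelaysContinuous}, iterating the FIFO principle at successively later $a'' \in (a, b]$ with $a'' + D_e(h,a'') < \zeta^*$ -- which stays admissible until $a''$ reaches the critical value where $a'' + D_e(h,a'') = \zeta^*$ -- exposes particles whose entry times converge from below to this critical value, yielding the desired entries in $[a,b]$. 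This step is the delicate one, since the FIFO principle only gives one-sided information about where entries can occur, and making it output entries in the \emph{prescribed} window $[a,b]$ rather than the wider window $[a,\zeta^*]$ is the core of the argument.
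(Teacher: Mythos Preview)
Your treatment of \cref{ass:fContinuous}, \cref{ass:fConvergesUniform}, and \cref{ass:ExistenceOfUnsaturatedPath} is correct and essentially identical to the paper's.

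For \cref{ass:StrongPrincipleOfCausation}, your setup also matches the paper exactly: choose a maximiser $\zeta^* \in [b, b+D_e(h,b)]$, show $f_e(h,\zeta^*) > 0$ and $\zeta^* > a + D_e(h,a)$, and invoke \cref{ass:FIFOcompatiblePrincipleOfCausation}. At this point the paper simply asserts that this yields entry times in $[a,b]$; it does not spell out the step you flag as the ``main obstacle''. You are right that \cref{ass:FIFOcompatiblePrincipleOfCausation} applied with the pair $(a,\zeta^*)$ literally only gives $\tau^j_p(h,t') \in [a,\zeta^*]$, so there is indeed something to justify here.

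Your proposed fix, however, does not close the gap. Applying \cref{ass:FIFOcompatiblePrincipleOfCausation} with $(a'', \zeta^*)$ for later $a'' \in (a,b]$ still yields entry times in $[a'', \zeta^*]$, an interval whose upper endpoint remains $\zeta^*$; iterating produces no new information. The sentence ``exposes particles whose entry times converge from below to this critical value, yielding the desired entries in $[a,b]$'' does not follow: \cref{ass:FIFOcompatiblePrincipleOfCausation} guarantees only \emph{some} entry time in the stated window, with no control over where in that window it lies, so nothing forces entries into $[a,b]$ even as $a''$ approaches the critical value. You have correctly identified a genuine subtlety that the paper passes over, but the iteration sketch you propose does not resolve it.
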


	\begin{proofClaim}
		\Cref{ass:CapacityContinuous,ass:fUniformlyContinuous,ass:EdgeDelaysContinuous} together with the definition of $\tilde{f}$ directly imply that the new edge-load function is again continuous, \ie satisfies \ref{ass:fContinuous}.
		
		To see that \ref{ass:fConvergesUniform} holds for $\tilde{f}$, take any sequence of walk inflows $h^n$ converging weakly to some walk inflow $h^*$. Then for any edge $e$ we have
			\begin{align*}
				&\norm{\tilde{f}_e(h^n,\emptyarg)-\tilde{f}_e(h^*,\emptyarg)}_\infty = \max_{\theta \in \IR_{\geq 0}}\abs{\tilde{f}_e(h^n,\theta)-\tilde{f}_e(h^*,\theta)} \\
				&\quad= \max_{\theta \in \IR_{\geq 0}}\abs{\max_{\vartheta \in [t,t+D_e(h^n,t)]}\left(f_e(h^n,\vartheta)-c_e(\vartheta)\right)-\max_{\vartheta \in [t,t+D_e(h^*,t)]}\left(f_e(h^*,\vartheta)-c_e(\vartheta)\right)} \\
				&\quad\leq \max_{\theta \in \IR_{\geq 0}}\abs{\max_{\vartheta \in [t,t+D_e(h^n,t)]}\left(f_e(h^n,\vartheta)-c_e(\vartheta)\right)-\max_{\vartheta \in [t,t+D_e(h^n,t)]}\left(f_e(h^*,\vartheta)-c_e(\vartheta)\right)} \\
					&\quad\quad\quad+ \max_{\theta \in \IR_{\geq 0}}\abs{\max_{\vartheta \in [t,t+D_e(h^n,t)]}\left(f_e(h^*,\vartheta)-c_e(\vartheta)\right)-\max_{\vartheta \in [t,t+D_e(h^*,t)]}\left(f_e(h^*,\vartheta)-c_e(\vartheta)\right)}\\
				&\quad\leq \max_{\theta \in \IR_{\geq 0}}\max_{\vartheta \in [t,t+D_e(h^n,t)]}\abs{f_e(h^n,\vartheta) - f_e(h^*,\vartheta)} \\
					&\quad\quad\quad+ \hspace{-2em}\max_{\substack{\vartheta,\vartheta' \in \IR_{\geq 0}:\\\abs{\vartheta-\vartheta'} \leq \norm{D_e(h^n,\emptyarg)-D_e(h^*,\emptyarg)}_\infty}}\hspace{-2em}\Set{\abs{(f_e(h^*,\vartheta)-c_e(\vartheta))-(f_e(h^*,\vartheta')-c_e(\vartheta'))}}\\	
				&\quad\leq \norm{f_e(h^n,\emptyarg)-f_e(h^*,\emptyarg)}_\infty \\
					&\quad\quad\quad+ \hspace{-2em}\max_{\substack{\vartheta,\vartheta' \in \IR_{\geq 0}:\\\abs{\vartheta-\vartheta'} \leq \norm{D_e(h^n,\emptyarg)-D_e(h^*,\emptyarg)}_\infty}}\hspace{-2em}\Set{\abs{(f_e(h^*,\vartheta)-c_e(\vartheta))-(f_e(h^*,\vartheta')-c_e(\vartheta'))}}.
			\end{align*}
		Now the two terms at the end go to zero as $n$ goes to infinity by \cref{ass:fUniformlyContinuous,ass:fConvergesUniform,ass:EdgeDelaysConvergeUniform,ass:CapacityContinuous} for $f_e$ and $c_e$. Thus, \ref{ass:fConvergesUniform} holds for $\tilde{f}_e$ as well.
		
		Assumption~\ref{ass:ExistenceOfUnsaturatedPathInterval} for $f$ implies that \ref{ass:ExistenceOfUnsaturatedPath} holds for $\tilde{f}$.
		
		Finally, to show that $\tilde{f}$ satisfies the principle of causation (\ie \ref{ass:StrongPrincipleOfCausation}), take any walk inflow $h$, edge $e$ and two times $a < b$ with $0 \leq \tilde{f}_e(h,a) < \tilde{f}_e(h,b)$. Then, the definition of $\tilde{f}$ implies that there exists some time $\theta \in [b,b+D_e(h,b)]$ with
			\[ f_e(h,\theta) \overset{\text{\ref{ass:CapacityNN}}}{\geq} f_e(h,\theta) - c_e(\theta) > \tilde{f}_e(h,a) \geq 0.\] 
		The definition of $\tilde{f}_e(h,a)$ then implies $\theta \notin [a,a + D_e(h,a)]$ and, thus, $\theta > a+D_e(h,a)$. By~\ref{ass:FIFOcompatiblePrincipleOfCausation} this gives us some walk $p \in \Pc$ with $e=(v_j,v_{j+1}) \in p$ and some set $J^{-1} \subseteq \planningInterval$ of positive measure such that $h_p(t) > 0$ and $\tau^j_p(h,t) \in [a,b]$ for all $t \in J^{-1}$. But this is exactly what we need for \ref{ass:StrongPrincipleOfCausation} to hold for $\tilde{f}$.
	\end{proofClaim}
	Since the walk travel times $\tau^j_p$ are induced by the edge delays (\cf \labelcref{ass:tauInducedByEdgeDelays}), \cref{ass:EdgeDelaysConvergeUniform} implies that \cref{ass:tEqualOnCommonPrefix,ass:tauConvergesUniform} hold. Furthermore, the new edge-capacities $\tilde{c}_e$ are constant and non negative (\ie satisfy \labelcref{ass:CapacityNonDecreasing,ass:CapacityNN}) and assumption~\ref{ass:FeasibleAtStartInterval} on $c$ and $f$ implies that $\tilde{c}$ and $\tilde{f}$ satisfy \ref{ass:FeasibleAtStart}. Thus, we can now apply \Cref{thm:ExistenceFDAddSpaceExCP} to get the existence of a \sCDEuP{} $h^*$ for the edge-load functions $\tilde{f}$ and capacities $\tilde{c}$, which, by \Cref{claim:EquivaleneOfAddmEpsSetsWithAndWithoutInterval}, is also a \wCDEuP{} for the original edge-load functions $f$ and capacities $c$.
\end{proof}

Two important flow models for which we can now apply the above existence results are the Vickrey point queue model and the liner edge delay model described in \Cref{sec:counter}. More precisely, let the edge delays be defined as $D_e(h,\theta) \coloneqq \tau_e + \frac{q_e(h,\theta)}{\nu_e}$ or $D_e(h,\theta) \coloneqq \tau_e + \frac{x_e(h,\theta)}{\nu_e}$ and assume that all free flow travel times and service rates are strictly positive and finite:
\begin{enumerate}[label=(A\arabic*),resume=Assumptions]
	\item For all edges $e \in E$ we have $\tau_e, \nu_e \in \IR_{>0}$.\label[asmpt]{ass:TauNuPosFinite}
\end{enumerate}
Furthermore, we define the \effWalkDelay s by 
	\[\Psi_p(h,t) \coloneqq C_p(D_p(h,t), t + D_p(h,t)) \text{ for all } p \in \Pc_i,\]
where $C_p$ is a continuous function mapping travel and arrival time to total travel cost and assume that $\tau^j_p$ is derived from the edge delays (i.e., defined as in \cref{ass:tauInducedByEdgeDelays}). Finally, we use the flow volume on an edge as its edge load, \ie $f_e(h,t) \coloneqq x_e(h,t)$.

As the following \namecref{lemma:VickreyModelSatisfiesAssumptions} will show, both these models then automatically satisfy all the previous assumptions posed on the flow dynamics. Thus, we only have to check whether a given network also satisfies the additional assumptions on the network itself to be able to apply the two existence theorems of this section.

\begin{lemma}\label{lemma:VickreyModelSatisfiesAssumptions}
	Under \cref{ass:FinitelyManyWalks,ass:TauNuPosFinite} the two models described above both satisfy \cref{ass:EffectivePathDelayContinuous,ass:fContinuous,ass:fUniformlyContinuous,ass:fConvergesUniform,ass:StrongPrincipleOfCausation,ass:PsiConvergesUniform,ass:PsiBoundedFlowIndependent,ass:EdgeDelaysNonNegative,ass:EdgeDelaysContinuous,ass:EdgeDelaysConvergeUniform,ass:FIFOcompatiblePrincipleOfCausation,ass:tEqualOnCommonPrefix,ass:tauConvergesUniform,ass:tauInducedByEdgeDelays}.
\end{lemma}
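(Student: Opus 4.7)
The plan is to verify the listed assumptions in a dependency-driven order: first the purely structural ones that are immediate from the definitions, then the edge-level continuity and boundedness properties, next the weak-strong continuity of the network loading (which is the crux), and finally the walk-level statements obtained by composition, followed by the properties of $\Psi$. Throughout, I will use that \ref{ass:FinitelyManyWalks} bounds the total flow volume by $\sum_i Q_i < \infty$ so that all cumulative inflows/outflows, queue lengths, and edge volumes are uniformly bounded by some constant $K$ depending only on the instance. Combined with \ref{ass:TauNuPosFinite}, this yields a uniform bound $\bar D$ on $D_e$ (and hence on $D_p$).

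First, the structural assumptions \ref{ass:EdgeDelaysNonNegative}, \ref{ass:tauInducedByEdgeDelays} and \ref{ass:tEqualOnCommonPrefix} are immediate: $D_e \geq \tau_e > 0$ since $x_e, q_e \geq 0$; $\tau^j_p$ is defined via a telescoping sum of $D_e$ by construction; and since $\tau^j_p$ only depends on the first $j-1$ edges of $p$, two walks with a common prefix agree on their initial arrival times. Next, I would establish continuity and uniform continuity of $f_e(h,\cdot) = x_e(h,\cdot)$ and of $D_e(h,\cdot)$ for fixed $h$. The functions $F^\pm_e(\theta) \coloneqq \int_0^\theta f^\pm_e(\zeta)\,d\zeta$ are absolutely continuous, and by Cauchy–Schwarz $|F^\pm_e(s)-F^\pm_e(t)| \leq |s-t|^{1/2}\|f^\pm_e\|_{L^2}$, so they are globally $\tfrac{1}{2}$-Hölder continuous and hence uniformly continuous on $\IR_{\geq 0}$. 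This yields \ref{ass:fContinuous}, \ref{ass:fUniformlyContinuous}, and \ref{ass:EdgeDelaysContinuous} (the latter since $x_e = F^+_e - F^-_e$ and $q_e(h,\theta) = F^+_e(\theta) - F^-_e(\theta+\tau_e)$).

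The main obstacle is establishing the weak-strong continuity assumptions \ref{ass:fConvergesUniform} and \ref{ass:EdgeDelaysConvergeUniform}. For the Vickrey point queue model this is precisely the content of the continuity results of \citefull{CominettiCL15} (see also \cite{Han2013a}), who show that if $h^n \rightharpoonup h^*$ weakly in $L^2$, then the induced queue lengths (and hence edge delays and edge volumes) converge uniformly on compact intervals; since total flow is finite and edge delays are bounded, the queues vanish after some common time horizon, giving uniform convergence on all of $\IR_{\geq 0}$. For the linear edge delay model the analogous statement is due to \citefull{ZhuM00}; here one can also give a direct inductive argument along the walks since the fixed-point equation defining $x_e$ is contractive in the appropriate norm. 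Once \ref{ass:EdgeDelaysConvergeUniform} is in hand, \ref{ass:tauConvergesUniform} follows by a straightforward finite composition (using uniform continuity of $D_e(h^*,\cdot)$ on the bounded range of $\tau^j_p$).

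It remains to handle the two principles of causation and the $\Psi$-properties. For \ref{ass:StrongPrincipleOfCausation}: if $f_e(h,a) < f_e(h,b)$ then $\int_a^b (f^+_e - f^-_e) > 0$, so $\int_a^b f^+_e > 0$; by the walk-to-edge disaggregation, some walk $p$ containing $e$ as its $j$-th edge satisfies $\int_a^b f^+_{p,j} > 0$, and flow conservation along the first $j-1$ edges produces a positive-measure set $J^{-1}$ of entry times $t'$ with $h_p(t')>0$ and $\tau^j_p(h,t') \in [a,b]$. For \ref{ass:FIFOcompatiblePrincipleOfCausation}: if $x_e(h,b)>0$ and $b > a + D_e(h,a)$, then by the FIFO property all flow present on $e$ at time $a$ has exited by time $a+D_e(h,a) < b$, so the flow present at $b$ must have entered during $(a,b]$, and the same disaggregation argument produces the required $p$ and $J^{-1}$. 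Finally, for the $\Psi$-properties, write $\Psi_p(h,t) = C_p(D_p(h,t), t+D_p(h,t))$; continuity of $C_p$ and of $t \mapsto D_p(h,t)$ gives \ref{ass:EffectivePathDelayContinuous}, while the uniform bound $D_p \leq |p|\bar D$ restricts the arguments of $C_p$ to a fixed compact set on which $C_p$ is uniformly continuous, so uniform convergence $D_p(h^n,\cdot) \to D_p(h^*,\cdot)$ (a finite sum of uniformly convergent terms from \ref{ass:EdgeDelaysConvergeUniform}) yields \ref{ass:PsiConvergesUniform}, and the same compactness argument yields the flow-independent bound \ref{ass:PsiBoundedFlowIndependent}.
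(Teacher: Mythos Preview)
Your proposal is correct and follows essentially the same approach as the paper: both establish the structural assumptions directly from the definitions, outsource the key weak-strong continuity of the network loading to existing results (you cite \cite{CominettiCL15,Han2013a} for Vickrey and \cite{ZhuM00} for linear edge delays, while the paper cites \cite{GHP22} and \cite{ZhuM00} respectively), derive the walk-level and $\Psi$-level statements by composition, and handle the two causation principles via the same flow-conservation arguments. The only cosmetic difference is that you obtain uniform continuity of $f_e(h,\cdot)$ via a Cauchy--Schwarz/H\"older bound on the cumulative flows, whereas the paper first establishes a common finite time horizon $[0,K]$ and then reads off all continuity properties (including uniform continuity) from the weak-strong continuity of the cumulative flow maps into $C([0,K])$.
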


\begin{proof}
	We first note, that under both the Vickrey point queue model and the linear edge delay model any fixed network with finite walk set $\Pc$ and fixed flow volume $Q$ has some constant $K > 0$ such that for any walk inflow $h \in \Lambda(Q)$ and edge $e \in E$ the support of $x_e(h,\emptyarg)$ is in $[0,K]$. This can be shown by the same proof as the one for \cite[Lemma~4.4 (full version)]{GHP22}. Together with the continuity of the functions $C_p$ this already implies that $\Psi_p$ is bounded, \ie \cref{ass:PsiBoundedFlowIndependent} holds.
	
	Then, for the Vickrey point queue model, one can show in the same way as in the proofs of \cite[Claims 3, 4, 5 (full version)]{GHP22} that the mappings $h \mapsto \int_0f^+_e(\zeta)\diff\zeta$ and $h \mapsto \int_0f^-_e(\zeta)\diff\zeta$ are sequentially weak-strong continuous from $\Lambda(Q) \subseteq L^2(\planningInterval)^\Pc$ to $C([0,K])^E$. It then immediately follows that the same is true for the mappings $h \mapsto x_e(h,\emptyarg)$, $h \mapsto q_e(h,\emptyarg)$, $h \mapsto D_e(h,\emptyarg)$, $h \mapsto D_p(h,\emptyarg)$ and $h \mapsto \Psi_p(h,\emptyarg)$. This shows that \cref{ass:EffectivePathDelayContinuous,ass:fContinuous,ass:fUniformlyContinuous,ass:fConvergesUniform,ass:PsiConvergesUniform,ass:EdgeDelaysConvergeUniform,ass:EdgeDelaysContinuous} are satisfied.
	
	For the linear edge delay model the required continuity properties have been shown by Zhu and Marcotte in~\cite{ZhuM00}. Namely, \cite[Corollary 5.1]{ZhuM00} shows that linear edge delays satisfy the strong FIFO condition and, thus, \cite[Theorem 3.3]{ZhuM00} implies that the mapping $h \mapsto x_e(h,\emptyarg)$ is sequentially weak-strong continuous from $\Lambda(Q) \subseteq L^2(\planningInterval)^\Pc$ to \mbox{$L^2([0,K])^E$}. Since both, walk inflow rates and edge outflow rates, are bounded, we can then apply \cite[Proposition 3.1]{ZhuM00} to show that the mapping is even sequentially weak-strong continuous to $C([0,K])^E$. From this we again immediately get \cref{ass:EffectivePathDelayContinuous,ass:fContinuous,ass:fUniformlyContinuous,ass:fConvergesUniform,ass:PsiConvergesUniform,ass:EdgeDelaysConvergeUniform,ass:EdgeDelaysContinuous}.
	
	\Cref{ass:EdgeDelaysNonNegative} follows directly from the definition of $D_e$ and the nonnegativity of the queue length $q_e$ or the flow volume $x_e$, respectively. \Cref{ass:tEqualOnCommonPrefix,ass:tauConvergesUniform} follow from \ref{ass:EdgeDelaysConvergeUniform} in the same way as in the proof of \Cref{thm:ExistenceFDAddSpaceExCPInterval}.
	
	It remains to show that \cref{ass:StrongPrincipleOfCausation,ass:FIFOcompatiblePrincipleOfCausation} (the principles of causation) hold as well. This can be shown in the same way for both models: For \ref{ass:StrongPrincipleOfCausation} we observe that the flow volume of an edge can only increase during some interval $[a,b]$ if there is positive inflow into this edge for some subset $J \subseteq [a,b]$ of positive measure. The definition of the flow dynamics then directly implies the existence of some walk $p$ with $e=(v_j,v_{j+1}) \in p$ and some set $J^{-1} \subseteq \planningInterval$ of positive measure such that $h_p(t) > 0$ and $\tau^j_p(h,t) \in J \subseteq [a,b]$ for all $t \in J^{-1}$.
	
	For \ref{ass:FIFOcompatiblePrincipleOfCausation} we also use flow conservation on edges~\eqref{eq:FlowConservationOnEdges}, which implies that any flow on some edge $e$ at time $a$ has left this edge by time $a+D_e(h,a)$. Thus, if there is still positive flow volume on edge $e$ at some later time $b > a+D_e(h,a)$, there must have been some additional inflow into this edge between $a$ and $b$. This then gives us the desired walk $p$ and set $J^{-1}$ in exactly the same way as before.
\end{proof}

\begin{corollary}\label{cor:ExistenceVickrey}
	Using either the Vickrey point queue model or the linear edge delay model together with volume constraints as described above
	\begin{itemize}
		\item any network satisfying \cref{ass:FinitelyManyWalks,ass:PathInflowBounds,ass:CapacityNonDecreasing,ass:CapacityNN,ass:ExistenceOfUnsaturatedPath,ass:FeasibleAtStart} has a \sCDEuP{} and
		\item any network satisfying \cref{ass:FinitelyManyWalks,ass:PathInflowBounds,ass:CapacityNN,ass:CapacityContinuous,ass:FeasibleAtStartInterval,ass:ExistenceOfUnsaturatedPathInterval} has a \wCDEuP.
	\end{itemize}
\end{corollary}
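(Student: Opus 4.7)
My plan is to observe that \Cref{cor:ExistenceVickrey} is essentially a bookkeeping exercise combining \Cref{thm:ExistenceFDAddSpaceExCP}, \Cref{thm:ExistenceFDAddSpaceExCPInterval} and \Cref{lemma:VickreyModelSatisfiesAssumptions}. The strategy is simply to partition the long list of assumptions of each existence theorem into (a) the assumptions on the flow dynamics (the network-loading, edge-delays, edge-loads and \effWalkDelay s), which are independent of the particular instance, and (b) the remaining assumptions on the network (finiteness of walks, inflow bounds, the structure of the capacity functions, the outside-option property and the initial feasibility). Group (a) is handled uniformly for the Vickrey point-queue model and the linear edge-delay model by \Cref{lemma:VickreyModelSatisfiesAssumptions}, while group (b) is exactly what the corollary assumes.

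For the first bullet, I would first invoke \Cref{lemma:VickreyModelSatisfiesAssumptions} (whose hypothesis \ref{ass:FinitelyManyWalks} is among the corollary's assumptions and whose hypothesis \ref{ass:TauNuPosFinite} is built into the definitions of the two flow models) to conclude that \cref{ass:EffectivePathDelayContinuous,ass:fContinuous,ass:fConvergesUniform,ass:StrongPrincipleOfCausation,ass:PsiConvergesUniform,ass:PsiBoundedFlowIndependent,ass:tEqualOnCommonPrefix,ass:tauConvergesUniform} all hold. Combined with the corollary's assumptions \cref{ass:FinitelyManyWalks,ass:PathInflowBounds,ass:CapacityNonDecreasing,ass:CapacityNN,ass:ExistenceOfUnsaturatedPath,ass:FeasibleAtStart}, this covers the full hypothesis list of \Cref{thm:ExistenceFDAddSpaceExCP}, which then yields the existence of a \sCDEuP.

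For the second bullet I would proceed analogously: \Cref{lemma:VickreyModelSatisfiesAssumptions} additionally provides the ``interval-version'' assumptions \cref{ass:fUniformlyContinuous,ass:EdgeDelaysNonNegative,ass:EdgeDelaysContinuous,ass:EdgeDelaysConvergeUniform,ass:FIFOcompatiblePrincipleOfCausation,ass:tauInducedByEdgeDelays} needed by \Cref{thm:ExistenceFDAddSpaceExCPInterval}, and together with the corollary's assumptions \cref{ass:FinitelyManyWalks,ass:PathInflowBounds,ass:CapacityNN,ass:CapacityContinuous,ass:FeasibleAtStartInterval,ass:ExistenceOfUnsaturatedPathInterval} we can invoke that theorem to obtain a \wCDEuP.

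Because all of the real work has already been done in the two existence theorems and in \Cref{lemma:VickreyModelSatisfiesAssumptions}, there is no substantive obstacle here; the only thing to be careful about is matching the ``entering-an-edge'' and ``travelling-on-an-edge'' versions of the assumptions to the corresponding theorem, and making sure that the hypothesis \ref{ass:TauNuPosFinite} required by the lemma is identified as part of the standing specification of the Vickrey and linear-delay models (so it need not be restated in the corollary).
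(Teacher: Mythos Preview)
Your proposal is correct and matches the paper's approach exactly: the paper's proof is a single sentence citing \Cref{lemma:VickreyModelSatisfiesAssumptions} together with \Cref{thm:ExistenceFDAddSpaceExCP,thm:ExistenceFDAddSpaceExCPInterval}, and your write-up simply spells out in more detail which assumptions are supplied by the lemma and which are hypothesized in the corollary. Your remark that \ref{ass:TauNuPosFinite} is absorbed into the phrase ``as described above'' is also accurate.
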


\begin{proof}
	This follows directly from \Cref{lemma:VickreyModelSatisfiesAssumptions} together with \Cref{thm:ExistenceFDAddSpaceExCP,thm:ExistenceFDAddSpaceExCPInterval}.
\end{proof}

We conclude by noting that, in contrast to unconstrained dynamic equilibria, it seems unlikely that an existence result like \Cref{cor:ExistenceVickrey} for the Vickrey point queue model with fixed network inflow rates can be shown using an extension approach instead wherein one ``constructs'' an equilibrium by iteratively extending the walk inflow over longer and longer time intervals (\cf \cite{Koch11,SeringThesis}). This is, because an important ingredient for the success of such a construction process is the observation that, in an unconstrained dynamic equilibrium, later starting particles can never influence earlier starting particles (and, thus, later extensions cannot invalidate earlier ones). For side-constrained dynamic equilibria, however, there exists a simple single-commodity instance wherein this is not true, \ie even in an equilibrium particles may still overtake each other. Moreover, this instance does indeed have a unique equilibrium which has a prefix which is not an equilibrium on its own:

\begin{figure}
	\centering
	\begin{tikzpicture}
	\node[namedVertex] (s) at (0,0) {$s$};
	\node[namedVertex] (v) at (4,0) {$w$};
	\node[namedVertex] (t)  at (8,0) {$t$};
	
	\draw[edge] (s) -- node[above]{$e_1$} node[below](e1){$\tau_{e_1}=1$} (v);
	\draw[edge] (s) to[bend left=40] node[below]{$e_2$} node[above]{$\tau_{e_2}=3$} (v);
	\draw[edge] (v) -- node[below]{$\tau_{vt}=1, \nu_{vt}=1$} (t);
	\draw[edge] (s) to[bend left=80] node[above]{$\tau_{st}=6$} (t);
	
	\node[below of=e1, anchor=north,node distance=.2cm] {
		\begin{tikzpicture}[scale=1,solid,black,
			declare function={
				c(\x)= 2;			
			}]

			\begin{axis}[xmin=0,xmax=3.5,ymax=2.5, ymin=0, samples=500,width=4cm,height=3cm,
				axis x line*=bottom, axis y line*=left, axis lines=middle, xtick={1,2,3,4}, ytick={1,2}]
				\addplot[blue, ultra thick,domain=0:5] {c(x)} node[below,pos=.4]{$c_{e_1}$};
			\end{axis}
			
		\end{tikzpicture}
	};
	
	\node[left of=s,blue,node distance=2cm](){$r=2\cdot\CharF[{[0,3]}] + \CharF[{[1,3]}]$};
\end{tikzpicture}
	\caption{A single-commodity network with fixed network inflow rate and a volume-constraint on one edge. All values of $\nu$ not explicitly given are infinity. Under the Vickrey point queue model this instance has a unique \wCDEu{} (which is also a \sCDEu) with the property that restricting it to $[0,2]$ is not an equilibrium for this instance.}\label{fig:PhaseExtensionImpossible}
\end{figure}
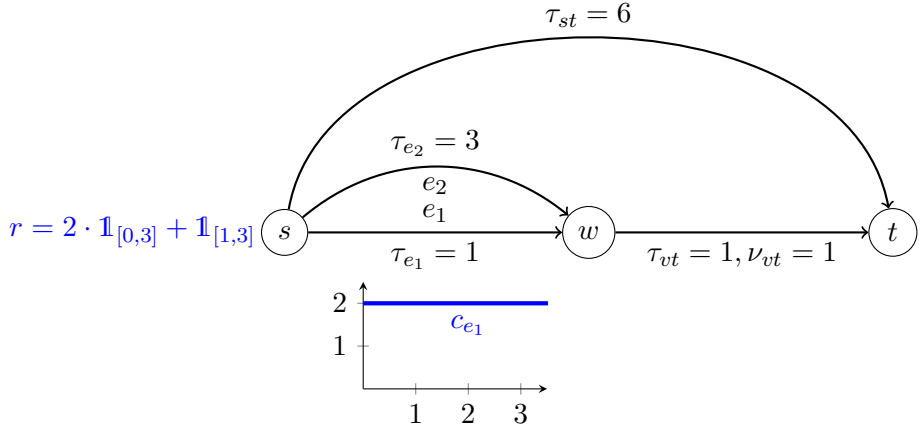

\begin{example}\label{ex:PhaseExtensionImpossible}
	\Cref{fig:PhaseExtensionImpossible} depicts a single-commodity network with fixed network inflow rate and a constant volume-constraint on one edge. Under the Vickrey point queue model this instance has a unique \wCDEu: Over the whole interval $[0,3]$ particles are send into the path $e_1,vt$ at a rate of $2$. Between time $1$ and $2$ the remaining particles are send over the direct edge towards $t$ and after time $2$ the remaining particles are sent into the path $e_2,vt$. To see why this is an equilibrium, observe that after time $1$ the volume constraint on edge $e_1$ is always tight and, thus, switching to path $e_1,vt$ is never an \addmDev. Furthermore, due to the queue on edge $vt$ the travel time over path $e_2,vt$ is strictly larger than $6$ during $(0,2)$. 
	
	However, restricting this path inflow to the interval $[0,2]$ does not yield a \wCDEu{} as then the queue on edge $vt$ will never grow beyond a length of $2$ and, consequently the travel time over path $e_2,vt$ will always be strictly shorter then over edge $st$ (except at time~$0$).
	
	Note, that in the equilibrium for the whole interval $[0,3]$ particles travelling along path $e_2,vt$ will get overtaken by later starting particles travelling along path $e_1,vt$. This is something that cannot happen in the unconstrained model as there the former particles could than always improve by copying the latter's strategy. Here, on the other hand, this is not allowed due to the capacity constraint. This effect is essentially independent of the exact equilibrium concept, \ie the above observation is equally true for all the equilibrium concepts defined in \Cref{def:TypesOfCDE}.
\end{example}


\section{Conclusion}

We provided a counterexample to a claimed existence result for dynamic equilibria with side constraints. The implications of this counterexample were shown to be severe since solutions to the canonical infinite dimensional variational inequality are in some sense useless and other approaches seem to be necessary. 
We then established a general framework for defining side-constrained dynamic equilibria based on two key objects: A \setS{} $S$ containing all feasible flows (given as walk inflows) and correspondences $A_p$ providing the flow-dependent set of \addmEpsDev s. We showed that this equilibrium concept not only encompasses the known unconstrained equilibria with and without departure time choice and capacitated dynamic equilibria with convex \setS{}s but also allows for a whole range of new dynamic equilibria inspired by static side-constrained equilibria.
We provided conditions under which they can be characterized as solutions to a quasi-variational or even a variational inequality. The latter characterization then also gave rise to a first existence result for certain side-constrained dynamic equilibria with convex \setS.
Finally, we turned to equilibria wherein the side-constraints are given by time-varying edge-load constraints. To deal with the non-convexity of the \setS{}, we employed an augmented Lagrangian approach by relaxing the hard edge-load-capacities and replacing them by penalty functions. We demonstrated that these existence results apply, in particular, for the widely used Vickrey point queue model as well as the linear edge delay model.

Several important questions remain open. First of all, it would be interesting to find an existence result for BSDE similar to \Cref{thm:ExistenceFDAddSpaceExCP} for LPDE and MNSDE. The main obstacle to obtaining such a result seems to be the fact that for BSDE, the definition of \addmEpsDev s involves the network loading which, in general, is a very complex mapping and, even for well-studied flow models, is not fully understood yet. Note that, due to \Cref{prop:RelationshipsOfCDE}, such a result would also directly imply existence of \globalEL{} as well as providing an alternative proof for the existence of LPDE. Another aspect is the multiplicity of equilibria and
the issue of selecting a particular type of equilibrium having desirable properties.
It is an interesting research direction to characterize equilibrium concepts
that admit equilibrium selection via appropriate optimization or optimal control reformulations
whose optimal solutions provide such desirable properties.

\paragraph*{Acknowledgments:} \acknowledgetext

\clearpage
\printbibliography
\clearpage

\appendix


\section{List of Symbols and Notation}

\ifarxiv\begin{longtable}{p{.39\textwidth}p{.57\textwidth}}\else\begin{longtable}{p{.41\textwidth}p{.54\textwidth}}\fi
	\textbf{Symbol}			& \textbf{Name/Description} \\\hline\endhead
	$L^2([a,b])$ 						& The set of $L^2$-integrable functions from $[a,b]$ to $\IR$ \\
	$L^2_+([a,b])$ 						& The set of $L^2$-integrable functions from $[a,b]$ to $\IR_{\geq 0}$ \\
	$\scalar{\emptyarg}{\emptyarg}$						& The scalar product. Specifically, for $f,g \in L^2([a,b])^d$ the scalar product is defined as $\scalar{f}{g} \coloneqq \sum_{j=1}^d\int_a^b f_j(\theta)g_j(\theta)\diff\theta$ \\
	$G = (V,E)$							& A directed graph with node set $V$ and edge set $E$ \\
	$\planningInterval \subseteq \IR_{\geq 0}$ 	& The \emph{planning horizon}, \ie the time interval during which particles may enter the network \\
	$I$									& The finite set of commodities \\
	$s_i, t_i$							& Source-/sink node of commodity $i \in I$ \\
	$\Pc_i$								& The set of feasible walks of commodity $i$ \\
	$r_i: \planningInterval \to \IR_{\geq 0} $ & The fixed \emph{network inflow rate} at which particles of commodity $i$ enter the network at $s_i$ \\
	$Q_i \geq 0$						& The fixed \emph{flow volume} of commodity $i$ \\
	$\Pc = \bigcup_{i \in I}\Pc_i$		& The set of feasible walks -- note that we assume that different commodities have disjoint sets of feasible walks $\Pc_i$ \\
	$B_p \geq 0$						& A given fixed upper bound on the walk inflow rate into walk $p \in \Pc$ \\
	$\Lambda(r) \subseteq L^2_+(\planningInterval)^\Pc$ 
										& The set of feasible walk inflows with fixed network inflow rates $r$ \\
	$\Lambda(Q) \subseteq L^2_+(\planningInterval)^\Pc$ 
										& The set of feasible walk inflows with fixed flow volume $Q$ \\
	$h \in \Lambda(r), h \in \Lambda(Q)$& A \emph{walk inflow} vector where $h_p: \planningInterval \to \IR_{\geq 0}$ describes the rate at which particles of commodity $i \in I$ enter walk $p \in \Pc_i$ \\
	$\mathcal{R}$						& A set containing a tuple $(p,j)$ for every walk $p \in \Pc$ and $j \in [\abs{p}]$ used to denote the $j$-th edge on $p$ \\
	$f^+ \in L^2_+([\tStart,\infty))^{\mathcal{R}}$ & \emph{Edge inflow rates}: $f^+_{e,j}(t)$ denotes the rate at which particle on walk $p$ enter its $j$-th edge at time $t$ \\
	$f^- \in L^2_+([\tStart,\infty))^{\mathcal{R}}$ & \emph{Edge outflow rates}: $f^-_{e,j}(t)$ denotes the rate at which particle on walk $p$ leave its $j$-th edge at time $t$ \\
	$f = (f^+,f^-)$						& \emph{Edge flow}: a flow described by its edge in- and outflow rates \\
	$\flowVolume[e](h,\emptyarg): \IR_{\geq 0} \to \IR_{\geq 0}$			
										& \emph{(Edge) flow volume}: the volume of flow $x_e(h,\theta) \coloneqq \int_{0}^\theta f^+_e(h,\vartheta)\diff\vartheta -  \int_{0}^\theta f^-_e(h,\vartheta)\diff\vartheta$ on edge $e$ at time $\theta$ under the flow induced by the walk inflow $h$ \\
	$q_e(h,\emptyarg): \IR_{\geq 0} \to \IR_{\geq 0}$						
										& \emph{Queue length}: The length $q_e(h,\theta) \coloneqq \int_{0}^\theta f^+_e(h,\emptyarg)\diff\vartheta -  \int_{0}^{\theta+\tau_e} f^-_e(h,\vartheta)\diff\vartheta$ of the queue on edge $e$ at time $\theta$ under the flow induced by the walk inflow $h$ \\
	$D_e(h,\emptyarg): \IR_{\geq 0} \to \IR_{\geq 0}$
										& \emph{Edge delay}: the delay $D_e(h,\theta)$ experience under the flow induced by $h$ by particles entering edge $e$ at time $\theta$ \\
	$D_p(h,\emptyarg): \planningInterval \to \IR_{\geq 0}$
										& \emph{Walk delay}: the travel time $D_p(h,t)$ experienced under the flow induced by $h$ by particles entering walk $p$ at time $t$ \\
	$\Psi:  L^2_+([t_0,t_f])^\Pc \to  \hat{M}([t_0,t_f])^\Pc$ 
										& \emph{\effWalkDelay}: the effective walk delay $\Psi_p(h,t)$ experienced under the flow induced by $h$ by particles entering walk $p$ at time $t$ (comprising \eg travel time, early/late arrival penalties, energy costs, \dots) \\
	$\truncated{\Psi}{M}: L^2_+([t_0,t_f])^\Pc \to  L^2([t_0,t_f])^\Pc$
										& \emph{truncated \effWalkDelay}: the \effWalkDelay{} operator $\Psi$ capped at $M \in \IR$ (see \Cref{def:truncatedPsi})\\
	$\tau_p^j(h,\emptyarg): \planningInterval \to \IR_{\geq 0}$						
										& The \emph{arrival time} $\tau^j_p(h,t)$ of particles starting along walk $p$ at time $t$ at the $j$-th node of this walk under the flow induced by $h$ \\
	$c_e: \IR_{\geq 0} \to \IR_{\geq 0}$ & \emph{Edge capacity}: a function denoting the capacity $c_e(\theta)$ of edge $e$ at time $\theta$ \\
	$f_e(h,\emptyarg): \IR_{\geq 0} \to \IR$	&  \emph{Edge load}: $f_e(h,\theta)$ denotes some measure of the flow induced by $h$ on edge $e$ at time $\theta$ \\
	$S \subseteq L^2_+(\planningInterval)^\Pc$ & \emph{\setS}: the set of all feasible walk inflows \\
	$A_p(h) \subseteq \Pc_i \times L^2_+(\planningInterval) \times \IR$
										& The set of \addmEpsDev s $(q,\shiftN,\Delta)$ under  $h$ from walk $p$ \\
	$(q,\shiftN,\Delta) \in A_p(h)$ & \emph{\addmEpsDev}: a tuple denoting an \addmEpsDev{} of particles in space from walk $p$ to walk $q$ at a rate of $\shiftN$ and with a time shift of $\Delta$ \\
	$H_{p\to q}(h,\shiftN,\Delta) \in L^2_+(\planningInterval)^\Pc$ 
										& The walk inflow obtained from $h$ by an \addmEpsDev{} $(q,\shiftN,\Delta)$ \\
	$M_i(h) \subseteq L^2_+(\planningInterval)^\Pc$ 
										& The set of walk inflows which can be obtained by \addmEpsDev s of commodity $i$ from $h$ \\
	$U_p(h,t) \subseteq \Pc_i \times \IR$ & The set of \addmDev s $(q,\Delta)$ for particles entering walk $p$ at time $t$ under $h$ \\
	$(q,\Delta) \in U_p(h,t)$			& \emph{\addmDev}: a tuple denoting an \addmDev{} of shifting in space from $p$ to $q$ and in time from $t$ to $t + \Delta$ \\ 
\end{longtable}

\section{List of Dynamic Equilibrium Concepts}

In the main paper we consider the following dynamic equilibrium concepts:
\begin{itemize}
	\item Dynamic equilibrium with fixed inflow rates: A walk inflow is an equilibrium if almost no particle can improve by switching to a different walk -- see \Cref{def:DE}.
	\item Dynamic equilibrium with fixed flow volume and departure choice: A walk inflow is an equilibrium if almost no particle can improve by switching to a different walk and/or departure time -- see \Cref{def:DE}.
	\item Dynamic equilibrium with elastic demands and departure choice: A walk inflow is an equilibrium if almost no particle can improve by switching to a different walk and/or departure time or by staying at home -- see \Cref{def:DE}.
	\item \SCDE[full] (\SCDE): Our general equilibrium concept: A walk inflow is an equilibrium if no particle has an \addmDev{} with strictly better \effWalkDelay -- see \Cref{def:DCE}.
	\item \globalSCDE[full] (\globalSCDE): Given any \setS{} $S \subseteq \Lambda(Q)$, the \SCDE{} where \addmEpsDev s are those \epsDev s that lead to another flow in $S$ -- see \Cref{def:strongCDE}.
	\item \globalEL[full] (\globalEL): The same as \globalSCDE{} but specifically for $S$ defined by edge-load constraints (\ie by \eqref{eq:FeasibilitySetforEdgeLoad}) -- see \ifarxiv\Cref{def:TypesOfCDE}\else\Cref{sec:SCviaNL}\fi.
	\item \sCDEdf[full] (\sCDEdf): $S$ defined by edge load constraints, \addmEpsDev s are those where the resulting flow is feasible for all deviating particles at times where such particles enter an edge -- see \ifarxiv\Cref{def:TypesOfCDE}\else\Cref{sec:EquilibriaLPMNSBS}\fi.
	\item \wCDEdf[full] (\sCDEdf): $S$ defined by edge load constraints, \addmEpsDev s are those where the resulting flow is feasible for all deviating particles at all time where such particles travel on an edge -- see \ifarxiv\Cref{def:TypesOfCDE}\else\Cref{sec:EquilibriaLPMNSBS}\fi.
	\item \sCDEu[full] (\sCDEu): $S$ defined by edge load constraints, \addmEpsDev s are those where deviating particles only enter unsaturated edges -- see \ifarxiv\Cref{def:TypesOfCDE}\else\Cref{sec:EquilibriaLPMNSBS}\fi.
	\item \wCDEu[full] (\wCDEu): $S$ defined by edge load constraints, \addmEpsDev s are those where deviating particles only travel on unsaturated edges -- see \ifarxiv\Cref{def:TypesOfCDE}\else\Cref{sec:EquilibriaLPMNSBS}\fi.
	\item \sCDEuP[full] (\sCDEuP): $S$ defined by edge load constraints, \addmEpsDev s are defined as for \sCDEu{} except that edge-capacities may be tight on a common prefix of the current and the alternative walk if $\Delta=0$ -- see \ifarxiv\Cref{def:TypesOfCDE}\else\Cref{sec:EquilibriaLPMNSBS}\fi.
	\item \wCDEuP[full] (\wCDEuP): $S$ defined by edge load constraints, \addmEpsDev s are defined as for \wCDEu{} except that edge-capacities may be tight on a common prefix of the current and the alternative walk if $\Delta=0$ -- see \ifarxiv\Cref{def:TypesOfCDE}\else\Cref{sec:EquilibriaLPMNSBS}\fi.
\end{itemize}


\section{An Example for the Usefulness of Cycles}

The following example shows why particles may prefer to travel along walks containing cycles in networks with capacity constraints. This means that the equilibria in such a network can be different depending on whether the strategy space of the particles only includes paths or also walks containing cycles.

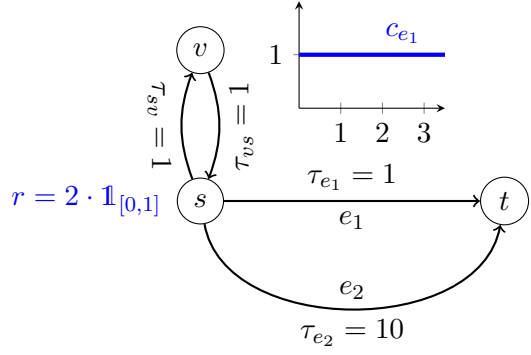
\begin{figure}
	\centering
	\begin{tikzpicture}
	\node[namedVertex] (s) at (0,0) {$s$};
	\node[namedVertex] (t) at (4,0) {$t$};
	\node[namedVertex] (v)  at (0,2) {$v$};
	
	\draw[edge] (s) -- node[below]{$e_1$} node[above](e){$\tau_{e_1}=1$} (t);
	\draw[edge] (s) to[bend right=80] node[above]{$e_2$} node[below]{$\tau_{e_2}=10$} (t);
	\draw[edge] (s) to[bend left=20] node[below, sloped]{$\tau_{sv}=1$} (v);
	\draw[edge] (v) to[bend left=20] node[below, sloped]{$\tau_{vs}=1$} (s);
	
	\node[above of=e, anchor=south,node distance=.2cm] {
		\begin{tikzpicture}[scale=1,solid,black,
			declare function={
				c(\x)= 1;			
			}]

			\begin{axis}[xmin=0,xmax=3.5,ymax=2, ymin=0, samples=500,width=3.5cm,height=3cm,
				axis x line*=bottom, axis y line*=left, axis lines=middle, xtick={1,2,3,4}, ytick={1}]
				\addplot[blue, ultra thick,domain=0:5] {c(x)} node[above,pos=.5]{$c_{e_1}$};
			\end{axis}
			
		\end{tikzpicture}
	};
	
	\node[left of=s,blue,node distance=1.5cm](){$r=2\cdot\CharF[{[0,1]}]$};
\end{tikzpicture}
	\caption{A single-commodity network with fixed network inflow rate where allowing particles to travel along cycles changes (improves) the equilibrium flow. All edges have a flow independent travel time as given in the figure. Edge~$e_1$ is the only edge with a hard volume constraint.}\label{fig:ImprovementByCycles}
\end{figure}

\begin{example}\label{ex:ImprovementByCycles}
	The network in \Cref{fig:ImprovementByCycles} is an example for a single commodity network with volume-constraints on the edges where it makes a difference for the resulting equilibrium whether travelling along cycles is allowed or not. If cycles are not allowed, only half of the flow can use the short edge $e_1$ towards the sink while the rest of the flow has to take the much longer edge $e_2$. If, on the other hand, cycles are allowed, particles can use the cycle $s\to v \to s$ to essentially wait at the source node until there is again room on edge $e_1$. In other word, in this example individual particles prefer to travel along a cycle.
\end{example}


\ifarxiv
	\section{Existence of Unconstrained Dynamic Equilibria}\label{app:VI-Existence}
\else
	\subsection{Existence of Unconstrained Dynamic Equilibria}
\fi

In \Cref{sec:dynamic} we restated\ifarxiv{} in \Cref{thm:VICharOfDE,thm:ExistenceUnconstrained}\fi{} a well known characterization and existence result for unconstrained dynamic equilibria in the notation and under the assumptions used throughout this paper. While none of the analogous results from literature known to us (\eg \ifarxiv\cite{CominettiCL15,Friesz93,Han2013,ZhuM00}\else\citealt{CominettiCL15,Friesz93,Han2013,ZhuM00}\fi) exactly match the model used in this paper, the respective theorems can still be proven in essentially the same way. For completeness we provide the adjusted proofs here:

\ifarxiv\else
\begin{repeattheorem}[\Cref{thm:VICharOfDE}]
	Assume that \ref{ass:PsiBounded} holds. Then, a walk inflow $h^* \in \Lambda(r)$ ($h^* \in \Lambda(Q)$) is a dynamic equilibrium with fixed inflow rates (with fixed flow volume) if and only if $h^*$ is a solution to \eqref{eqn:vi-fixed-inflow-uc} (to \eqref{eqn:vi-fixed-volume-uc}).
\end{repeattheorem}
\fi

\begin{proofNormal}[Proof of \Cref{thm:VICharOfDE}]
	First, consider the case of fixed inflow rates and let $h^*$ be a solution to the variational inequality \eqref{eqn:vi-fixed-inflow-uc}. Let $p,q \in \Pc_i$ be any two walks of some commodity $i \in I$ and $J \coloneqq \Set{t \in \planningInterval | h^*_p(t) > 0, \Psi_p(t) > \Psi_q(t)}$ the set of all times with strictly positive inflow of commodity $i$ into walk $p$ where $q$ would be a better alternative. We then define $h$ as the flow obtained by shifting all inflow of commodity~$i$ during $J$ from $p$ to $q$, i.e.
	\begin{align*}
		h_p(t) &= 0 &\text{ for all } t \in J \\
		h_q(t) &= h^*_q(t) + h^*_p(t) &\text{ for all } t \in J \\
		h_{p'}(t) &=h^*_{p'}(t) &\text{ in all other cases.}
	\end{align*}
	We then clearly have $h \in \Lambda(r)$ and, as $h^*$ is a solution to \eqref{eqn:vi-fixed-inflow-uc}, we get
	\begin{align*}
		0 	&\leq \scalar{\Psi(h^*)}{h-h^*} = \int_J \Psi_p(t)\left(0-h^*_p(t)\right)\diff t + \int_J \Psi_q(t)\left(h^*_q(t) + h^*_p(t)-h^*_q(t)\right)\diff t \\
		&=\int_J \left(\Psi_q(t)-\Psi_p(t)\right)h^*_p(t)\diff t.
	\end{align*}
	As $h^*_p$ is strictly positive and $\Psi_q(h,\emptyarg)-\Psi_p(h,\emptyarg)$ strictly negative on all of $J$, this implies that $J$ has measure zero. In other words, we have $\Psi_p(h,t) \leq \Psi_q(h,t)$ for almost all $t$ with $h^*_p(t)>0$. Thus, $h^*$ is indeed a dynamic equilibrium. 
	
	For the other direction, let $h^* \in \Lambda(r)$ be a dynamic equilibrium and $h \in \Lambda(r)$ any feasible flow. Defining
	\[\psi_i: \planningInterval \to \IR_{\geq 0}, t \mapsto \inf\Set{\Psi_p(h^*,t) | p \in \Pc_i}\]
	for every commodity $i \in I$ we get
	\[\Psi_p(h^*,t)\left(h_p(t) - h^*_p(t)\right) \geq \psi_i(t)\left(h_p(t) - h^*_p(t)\right)\]
	for almost all $t \in \planningInterval$ and every walk $p \in \Pc_i$ (by case-distinction on the second factor being negative or non-negative and using the fact that $h^*$ satisfies \eqref{eq:de-rate}). From this we directly get
	\begin{align*}
		&\scalar{\Psi(h^*)}{h-h^*} = \\
		&\quad=\sum_{i \in I}\sum_{p \in \Pc_i}\int_{t_0}^{t_f} \Psi_p(h^*,t)\left(h_p(t) - h^*_p(t)\right)\diff t \\
		&\quad\geq \sum_{i \in I}\sum_{p \in \Pc_i}\int_{t_0}^{t_f}\psi_i(t)\left(h_p(t) - h^*_p(t)\right)\diff t \\
		&\quad= \sum_{i \in I}\int_{t_0}^{t_f}\psi_i(t)\left(\sum_{p \in \Pc_i}h_p(t) - \sum_{p \in \Pc_i}h^*_p(t)\right)\diff t \\
		&\quad= \sum_{i \in I}\int_{t_0}^{t_f}\psi_i(t)\left(r_i(t) - r_i(t)\right)\diff t = 0.
	\end{align*}
	Therefore, $h^*$ is indeed a solution to the variational inequality \eqref{eqn:vi-fixed-inflow-uc}. 
	
	Now, consider the case of fixed flow volumes, let $h^*$ be a solution to the variational inequality \eqref{eqn:vi-fixed-volume-uc} and assume that $h^*$ is not a dynamic equilibrium. Then there must be a commodity $i \in I$, walks $p,q \in \Pc_i$, sets of positive measure $J_p, J_q \subseteq \planningInterval$ and constants $\varepsilon, \nu > 0$ such that
	\begin{align*}
		\forall t \in J_p: h^*_p(t) \geq \varepsilon \text{ and } \Psi_p(h^*,t) > \nu \\
		\forall t \in J_q: h^*_q(t) \leq B_p - \varepsilon \text{ and } \Psi_q(h^*,t) < \nu.
	\end{align*}
	We can also assume \wlofg that $J_p$ and $J_q$ have the same size (\wrt the Lebesgue measure on $\IR$). We now define $h$ as the flow obtained from $h^*$ by shifting flow at a rate of $\varepsilon$ in space from $p$ to $q$ and in time from $J_p$ to $J_q$, i.e.
	\begin{align*}
		h_{p}(t) &\coloneqq h^*_p(t) - \varepsilon &\text{ for all } t \in J_p \\
		h_{q}(t) &\coloneqq h^*_q(t) + \varepsilon &\text{ for all } t \in J_q \\
		h_{p'}(t) &=h^*_{p'}(t) &\text{ in all other cases.}
	\end{align*}
	Clearly, we have $h \in \Lambda(Q)$ and additionally we get
	\begin{align*}
		&\scalar{\Psi(h^*)}{h-h^*} \\
		&\quad= \int_{J_p} \Psi_{p}(h^*,t)\left(h^*_p(t)-\varepsilon-h^*_p(t)\right)\diff t + \int_{J_q} \Psi_{q}(h^*,t)\left(h^*_q(t) + \varepsilon - h^*_q(t)\right)\diff t \\
		&\quad= -\int_{J_p} \Psi_{p}(h^*,t)\varepsilon\diff t + \int_{J_q} \Psi_{q}(h^*,t) \varepsilon\diff t \\
		&\quad< - \int_{J_p} \nu\varepsilon \diff t + \int_{J_q} \nu\varepsilon \diff t = 0.
	\end{align*}
	But this is now a contradiction to $h^*$ being a solution to the variational inequality \eqref{eqn:vi-fixed-volume-uc}. Thus, $h^*$ must have been a dynamic equilibrium. 
	
	For the other direction, let $h^* \in \Lambda(Q)$ be a dynamic equilibrium with corresponding values $\nu_i \geq 0$ and $h \in \Lambda(Q)$ any feasible flow. Then we have 
	\[\Psi_p(h^*,t)\left(h_p(t) - h^*_p(t)\right) \geq \nu_i\left(h_p(t) - h^*_p(t)\right)\]
	for every time $t \in \planningInterval$ and walk $p \in \Pc_i$ (by case-distinction on the second factor being negative, positive or zero and the fact that $h^*$ satisfies \eqref{eq:de-volume}). From this we directly get
	\begin{align*}
		&\scalar{\Psi(h^*)}{h-h^*} = \\
		&\quad=\sum_{i \in I}\sum_{p \in \Pc_i}\int_{t_0}^{t_f} \Psi_p(h^*,t)\left(h_p(t) - h^*_p(t)\right)\diff t \\
		&\quad\geq \sum_{i \in I}\sum_{p \in \Pc_i}\int_{t_0}^{t_f}\nu_i\left(h_p(t) - h^*_p(t)\right)\diff t \\
		&\quad= \sum_{i \in I}\nu_i \left(\sum_{p \in \Pc_i}\int_{t_0}^{t_f}h_p(t)\diff t - \sum_{p \in \Pc_i}\int_{t_0}^{t_f}h^*_p(t)\diff t\right) \\
		&\quad= \sum_{i \in I}\nu_i\left(Q-Q\right) = 0.
	\end{align*}
	Therefore, $h^*$ is indeed a solution to the variational inequality \eqref{eqn:vi-fixed-volume-uc}. 
\end{proofNormal}

\ifarxiv\else
\begin{repeattheorem}[\Cref{thm:ExistenceUnconstrained}]
	For any network and \effWalkDelay s satisfying \labelcref{ass:FinitelyManyWalks,ass:nonEmptyPathset,ass:PsiWScont,ass:PsiBounded} there exists a dynamic equilibrium (both with and without departure time choice).
\end{repeattheorem}
\fi

\begin{proofNormal}[Proof of \Cref{thm:ExistenceUnconstrained}]
	It is easy to see that both $\Lambda(r)$ and $\Lambda(Q)$ are non-empty, convex, closed and bounded (with respect to the $L^2$-norm): 
	\begin{itemize}
		\item Non-emptyness follows from \labelcref{ass:nonEmptyPathset} and our general assumption on the choice of the walk-inflow bounds~$B_p$.
		\item Convexity follows from the fact that the constraints defining $\Lambda(r)$ and $\Lambda(Q)$ are all linear.
		\item For closedness let $h^n \in L^2_+(\planningInterval)^\Pc$ be a sequence of functions converging to some $h \in L^2_+(\planningInterval)^\Pc$ (with respect to the $L^2$-norm). If all $h^n$ are from $\Lambda(r)$ then so is $h$ as we have
		\begin{align*}
			&\int_{t_0}^{t_f}\abs{r_i(t) - \sum_{p \in \Pc_i}h_p(t)}\diff t 
			= \int_{t_0}^{t_f}\abs{\sum_{p \in \Pc_i}h^n_p(t) - \sum_{p \in \Pc_i}h_p(t)}\diff t \\
			&\quad\quad\leq \sum_{p \in \Pc_i}\int_{t_0}^{t_f}\abs{h^n_p(t) - h_p(t)}\diff t \\
			&\quad\quad\leq \sum_{p \in \Pc_i}\left(\int_{t_0}^{t_f}\left(h_p^n(t) - h_p(t)\right)^2\diff t\right)^{1/2}\cdot\left(\int_{t_0}^{t_f}1^2\diff t\right)^{1/2} \overset{n \to \infty}{\longrightarrow} 0
		\end{align*}
		and, therefore, $\sum_{p \in \Pc_i}h_p(t) = r_i(t)$ for all $i \in I$ and almost all $t \in \planningInterval$. 
		If all $h^n$ are from $\Lambda(Q)$ then so is $h$ as we have
		\begin{align*}
			&\abs{Q_i - \sum_{p \in \Pc_i}\int_{t_0}^{t_f}h_p(t)\diff t} 
				= \abs{\sum_{p \in \Pc_i}\int_{t_0}^{t_f}h^n_p(t)\diff t - \sum_{p \in \Pc_i}\int_{t_0}^{t_f}h_p(t)\diff t}\\
			&\quad\quad=\abs{\sum_{p \in \Pc_i}\int_{t_0}^{t_f} h^n_p(t) - h_p(t)\diff t} 
				\leq \sum_{p \in \Pc_i}\int_{t_0}^{t_f}\abs{h^n_p(t) - h_p(t)}\diff t \\
			&\quad\quad\leq \sum_{p \in \Pc_i}\left(\int_{t_0}^{t_f}\left(h^n_p(t) - h_p(t)\right)^2\diff t\right)^{1/2}\cdot\left(\int_{t_0}^{t_f}1^2\diff t\right)^{1/2} \overset{n \to \infty}{\longrightarrow} 0.
		\end{align*}
		Furthermore, $h$ is also bounded by $B_p$ almost everywhere as otherwise we would have some $p \in \Pc$, $\varepsilon > 0$ and some set $J \subseteq \planningInterval$ of positive measure with $h_p(t) \geq B_p + \varepsilon$ for all $t \in J$. But this would imply the following contradiction:
		\begin{align*}
			0 = \lim_n \int_{t_0}^{t_f}\left(h_p(t) - h^n_p(t)\right)^2\diff t \geq \int_J\varepsilon^2\diff t = \varepsilon^2\abs{J}.
		\end{align*}
		\item For boundedness observe that both for $\Lambda(r)$ and $\Lambda(Q)$ there exist fixed bounded $L^2$-functions bounding every walk inflow function $h_p$ of any feasible $h$ ($r_i$ and $B_p$, respectively).
	\end{itemize}
	Thus, we can choose $C = \Lambda(r) \subseteq L^2_+(\planningInterval)^\Pc$ or $C = \Lambda(Q) \subseteq L^2_+(\planningInterval)^\Pc$ and $\A = \Psi$ in \Cref{thm:Lions} to obtain a solution to \eqref{eqn:vi-fixed-inflow-uc} or \eqref{eqn:vi-fixed-volume-uc}, respectively. By \Cref{thm:VICharOfDE} those solution are then also dynamic equilibria.
\end{proofNormal}

\ifarxiv
\begin{remark}\label{rem:JustificationPathInflowBounds}
	Here we see why the walk inflow bounds $B_p$ are needed for the case with departure time choice as these ensure that $\Lambda(Q)$ is bounded. Without those bounds it is easy to construct instances without an equilibrium: Consider for example a network consisting of just a single edge $e$ and a flow independent cost function $\Psi_{1,\set{e}}(h,t) \coloneqq t$. Then for every possible flow all particles entering the network at a time different to $t=0$ can improve by shifting to a time closer to $0$. Thus, there is no equilibrium flow in this instance (and also no solution to the corresponding variational inequality).
\end{remark}
\fi

\end{document}